\documentclass[reqno]{amsart}
\usepackage{amsfonts,amsmath,amsthm,amssymb,mathrsfs}
\newtheorem{theorem}{Theorem}[section]

\newtheorem{lemma}[theorem]{Lemma}
\newtheorem{proposition}[theorem]{Proposition}
\newtheorem{remark}[theorem]{Remark}
\newtheorem{corollary}[theorem]{Corollary}

\numberwithin{equation}{section}
\usepackage[pdfstartview=FitH,colorlinks=true]{hyperref}
\hypersetup{urlcolor=red, linkcolor=blue,citecolor=red, CJKbookmarks=true}
\allowdisplaybreaks
\usepackage{color}

\setcounter{tocdepth}{3}

\begin{document}
\title[Prandtl equation]
{ Long time well-posdness\\ of the Prandtl equations in Sobolev space}
\author[C.-J. Xu]{Chao-Jiang Xu}
\date{May 8, 2016}
\address{Chao-Jiang XU
\newline\indent
School of Mathematics and Statistics, Wuhan University
\newline\indent
430072, Wuhan, P. R. China
\newline \indent
and 
\newline \indent
Universit\'e de Rouen, CNRS, UMR 6085-Laboratoire de Math\'ematiques
\newline \indent
76801 Saint-Etienne du Rouvray, France
}
\email{chao-jiang.xu@univ-rouen.fr }

\author[X. Zhang]{Xu Zhang}

\address{Xu ZHANG
\newline\indent
School of Mathematical Sciences, Xiamen University, Xiamen, Fujian 361005, China
\newline \indent
and
\newline \indent
Universit\'e de Rouen, CNRS, UMR 6085-Laboratoire de Math\'ematiques
	\newline \indent
	76801 Saint-Etienne du Rouvray, France}
\email{xu.zhang1@etu.univ-rouen.fr}

\keywords{Prandtl boundary layer equation, energy method, well-posedness theory, monotonic condition, Sobolev space}
\subjclass[2010]{35M13, 35Q35, 76D10, 76D03, 76N20}

\begin{abstract}
In this paper, we study the long time well-posedness for the nonlinear Prandtl boundary layer equation on the half plane. While the initial data  are small perturbations of some monotonic shear profile, we prove the existence, uniqueness and stability of solutions in weighted Sobolev space by energy methods. The key point is that the life span of  the solution could be any large $T$ as long as its initial date is a perturbation around the monotonic shear profile of small size like  $e^{-T}$.  The nonlinear cancellation properties of  Prandtl equations under the monotonic assumption are the main ingredients to establish a new energy estimate.
\end{abstract}

\maketitle
\tableofcontents

\section{Introduction}

In this work, we study the initial-boundary value problem for the Prandtl boundary layer equation in two dimension, which reads
\begin{equation*}
\begin{cases}
\partial_t u + u \partial_x u + v\partial_y u + \partial_x p
= \partial^2_y u,\quad t>0,\, (x, y)\in\mathbb{R}^2_+, \\
\partial_x u +\partial_y v =0, \\
u|_{y=0} = v|_{y=0} =0 , \ \lim\limits_{y\to+\infty} u = U(t,x), \\
u|_{t=0} =u_0 (x,y)\, ,
\end{cases}
\end{equation*}
where $\mathbb{R}^2_+=\{(x, y)\in \mathbb{R}^2;\, y>0\}$, $u(t,x,y)$  represents the tangential velocity, $v(t, x, y)$ normal
velocity. $p(t, x)$ and $U(t, x)$ are the values on the boundary of  the Euler's pressure and
Euler's tangential velocity and determined by the Bernoulli's law: $
\partial_t U(t,x) +  U(t,x)\partial_x U(t,x) + \partial_x p =0.$

Prandtl equations is a major achievement in the progress of understanding the famous D'Alembert's paradox in fluid mechanics. In a word, D'Alembert's paradox can be stated as: while a solid body moves in an incompressible and inviscid potential flow,  it undergoes neither drag or buoyancy. This of course disobeys our everyday experiences. In 1904, Prandtl said that, in  fluid of small viscosity,  the behavior of fluid near the boundary is completely different from that away from the boundary. Away from the boundary part can be almost considered as ideal fluid, but the near boundary part is deeply affected by the viscous force and is described by  Prandtl boundary layer equation which  was firstly derived formally  by Prandtl in 1904 (\cite{prandtl1904uber}).

 From the mathematical point of view,  the well-posedness and justification of the Prandtl boundary layer theory don't have satisfactory  theory yet,  and remain open for general cases. During the past century, lots of mathematicians have investigated this problems. The Russian school has contributed a lot to the boundary layer theory and their works were collected in \cite{oleinik1999prandtl}. Up to now, the local existence theory for the Prandtl boundary layer equation has been achieved when the initial data belong to some special functional spaces: 1) the analytic space or analytic with respect to the tangential variable \cite{vicol2013,sammartino2003onlyx,sammartino1998analytic-1,sammartino1998analytic-2}; 2) Sobolev spaces or H\"older spaces under monotonicity assumption \cite{xu2014local,wangyaguang2014three,masmoudi2012local,oleinik1999prandtl,xin2004global}; 3) recently \cite{masmoudi2013gevrey} in Gevrey class with non-degenerate critical point. See also \cite{vicol2014} where the initial data  is monotone on a number of intervals and analytic on the complement.  

Except explaining the D'Alembert's Parabox,   Prandtl equations play a vital role in the challenging problem: inviscid limit problem. { In deed,  as pointed out by Grenier-Guo-Nguyen \cite{ggn1,ggn2,ggn3}, the long time behavior of the Prandtl equations is important to make progress towards the inviscid limit of the Navier-Stokes equations.  We must understand  behaviors of solutions to  on a longer time interval than the one which causes the instability used to prove ill-posedness. } 

To the best of our knowledge, under the monotonic assumption, by using the Crocco transformation, Oleinik (\cite{oleinik1999prandtl}) obtained the long-time smooth solution in H\"older space for the Prandtl equation defined on the interval $0\le x\le L$ with $L$ very small.
Xin-Zhang (\cite{xin2004global}) proved the global existence of weak solutions if the pressure gradient has a favorable sign, that is $\partial_x p\le 0$.  See \cite{wangyaguang2015global} for a similar work in 3-D case. The global existence of smooth solutions in the monotonic case remains open. 

In the analytical frame, Ignatova-Vicol (\cite{vicol2015}) recently get an almost global-in-time  solution which is analytic with respect to the tangential variable, see also \cite{zhangping2014longtime} for a same attempt work by using a refined Littlewood-Paley analysis. On the other side, without the monotonicity assumption, E and Engquist in \cite{e-2} constructed finite time blowup solutions to the Prandtl equation. After this work, there are many un-stability or strong ill-posedness results. In particular,  G\'erard-Varet and Dormy \cite{gerard2010ill} showed that the linearized Prandtl equation around the shear flow with a non-degenerate critical point is ill-posed in the sense of Hadamard in Sobolev spaces. See also \cite{e-1,gerard2010remark,grenier-2,guo2011nonlinear,renardy2009ill-steady-hydrostatic} for the relative works.

Besides, Crocoo transformation can't be used to Navier-Stokes equations.  The best choice left for us is to get the long time wellposedness by  energy method, since energy  method works well for both  Navier-Stokes equations and Euler equations.  Recently, there are two works\cite{xu2014local,masmoudi2012local} where the local-in-time wellposedness is obtained by different kinds of energy methods. One is by Nash-Moser-H\"ormander iteration. The other is by using uniform estimates of  the regularized parabolic equation and Maximal Principle.  

Motivated by above analysis, in this work, using directly energy method, we will prove the long time existence of smooth solutions of Prandtl equations in Sobolev space. { In details, for any fixed $T>0$, we will show that if the initial perturbation  are size of $e^{-T}$ small enough, then the  life time of solutions to Prandtl equations could  at least be  $T$.}

In what follows,  we choose the uniform outflow $
U(t,x)= 1$ which implies $p_x=0$. In other words the following problem for the Prandtl equation is considered :
\begin{equation}\label{full-prandtl}
\begin{cases}
\partial_t u + u \partial_x u + v\partial_y u
= \partial^2_y u,\,\, t>0,\,\, (x, y)\in\mathbb{R}^2_+, \\
\partial_x u +\partial_y v =0, \\
u|_{y=0} = v|_{y=0} =0 , \ \lim\limits_{y\to+\infty} u =1, \\
u|_{t=0} =u_0(x, y).
\end{cases}
\end{equation}

The weighted Sobolev spaces (similar to  \cite{masmoudi2012local}) are defined as follows: 
\begin{equation*}
\|f\|_{H^n_\lambda(\mathbb{R}^2_+)}^2 = \sum\limits_{|\alpha_1+\alpha_2|\le n}\int_{\mathbb{R}^2_+} \langle y \rangle^{2\lambda+2\alpha_2} |\partial^{\alpha_1}_{x}\partial^{\alpha_2}_{y} f|^2 dx dy\, ,~~~ \lambda > 0,~n \in \mathbb{N}^+.
\end{equation*}
Specially,  $\|f\|_{L^2_\lambda(\mathbb{R}^2_+)} = \|f\|_{H^0_\lambda(\mathbb{R}^2_+)} $ and $H^n$ stands for the usual Sobolev space.

{\bf Initial data of shear flow.}  Loosely speaking, shear flow is a  solution to Prandtl equations and is independent of $x$.   For more details, please check the {\it analysis of shear flow } part in Section \ref{section2} and Lemma \ref{shear-profile}. We denote shear flow as $u^s$. From now on,  we   consider  solutions to   Prandtl equations   as   their perturbations around some  shear flow. That is to say,
\[  u(t, x, y) =  u^s(t, y) + \tilde{u}(t, x, y),  t \ge 0.\]
Assume that $u^s_0$(initial datum of shear flow) satisfies the following conditions: 
\begin{align} \label{shear-critical-momotone}
\begin{cases}
u^s_0\in C^{m+4}([0, +\infty[),\,\,\, \lim\limits_{y \to + \infty} u^s_0(y)=1;\\
(\partial^{2p}_y u^s_0)(0) = 0,\,\,\,0\le 2p\le m+4;\\
c_1\langle y \rangle^{-k}\le (\partial_y u^s_0)(y)\le  c_2 \langle y \rangle^{-k}, ~~ \forall\,y\ge 0,\\
|(\partial_y^p u^s_0)(y)| \le c_2 \langle y \rangle^{-k-p+1},\,\, \forall\,\,y\ge 0,\,\, 1\le p\le m+4,
\end{cases}
\end{align}
for certain $c_1, c_2>0$ and even integer $m$.

  We have the following long time wellposedness results.

\begin{theorem}\label{main-theorem}
Let $m\ge 6$ be an even integer, $k>1$ and  $-\frac12<\nu<0$. Assume that $u^s_0$ satisfies \eqref{shear-critical-momotone}, the initial data
$\tilde u_0=(u_0-u^s_0) \in H^{m+3}_{k + \nu }(\mathbb{R}^2_+)$, 
and $\tilde u_0$  satisfies the compatibility condition up to order $m+2$.  Then for any $T>0$, there exists $\delta_0>0$ small enough such that if
\begin{equation}\label{initial-small}
\|\tilde u_0 \|_{H^{m+1}_{k + \nu }(\mathbb{R}^2_+)}\le \delta_0,
\end{equation}
then the initial-boundary value problem \eqref{full-prandtl} admits a unique solution $(u, v)$ with
$$
(u-u^s)\in L^\infty([0, T]; H^{m}_{k+\nu-\delta'}(\mathbb{R}^2_+)),\,\, 	v\in L^\infty([0, T]; L^\infty(\mathbb{R}_{y, +}; H^{m-1}(\mathbb{R}_x)),
$$
where $\delta'>0$ satisfying  $\nu+\frac 12<\delta'<\nu+1$ and $k+\nu-\delta'>\frac 12$. 

Moreover, we have the stability with respect to the initial data in the following sense:  given any two
initial data
$$
u^1_0=u^s_0+\tilde{u}^1_0,\quad u^2_0=u^s_0+\tilde{u}^2_0,
$$
if $u^s_0$ satisfies \eqref{shear-critical-momotone} and $\tilde{u}^1_0, \,\tilde{u}^2_0$ satisfies \eqref{initial-small}, then the solutions $u^1$ and $u^2$  to  \eqref{full-prandtl}  satisfy,
$$
\|u^1-u^2\|_{L^\infty([0, T]; H^{m-3}_{k+\nu-\delta'}(\mathbb{R}^2_+))} \le C\|u^1_0-u^2_0 \|_{ H^{m+1}_{k +\nu}(\mathbb{R}^2_+)}, 
$$
where the constant $C$ depends on the norm of $\partial_y{u}^1, \partial_y{u}^2$ in $L^\infty([0, T]; H^m_{k+\nu-\delta'+1}(\mathbb{R}^2_+))$.
\end{theorem}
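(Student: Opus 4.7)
My plan is to prove the theorem by a standard continuation argument based on a nonlinear a priori energy estimate in the weighted Sobolev space $H^m_{k+\nu-\delta'}$, where the smallness assumption $\delta_0 \sim e^{-T}$ will be absorbed by the exponential growth allowed by the linear part of the estimate.

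First I would reformulate the problem in terms of the perturbation $\tilde u=u-u^s$, $\tilde v=v$. Writing $\mathcal{U}=u^s+\tilde u$, the equation becomes
\begin{equation*}
\partial_t \tilde u+\mathcal{U}\partial_x\tilde u+\tilde v\,\partial_y\mathcal{U}-\partial_y^2\tilde u=0,\qquad \tilde v(t,x,y)=-\int_0^y\partial_x\tilde u(t,x,y')\,dy',
\end{equation*}
with $\tilde u|_{y=0}=0$ and $\tilde u|_{y=\infty}=0$. The central analytic difficulty is the loss of one $x$-derivative coming from $\tilde v\,\partial_y\mathcal{U}$: when applying $\partial_x^\alpha$ with $|\alpha|=m$, the worst term involves $\partial_x^m\tilde u$ through $\tilde v$ multiplied by $\partial_y\mathcal{U}$, and this cannot be directly controlled by dissipation.

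To bypass this loss I would follow the Masmoudi--Wong cancellation strategy. Since monotonicity of $u^s$ and smallness of $\tilde u$ give $\partial_y\mathcal{U}\ge c_1\langle y\rangle^{-k}/2>0$, the quotient $\eta=\partial_y\tilde u/\partial_y\mathcal{U}$ is well defined and smooth. For each multi-index $\alpha$ with $|\alpha|=m$ and $\alpha_2=0$ (the tangential derivatives are the critical ones), define the \emph{good unknown}
\begin{equation*}
g_\alpha=\partial^\alpha\tilde u-\eta\,\partial^\alpha\varphi,\qquad \varphi(t,x,y)=\int_0^y\tilde u(t,x,y')\,dy'\ \text{or an analogous primitive},
\end{equation*}
chosen so that when one computes $(\partial_t+\mathcal{U}\partial_x-\partial_y^2)g_\alpha$ the worst-order term $\tilde v\,\partial_y\mathcal{U}$ in the $\alpha$-derivative of the equation cancels identically, leaving only commutators that are of lower order in $x$-derivatives. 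I would then set
\begin{equation*}
E_m(t)=\sum_{|\alpha|\le m}\|\langle y\rangle^{k+\nu-\delta'+\alpha_2}g_\alpha\|_{L^2}^{2}+\text{(lower-order terms)},
\end{equation*}
and derive, via integration by parts together with the Hardy-type inequality that controls $\|\tilde v/\langle y\rangle\|$ by $\|\partial_x\tilde u\|$ (which is why $k+\nu-\delta'>1/2$ is required), an estimate of the form
\begin{equation*}
\frac{d}{dt}E_m(t)+\mathcal D_m(t)\le C_\ast\bigl(E_m(t)+E_m(t)^{3/2}\bigr),
\end{equation*}
where $\mathcal D_m$ is the dissipation and $C_\ast$ depends only on $u^s$ and the weight parameters. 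The inverse change of variables $\partial^\alpha\tilde u=g_\alpha+\eta\,\partial^\alpha\varphi$ together with Poincar\'e/Hardy inequalities then shows $E_m\sim\|\tilde u\|_{H^m_{k+\nu-\delta'}}^2$ as long as $\|\tilde u\|_{H^3}$ remains small; the slight loss of weight from $k+\nu$ to $k+\nu-\delta'$ is precisely what absorbs the $1/\partial_y\mathcal{U}$ factor in $\eta$.

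\textbf{Closing the argument for long time.} Given $T>0$ fixed, Gronwall's lemma on the inequality above yields, as long as $E_m(t)^{1/2}\le 1$,
\begin{equation*}
E_m(t)\le E_m(0)\,e^{2C_\ast t}\bigl(1-C_\ast E_m(0)^{1/2}(e^{C_\ast t}-1)\bigr)^{-2},
\end{equation*}
so by choosing $\delta_0$ of order $e^{-C_\ast T}$ the right-hand side stays bounded by a fixed small constant on $[0,T]$. This closes the a priori bound. Local existence of the regular solution $\tilde u\in L^\infty([0,T_0];H^{m+3}_{k+\nu})$ with $T_0>0$ can be produced by the parabolic regularization used in \cite{masmoudi2012local,xu2014local} (adding $\varepsilon\partial_x^2$, taking limits via the above uniform estimates and the compatibility conditions up to order $m+2$), and the a priori estimate then extends the life span to $[0,T]$ with the slightly weaker weight $k+\nu-\delta'$, yielding the claimed regularity. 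Finally, for stability I would apply the same good-unknown machinery to the difference $w=u^1-u^2$, which solves a linear equation with coefficients in $L^\infty_t H^m_{k+\nu-\delta'+1}$; the loss of three derivatives in the statement reflects the loss of one derivative per use of the cancellation combined with the need to control commutator terms by $\|\partial_y u^j\|$ in the stated norm. The main obstacle in the whole scheme is verifying that the commutators produced by the good-unknown substitution really are dominated by $E_m+E_m^{3/2}$ with constants independent of $T$; this requires careful bookkeeping of weights (hence the conditions $\nu\in(-1/2,0)$, $\nu+1/2<\delta'<\nu+1$, $k+\nu-\delta'>1/2$) and is where the monotonicity assumption \eqref{shear-critical-momotone} together with the boundary conditions $\partial_y^{2p}u^s_0(0)=0$ is crucially used.
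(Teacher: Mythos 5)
Your proposal follows essentially the same route as the paper: perturbation around the monotone shear profile, a Masmoudi--Wong type nonlinear cancellation through a ``good unknown'' to neutralize the loss of the $x$-derivative coming from $\tilde v\,\partial_y\mathcal U$, parabolic regularization by $\epsilon\partial_x^2$ for local existence, uniform weighted energy estimates, and a Gronwall-plus-continuation argument whose exponential factor forces $\delta_0\sim e^{-T}$. At the level of strategy there is nothing to object to, but three concrete points in your sketch would need repair before the argument closes.

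First, the good unknown as you wrote it does not produce the cancellation. With $\eta=\partial_y\tilde u/\partial_y\mathcal U$ (the ratio of the \emph{perturbation} vorticity to the total vorticity) the worst term $\partial_x^m\tilde v\,\partial_y\mathcal U$ survives; the correct weight is the logarithmic $y$-derivative of the \emph{total} vorticity, i.e. the paper works with $g_m=\bigl(\partial_x^m u/(u^s_y+u_y)\bigr)_y$, which after multiplication by the vorticity reads $\partial_x^m\partial_y u-\tfrac{\partial_y^2(u^s+\tilde u)}{\partial_y(u^s+\tilde u)}\,\partial_x^m u$. Moreover the paper applies this device only to the pure tangential derivatives $\partial_x^n u$, $1\le n\le m$, and in the energy only the top one is really needed; all mixed derivatives are handled in the anisotropic norm $H^{m,m-1}_{k+\ell}$, which avoids the delicate boundary traces that your sum of good unknowns over all $|\alpha|\le m$ with $\alpha_2>0$ would generate. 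Second, the regularization step is not as routine as ``add $\epsilon\partial_x^2$ and pass to the limit'': the term $\epsilon\partial_x^2$ alters the compatibility conditions at $y=0$, so the initial data must be corrected by an $\epsilon$-dependent term $\epsilon\mu_\epsilon$ (see Remark \ref{remark-corrector}); without this corrector the traces $\partial_y^{2p}\tilde u_\epsilon|_{y=0}$, which enter every integration by parts in the energy identities, cannot be expressed through lower-order data and the uniform-in-$\epsilon$ estimates fail. Third, your constant $C_\ast$ is not independent of $T$: the monotonicity bounds for the shear flow degrade polynomially in $T$ by \eqref{c-tilde}, so the relation $\delta_0\approx e^{-T}$ comes from the combination of this degradation with the Gronwall exponential, not from the Gronwall factor alone. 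None of these is a conceptual obstruction, but each is a place where the sketch as written would not compile into a proof.
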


\begin{remark}～
	\begin{itemize}
		\item [1.] We also can verify ,
		\begin{align*}
\partial_y (u-u^s)\in L^\infty([0, T]; H^{m}_{k+\nu-\delta' + 1}(\mathbb{R}^2_+)),\,
\partial_y v\in L^\infty([0, T]; H^{m-1}_{k+\nu-\delta'}(\mathbb{R}^2_+)).
		\end{align*} 
		\item [2.] From \eqref{c-tilde} and \eqref{bound-2},  the relationship between the life span $T$ and the size of initial data is:　
		$$
		\delta_0\,\,\approx\,\, e^{-T}.
		$$
		\item [3.] The results of main Theorem can be generated to the periodic case where $x$ is in torus.
		\item [4.] We find that the weight of solution $u(t) - u^s(t)$  is smaller than that of initial dates $u_0 - u^s_0$. 
		There means that there exist decay loss of order $\delta'>0$ which may be very small. It results from the term $v\,\partial_y u$ which is the major difficulty for the analysis of Prandtl equation. 
	\end{itemize}
\end{remark}

This article is arranged as follows. In Section \ref{section2}, we explain the main difficulties for the study of the Prandtl equation and present
an outline of our approach. In Section \ref{section3}, we study
the approximate solutions to \eqref{full-prandtl}
by a parabolic regularization. In Section \ref{section4}, we prepare some
technical tools and the formal transformation for the Prandtl equations. Sections \ref{section5} is dedicated to the uniform estimates of approximate solutions obtained in Section \ref{section3}. We prove finally
the main theorem in Section \ref{section7}-\ref{section8}.

\smallskip
\noindent
{\bf Notations: } The letter $C$ stands for various suitable constants, independent with functions and the special parameters, which may vary from line to line and step to step. When it depends on some crucial parameters in particular, we put a sub-index such as $C_\epsilon$ etc, which may also vary from line to line.

\section{Preliminary}\label{section2}

\noindent {\bf Difficulties and our approach.} Now, we  explain the main difficulties in proving Theorem	 \ref{main-theorem}, and present the strategies
of our approach.

It is well-known that the major difficulty for the study of the Prandtl equation \eqref{full-prandtl} is the term $v\,\partial_y u$, where the vertical velocity behaves like
$$
v(t, x, y)=-\int^y_0 \partial_x u(t, x, \tilde y)d\tilde y,
$$
by using the divergence free condition and boundary conditions. So it introduces a loss of $x$-derivative. The $y$-integration create also a loss of weights with respect to $y$-variable. Then the standard energy estimates do not work. This explains why there are few existence results in the literatures.

Recalling that in \cite{xu2014local} (see also \cite{masmoudi2012local} for a similar transformation), under the monotonic assumption $\partial_y u>0$,
we divide the Prandtl equations by $\partial_y u$ and then take derivative with respect to $y$, to obtain an equation of the new
unknown function $f=\left(\frac{u}{\partial_y u}\right)_y$ .
In the new equation, the term $v$ disappears by using the divergence free condition.  Here a little different from  \cite{xu2014local}, we use $ g_m =\left(\frac{\partial_x^m u}{\partial_y u}\right)_y$, where $m$ stands for the highest derivative with $x$. From  \cite{masmoudi2012local}, we can observe that we only need to worry about the highest derivative with $x$. This is why we only define $g_m$.

In order to prove the existence of solutions, following the idea of  Masmoudi-Wong (\cite{masmoudi2012local}), we will construct an approximate scheme and study the parabolic regularized
Prandtl equation \eqref{shear-prandtl-approxiamte}, which preserves the nonlinear structure of the
original Prandlt equation \eqref{full-prandtl}, as well as  the nonlinear cancellation properties.
Then by  uniform energy estimates of the approximate solutions,   the existence of solutions to
the original Prandlt equation \eqref{full-prandtl} follows. This energy estimate  also implies the uniqueness
and the stability. The uniform energy estimate for the approximate solutions is the main duty of this paper.

\noindent {\bf Analysis of shear flow.} We write the solution $(u, v)$ of system \eqref{full-prandtl} as
 \begin{align*}
   u(t, x, y) = u^s(t, y) + \tilde{u}(t, x, y),\,\, v(t, x, y)=\tilde v(t, x, y),
 \end{align*}
where $u^s(t,y)$ is the solution of the following heat equation
\begin{align}
\label{shear-flow}
\begin{cases}
\partial_t u^s - \partial_y^2 u^s =0,\\
u^s|_{y=0} = 0, \lim\limits_{y \to + \infty} u^s(t,y) = 1,\\
u^s|_{t=0} = u^s_0(y).
\end{cases}
   \end{align}
Then \eqref{full-prandtl} can be written as
\begin{equation}\label{non-shear-prandtl}
\begin{cases} \partial_t\tilde{u} + (u^s + \tilde{u}) \partial_x\tilde{u} + \tilde v (u^s_y +\partial_y  \tilde{u})
= \partial^2_y\tilde{u}, \\
\partial_x\tilde{u} +\partial_y\tilde{v} =0, \\
\tilde{u}|_{y=0} = \tilde{v}|_{y=0} =0 , \ \lim\limits_{y\to+\infty} \tilde{u} = 0, \\
\tilde{u}|_{t=0} =\tilde{u}_0 (x,y)\, .
\end{cases}
\end{equation}
We first study the shear flow,

\begin{lemma} \label{shear-profile}
Assume that the initial date $u^s_0$ satisfy \eqref{shear-critical-momotone}, then for any $T>0$, there exist $\tilde{c}_1, \tilde{c}_2, \tilde{c}_3>0$ such that the solution $u^s(t,y)$ of the initial boundary value problem \eqref{shear-flow} satisfies
\begin{align} \label{shear-critical-momotone-2}
\begin{cases}
\tilde{c}_1\langle y \rangle^{-k}\le \partial_y u^s(t, y) \le  \tilde{c}_2 \langle y \rangle^{-k}, ~~
\forall\,(t, y)\in [0, T]\times \bar{\mathbb{R}}_+,\\
|\partial_y^p u^s(t, y)| \le \tilde{c}_3 \langle y \rangle^{-k-p+1},\,\, \forall\,\,(t, y)\in [0, T]\times \bar{\mathbb{R}}_+,\,\, 1\le p\le m+4,
 \end{cases}
\end{align}
where $\tilde{c}_1, \tilde{c}_2, \tilde{c}_3$ depend on $T$.
\end{lemma}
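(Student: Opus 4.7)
The plan is to show that each derivative $w_p := \partial_y^p u^s$ for $1\le p\le m+4$ itself solves the one-dimensional heat equation on $[0,T]\times\bar{\mathbb{R}}_+$ with a parity-dependent homogeneous boundary condition at $y=0$, and to compare $w_p$ against an explicit polynomial-weighted barrier via the maximum principle. First, by standard parabolic theory on the half-line together with the compatibility conditions $\partial_y^{2p}u^s_0(0)=0$, $0\le 2p\le m+4$, in \eqref{shear-critical-momotone}, the problem \eqref{shear-flow} admits a classical solution $u^s\in C^0([0,T];C^{m+4}([0,\infty)))$ with $u^s\to 1$ and all derivatives $\partial_y^p u^s\to 0$ as $y\to\infty$ uniformly on $[0,T]$. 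Differentiating $\partial_t u^s = \partial_y^2 u^s$ in $y$ shows that each $w_p$ satisfies the same equation, while iterating $u^s(t,0)=0\Rightarrow \partial_t^q u^s(t,0) = \partial_y^{2q}u^s(t,0) = 0$ gives Dirichlet data $w_p(t,0)=0$ when $p$ is even and Neumann data $\partial_y w_p(t,0)=w_{p+1}(t,0)=0$ when $p$ is odd.

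For the upper bound in \eqref{shear-critical-momotone-2} I would introduce the barriers $\Psi_p(t,y) := c_2\, e^{\lambda_p t}\langle y\rangle^{-k-p+1}$. Using the identity $\partial_y^2\langle y\rangle^{-\alpha} = \alpha(\alpha+1)\langle y\rangle^{-\alpha-2}-\alpha(\alpha+2)\langle y\rangle^{-\alpha-4}$ with $\alpha=k+p-1$,
\[ (\partial_t-\partial_y^2)\Psi_p = c_2 e^{\lambda_p t}\langle y\rangle^{-\alpha}\bigl[\lambda_p - \alpha(\alpha+1)\langle y\rangle^{-2}+\alpha(\alpha+2)\langle y\rangle^{-4}\bigr], \]
which is nonnegative once $\lambda_p$ exceeds the (finite) maximum of the bracket over $y\ge 0$, so $\Psi_p$ is a supersolution. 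At $y=0$ the inequality $\Psi_p\ge 0 = w_p$ holds for even $p$, while for odd $p$ the Neumann condition is matched because $\partial_y\langle y\rangle^{-\alpha}|_{y=0}=0$; initially $|w_p(0,\cdot)|\le c_2\langle y\rangle^{-k-p+1}=\Psi_p(0,\cdot)$ by \eqref{shear-critical-momotone}; and both functions tend to $0$ as $y\to\infty$. Applying the maximum principle in its Dirichlet form for even $p$ and Neumann form for odd $p$ to $\Psi_p\pm w_p$ yields $|w_p(t,y)|\le \Psi_p(t,y)\le \tilde c_3\langle y\rangle^{-k-p+1}$ on $[0,T]\times\bar{\mathbb{R}}_+$ with $\tilde c_3 := c_2 e^{(\max_p\lambda_p)T}$.

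For the matching lower bound on $\partial_y u^s = w_1$, set $\phi(t,y) := c_1 e^{-\mu t}\langle y\rangle^{-k}$. The same algebraic identity gives
\[ (\partial_t-\partial_y^2)\phi = c_1 e^{-\mu t}\langle y\rangle^{-k}\bigl[-\mu - k(k+1)\langle y\rangle^{-2}+k(k+2)\langle y\rangle^{-4}\bigr], \]
which is nonpositive once $\mu\ge k$ (the bracket is maximized at $y=0$), so $\phi$ is a subsolution; moreover $\partial_y\phi(t,0)=0=\partial_y w_1(t,0)$ and $\phi(0,\cdot)\le w_1(0,\cdot)$ by \eqref{shear-critical-momotone}. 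The Neumann comparison principle then gives $w_1(t,y)\ge \phi(t,y)\ge \tilde c_1\langle y\rangle^{-k}$ on $[0,T]\times\bar{\mathbb{R}}_+$ with $\tilde c_1 := c_1 e^{-\mu T}$. The principal obstacle is the careful parity-dependent book-keeping of boundary data for each $w_p$, which selects whether to apply the Dirichlet or the Neumann form of the comparison principle; once that is arranged the barrier estimates reduce to routine one-variable algebra for $\langle y\rangle^{-\alpha}$.
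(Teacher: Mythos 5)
Your proposal is correct, but it takes a genuinely different route from the paper. The paper works directly from the explicit half-line heat kernel: the compatibility conditions $(\partial_y^{2j}u_0^s)(0)=0$ let it integrate by parts to write $\partial_y^p u^s$ as a convolution of the kernel with $\partial_y^p u_0^s$ (with the parity-dependent sign of the image term), after which the pointwise bounds $c_1\langle y\rangle^{-k}\le \partial_y u_0^s\le c_2\langle y\rangle^{-k}$ and $|\partial_y^p u_0^s|\le c_2\langle y\rangle^{-k-p+1}$ are transported through the Gaussian by Peetre's inequality. You instead verify that each $w_p=\partial_y^p u^s$ solves the heat equation with homogeneous Dirichlet (even $p$) or Neumann (odd $p$) data and compare it with the barriers $c_2 e^{\lambda_p t}\langle y\rangle^{-k-p+1}$ and $c_1 e^{-\mu t}\langle y\rangle^{-k}$; your algebra for $\partial_y^2\langle y\rangle^{-\alpha}$ and the resulting choices $\lambda_p\ge \alpha(\alpha+1)^2/(4(\alpha+2))$, $\mu\ge k$ are right, and the parity book-keeping of the boundary data is exactly what is needed to invoke the Dirichlet versus Neumann (Hopf-lemma) form of the comparison principle. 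Two remarks on the trade-off. First, your route quietly relies on the same kernel representation anyway: the existence of a solution that is $C^{m+4}$ in $y$ up to the corner $\{t=0, y=0\}$, with all derivatives decaying at $y=\infty$ (needed so the infimum in the comparison argument is attained and Phragm\'en--Lindel\"of applies on the unbounded strip), is most cleanly obtained from the odd extension and the explicit formula, which is precisely the paper's starting point; so the barrier argument is not shorter, only different. Second, the constants differ quantitatively: the paper gets $\tilde c_1=c_1\tilde c_0(1+T)^{-k/2}$ and $\tilde c_2=c_2\tilde c_0^{-1}(1+T)^{k/2}$, i.e.\ polynomial in $T$, and \eqref{c-tilde} is cited later when tracking how the admissible size $\delta_0$ of the initial perturbation depends on the lifespan $T$; your barriers give $\tilde c_1=c_1e^{-kT}$ and $\tilde c_3=c_2 e^{CT}$, which still proves the lemma as stated but would slightly worsen the explicit $T$-dependence recorded in Remark 1.2.
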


\begin{proof}
Firstly, the solution of \eqref{shear-flow} can be written as
\begin{align*}
u^s (t,y) &=\frac{1}{2\sqrt {\pi t}} \int^{+\infty}_{0} \Big(
e^{-\frac{(y-\tilde{y})^2}{4 t}}-e^{-\frac{(y+\tilde{y})^2}
{4 t}}\Big)  u_0^s (\tilde{y}) d\tilde{y}\\
&=\frac{1}{\sqrt {\pi}} \Big(\int^{+\infty}_{- {\frac{y}{2\sqrt t}}}
 e^{-\xi^2} u_0^s (2\sqrt t \xi +y) d\xi - \int^{+\infty}_{
  {\frac{y}{2\sqrt t}}}  e^{-\xi^2}u_0^s (2\sqrt t \xi -y)d\xi \Big),
\end{align*}
which gives
\begin{align*}
\partial_t u^s(t, y) =& \frac{1}{\sqrt {\pi t}} \Big(
\int^{+\infty}_{- {\frac{y}{2\sqrt t}}} {\xi}\, e^{-\xi^2}
(\partial_y u_0^s) (2\sqrt t \xi +y) d\xi  \\
&\qquad- \int^{+\infty}_{ {\frac{y}{2\sqrt t}}}{\xi}\,
e^{-\xi^2}(\partial_y u_0^s) (2\sqrt t \xi -y)d\xi \Big).
\end{align*}
By using $(\partial_y^{2j}u_0^s)(0)=0$ for $0\le 2j\le m+4$, it follows
\begin{align}\label{u-0}
\begin{split}
\partial^p_y u^s(t, y) =& \frac{1}{\sqrt \pi} \Big( \int^{+\infty}_{-
{\frac{y}{2\sqrt t}}}  e^{-\xi^2} (\partial^p_yu_0^s) (2\sqrt t \xi+y) d\xi\\
 &\quad+ (-1)^{p+1}\int^{+\infty}_{ {\frac{y}{2\sqrt t}}}
 e^{-\xi^2}(\partial^p_yu_0^s) (2\sqrt t \xi -y)d\xi \Big)\\
 &=\frac{1}{2\sqrt {\pi t}} \int^{+\infty}_{0} \Big(
e^{-\frac{(y-\tilde{y})^2}{4 t}}+ (-1)^{p+1}e^{-\frac{(y+\tilde{y})^2}
{4 t}}\Big)  (\partial^p_yu_0^s) (\tilde{y}) d\tilde{y},
\end{split}
\end{align}
for all $1\le p\le m+4$.

For $p=1$, we have, 
\begin{align*}
\partial_y u^s(t, y) =& \frac{1}{\sqrt \pi} \Big( \int^{+\infty}_{-
{\frac{y}{2\sqrt t}}}  e^{-\xi^2} (\partial_yu_0^s) (2\sqrt t \xi+y) d\xi\\
 &\quad+\int^{+\infty}_{ {\frac{y}{2\sqrt t}}}
 e^{-\xi^2}(\partial_yu_0^s) (2\sqrt t \xi -y)d\xi \Big)\\
 &=\frac{1}{2\sqrt {\pi t}} \int^{+\infty}_{0} \Big(
e^{-\frac{(y-\tilde{y})^2}{4 t}}+e^{-\frac{(y+\tilde{y})^2}
{4 t}}\Big)  (\partial_yu_0^s) (\tilde{y}) d\tilde{y}\,.
\end{align*}
Thanks to the monotonic assumption \eqref{shear-critical-momotone}, we have that
\begin{align*}
\partial_y u^s(t, y) &\approx \frac{1}{2\sqrt {\pi t}} \int^{+\infty}_{0} \Big(
e^{-\frac{(y-\tilde{y})^2}{4 t}}+e^{-\frac{(y+\tilde{y})^2}
{4 t}}\Big) \langle \tilde{y}\rangle^{-k} d\tilde{y}\\
&\approx \frac{1}{2\sqrt {\pi t}} \int^{+\infty}_{-\infty} e^{-\frac{(y+\tilde{y})^2}
{4 t}} \langle \tilde{y}\rangle^{-k} d\tilde{y}\,.
\end{align*}
Recalling now Peetre's inequality, for any $\lambda\in\mathbb{R}$
\begin{equation*}
\tilde{c}_0\langle y\rangle^{\lambda}\langle y +\tilde{y}\rangle^{-|\lambda|}\le \langle \tilde{y}\rangle^{\lambda}\le
\tilde{c}^{-1}_0\langle {y}\rangle^{\lambda}\langle y+ \tilde{y}\rangle^{|\lambda|},
\end{equation*}
then for $\lambda=-k$, we get the first estimate of \eqref{shear-critical-momotone-2} with
\begin{equation}\label{c-tilde}
\tilde{c}_1=c_1\tilde{c}_0 (1+T)^{-\frac k2},\,\,\tilde{c}_2=c_2\tilde{c}^{-1}_0 (1+T)^{\frac k2}.
\end{equation}

For the second estimate of \eqref{shear-critical-momotone-2}, \eqref{u-0} implies
\begin{align*}
|\partial^p_y u^s(t, y)| &\le \frac{c_2}{2\sqrt {\pi t}} \int^{+\infty}_{0} \Big(
e^{-\frac{(y-\tilde{y})^2}{4 t}}+e^{-\frac{(y+\tilde{y})^2}
{4 t}}\Big) \langle \tilde{y}\rangle^{-k-p+1} d\tilde{y}\\
&\le \frac{c_2}{2\sqrt {\pi t}} \int^{+\infty}_{-\infty} e^{-\frac{(y+\tilde{y})^2}
{4 t}} \langle \tilde{y}\rangle^{-k-p+1} d\tilde{y}\,.
\end{align*}
Using now Peetre's inequality, with $\lambda=-k-p+1$, we get
\begin{align*}
|\partial^p_y u^s(t, y)|\le c_2\tilde{c}^{-1}_0(1+T)^{\frac {k+p-1}2}\langle {y}\rangle^{-k-p+1},
\end{align*}
for any $(t, y)\in [0, T]\times \mathbb{R}_+$. 
\end{proof}

\noindent {\bf Compatibility conditions and reduction of boundary data.} We give now the precise version of the compatibility condition for the nonlinear system \eqref{non-shear-prandtl} and the reduction properties of boundary data.

\begin{proposition}\label{prop-comp}
Let $m\ge 6$  be  an even integer, and assume that $\tilde{u}$ is a smooth solution of the system \eqref{non-shear-prandtl}, then the initial data $\tilde u_0$ have to satisfy the following compatibility conditions up to order $m+2$:
\begin{equation}\label{compatibility-a1}
\begin{cases}
&\tilde{u}_0(x, 0)=0, \quad\,(\partial^2_y \tilde{u}_0)(x, 0)=0, \,\,\forall x\in \mathbb{R},\\
&(\partial^4_y \tilde u_0)(x, 0)=\big(\partial_yu^s_0(0) + (\partial_y\tilde{u}_0)(x, 0)\big) (\partial_y\partial_x\tilde{u}_0)(x, 0),\forall x\in \mathbb{R},
\end{cases}
\end{equation}
and for $4\le 2p\le m$,
\begin{equation}\label{compatibility-a2}
(\partial^{2(p+1)}_y \tilde{u}_0)(x, 0)=\sum^p_{q=2}\sum_{(\alpha, \beta)\in \Lambda_q}C_{\alpha,\beta}\prod\limits_{j=1}^q \partial_x^{\alpha_j}\partial_y^{\beta_j +1} \big( u^s_0 + \tilde{u}_0 \big)\big|_{y=0}\,,\, \,\,\forall x\in \mathbb{R},
\end{equation}
where
\begin{align}\label{Lambda-p}
\begin{split}	
\Lambda_q=&\bigg\{
(\alpha, \beta)=(\alpha_1, \cdots, \alpha_q; \beta_1, \cdots, \beta_q)\in \mathbb{N}^{q}\times\,\mathbb{N}^{q};\\
&\qquad\alpha_j+\beta_j\le 2p-1,\,\,\, 1\le j\le q;\,\,~\sum^q_{j=1}3\alpha_j + \beta_j = 2p +1;\\
&\qquad\quad\quad\qquad~~\sum\limits_{j=1}^{q}\beta_j \le 2p -2,~\,0<\sum\limits_{j=1}^{q} \alpha_j \le p - 1\bigg\}\, .
\end{split}
\end{align}
\end{proposition}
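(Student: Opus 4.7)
The approach is induction on the compatibility order. The Dirichlet condition $\tilde u(t, x, 0)\equiv 0$ implies $(\partial_t^k \tilde u)(t, x, 0)\equiv 0$ for every $k\ge 0$, and analogous vanishing holds for $\tilde v$ and for $u^s(t, 0)$. The first equation of \eqref{non-shear-prandtl} can be rewritten as $\partial_t\tilde u=\partial_y^2\tilde u+N[\tilde u]$ with $N[\tilde u]=-(u^s+\tilde u)\partial_x\tilde u-\tilde v(\partial_y u^s+\partial_y\tilde u)$. Iterating this identity, $\partial_t^{p+1}\tilde u$ expands as $\partial_y^{2(p+1)}\tilde u$ plus a polynomial in spatial derivatives of $u^s+\tilde u$ and $\tilde v$. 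Setting $(t, y)=(0, 0)$ then converts the trivial identity $(\partial_t^{p+1}\tilde u)(0, x, 0)=0$ into a compatibility relation on $(\partial_y^{2(p+1)}\tilde u_0)(x, 0)$.

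\textbf{Base cases.} The identity $\tilde u_0(x, 0)=0$ is just the Dirichlet condition at $t=0$. For $(\partial_y^2\tilde u_0)(x, 0)=0$, differentiate $\tilde u(t, x, 0)=0$ once in $t$ and evaluate the equation at $y=0$: all three nonlinear contributions vanish there because $u^s|_{y=0}=\tilde u|_{y=0}=\tilde v|_{y=0}=0$, leaving $\partial_y^2\tilde u|_{y=0}=0$. For the third identity, compute $\partial_t^2\tilde u=\partial_y^2(\partial_y^2\tilde u+N[\tilde u])+\partial_t N[\tilde u]$ and evaluate at $(t, y)=(0, 0)$. A short Leibniz expansion using $\tilde v|_{y=0}=0$, $\partial_y\tilde v|_{y=0}=-\partial_x\tilde u|_{y=0}=0$, $\partial_y^2\tilde v|_{y=0}=-\partial_x\partial_y\tilde u|_{y=0}$, together with the analogous vanishing of $\partial_t N[\tilde u]|_{y=0, t=0}$, isolates $\bigl(\partial_y u^s_0(0)+\partial_y\tilde u_0(x,0)\bigr)(\partial_y\partial_x\tilde u_0)(x,0)$ as the only surviving trace, which yields the stated formula.

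\textbf{Induction step.} Assume \eqref{compatibility-a2} holds at orders lower than $p$. From $(\partial_t^{p+1}\tilde u)(0, x, 0)=0$ expand $\partial_t^{p+1}$ by recursively substituting $\partial_t=\partial_y^2+N$. The term with $q$ factors of $u^s+\tilde u$ corresponds to picking up $N$ exactly $q-1$ times during the expansion. At $y=0$: (i) $(u^s+\tilde u)|_{y=0}=0$ kills every product with an undifferentiated factor of $u^s+\tilde u$; (ii) $\tilde v|_{y=0}=0$ kills every product with an undifferentiated $\tilde v$, so only $\partial_y^{s}\tilde v|_{y=0}$ with $s\ge 1$ can appear, and $\partial_y\tilde v=-\partial_x\tilde u$ converts it into $-\partial_x\partial_y^{s-1}\tilde u|_{y=0}$; (iii) the inductive hypothesis allows any surviving $\partial_y^{2j}\tilde u|_{y=0}$ with $j\le p$ to be replaced by the corresponding compatibility-reduced spatial expression. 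The multilinear structure arising from this reduction matches the sum in \eqref{compatibility-a2}: each substitution $\partial_y\tilde v\to-\partial_x\tilde u$ trades one $y$-derivative for one $x$-derivative, which combined with the parabolic scaling $\partial_t\sim\partial_y^2\sim\partial_x$ produces the weight $\sum_j(3\alpha_j+\beta_j)=2p+1$, while the constraints $\sum\beta_j\le 2p-2$ and $0<\sum\alpha_j\le p-1$ reflect the range of multi-indices produced (at least one $x$-derivative per nonlinear factor, and no more than $p-1$ in total after the available $\partial_t$-substitutions have been used).

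\textbf{Main obstacle.} The chief technical difficulty is the combinatorial bookkeeping in the induction: one must enumerate all ways of expanding $\partial_t^{p+1}$ as a labelled tree of substitutions $\partial_t\to\partial_y^2+N$, retain only the leaves whose trace at $y=0$ is nonzero, and verify that the index set is exactly $\Lambda_q$ with the correct coefficients $C_{\alpha, \beta}$. This is a Fa\`a di Bruno-type calculation, made more intricate by the non-local structure of $\tilde v=-\int_0^y\partial_x\tilde u\,dy'$ and by the presence of both $u^s$ and $\tilde u$ in the convective coefficients, so that at each substitution step several branches of the expansion tree must be tracked simultaneously.
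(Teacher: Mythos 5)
Your argument is correct and follows essentially the same route as the paper: both exploit the vanishing of all time derivatives of the boundary traces of $\tilde u$ and $\tilde v$, use the equation to trade them for normal derivatives, and reduce the nonlinear terms via $(u^s+\tilde u)|_{y=0}=\tilde v|_{y=0}=0$ and $\partial_y\tilde v=-\partial_x\tilde u$. The only organizational difference is that you expand $(\partial_t^{p+1}\tilde u)(0,x,0)=0$ directly by iterating $\partial_t=\partial_y^2+N$, whereas the paper proceeds incrementally, maintaining the trace identity for $\partial_y^{2p}\tilde u|_{y=0}$ for all $t$ and differentiating it once in time at each stage; both leave the same combinatorial verification of the index set $\Lambda_q$ to induction.
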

Remark that for $\alpha_j>0$, we have  $\partial_x^{\alpha_j}\partial_y^{\beta_j +1} \big( u^s+ \tilde{u} \big)=\partial_x^{\alpha_j}\partial_y^{\beta_j +1}  \tilde{u}$. So
the condition $0<\sum\limits_{j=1}^{q} \alpha_j$ implies that, for each terms of  \eqref{compatibility-a2}, there is at last one factor  like $\partial_x^{\alpha_j}\partial_y^{\beta_j +1}  \tilde{u}_0$.

\begin{proof}  By the assumption of this Proposition, $\tilde u$ is a smooth solution.  If we need the existence of the trace of $\partial_y^{m+2} \tilde u$ on $y=0$, then we at least need to  assume that $\tilde{u}\in L^\infty([0, T]; H^{m+3}_{k+\ell-1}(\mathbb{R}^2_+))$.
	
Recalling the boundary condition in \eqref{non-shear-prandtl}:
\begin{equation*}
\tilde u(t, x, 0)=0, \quad \tilde v(t, x, 0)=0,\quad (t, x)\in [0, T]\times \mathbb{R},
\end{equation*}
then the following is obvious:
\begin{equation*}
(\partial_t\partial^n_x\tilde u)(t, x, 0)=0, \quad (\partial_t\partial^n_x \tilde v)(t, x, 0)=0,\quad (t, x)\in [0, T]\times \mathbb{R}, \, 0\le n \le m.
\end{equation*}
Thus the first result of \eqref{compatibility-a1} is exactly the compatibility of the solution with the initial data at $t=0$. For the second result of
\eqref{compatibility-a1}, using the equation of \eqref{non-shear-prandtl}, we find that,
fro $0\le n\le m$
\begin{equation*}
(\partial^2_{y}\partial^n_x\tilde{u})(t, x, 0)=0,\quad (\partial_t\partial^2_{y}\partial^n_x\tilde{u})(t, x, 0)=0,\quad (t, x)\in [0, T]\times \mathbb{R}.
\end{equation*}
Derivating  the equation of \eqref{non-shear-prandtl} with $y$,
$$
\partial_t\partial_y\tilde{u} + \partial_y\big((u^s + \tilde{u}) \partial_x\tilde{u}\big) +\partial_y\big(\tilde {v} (u^s_y + \partial_y \tilde{u})\big)
= \partial^3_{y}\tilde{u},
$$
observing
\begin{align*}
\bigg(\partial_y\big((u^s + \tilde{u}) \partial_x\tilde{u}\big) +\partial_y\bigg(\tilde {v} (u^s_y + \partial_y \tilde{u})\big)\bigg)\bigg|_{y=0}=0,
\end{align*}
then we get
\begin{equation*}
(\partial_t\partial_y\tilde{u}))|_{y=0}
= (\partial^3_{y}\tilde{u}_\epsilon)|_{y=0} .
\end{equation*}
Derivating again the equation of \eqref{non-shear-prandtl} with $y$,
$$
\partial_t\partial^2_y\tilde{u} + \partial^2_y\bigg((u^s + \tilde{u}) \partial_x\tilde{u}\bigg) +\partial^2_y\bigg(\tilde {v} (u^s_y + \partial_y \tilde{u})\bigg)
= \partial^4_{y}\tilde{u},
$$
using Leibniz formula
\begin{align*}
&\partial^2_y\bigg((u^s + \tilde{u}) \partial_x\tilde{u}\bigg) +\partial^2_y\bigg(\tilde {v} (u^s_y + \partial_y \tilde{u})\bigg)\\
&=(\partial^2_y(u^s + \tilde{u})) \partial_x\tilde{u} +(\partial^2_y\tilde {v})(u^s_y + \partial_y \tilde{u})\\
&\quad+(u^s + \tilde{u}) \partial^2_y\partial_x\tilde{u} +\tilde {v} \partial^2_y(u^s_y + \partial_y \tilde{u})\\
&\qquad+2(\partial_y(u^s + \tilde{u})) \partial_y\partial_x\tilde{u} +2(\partial_y\tilde {v})\partial_y(u^s_y + \partial_y \tilde{u}),
\end{align*}
thus,
\begin{equation*}
(\partial^4_y \tilde u)(t, x, 0)=\bigg(u^s_y(t, 0) + (\partial_y\tilde{u})(t, x, 0)\bigg) (\partial_y\partial_x\tilde{u})(t, x, 0),
\end{equation*}
and
\begin{equation}\label{boundary-a15}
\begin{split}
(\partial_t\partial^4_y \tilde u)(t, x, 0)=&\bigg(\partial_y u^s(t, 0) + (\partial_y\tilde{u})(t, x, 0)\bigg)\bigg((\partial^3_y\partial_x\tilde{u})(t, x, 0)\bigg) \\
& + \bigg(\partial^3_y u^s(t, 0) + (\partial^3_y\tilde{u})(t, x, 0)\bigg)\bigg((\partial_y\partial_x\tilde{u})(t, x, 0)\bigg).
\end{split}
\end{equation}
For $p=2$, we have
$$
\partial_t\partial^4_y\tilde{u}+ \partial^4_y\bigg((u^s + \tilde{u}) \partial_x\tilde{u}\bigg) +\partial^4_y\bigg(\tilde {v} (u^s_y + \partial_y \tilde{u})\bigg)
= \partial^6_{y}\tilde{u},
$$
using Leibniz formula
\begin{align*}
&\partial^4_y\bigg((u^s + \tilde{u}) \partial_x\tilde{u}\bigg) +\partial^4_y\bigg(\tilde {v} (u^s_y + \partial_y \tilde{u})\bigg)\\
&=(\partial^4_y(u^s + \tilde{u})) \partial_x\tilde{u} +(\partial^4_y\tilde {v})(u^s_y + \partial_y \tilde{u})
+(u^s + \tilde{u}) \partial^4_y\partial_x\tilde{u} +\tilde {v} \partial^4_y(u^s_y + \partial_y \tilde{u})\\
&\qquad+\sum_{1\le j\le 3}C^4_j \bigg((\partial^j_y(u^s + \tilde{u})) \partial^{4-j}_y\partial_x\tilde{u} +(\partial^j_y\tilde {v})\partial^{4-j}_y(u^s_y + \partial_y \tilde{u})\bigg),
\end{align*}
thus,  by \eqref{boundary-a15}
\begin{equation}\label{boundary-16-0}
\begin{split}
&(\partial^6_y \tilde u)(t, x, 0)=
(\partial_t\partial^4_y \tilde u)(t, x, 0)
-(\partial^3_y\partial_x{u})(u^s_y + \partial_y \tilde{u})(t, x, 0)\\
&\quad+\sum_{1\le j\le 3}C^4_j \bigg((\partial^j_y(u^s + \tilde{u})) \partial^{4-j}_y\partial_x\tilde{u} +(\partial^j_y\tilde {v})\partial^{4-j}_y(u^s_y + \partial_y \tilde{u})\bigg)(t, x, 0)\\
&=\quad\quad \bigg(\partial^3_y u^s(t, 0) + (\partial^3_y\tilde{u})(t, x, 0)\bigg)\bigg((\partial_y\partial_x\tilde{u})(t, x, 0)\bigg)\\
&\quad+\sum_{1\le j\le 3}C^4_j \bigg((\partial^j_y(u^s + \tilde{u})) \partial^{4-j}_y\partial_x\tilde{u} -(\partial^{j-1}_y\partial_x\tilde {u})\partial^{4-j}_y(u^s_y + \partial_y \tilde{u})\bigg)(t, x, 0).
\end{split}
\end{equation}
Taking the values at $t=0$, we have proven \eqref{compatibility-a2} for $p=2$. The case of $p\ge 3$ is then by induction.
\end{proof}

\begin{remark}
By the similar methods, we can prove that if $\tilde u$ is a smooth solution of the system \eqref{non-shear-prandtl}, then we have
\begin{equation*}
\begin{cases}
&\tilde{u}(t, x, 0)=0, \,\,(\partial^2_y \tilde{u})(t, x, 0)=0, \,\,\forall (t, x)\in [0, T]\times \mathbb{R},\\
&(\partial^4_y \tilde u)(t, x, 0)=\big(u^s_y(t, 0) + (\partial_y\tilde{u})(t, x, 0)\big) (\partial_y\partial_x\tilde{u})(t, x, 0),\forall (t, x)\in [0, T]\times  \mathbb{R},
\end{cases}
\end{equation*}
and for $4\le 2p\le m$,
\begin{equation}\label{boundary-data1-e}
(\partial^{2(p+1)}_y \tilde{u})(t, x, 0)=\sum^p_{q=2}\sum_{(\alpha, \beta)\in \Lambda_q}C_{\alpha,\beta}\prod\limits_{j=1}^q \partial_x^{\alpha_j}\partial_y^{\beta_j +1} \Big( u^s(t, 0) + \tilde{u}(t, x, 0) \Big),
\end{equation}
for all $ (t, x)\in [0, T]\times \mathbb{R}$, where $\Lambda_q$ is defined in \eqref{Lambda-p}.

See Lemma 5.9 of  \cite{masmoudi2012local} and  Lemma 4 of \cite{masmoudi2013gevrey} for the similar results.
\end{remark}

Remark that the condition $0<\sum\limits_{j=1}^{q} \alpha_j$ implies that, for each terms of  \eqref{boundary-data1-e}, there is at last one factor like $\partial_x^{\alpha_j}\partial_y^{\beta_j +1}  \tilde{u}(t, x, 0)$.


\section{The approximate solutions} \label{section3}

To prove the existence of solution of the Prandtl equation, we study a parabolic regularized equation for which we can get
the existence by using the classical energy method.

\noindent {\bf Nonlinear regularized Prandtl equation.} We study the following nonlinear regularized Prandtl equation, for $0<\epsilon\le 1$,
\begin{equation}\label{shear-prandtl-approxiamte}
\left\{\begin{array}{l}
\partial_t\tilde{u}_\epsilon + (u^s + \tilde{u}_\epsilon) \partial_x\tilde{u}_\epsilon +{v}_\epsilon (u^s_y + \partial_y \tilde{u}_\epsilon)
= \partial^2_{y}\tilde{u}_\epsilon + \epsilon \partial^2_{x}\tilde{u}_\epsilon, \\
\partial_x\tilde{u}_\epsilon +\partial_y{v}_\epsilon =0, \\
\tilde{u}_\epsilon|_{y=0} = {v}_\epsilon|_{y=0} =0 , \ \lim\limits_{y\to+\infty} \tilde{u}_\epsilon = 0, \\
\tilde{u}_\epsilon|_{t=0}=\tilde{u}_{0, \epsilon} =\tilde{u}_0+\epsilon \mu_\epsilon \, ,
\end{array}\right.
\end{equation}
where we choose the corrector $\epsilon \mu_\epsilon $ such that $\tilde{u}_0 +\epsilon \mu_\epsilon $ satisfies the compatibility condition up to order $m+2$   for the regularized system \eqref{shear-prandtl-approxiamte}.

We study now the boundary data of the solution for the regularized nonlinear system \eqref{shear-prandtl-approxiamte} which give also the precise version of the compatibility condition for the system \eqref{shear-prandtl-approxiamte}, see \cite{cannone-non1,cannone-non2} for the Prandtl equation with non-compatible data.

\begin{proposition}\label{prop-comp-b}
Let $m\ge 6$ be an even integer $1<k,  0< \ell<\frac12$ and $k+\ell>\frac 32$, and assume that $\tilde{u}_0$ satisfies the compatibility
conditions \eqref{compatibility-a1} and \eqref{compatibility-a2} for the system
\eqref{non-shear-prandtl}, and
$\mu_\epsilon \in H^{m+3}_{k +\ell'-1}(\mathbb{R}^2_+)$ for some $\frac 12 <\ell'<\ell+\frac 12$
such that  $\tilde{u}_0 +\epsilon \mu_\epsilon $ satisfies the compatibility
conditions up to order $m+2$ for the regularized system
\eqref{shear-prandtl-approxiamte}. If $\tilde{u}_\epsilon \in
L^\infty ([0, T]; H^{m+3}_{k +\ell}(\mathbb{R}^2_+))\cap Lip([0, T];
H^{m+1}_{k +\ell}(\mathbb{R}^2_+))$ is a solution of the
system \eqref{shear-prandtl-approxiamte}, then we have
\begin{equation*}
\begin{cases}
&\tilde{u}_\epsilon(t, x, 0)=0, \,\,(\partial^2_y \tilde{u}_\epsilon)(t, x, 0)=0, \,\,\forall (t, x)\in [0, T]\times \mathbb{R},\\
&(\partial^4_y \tilde u_\epsilon)(t, x, 0)=\big(u^s_y(t, 0) + (\partial_y\tilde{u}_\epsilon)(t, x, 0)\big) (\partial_y\partial_x\tilde{u}_\epsilon)(t, x, 0),\forall (t, x)\in [0, T]\times  \mathbb{R},
\end{cases}
\end{equation*}
and for $4\le 2p\le m$,
\begin{equation}\label{boundary-data1b}
\begin{split}
(\partial^{2(p+1)}_y \tilde{u}_\epsilon)(t, x, 0)=&
\sum^p_{q=2}\sum^{q-1}_{l=0}\epsilon^l\sum_{(\alpha^l, \beta^l)\in \Lambda^l_q}C_{\alpha^l,\beta^l}
\\&
\qquad\times\, \prod\limits_{j=1}^q \partial_x^{\alpha^l_j}\partial_y^{\beta^l_j +1}
\big( u^s(t, 0) + \tilde{u}_\epsilon(t, x, 0) \big),
\end{split}
\end{equation}
for all $ (t, x)\in [0, T]\times \mathbb{R}$, where
\begin{equation*}
\begin{split}	
\Lambda^l_q=&\bigg\{(\alpha, \beta)=(\alpha_1, \cdots, \alpha_p; \beta_1, \cdots, \beta_p)\in
\mathbb{N}^{q}\times \mathbb{N}^q;\\
&\qquad \alpha_j+\beta_j\le 2p-1,\,,~~1\le j\le q; \,\,~\sum^q_{j=1}3\alpha_j + \beta_j = 2p +4l+1;\\
&\qquad\qquad\sum\limits_{j=1}^{q}\beta_j \le 2p -2l-2,~\,0<\sum\limits_{j=1}^{q} \alpha_j \le p +2l - 1\bigg\}.
\end{split}
\end{equation*}
\end{proposition}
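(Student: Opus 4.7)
\medskip

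My plan is to mimic the proof of Proposition \ref{prop-comp} but carefully tracking the extra contributions coming from the regularization term $\epsilon\,\partial_x^2 \tilde u_\epsilon$. The base cases run the same way: the Dirichlet boundary data $\tilde u_\epsilon|_{y=0}=v_\epsilon|_{y=0}=0$ gives $\tilde u_\epsilon(t,x,0)=0$ and consequently $(\partial_t\partial_x^n\tilde u_\epsilon)(t,x,0)=0$ for all $n$; evaluating the equation at $y=0$ I use that every nonlinear factor contains either $\tilde u_\epsilon$, $u^s$, or $v_\epsilon$ (all vanishing at $y=0$), and that $\epsilon\,(\partial_x^2\tilde u_\epsilon)|_{y=0}=0$ since $\tilde u_\epsilon|_{y=0}=0$, to conclude $(\partial_y^2\tilde u_\epsilon)(t,x,0)=0$. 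Differentiating the equation once in $y$ and evaluating at $y=0$, the Leibniz expansion kills all mixed terms (each factor still contains something vanishing on the wall) and the $\epsilon\,\partial_x^2$ contribution is again zero because $(\partial_y\tilde u_\epsilon)(t,x,0)$ depends only on $(t,x)$ so $\epsilon\,(\partial_x^2\partial_y\tilde u_\epsilon)|_{y=0}$ kills nothing new—this yields the $(\partial_y^4\tilde u_\epsilon)$ identity exactly as in Proposition \ref{prop-comp}, with no $\epsilon$-correction at this order.

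The heart of the matter is the induction step. I would apply $\partial_y^{2p-2}$ to the regularized equation and evaluate at $y=0$, writing
\[
(\partial_y^{2(p+1)}\tilde u_\epsilon)|_{y=0} = (\partial_t\partial_y^{2p-2}\tilde u_\epsilon)|_{y=0} + \partial_y^{2p-2}\bigl((u^s+\tilde u_\epsilon)\partial_x\tilde u_\epsilon + v_\epsilon(u^s_y+\partial_y\tilde u_\epsilon)\bigr)\big|_{y=0} - \epsilon(\partial_x^2\partial_y^{2p-2}\tilde u_\epsilon)|_{y=0}.
\]
The first two groups produce, by the same Leibniz bookkeeping as in Proposition \ref{prop-comp}, the $l=0$ slice of \eqref{boundary-data1b} with index set $\Lambda_q^0 = \Lambda_q$ (satisfying $\sum(3\alpha_j+\beta_j)=2p+1$). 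The new term $-\epsilon\,\partial_x^2\partial_y^{2p-2}\tilde u_\epsilon|_{y=0}$ must now be reduced recursively: applying the inductive hypothesis to the trace $(\partial_y^{2q}\tilde u_\epsilon)|_{y=0}$ appearing inside $\partial_x^2\partial_y^{2p-2}\tilde u_\epsilon|_{y=0}$ (after trading two $y$-derivatives for the equation), each such substitution exchanges a $\partial_y^2$ for an $\epsilon\,\partial_x^2$. Combinatorially this exchange increases the index balance $\sum(3\alpha_j+\beta_j)$ by $3\cdot 2 - 2 = 4$ per power of $\epsilon$, which is exactly the shift $2p+1 \rightsquigarrow 2p+4l+1$ appearing in the definition of $\Lambda_q^l$; simultaneously $\sum\beta_j$ drops by $2l$ and $\sum\alpha_j$ grows by $2l$, explaining the constraints $\sum\beta_j\le 2p-2l-2$ and $\sum\alpha_j\le p+2l-1$. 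Iterating until no further $\epsilon\,\partial_x^2$ can be absorbed gives the finite sum $\sum_{l=0}^{q-1}\epsilon^l(\cdots)$.

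I would set up the induction cleanly on $p$ and, inside each step, a secondary induction (or a counting argument) on the number of $\epsilon$-substitutions $l$, always justifying that traces of $\partial_y^{2j}\tilde u_\epsilon$ at $y=0$ for $j\le p$ are already characterized by the formula with the smaller parameter, which is legitimate under the regularity hypothesis $\tilde u_\epsilon\in L^\infty([0,T];H^{m+3}_{k+\ell})\cap \mathrm{Lip}([0,T];H^{m+1}_{k+\ell})$ (this is exactly why the Lipschitz-in-time regularity is assumed, so that $\partial_t\partial_y^{2p-2}\tilde u_\epsilon|_{y=0}$ makes sense and the compatibility chain closes). The case $p=2$ is done by direct computation, which then serves as the anchor.

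The main obstacle will be the combinatorics: verifying that every term generated by the substitution procedure really lies in some $\Lambda_q^l$ with the stated constraints, and that the constraint $0<\sum\alpha_j$ is preserved at every step (so that at least one factor is a genuine $\tilde u_\epsilon$-factor rather than a pure $u^s$ trace). This last point is essential because $\partial_x^{\alpha_j}\partial_y^{\beta_j+1}u^s$ with $\alpha_j=0$ is not zero at the wall in general; the fact that one $\partial_x$-derivative is always present is what makes the right-hand side of \eqref{boundary-data1b} a nontrivial compatibility condition rather than an identity satisfied by the shear flow alone. Keeping this invariant along the recursion, together with the bookkeeping on $3\alpha_j+\beta_j$, will be the technically delicate part.
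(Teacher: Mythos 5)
Your strategy coincides with the paper's: induct on $p$, apply $\partial_y^{2p}$ to the regularized equation, evaluate on $y=0$, use Leibniz and the inductive formula for lower-order traces, and track the extra $\epsilon$-contributions; the outline is sound and your counting (a shift of $+4$ in $\sum(3\alpha_j+\beta_j)$, $-2$ in $\sum\beta_j$, $+2$ in $\sum\alpha_j$ per power of $\epsilon$, with $0<\sum\alpha_j$ preserved) matches the definition of $\Lambda^l_q$. Two corrections, though. First, an indexing slip: to produce $(\partial^{2(p+1)}_y\tilde u_\epsilon)|_{y=0}$ you must apply $\partial_y^{2p}$ to the equation, not $\partial_y^{2p-2}$ (the latter yields $(\partial_y^{2p}\tilde u_\epsilon)|_{y=0}$). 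Second, your claim that the first two groups — $(\partial_t\partial_y^{2p}\tilde u_\epsilon)|_{y=0}$ plus the nonlinear Leibniz terms — produce only the $l=0$ slice is inaccurate: the time-derivative term is itself a source of $\epsilon$-powers, because it is evaluated by applying $\partial_t$ to the inductive boundary formula and then eliminating $\partial_t$ of each factor $\partial_x^{\alpha_j}\partial_y^{\beta_j+1}\tilde u_\epsilon|_{y=0}$ via the equation, which contains $\epsilon\partial_x^2$; this replaces $\partial_y^2$ by $\epsilon\partial_x^2$ in exactly the same way as your recursive substitution into $-\epsilon(\partial_x^2\partial_y^{2p}\tilde u_\epsilon)|_{y=0}$, and both sources land in $\Lambda^{l+1}_q$. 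In the paper's $p=2$ computation the surviving term $-2\epsilon\,\partial_x\partial_y\tilde u_\epsilon\,\partial_y\partial_x^2\tilde u_\epsilon$ in \eqref{boundary-16} arises precisely from the partial cancellation between the $\epsilon$-terms of $(\partial_t\partial_y^4\tilde u_\epsilon)|_{y=0}$ and the expansion of $-\epsilon(\partial_x^2\partial_y^4\tilde u_\epsilon)|_{y=0}$ after substituting \eqref{boundary-14}; if you track only the explicit regularization term your coefficients $C_{\alpha^l,\beta^l}$ will be wrong even though the index sets come out right. With both sources accounted for, the induction closes as you describe.
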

\begin{remark}\label{remark3.2}.
\begin{itemize}
\item[1.]  Remark that the condition $0<\sum\limits_{j=1}^{q} \alpha^l_j$ implies that, for each terms of  \eqref{boundary-data1b}, there are at last one factor like $\partial_x^{\alpha^l_j}\partial_y^{\beta^l_j +1}  \tilde{u}_\epsilon(t, x, 0)$.
\item[2.]  Here we change the notation for the wighted index of function space, in fact, 
using the notations of Theorem \ref{main-theorem}, we have
$$
\ell=\nu-\delta'+1,\quad \ell'=\nu+1.
$$
\end{itemize}
\end{remark}

\begin{proof} Firstly, for $ p\le \frac m2$, we have  $\partial^{2p+2}_y\tilde{u}_\epsilon \in L^\infty ([0, T]; H^{1}_{k +\ell + 2p + 1}(\mathbb{R}^2_+))$.  So the trace of $\partial^{2p+2}_y\tilde{u}_\epsilon$ exists on $y=0$.

Using the boundary condition of \eqref{shear-prandtl-approxiamte}, we have,
for $0\le n\le m+2$,
\begin{equation*}
\partial^n_x\tilde u_\epsilon(t, x, 0)=0, \quad \partial^n_xv_\epsilon(t, x, 0)=0,\quad (t, x)\in [0, T]\times \mathbb{R},
\end{equation*}
and for $0\le n\le m$
\begin{equation*}
(\partial_t\partial^n_x\tilde u_\epsilon)(t, x, 0)=0, \quad (\partial_t\partial^n_x v_\epsilon)(t, x, 0)=0,\quad (t, x)\in [0, T]\times \mathbb{R}.
\end{equation*}
From the equation of \eqref{shear-prandtl-approxiamte}, we get also
\begin{equation}\label{boundary-12b}
(\partial^2_{y}\partial^n_x\tilde{u}_\epsilon)(t, x, 0)=0,\quad (\partial_t\partial^2_{y}\partial^n_x\tilde{u}_\epsilon)(t, x, 0)=0,\quad (t, x)\in [0, T]\times \mathbb{R}.
\end{equation}
On the other hand,
$$
\partial_t\partial_y\tilde{u}_\epsilon + \partial_y\big((u^s + \tilde{u}_\epsilon) \partial_x\tilde{u}_\epsilon\big) +\partial_y\big({v}_\epsilon (u^s_y + \partial_y \tilde{u}_\epsilon)\big)
= \partial^3_{y}\tilde{u}_\epsilon + \epsilon \partial^2_{x}\partial_y\tilde{u}_\epsilon,
$$
observing
\begin{align*}
\big[\partial_y\big((u^s + \tilde{u}_\epsilon) \partial_x\tilde{u}_\epsilon\big) +\partial_y\bigg({v}_\epsilon (u^s_y + \partial_y \tilde{u}_\epsilon)\big)\big]\big|_{y=0}=0,
\end{align*}
we get
\begin{equation*}
(\partial_t\partial_y\tilde{u}_\epsilon)|_{y=0}
= (\partial^3_{y}\tilde{u}_\epsilon)|_{y=0} + \epsilon (\partial^2_{x}\partial_y\tilde{u}_\epsilon)|_{y=0}.
\end{equation*}
We have also
$$
\partial_t\partial^2_y\tilde{u}_\epsilon + \partial^2_y\big((u^s + \tilde{u}_\epsilon) \partial_x\tilde{u}_\epsilon\big) +\partial^2_y\big({v}_\epsilon (u^s_y + \partial_y \tilde{u}_\epsilon)\big)
= \partial^4_{y}\tilde{u}_\epsilon + \epsilon \partial^2_{x}\partial^2_y\tilde{u}_\epsilon,
$$
using Leibniz formula
\begin{align*}
&\partial^2_y\big((u^s + \tilde{u}_\epsilon) \partial_x\tilde{u}_\epsilon\big) +\partial^2_y\big({v}_\epsilon (u^s_y + \partial_y \tilde{u}_\epsilon)\big)\\
&=(\partial^2_y(u^s + \tilde{u}_\epsilon)) \partial_x\tilde{u}_\epsilon +(\partial^2_y{v}_\epsilon)(u^s_y + \partial_y \tilde{u}_\epsilon)\\
&\quad+(u^s + \tilde{u}_\epsilon) \partial^2_y\partial_x\tilde{u}_\epsilon +{v}_\epsilon \partial^2_y(u^s_y + \partial_y \tilde{u}_\epsilon)\\
&\qquad+2(\partial_y(u^s + \tilde{u}_\epsilon)) \partial_y\partial_x\tilde{u}_\epsilon +2(\partial_y{v}_\epsilon)\partial_y(u^s_y + \partial_y \tilde{u}_\epsilon),
\end{align*}
thus,
\begin{equation}\label{boundary-14}
(\partial^4_y \tilde u_\epsilon)(t, x, 0)=\left(u^s_y(t, 0) + (\partial_y\tilde{u}_\epsilon)(t, x, 0)\right) (\partial_y\partial_x\tilde{u}_\epsilon)(t, x, 0).
\end{equation}
Applying $\partial_t$ to  \eqref{boundary-14},  we have
\begin{equation*}
\begin{split}
&(\partial_t\partial^4_y \tilde u_\epsilon)(t, x, 0)=\left(\partial^3_y u^s(t, 0) + (\partial^3_y\tilde{u}_\epsilon)(t, x, 0)+\epsilon (\partial^2_x\partial_y \tilde u_\epsilon)(t, x, 0)\right)(\partial_y\partial_x\tilde{u}_\epsilon)(t, x, 0)\\
&+\left(u^s_y(t, 0) + (\partial_y\tilde{u}_\epsilon)(t, x, 0)\right) \left((\partial^3_y\partial_x\tilde{u}_\epsilon)(t, x, 0)+\epsilon (\partial^3_x\partial_y \tilde u_\epsilon)(t, x, 0)\right).
\end{split}
\end{equation*}
On the other hand, we have
$$
\partial_t\partial^4_y\tilde{u}_\epsilon + \partial^4_y\big((u^s + \tilde{u}_\epsilon) \partial_x\tilde{u}_\epsilon\big) +\partial^4_y\big({v}_\epsilon (u^s_y + \partial_y \tilde{u}_\epsilon)\big)
= \partial^6_{y}\tilde{u}_\epsilon + \epsilon \partial^2_{x}\partial^4_y\tilde{u}_\epsilon,
$$
using Leibniz formula
\begin{align*}
&\partial^4_y\big((u^s + \tilde{u}_\epsilon) \partial_x\tilde{u}_\epsilon\big) +\partial^4_y\big({v}_\epsilon (u^s_y + \partial_y \tilde{u}_\epsilon)\big)\\
&=(\partial^4_y(u^s + \tilde{u}_\epsilon)) \partial_x\tilde{u}_\epsilon +(\partial^4_y{v}_\epsilon)(u^s_y + \partial_y \tilde{u}_\epsilon)\\
&\quad+(u^s + \tilde{u}_\epsilon) \partial^4_y\partial_x\tilde{u}_\epsilon +{v}_\epsilon \partial^4_y(u^s_y + \partial_y \tilde{u}_\epsilon)\\
&\qquad+\sum_{1\le j\le 3}C^4_j \big((\partial^j_y(u^s + \tilde{u}_\epsilon)) \partial^{4-j}_y\partial_x\tilde{u}_\epsilon +(\partial^j_y{v}_\epsilon)\partial^{4-j}_y(u^s_y + \partial_y \tilde{u}_\epsilon)\big),
\end{align*}
thus,
\begin{equation*}
\begin{split}
&(\partial^6_y \tilde u_\epsilon)(t, x, 0)=
(\partial_t\partial^4_y \tilde u_\epsilon)(t, x, 0)
-(\partial^3_y\partial_x{u}_\epsilon)(u^s_y + \partial_y \tilde{u}_\epsilon)(t, x, 0)\\
&\quad+\sum_{1\le j\le 3}C^4_j \big[(\partial^j_y(u^s + \tilde{u}_\epsilon)) \partial^{4-j}_y\partial_x\tilde{u}_\epsilon +(\partial^j_y{v}_\epsilon)\partial^{4-j}_y(u^s_y + \partial_y \tilde{u}_\epsilon)\big](t, x, 0)\\
&\qquad\qquad -\underline{\epsilon \partial^2_{x}\partial^4_y\tilde{u}_\epsilon(t, x, 0)}.
\end{split}
\end{equation*}
Using \eqref{boundary-14}, we get then
\begin{equation}\label{boundary-16}
\begin{split}
&(\partial^6_y \tilde u_\epsilon)(t, x, 0)
=
\big(\partial^3_y u^s(t, 0) +  \partial^3_y\tilde{u}_\epsilon (t, x, 0)\big)\partial_y\partial_x\tilde{u}_\epsilon(t, x, 0)\\
&\hskip 5cm  -\underline{2\epsilon  \partial_x\partial_y\tilde{u}_\epsilon(t, x, 0)  (\partial_y\partial_x^2\tilde{u}_\epsilon)(t, x, 0)}\\
&+\sum_{1\le j\le 3}C^4_j \big[(\partial^j_y(u^s + \tilde{u}_\epsilon)) \partial^{4-j}_y\partial_x\tilde{u}_\epsilon - \partial^{j - 1}_y\partial_x \tilde{u}_\epsilon\partial^{4-j}_y(u^s_y + \partial_y \tilde{u}_\epsilon)\big](t, x, 0),
\end{split}
\end{equation}
Compared to \eqref{boundary-16-0}, the underlined term is the new term.

This is the Proposition \ref{prop-comp-b} for $p=2$. We can  complete the proof of Proposition \ref{prop-comp-b} by induction.
\end{proof}

The proof of the above Proposition implies also the following result.
\begin{corollary}\label{coro-boundary}
Let $m\ge 6$ be an even integer, assume that $\tilde{u}_0$ satisfies the compatibility conditions \eqref{compatibility-a1} - \eqref{compatibility-a2} for the system \eqref{non-shear-prandtl} and $\partial_y\tilde{u}_{0}\in H^{m+2}_{k+\ell'}(\mathbb{R}^2_+)$, then there exists $\epsilon_0>0$, and for any $0<\epsilon\le \epsilon_0$ there exists $\mu_\epsilon \in H^{m+3}_{k +\ell'-1}(\mathbb{R}^2_+)$ such that  $\tilde{u}_0 +\epsilon \mu_\epsilon $ satisfies the compatibility condition up to order $m+2$  for the regularized system \eqref{shear-prandtl-approxiamte}. Moreover, for any $m\le \tilde m\le m+2$
$$
\|\partial_y\tilde{u}_{0, \epsilon}\|_{H^{\tilde m}_{k+\ell'}(\mathbb{R}^2_+)}\le \frac 32 \|
\partial_y\tilde{u}_{0}\|_{H^{\tilde m}_{k+\ell'}(\mathbb{R}^2_+)},
$$
and
$$
\lim_{\epsilon\to 0}\|\partial_y\tilde{u}_{0, \epsilon}-\partial_y\tilde{u}_{0}\|_{H^{\tilde m}_{k+\ell'}(\mathbb{R}^2_+)}=0.
$$
\end{corollary}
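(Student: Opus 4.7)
The plan is to construct $\mu_\epsilon$ by prescribing its $y$-traces at the wall inductively so as to absorb the $\epsilon$-dependent boundary terms produced in Proposition \ref{prop-comp-b}, and then to extend off the boundary using a Borel-type lift in the weighted Sobolev space $H^{m+3}_{k+\ell'-1}(\mathbb{R}^2_+)$.

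Writing $\tilde u_{0,\epsilon}=\tilde u_0+\epsilon\mu_\epsilon$ and using that $\tilde u_0$ already satisfies the $\epsilon=0$ conditions \eqref{compatibility-a1}--\eqref{compatibility-a2}, the order-$2(p+1)$ compatibility condition \eqref{boundary-data1b} for the regularized system, taken at $t=0$ and for $2\le p\le m/2$, becomes an identity of the form
\[
\epsilon\,(\partial_y^{2(p+1)}\mu_\epsilon)(x,0)\,=\,\bigl[P_p^{0}(\tilde u_0+\epsilon\mu_\epsilon)-P_p^{0}(\tilde u_0)\bigr]\,+\,\sum_{l\ge 1}\epsilon^{l}\,P_{p}^{l}(\tilde u_0+\epsilon\mu_\epsilon),
\]
where $P_p^{l}$ denotes the $l$-th polynomial block of \eqref{boundary-data1b} (corresponding to the index set $\Lambda_q^l$). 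The first bracket is a manifest $O(\epsilon)$ Taylor-type difference, and every summand with $l\ge 1$ carries a prefactor $\epsilon^l$. Dividing by $\epsilon$ therefore yields an \emph{explicit} inductive formula expressing $(\partial_y^{2(p+1)}\mu_\epsilon)(x,0)$ as a polynomial in the traces of $\tilde u_0$ and in the lower ($y$-order $<2(p+1)$) traces of $\mu_\epsilon$ already constructed, with $\epsilon$ entering only polynomially and remaining bounded. I would start the recursion with the forced values $\mu_\epsilon|_{y=0}=\partial_y^2\mu_\epsilon|_{y=0}=0$ and freely set all odd-order traces $\partial_y^{2j+1}\mu_\epsilon|_{y=0}=0$, since the compatibility conditions only constrain even $y$-derivatives at the wall.

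To realize these prescribed data as the boundary values of a function $\mu_\epsilon\in H^{m+3}_{k+\ell'-1}(\mathbb{R}^2_+)$ I would use a standard Borel-type extension such as
\[
\mu_\epsilon(x,y)\,=\,\sum_{j=0}^{m+2}\frac{y^j}{j!}\,\phi_j(x)\,\chi(\lambda_j\,y),\qquad \phi_j:=\bigl(\partial_y^j\mu_\epsilon\bigr)(\cdot,0),
\]
with $\chi\in C_c^\infty(\mathbb{R}_+)$ a fixed cutoff equal to $1$ near the origin and scaling factors $\lambda_j>0$ chosen large enough that the series converges in $H^{m+3}_{k+\ell'-1}(\mathbb{R}^2_+)$ to a function with the correct traces. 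The hypothesis $\partial_y\tilde u_0\in H^{m+2}_{k+\ell'}(\mathbb{R}^2_+)$, combined with the trace theorem, places each $\phi_j$ in a suitable $H^{s_j}(\mathbb{R}_x)$ uniformly in $\epsilon$, and one then checks that $\|\partial_y\mu_\epsilon\|_{H^{\tilde m}_{k+\ell'}(\mathbb{R}^2_+)}$ stays bounded independently of $\epsilon\le\epsilon_0$; note that the weight shift from $k+\ell'-1$ to $k+\ell'$ is precisely the one produced by a $\partial_y$ derivative in the definition of these weighted Sobolev spaces, so the indices fit exactly.

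The two concluding estimates follow at once: choosing $\epsilon_0$ so small that $\epsilon_0\|\partial_y\mu_\epsilon\|_{H^{\tilde m}_{k+\ell'}}\le \tfrac12\|\partial_y\tilde u_0\|_{H^{\tilde m}_{k+\ell'}}$ produces the factor-$\tfrac32$ bound, while $\|\partial_y(\tilde u_{0,\epsilon}-\tilde u_0)\|_{H^{\tilde m}_{k+\ell'}}=\epsilon\|\partial_y\mu_\epsilon\|_{H^{\tilde m}_{k+\ell'}}\to 0$. The main obstacle I anticipate is not the algebraic step of solving for the $\phi_{2p+2}$ (which is completely explicit) but the careful bookkeeping of $x$-regularity along the induction: each $\phi_{2p+2}$ involves traces of $\tilde u_0$ carrying many $x$-derivatives, and one must verify that $\partial_y\tilde u_0\in H^{m+2}_{k+\ell'}(\mathbb{R}^2_+)$ is just enough to feed all the $\phi_j$ with $j\le m+2$ into a valid Borel extension while keeping the uniform-in-$\epsilon$ norm bound.
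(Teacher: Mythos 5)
Your proposal is correct and follows essentially the same route as the paper: prescribe the even-order wall traces $(\partial_y^{2p}\mu_\epsilon)(x,0)$ inductively from the regularized compatibility relations (with the odd-order traces set to zero, and the lowest nontrivial one at order $6$ equal to $-2\,\partial_x\partial_y\tilde u_0\,\partial_y\partial_x^2\tilde u_0$ at $y=0$), then lift to $\mathbb{R}^2_+$ by a Taylor polynomial in $y$ multiplied by a cutoff. The only cosmetic difference is your per-order scaling $\chi(\lambda_j y)$ in the lift, which is unnecessary here since only finitely many traces occur; the paper uses a single fixed cutoff $\chi(y)$.
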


\begin{proof} We use the proof of the Proposition \ref{prop-comp-b}.

Taking the values at $t=0$ for \eqref{boundary-12b}, then \eqref{compatibility-a1} implies that the function $\mu_\epsilon$ satisfies
\begin{equation*}
(\partial^n_x\mu_\epsilon )(x, 0)=0, \quad (\partial^2_y \partial^n_x\mu_\epsilon )(x, 0)=0,\quad x\in \mathbb{R}\,.
\end{equation*}
Taking $t=0$ for \eqref{boundary-14}, we have
\begin{align*}
(\partial^4_y \tilde u_0)(x, 0)+\epsilon(\partial^4_y \mu_\epsilon)(x, 0))=&\big[\partial_yu^s_0(0) + (\partial_y\tilde{u}_0)(x, 0)+\epsilon(\partial_y \mu_\epsilon)(x, 0)\big]\\
&\times\big[(\partial_y\partial_x\tilde{u}_0)(x, 0)+\epsilon(\partial_y \partial_x\mu_\epsilon)(x, 0)\big],
\end{align*}
using \eqref{compatibility-a1}, we have that $\mu_\epsilon$ satisfies
\begin{equation*}
\begin{split}
(\partial^4_y \mu_\epsilon )(x, 0))=&\big(\partial_yu^s_0(0) + (\partial_y\tilde{u}_0)(x, 0)\big)(\partial_y \partial_x\mu_\epsilon )(x, 0)\\
&+(\partial_y \mu_\epsilon )(x, 0)(\partial_y\partial_x\tilde{u}_0)(x, 0)\\
&+\epsilon(\partial_y \partial_x\mu_\epsilon )(x, 0)(\partial_y \partial_x\mu_\epsilon )(x, 0).
\end{split}
\end{equation*}
We have also
\begin{equation*}
\begin{split}
(\partial_t\partial^4_y \tilde u_\epsilon)(0, x, 0)=&\big(\partial^3_y u^s_0(0) + (\partial^3_y\tilde{u}_\epsilon)(0, x, 0)+\epsilon (\partial^2_x\partial_y \tilde u_\epsilon)(0, x, 0)\big)\\
&\times \big((\partial^3_y\partial_x\tilde{u}_\epsilon)(0, x, 0)+\epsilon (\partial^3_x\partial_y \tilde u_\epsilon)(0, x, 0)\big).
\end{split}
\end{equation*}
Taking the values at $t=0$ for \eqref{boundary-16}, we obtain a restraint condition for $(\partial^6_y \mu_\epsilon)(x, 0)$,
\begin{align*}
	\partial_y^6  \mu_\epsilon (x, 0) & =( (\partial_y^3 u^s_0 + \partial_y^3 \tilde{u}_0  ) \partial_y \partial_x \mu_\epsilon)|_{y=0} +  \partial_y^3 \mu_\epsilon \partial_y \partial_x \tilde{u}_0|_{y=0} + \epsilon  \partial_y^3 \mu_\epsilon \partial_y \partial_x \mu_\epsilon|_{y=0}\\
	&  - \underline{2　 \partial_x\partial_y\tilde{u}_0(x, 0)  (\partial_y\partial_x^2\tilde{u}_0)( x, 0)}- 2　\epsilon \partial_x\partial_y\tilde{u}_0( x, 0)  (\partial_y\partial_x^2\mu_\epsilon)(t, x, 0)  \\
	& - 2　\epsilon \partial_x\partial_y\mu_\epsilon(t, x, 0)  (\partial_y\partial_x^2\tilde{u}_0)(t, x, 0) - 2 \epsilon^2　 \partial_x\partial_y\mu_\epsilon(x, 0)  (\partial_y\partial_x^2\mu_\epsilon)(x, 0)  \\
	& + \sum\limits_{1 \le j \le 3}C_j^4 \big[ \partial_y^j\big( u^s_0 + \tilde{u}_0 \big)\partial_y^{4 - j}\partial_x \mu_\epsilon + \partial_y^j \mu \partial_y^{4 - j} \partial_x \tilde{u}_0 + \epsilon\partial_y^j \mu \partial_y^{4 - j} \partial_x \mu_\epsilon  \big]\big|_{y=0}\\
	& - \sum\limits_{ 1 \le j \le 3 }C_j^4 \big[  \partial_y^{j-1} \partial_x \tilde{u}_0 \partial_y^{4 - j} \mu_\epsilon + \epsilon \partial_y^{j - 1} \partial_x \mu_\epsilon \partial_y^{4 - j} \partial_y \mu_\epsilon    \big]\big|_{y = 0}\\
	& - \sum\limits_{ 1 \le j \le 3 }C_j^4    \partial_y^{j - 1} \partial_x \mu_\epsilon \partial_y^{4 - j}( \partial_y u^s_0 + \partial_y \tilde{u}_0 )   \big|_{y = 0},
\end{align*}
thus
\begin{equation}\label{mu-6}
\begin{split}
		\partial_y^6  \mu_\epsilon (x, 0) 	& =  - ~ \underline{2　 \partial_x\partial_y\tilde{u}_0(x, 0)  (\partial_y\partial_x^2\tilde{u}_0)( x, 0)}\\
	& + \sum\limits_{\alpha_1, \beta_1; \alpha_2, \beta_2}C_{\alpha_1, \beta_1; \alpha_2, \beta_2} \partial_x^{\alpha_1}\partial_y^{\beta_1  + 1} ( u^s_0 + \tilde{u}_0) \partial_x^{\alpha_1}\partial_y^{\beta_1  + 1}\mu_\epsilon(x, 0)\\
	& + \sum\limits_{\alpha_1, \beta_1; \alpha_2, \beta_2}C_{\alpha_1, \beta_1; \alpha_2, \beta_2} \partial_x^{\alpha_1}\partial_y^{\beta_1  + 1} \mu_\epsilon \partial_x^{\alpha_1}\partial_y^{\beta_1  + 1}\mu_\epsilon(x, 0),
\end{split}
\end{equation}
where the summation is for the index $\alpha_2+\beta_2\le 3;\,
\alpha_1+\beta_1+\alpha_2+\beta_2\le 3$. The underlined term in
the above equality is deduced from the underlined term in \eqref{boundary-16}.
All these underlined terms are from the added regularizing term $\epsilon\partial_x^2 \tilde{u}$
in the equation \eqref{shear-prandtl-approxiamte}.
This means that the regularizing term $\epsilon \partial_x^2 \tilde{u}$
has an affect on the boundary. This is why we add a corrector term.

More generally,  for $6\le 2p\le m$, we have that $(\partial^{2(p+1)}_y \mu_\epsilon)(x, 0)
$ is a linear combination of the terms of the form
$$
\prod\limits_{j=1}^{q_1}\left( \partial_x^{\alpha^1_j}\partial_y^{\beta^1_j +1} \big( u^s_0 + \tilde{u}_0 \big)\right)\bigg|_{y=0},\,\quad \prod\limits_{i=1}^{q_2}\left( \partial_x^{\alpha^2_i}\partial_y^{\beta^2_i+1} \mu_\epsilon\right)\bigg|_{y=0}\,,
$$
and $$
\prod\limits_{j=1}^{q_1}\left( \partial_x^{\alpha^1_j}\partial_y^{\beta^1_j +1} \big( u^s_0 + \tilde{u}_0 \big)\right)\bigg|_{y=0}\,\times \, \prod\limits_{i=1}^{q_2}\left( \partial_x^{\alpha^2_i}\partial_y^{\beta^2_i+1} \mu_\epsilon\right)\bigg|_{y=0}\,,
$$
where the coefficients of the combination can be depends on $\epsilon$ but with
a non-negative power. We have also $\alpha^l_j+\beta^l_j+1\le 2p, l=1, 2$, thus $(\partial^{2(p+1)}_y \mu_\epsilon)(x, 0)$ is determined by the low order derivatives of $\mu_\epsilon$ and these of  $\tilde u_0$.

We now construct a polynomial function $\tilde \mu_\epsilon$ on $y$ by the following
Taylor expansion,
$$
\tilde \mu_\epsilon(x, y)=\sum^{\frac m2+1}_{p=3} \tilde \mu^{2p}_\epsilon(x)\frac{ y^{2p}}{(2p)!}\,,
$$
where
$$
\tilde \mu^{6}_\epsilon(x)=-2 (\partial_x\partial_y\tilde{u}_0)(x, 0)(\partial_y\partial^2_x\tilde{u}_0)(x, 0),
$$
and $\tilde \mu^{2p}_\epsilon(x)$  will give successively by  $(\partial^{2q}_y \mu_\epsilon)(x, 0)$  with  $(\partial^{2q+1}_y \mu_\epsilon)(x, 0)=0, q=0, \cdots, m$, and it is then determined by $(\partial^\alpha_x\partial^\beta_y\tilde u_0)|_{y=0}$.
Finally we take $\mu_\epsilon= \chi(y)\tilde \mu_\epsilon$ with $\chi\in C^\infty([0, +\infty[);\, \chi(y)=1,\, 0\le y\le 1;\, \chi(y)=0,\, y\ge 2$. Thus we   complete the proof of the Corollary.
\end{proof}

\begin{remark}\label{remark-corrector}
Suppose that $\tilde u_0$ satisfies the compatibility conditions up to  order $m+2$
 for the system \eqref{non-shear-prandtl} with $m\ge 4$, then for the regularized
 system  \eqref{shear-prandtl-approxiamte},  if we want to obtain the smooth solution $\tilde w_\epsilon$,  we have to add a non-trivial corrector $\mu_\epsilon$ to the initial data such that $\tilde u_0+\epsilon\mu_\epsilon$ satisfies the compatibility conditions
 up to  order $m+2$  for the system \eqref{shear-prandtl-approxiamte}. In fact,  if we take $\mu_\epsilon$ with
$$
(\partial^{j}_y \mu_\epsilon)(x, 0)=0,\quad 0\le j\le 5,
$$
 then \eqref{mu-6} implies
$$
(\partial^{6}_y \mu_\epsilon)(x, 0)=-2 (\partial_x\partial_y\tilde{u}_0)(x, 0)(\partial_y\partial^2_x\tilde{u}_0)(x, 0),
$$	
which is not  equal to $0$. So added a corrector is necessary for the initial data of
the regularized system.
\end{remark}

We will prove the  the existence of the approximate solutions of  the system 
\eqref{shear-prandtl-approxiamte} by using the following equation of vorticity 
$ \tilde{w}_\epsilon=\partial_y\tilde{u}_\epsilon $, it reads
\begin{equation}
\label{shear-prandtl-approxiamte-vorticity}
\begin{cases}
& \partial_t\tilde{w}_\epsilon + (u^s + \tilde{u}_\epsilon) \partial_x\tilde{w}_\epsilon +{v}_\epsilon (u^s_{yy} + \partial_y\tilde{w}_{\epsilon})
= \partial^2_{y}\tilde{w}_\epsilon + \epsilon \partial^2_{x}\tilde{w}_\epsilon, \\
& \partial_y\tilde{w}_{\epsilon}|_{y=0}=0,\\
& \tilde{w}_{\epsilon}|_{t=0}=\tilde{w}_{0, \epsilon}=\tilde{w}_0+\epsilon \partial_y\mu_{\epsilon},
\end{cases}
\end{equation}
where 
\begin{equation}\label{u-v-w}
\tilde{u}_\epsilon(t, x, y)=-\int^{+\infty}_y \tilde{w}_\epsilon(t, x, \tilde y) d\tilde y,\quad
\tilde{v}_\epsilon(t, x, y)=-\int^{y}_0\partial_x \tilde{u}_\epsilon(t, x, \tilde y) d\tilde y.
\end{equation}
We have the following theorem for the existence of approximate solutions
\begin{theorem}\label{theorem3.1}
Let $\partial_y \tilde{u}_{0}\in H^{m+2}_{k+\ell}(\mathbb{R}^2_+)$, and $m\ge 6$ be an even integer, $k>1, 0\le \ell<\frac12, k+\ell>\frac32$,
assume that $\tilde{u}_0$ satisfies the compatibility
conditions of order $m+2$ for the system \eqref{non-shear-prandtl}.  Suppose that the shear flow satisfies
$$
|\partial^{p+1}_y u^s(t, y)|\le C\langle y \rangle^{-k-p}, \quad (t, y)\in [0, T_1]\times \mathbb{R}_+,\,\, 0\le p\le m+2.
$$
Then, for any $0<\epsilon\le \epsilon_0$ and $0<\bar\zeta$, there exits $T_\epsilon>0$ which   depends on
$\epsilon$ and $\bar\zeta$,  such that if
$$
\|\tilde{w}_{0}\|_{H^{m+2}_{k+\ell}(\mathbb{R}^2_+)}\le \bar\zeta,
$$
then the system \eqref{shear-prandtl-approxiamte-vorticity}-\eqref{u-v-w} admits a unique solution
$$
\tilde{w}_\epsilon\in L^\infty([0, T_\epsilon]; H^{m+2}_{k+\ell}(\mathbb{R}^2_+)),
$$
which satisfies
\begin{equation}\label{2-estimate}
\|\tilde{w}_\epsilon\|_{L^\infty([0, T_\epsilon];H^m_{k+\ell}(\mathbb{R}^2_+))}\le \frac 43
\|\tilde{w}_{0, \epsilon}\|_{H^m_{k+\ell}(\mathbb{R}^2_+)}\le 2
\|\tilde{w}_0\|_{H^m_{k+\ell}(\mathbb{R}^2_+)}.
\end{equation}
\end{theorem}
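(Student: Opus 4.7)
The plan is to construct $\tilde w_\epsilon$ by a Picard iteration on the linearised version of \eqref{shear-prandtl-approxiamte-vorticity}, taking advantage of the fact that, for each fixed $\epsilon>0$, the operator $\partial_t-\partial_y^2-\epsilon\partial_x^2$ is uniformly parabolic on $\mathbb{R}^2_+$ with Neumann boundary condition $\partial_y\tilde w|_{y=0}=0$. I would set $\tilde w^0_\epsilon(t,x,y)=\tilde w_{0,\epsilon}(x,y)$ and, given $\tilde w^n_\epsilon$, define $\tilde w^{n+1}_\epsilon$ as the solution of the linear parabolic problem
\begin{equation*}
\partial_t\tilde w^{n+1}_\epsilon+(u^s+\tilde u^n_\epsilon)\partial_x\tilde w^{n+1}_\epsilon+v^n_\epsilon(u^s_{yy}+\partial_y\tilde w^{n+1}_\epsilon)=\partial_y^2\tilde w^{n+1}_\epsilon+\epsilon\partial_x^2\tilde w^{n+1}_\epsilon,
\end{equation*}
with Neumann condition at $y=0$ and initial data $\tilde w_{0,\epsilon}$, where $\tilde u^n_\epsilon$ and $v^n_\epsilon$ are reconstructed from $\tilde w^n_\epsilon$ via \eqref{u-v-w}. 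For each fixed $n$ the existence of $\tilde w^{n+1}_\epsilon$ in $L^\infty([0,T];H^{m+2}_{k+\ell})$ follows from classical linear parabolic theory, e.g.\ a Galerkin scheme in the $x$-variable or an integral representation using the Neumann heat kernel on the half-plane.

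Next I would derive uniform-in-$n$ weighted energy estimates on a common interval $[0,T_\epsilon]$. Applying $\langle y\rangle^{k+\ell+\beta}\partial_x^{\alpha}\partial_y^{\beta}$ to the equation for all $\alpha+\beta\le m+2$, testing against the same weighted derivative of $\tilde w^{n+1}_\epsilon$ and summing, the diffusion yields the coercive contribution controlling $\|\partial_y\tilde w^{n+1}_\epsilon\|_{H^{m+2}_{k+\ell}}^2+\epsilon\|\partial_x\tilde w^{n+1}_\epsilon\|_{H^{m+2}_{k+\ell}}^2$ up to commutators with the weight, which are of lower weighted order. The transport term $(u^s+\tilde u^n_\epsilon)\partial_x\tilde w^{n+1}_\epsilon$ is handled via integration by parts in $x$ together with Lemma \ref{shear-profile}. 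The genuinely dangerous term is $v^n_\epsilon\,\partial_y\tilde w^{n+1}_\epsilon$, because $v^n_\epsilon(t,x,y)=-\int_0^y\partial_x\tilde u^n_\epsilon(t,x,\tilde y)\,d\tilde y$ grows linearly in $y$; this growth is absorbed by the extra factor $\langle y\rangle^{k+\ell+\beta}$ thanks to the assumption $k+\ell>3/2$ combined with a weighted Sobolev embedding $H^{m+2}_{k+\ell}\hookrightarrow L^\infty$. Controlling these weighted commutators and the $y$-integral defining $v^n_\epsilon$ is the main technical obstacle; the $\epsilon^{-1}$-type factors arising when bounding the highest $\partial_x$ derivatives are absorbed into the length of $T_\epsilon$, which is why $T_\epsilon$ must depend on $\epsilon$.

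Closing with Gronwall yields, for $T_\epsilon=T_\epsilon(\epsilon,\bar\zeta,T_1)$ small enough, a uniform bound $\|\tilde w^{n+1}_\epsilon\|_{L^\infty([0,T_\epsilon];H^{m+2}_{k+\ell})}\le C(\epsilon)$, and, by running the same computation only at order $m$ with higher derivatives treated in $L^\infty$ by tame estimates, the sharper bound $\|\tilde w^{n+1}_\epsilon\|_{L^\infty([0,T_\epsilon];H^{m}_{k+\ell})}\le\tfrac{4}{3}\|\tilde w_{0,\epsilon}\|_{H^{m}_{k+\ell}}$. An analogous estimate applied to the difference $\tilde w^{n+1}_\epsilon-\tilde w^n_\epsilon$ in a lower norm gives contraction of the scheme, so the iteration converges to a limit in $L^\infty([0,T_\epsilon];H^{m+2}_{k+\ell})$, and the same difference estimate between two hypothetical solutions yields uniqueness. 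The second inequality in \eqref{2-estimate} then follows from Corollary \ref{coro-boundary}, which provides $\|\tilde w_{0,\epsilon}\|_{H^{m}_{k+\ell}}\le\tfrac{3}{2}\|\tilde w_0\|_{H^{m}_{k+\ell}}$. Throughout the iteration, the compatibility conditions of Proposition \ref{prop-comp-b} ensure that all the even-order normal traces of $\tilde w^{n+1}_\epsilon$ at $y=0$ are inherited at every step, which is what permits the energy estimate to close at order $m+2$ without generating spurious boundary terms.
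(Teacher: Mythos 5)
Your overall strategy --- a linear iteration for the regularized vorticity equation, weighted energy estimates at order $m+2$ on an $\epsilon$-dependent time interval, a separate order-$m$ estimate for the $\tfrac43$ bound, and Corollary \ref{coro-boundary} for the factor $\tfrac32$ --- is the same as the paper's (Proposition \ref{prop3.0} with Appendix \ref{section-a3}, then Propositions \ref{prop3.1} and \ref{prop3.2}). There is, however, a concrete gap at the step where you claim the uniform-in-$n$ bound closes. In your linearization you take $v^n_\epsilon$ from the \emph{previous} iterate and $\partial_y\tilde w^{n+1}_\epsilon$ from the new one. When you apply $\partial_x^{m+2}$ to $v^n_\epsilon\,(u^s_{yy}+\partial_y\tilde w^{n+1}_\epsilon)$, the top term contains $\partial_x^{m+2}v^n_\epsilon=-\int_0^y\partial_x^{m+3}\tilde u^n_\epsilon\,d\tilde y$, i.e.\ $m+3$ tangential derivatives of the \emph{old} iterate. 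This is not controlled by $\|\tilde w^n_\epsilon\|_{H^{m+2}_{k+\ell}}$, and the dissipation $\epsilon\|\partial_x\tilde w^{n+1}_\epsilon\|^2_{H^{m+2}_{k+\ell}}$ acts on the new iterate, so it cannot absorb it; as written your energy inequality bounds $\|\tilde w^{n+1}_\epsilon\|_{H^{m+2}}$ in terms of $\|\tilde w^{n}_\epsilon\|_{H^{m+3}}$ and the induction does not close. (It is repairable by an extra integration by parts in $x$, shifting one derivative onto $\partial_x^{m+2}\tilde w^{n+1}_\epsilon$ and only then invoking the $\epsilon$-smoothing of the new iterate, but you never say this.) The paper sidesteps the issue by writing the iteration as $(u^s_{yy}+\partial_y w^{n-1})v^{n}$ with the \emph{new} $v^{n}$ in \eqref{apendix-vorticity-iteration}: the lost derivative then lands on $\partial_x^{m+3}\partial_y^{-1}w^{n}$ of the new iterate and is absorbed directly by $\tfrac{\epsilon}{2}\|\partial_x w^{n}\|^2_{H^{m+2}_{k+\ell}}+\tfrac{C}{\epsilon}(\cdots)$, which is exactly where the $\epsilon$-dependence of $T_\epsilon$ originates. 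Relatedly, you identify the ``main technical obstacle'' as the linear growth of $v^n_\epsilon$ in $y$; in fact $v^n_\epsilon$ is bounded, being controlled by $\|\partial_x\tilde u^n_\epsilon\|_{L^\infty(\mathbb{R}_x;L^2_{1/2+\delta}(\mathbb{R}_{y,+}))}$, and the genuine obstruction is the derivative loss just described.

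Two smaller points. The compatibility conditions do not make the boundary terms vanish: after integrating $\partial_y^2$ by parts one must estimate $\int_{\mathbb{R}}(\partial^\alpha\partial_y\tilde w\,\partial^\alpha\tilde w)|_{y=0}\,dx$ for $\alpha_2$ even, and these are nonzero; the paper controls them with the explicit trace formulas of Proposition \ref{prop-comp-b} and Lemma \ref{lemma-trace}, which is why the energy inequality \eqref{appendix-ck-a} carries the power $\|w^n\|^{m+2}$ and the Gronwall step is nonlinear, leading to the implicit definition of $T_\epsilon$ in \eqref{time-1}. Finally, the paper passes to the limit by weak compactness rather than by contraction; your contraction argument in a lower norm is a legitimate alternative and also yields uniqueness, but the difference equation suffers from the same derivative loss and needs the same fix.
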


\begin{remark}.

\begin{itemize}
\item[(1)] Remark that $T_\epsilon$ depends   on $\epsilon$ and $\bar\zeta$, and
$T_\epsilon\to 0$ as $\epsilon \to 0$. So this is not a bounded estimate
for the approximate solution  sequences $\{u^s+\tilde{u}_\epsilon; 0<\epsilon\le \epsilon_0\}$ where $\epsilon_0>0$ is given in Corollary \ref{coro-boundary}.
When the initial data $\tilde u_{0}$ is small enough, we observe  that $u^s+\tilde{u}_\epsilon$ preserves  the monotonicity and convexity of the shear flow on $[0, T_\epsilon]$.

\item [(2)] In this theorem, for the regularized Prandtl equation, there are not constrain conditions
on the initial date, meaning that we don't need the monotonicity or convexcity of
shear flow $u^s$, and $\bar\zeta$ is also arbitrary.
\end{itemize}
\end{remark}

If $ \tilde{w}_\epsilon$ is a  solution of the system 
\eqref{shear-prandtl-approxiamte-vorticity}-\eqref{u-v-w}, then \eqref{Hardy1} with $\lim_{y\to +\infty} \tilde u_\epsilon=0$ imply
$$
\tilde{u}_\epsilon\in L^\infty([0, T_\epsilon]; H^{m+2}_{k+\ell-1}(\mathbb{R}^2_+)),
$$
and
$$
\tilde v_\epsilon\in L^\infty([0, T_\epsilon]; L^\infty(\mathbb{R}_{y, +}; H^{m+1}(\mathbb{R}_x)).
$$
Integrating the equation 
of \eqref{shear-prandtl-approxiamte-vorticity} over 
$[y, +\infty[$ imply that   $(\tilde u_\epsilon, \tilde v_\epsilon)$ is a solution of the system 
\eqref{shear-prandtl-approxiamte}, except the boundary condition to check: 
\begin{equation}\label{u-w-0}
\tilde{u}_\epsilon(t, x, 0)=-\int^{+\infty}_0 \tilde{w}_\epsilon(t, x, \tilde y) d\tilde y=0,\quad (t, x)\in [0, T_\epsilon]\times \mathbb{R}.
\end{equation}
In fact,  noting $f(t, x)=-\int^{+\infty}_0 \tilde{w}_\epsilon(t, x, \tilde y) d\tilde y
=\tilde{u}_\epsilon(t, x, 0)$, 
a direct calculate give 
\begin{equation} \label{3.00}
\begin{cases}
& \partial_t f+f \partial_x f =  \epsilon \partial^2_{x}f, \quad (t, x)\in ]0, T_\epsilon]\times \mathbb{R};\\
& f|_{t=0}=0,
\end{cases}
\end{equation}
here we use
\begin{align*}
 \int^{+\infty}_0 {v}_\epsilon (u^s_{yy} + \partial_y\tilde{w}_{\epsilon}) dy&=
 \big[{v}_\epsilon (u^s_{y} + \tilde{w}_{\epsilon})\big]^{+\infty}_0
-\int^\infty_0 (\partial_y{v}_\epsilon) (u^s_{y} + \tilde{w}_{\epsilon}) dy\\
&=\int^\infty_0 (\partial_x{u}_\epsilon) \partial_y(u^s + \tilde{u}_{\epsilon}) dy\\
&=
 \big[(\partial_x{u}_\epsilon) (u^s + \tilde{u}_{\epsilon})\big]\big|^{+\infty}_0
-\int^\infty_0 (\partial_x{w}_\epsilon) (u^s+ \tilde{u}_{\epsilon}) dy\\
&=- f \partial_x f
-\int^\infty_0 (\partial_x{w}_\epsilon) (u^s+ \tilde{u}_{\epsilon}) dy.
\end{align*}
Since $f\in L^\infty([0, T_\epsilon], H^{m+2}(\mathbb{R}))$, the uniqueness of solution for equation 
\eqref{3.00} imply that $f=0$ on $[0, T_\epsilon]\times \mathbb{R}$. \eqref{u-w-0} imply also
\begin{equation*}
\tilde{u}_\epsilon(t, x, y)=-\int^{+\infty}_y \tilde{w}_\epsilon(t, x, \tilde y) d\tilde y=
\int^{y}_0 \tilde{w}_\epsilon(t, x, \tilde y) d\tilde y,\quad (t, x, y)\in  [0, T_\epsilon]\times \mathbb{R}^2_+.
\end{equation*}

We will prove Theorem \ref{theorem3.1} by the following three Propositions, where the first one is devoted to the local existence of approximate solution $\tilde{w}_\epsilon$ of \eqref{shear-prandtl-approxiamte-vorticity}.

\begin{proposition}\label{prop3.0}
Let $\tilde{w}_{0, \epsilon}\in H^{m+2}_{k+\ell}(\mathbb{R}^2_+)$, $m\ge 6$ be an even integer, $k>1, 0\le \ell<\frac12, k+\ell> \frac 32$, and satisfy the compatibility conditions up to order $m+2$ for \eqref{shear-prandtl-approxiamte-vorticity}.
Suppose that the shear flow satisfies
$$
|\partial^{p+1}_y u^s(t, y)|\le C\langle y \rangle^{-k-p}, \quad (t, y)\in [0, T_1]\times \mathbb{R}_+,\,\, 0\le p\le m+2.
$$
Then, for any $0<\epsilon\le 1$ and $\bar\zeta>0$, there exits $T_\epsilon>0$  such that if
$$
\|\tilde{w}_{0, \epsilon}\|_{H^{m+2}_{k+\ell}(\mathbb{R}^2_+)}\le \bar\zeta,
$$
then the system \eqref{shear-prandtl-approxiamte-vorticity} admits a unique solution
$$
\tilde{w}_\epsilon\in L^\infty([0, T_\epsilon]; H^{m+2}_{k+\ell}(\mathbb{R}^2_+))\, .
$$
\end{proposition}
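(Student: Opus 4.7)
\textbf{Proof proposal for Proposition \ref{prop3.0}.} The plan is to construct $\tilde w_\epsilon$ by linear iteration, exploiting the fact that, for fixed $\epsilon>0$, the equation \eqref{shear-prandtl-approxiamte-vorticity} is fully parabolic in $(x,y)$ with Neumann boundary condition on $y=0$. Set $\tilde w^0_\epsilon\equiv\tilde w_{0,\epsilon}$ and, given $\tilde w^n_\epsilon$, define $\tilde u^n_\epsilon,\, v^n_\epsilon$ from $\tilde w^n_\epsilon$ via \eqref{u-v-w} and let $\tilde w^{n+1}_\epsilon$ solve the linear parabolic problem
\begin{equation*}
\partial_t\tilde w^{n+1}_\epsilon+(u^s+\tilde u^n_\epsilon)\partial_x\tilde w^{n+1}_\epsilon+v^n_\epsilon\,\partial_y\tilde w^{n+1}_\epsilon-\partial_y^2\tilde w^{n+1}_\epsilon-\epsilon\partial_x^2\tilde w^{n+1}_\epsilon=-v^n_\epsilon u^s_{yy},
\end{equation*}
with $\partial_y\tilde w^{n+1}_\epsilon|_{y=0}=0$ and $\tilde w^{n+1}_\epsilon|_{t=0}=\tilde w_{0,\epsilon}$. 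For each $n$ this is a linear uniformly parabolic problem with smooth bounded coefficients on any bounded time interval (the coefficients depending on $\tilde w^n_\epsilon$ already enjoy the required regularity by induction), so classical parabolic theory in weighted Sobolev spaces yields a unique solution in $L^\infty([0,T];H^{m+2}_{k+\ell}(\mathbb{R}^2_+))$ for any $T>0$.

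Next, I would derive a uniform-in-$n$ energy estimate in $H^{m+2}_{k+\ell}$. For $|\alpha|=\alpha_1+\alpha_2\le m+2$, apply $\partial_x^{\alpha_1}\partial_y^{\alpha_2}$ to the linearized equation, multiply by $\langle y\rangle^{2(k+\ell+\alpha_2)}\partial_x^{\alpha_1}\partial_y^{\alpha_2}\tilde w^{n+1}_\epsilon$, and integrate over $\mathbb{R}^2_+$. The diffusion terms produce the good dissipation $\|\partial_y\partial_x^{\alpha_1}\partial_y^{\alpha_2}\tilde w^{n+1}_\epsilon\|^2_{L^2_{k+\ell+\alpha_2}}+\epsilon\|\partial_x\partial_x^{\alpha_1}\partial_y^{\alpha_2}\tilde w^{n+1}_\epsilon\|^2_{L^2_{k+\ell+\alpha_2}}$; commutators with $u^s+\tilde u^n_\epsilon$ are controlled by Moser/Gagliardo--Nirenberg inequalities, while commutators with $v^n_\epsilon$ (which grows linearly in $y$) are absorbed by combining Hardy's inequality $\|v^n_\epsilon/\langle y\rangle\|\lesssim\|\partial_x\tilde u^n_\epsilon\|$ with the weighted structure and the $\epsilon\partial_x^2$ dissipation to pay the $x$-derivative loss. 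The boundary terms at $y=0$ arising from the Neumann condition and from repeated integration by parts in $y$ are closed using the compatibility conditions of order $m+2$ in the hypothesis (see Proposition \ref{prop-comp-b}), which express the traces $(\partial_y^{2p+2}\tilde w^{n+1}_\epsilon)|_{y=0}$ as polynomials in lower-order traces. Schematically this yields
\begin{equation*}
\frac{d}{dt}\,E^{n+1}(t)\le C_\epsilon\bigl(1+E^n(t)^N\bigr)\bigl(1+E^{n+1}(t)\bigr),\qquad E^n(t)=\|\tilde w^n_\epsilon(t)\|^2_{H^{m+2}_{k+\ell}(\mathbb{R}^2_+)},
\end{equation*}
for some integer $N$. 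Choosing $T_\epsilon>0$ small enough (depending on $\epsilon$ and $\bar\zeta$), an induction closes the bound $E^n(t)\le M_\epsilon(\bar\zeta)$ on $[0,T_\epsilon]$ uniformly in $n$.

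Then I would pass to the limit. Writing the equation for the difference $\tilde w^{n+1}_\epsilon-\tilde w^n_\epsilon$ and running a similar energy estimate in the weaker space $L^2_{k+\ell}(\mathbb{R}^2_+)$ (where the coefficients are already controlled by the uniform bound just obtained), one shows $\{\tilde w^n_\epsilon\}$ is Cauchy in $L^\infty([0,T_\epsilon];L^2_{k+\ell}(\mathbb{R}^2_+))$. Interpolation with the uniform $H^{m+2}_{k+\ell}$ bound gives convergence in every intermediate norm, and weak-$\ast$ compactness yields a limit $\tilde w_\epsilon\in L^\infty([0,T_\epsilon];H^{m+2}_{k+\ell}(\mathbb{R}^2_+))$ solving \eqref{shear-prandtl-approxiamte-vorticity} with $\partial_y\tilde w_\epsilon|_{y=0}=0$. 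Uniqueness follows from the same difference estimate applied to two solutions.

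The main obstacle is the energy estimate of Step~2, specifically the weighted tangential--normal coupling. The term $v^n_\epsilon\partial_y\tilde w^{n+1}_\epsilon$, together with the source $v^n_\epsilon u^s_{yy}$, is delicate because $v^n_\epsilon$ loses one $x$-derivative and grows with $y$; it is only the regularizing dissipation $\epsilon\partial_x^2$ (and the decay of $u^s_{yy}$ from Lemma \ref{shear-profile}) that makes the estimate closable, which is precisely why $T_\epsilon\to 0$ as $\epsilon\to 0$. Equally delicate is the appearance of non-vanishing boundary traces of $\partial_y^{2p+2}\tilde w^{n+1}_\epsilon$ on $y=0$ when one integrates by parts in $y$ beyond order two; these can only be reduced to already-controlled quantities via the full chain of compatibility relations \eqref{boundary-data1b}, which is why the hypothesis requires compatibility up to order $m+2$.
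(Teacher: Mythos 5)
Your overall strategy (linear iteration, weighted $H^{m+2}_{k+\ell}$ energy estimates with the $\epsilon\partial_x^2$ dissipation paying for the $x$-derivative loss, boundary traces handled by the compatibility relations, then compactness) is the same as the paper's, but the specific linearization you chose breaks the key step. You freeze \emph{all} coefficients at level $n$, in particular $v^n_\epsilon=-\int_0^y\partial_x\tilde u^n_\epsilon\,d\tilde y$, and solve a genuinely linear equation for $\tilde w^{n+1}_\epsilon$. Then in the top-order energy estimate the Leibniz term $(\partial_x^{m+2}v^n_\epsilon)\,\partial_y\tilde w^{n+1}_\epsilon$ (and likewise $\partial_x^{m+2}(v^n_\epsilon u^s_{yy})$) requires control of $\partial_x^{m+3}\tilde u^n_\epsilon$, i.e.\ $m+3$ tangential derivatives of the \emph{previous} iterate. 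The uniform induction bound $E^n\le M_\epsilon(\bar\zeta)$ lives in $H^{m+2}_{k+\ell}$ and does not control this quantity, and the dissipation $\epsilon\|\partial_x\tilde w^{n+1}_\epsilon\|^2_{H^{m+2}_{k+\ell}}$ produced by your energy identity acts on the new iterate, not on $\tilde w^n_\epsilon$, so it cannot absorb the loss. Your closing remark that ``the regularizing dissipation $\epsilon\partial_x^2$ makes the estimate closable'' is therefore not justified for your scheme: the derivative loss and the dissipation sit on different functions.

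The paper's iteration \eqref{apendix-vorticity-iteration} is set up precisely to avoid this: only the transport coefficient $u^{n-1}$ and the factor $u^s_{yy}+\partial_y w^{n-1}$ are frozen at the old level, while $v^n$ is defined from the \emph{new} unknown $w^n$. The equation is then linear but nonlocal in $w^n$, and the worst term $\big(\partial^\alpha\partial_y^{-1}u^n_x\big)(u^s_{yy}+\partial_y w^{n-1})$ is bounded by $\|\partial_x w^n\|_{H^{m+2}_{k+\ell}}\|w^n\|_{H^{m+2}_{k+\ell}}$ and absorbed as $\tfrac{\epsilon}{2}\|\partial_x w^n\|^2_{H^{m+2}_{k+\ell}}$ into the left-hand side, at the price of a $C/\epsilon$ constant on the right --- which is exactly why $T_\epsilon\to0$ as $\epsilon\to0$. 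To repair your version you would either have to switch to this semi-implicit treatment of $v$, or carry the time-integrated dissipation $\epsilon\int_0^t\|\partial_x\tilde w^n_\epsilon\|^2_{H^{m+2}_{k+\ell}}\,ds$ of the previous iterate through the induction hypothesis; as written, the uniform-in-$n$ bound does not close. The rest of your outline (contraction of differences in the weaker $L^2_{k+\ell}$ topology, interpolation, uniqueness) is consistent with the paper once this is fixed.
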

\begin{remark}\label{remark3.5}
If $\tilde{w}_{0}\in H^{m+2}_{k+\ell}(\mathbb{R}^2_+)$ is the initial data in Theorem \ref{theorem3.1}, using Corollary \ref{coro-boundary}, there exists $\epsilon_0>0$, and for any $0<\epsilon\le \epsilon_0$, there exists $\mu_\epsilon \in H^{m+3}_{k+\ell}(\mathbb{R}^2_+)$ such that  $\tilde{w}_{0,\epsilon}= \tilde{w}_{0}+\epsilon \partial_y \mu_\epsilon $ satisfies the compatibility conditions up to order $m+2$ for the system \eqref{shear-prandtl-approxiamte-vorticity}, and
$$
\|\tilde{w}_{0, \epsilon}\|_{H^{m+2}_{k+\ell}(\mathbb{R}^2_+)}\le \frac 32 \|\tilde{w}_{0}\|_{H^{m+2}_{k+\ell}(\mathbb{R}^2_+)}.
$$
Then, using Proposition \ref{prop3.0}, we obtain also the existence of the approximate solution under the assumption of Theorem \ref{theorem3.1}.
\end{remark}

The proof of this Proposition is standard since the equation in \eqref{shear-prandtl-approxiamte-vorticity} is a parabolic type equation.
Firstly, we establish the {\it \`a priori} estimate and then prove the existence of solution by the standard iteration and weak convergence methods. Because we work in the weighted  Sobolev space and the computation is not so trivial, we give a detailed proof in the Appendix  \ref{section-a3}, to make the paper self-contained.
So the rest of this section is devoted to proving the  estimate \eqref{2-estimate}.

\noindent {\bf Uniform estimate with loss of $x$-derivative }
In the proof of the Proposition \ref{prop3.0} (see Lemma \ref{lemmab.2}), we already get the {\it \`a priori} estimate for $\tilde{w}_\epsilon$. Now we try to prove  the estimate \eqref{2-estimate} in a new way, and our object is to establish an uniform estimate with respect to $\epsilon>0$. We first treat the easy part in this subsection.

We define the non-isotropic Sobolev norm,
\begin{equation}\label{norm-1}
\|f\|^2_{H^{m, m-1}_{k+\ell}(\mathbb{R}^2_+)}=
\sum_{|\alpha_1+\alpha_2|\le m, \alpha_1\le m-1}\|\langle y\rangle^{k+\ell+\alpha_2}\,\partial^{\alpha_1}_x \partial^{\alpha_2}_y f\|_{L^{2}(\mathbb{R}^2_+)}^2,
\end{equation}
where we  don't have the $m$-order derivative with respect to $x$-variable.  Then
$$
\|f\|^2_{H^{m}_{k+\ell}(\mathbb{R}^2_+)}=
\|f\|^2_{H^{m,m-1}_{k+\ell}(\mathbb{R}^2_+)}+\|\partial^m_x
f\|^2_{L^{2}_{k+\ell}(\mathbb{R}^2_+)}.
$$

\begin{proposition}\label{prop3.1}
Let $m\ge 6$ be an even integer, $k>1, 0< \ell<\frac12, k+\ell> \frac 32$, and assume that $\tilde{w}_\epsilon\in L^\infty([0, T_\epsilon]; H^{m+2}_{k+\ell}(\mathbb{R}^2_+))$ is a solution to \eqref{shear-prandtl-approxiamte-vorticity}, then we have
	\begin{equation}
	\label{approx-less-k}
	\begin{split}
	&\frac{d}{dt}\|\tilde{w}_\epsilon\|^2_{H^{m, m-1}_{k+\ell}(\mathbb{R}^2_+)}+ \|\partial_y\tilde{w}_\epsilon\|^2_{H^{m, m-1}_{k+\ell}(\mathbb{R}^2_+)}\\
	&\qquad+ \epsilon\|\partial_x\tilde{w}_\epsilon\|^2_{H^{m, m-1}_{k+\ell}(\mathbb{R}^2_+)}
	\le C_1\bigg( \| \tilde{w}_\epsilon\|_{H^m_{k+\ell}(\mathbb{R}^2_+)}^2 + \| \tilde{w}_\epsilon\|_{H^m_{k+\ell}(\mathbb{R}^2_+)}^m \bigg),
	\end{split}
	\end{equation}
	where $C_1>0$ is independent of $\epsilon$.
\end{proposition}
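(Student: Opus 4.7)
The plan is to apply the differential operator $\partial_x^{\alpha_1}\partial_y^{\alpha_2}$ to the vorticity equation \eqref{shear-prandtl-approxiamte-vorticity} for every multi-index with $\alpha_1+\alpha_2\le m$ and $\alpha_1\le m-1$, pair the result in $L^2(\mathbb{R}^2_+)$ with $\langle y\rangle^{2(k+\ell+\alpha_2)}\partial_x^{\alpha_1}\partial_y^{\alpha_2}\tilde w_\epsilon$, and sum over the admissible multi-indices. The time derivative produces $\tfrac12\frac{d}{dt}$ of the corresponding weighted $L^2$-norm; integration by parts in $y$ against $\partial_y^2\tilde w_\epsilon$ yields the parabolic dissipation $\|\partial_y\partial_x^{\alpha_1}\partial_y^{\alpha_2}\tilde w_\epsilon\|_{L^2_{k+\ell+\alpha_2}}^2$, and analogously for $\epsilon\partial_x^2$, together with commutator corrections from the weight $\langle y\rangle^{k+\ell+\alpha_2}$ that are one order lower in $\partial_y$ and can be absorbed into half of the dissipation plus a multiple of $\|\tilde w_\epsilon\|_{H^m_{k+\ell}}^2$ through Young's inequality. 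The boundary contributions at $y=0$ generated by these integrations by parts either vanish thanks to $\partial_y\tilde w_\epsilon|_{y=0}=0$ and its inductive consequences, or are handled via the explicit trace identities provided by Proposition \ref{prop-comp-b}.

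For the horizontal transport $(u^s+\tilde u_\epsilon)\partial_x\tilde w_\epsilon$, the top-order contribution integrates by parts in $x$ to yield $-\tfrac12\int \partial_x\tilde u_\epsilon\,|\partial_x^{\alpha_1}\partial_y^{\alpha_2}\tilde w_\epsilon|^2\langle y\rangle^{2(k+\ell+\alpha_2)}\,dxdy$, which is bounded by $\|\partial_x\tilde u_\epsilon\|_{L^\infty}\,\|\tilde w_\epsilon\|_{H^m_{k+\ell}}^2$; the $L^\infty$-norm of $\partial_x\tilde u_\epsilon$ is in turn controlled by $\|\tilde w_\epsilon\|_{H^m}$ via Sobolev embedding (legal since $m\ge 6$) together with $\tilde u_\epsilon=-\int_y^\infty\tilde w_\epsilon\,d\tilde y$. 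The remaining Leibniz commutators of the transport, combined with the pointwise bounds on $\partial_y^p u^s$ supplied by Lemma \ref{shear-profile}, are handled by Moser-type weighted products: after redistributing derivatives by Gagliardo--Nirenberg, each such product is controlled by a polynomial in $\|\tilde w_\epsilon\|_{H^m_{k+\ell}}$ of degree at most $m$, which accounts for the $\|\tilde w_\epsilon\|_{H^m_{k+\ell}}^m$ term on the right-hand side of \eqref{approx-less-k}.

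The principal obstacle will be the vertical convection $v_\epsilon(u^s_{yy}+\partial_y\tilde w_\epsilon)$, since $v_\epsilon=-\int_0^y\partial_x\tilde u_\epsilon\,d\tilde y$ loses one $x$-derivative and grows linearly in $y$. The restriction $\alpha_1\le m-1$ built into the norm \eqref{norm-1} is precisely what makes the estimate close: in the Leibniz expansion of $\partial_x^{\alpha_1}\partial_y^{\alpha_2}(v_\epsilon\,\partial_y\tilde w_\epsilon)$, the worst branch places every derivative on $v_\epsilon$ and, using $\partial_y v_\epsilon=-\partial_x\tilde u_\epsilon$, reduces to $\partial_x^{\alpha_1+1}\partial_y^{\alpha_2-1}\tilde u_\epsilon$ (or $-\int_0^y\partial_x^{\alpha_1+1}\tilde u_\epsilon\,d\tilde y$ when $\alpha_2=0$), carrying a total of at most $m$ derivatives and therefore controllable by $\|\tilde w_\epsilon\|_{H^m_{k+\ell}}$ through the weighted Hardy inequality. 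If $\alpha_1=m$ were allowed the count would jump to $m+1$ and the estimate would fail to close; handling that remaining $\partial_x^m$-contribution is precisely what motivates the introduction of the auxiliary unknown $g_m$ in the subsequent sections. The cross term $v_\epsilon u^s_{yy}$ is harmless because $|u^s_{yy}|\lesssim \langle y\rangle^{-k-1}$ by Lemma \ref{shear-profile}, which absorbs the $y$-growth of $v_\epsilon$. Collecting all contributions and summing over admissible $(\alpha_1,\alpha_2)$ yields \eqref{approx-less-k} with a constant $C_1$ independent of $\epsilon$.
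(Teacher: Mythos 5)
Your proposal follows essentially the same route as the paper: the same weighted energy pairing over multi-indices with $\alpha_1\le m-1$, the same splitting of the transport term into a top-order part (integrated by parts in $x$) plus commutators, the same treatment of the vertical convection exploiting $\partial_y v_\epsilon=-\partial_x\tilde u_\epsilon$ and the restriction $\alpha_1\le m-1$, and the same use of the trace lemma together with Proposition \ref{prop-comp-b} for the boundary terms. The only small discrepancy is bookkeeping: in the paper the $\|\tilde w_\epsilon\|_{H^m_{k+\ell}}^m$ term on the right of \eqref{approx-less-k} comes from the boundary traces $(\partial_y^{m+2}\tilde u_\epsilon)|_{y=0}$, which Proposition \ref{prop-comp-b} expresses as products of up to $m/2$ factors, whereas the transport commutators contribute only quadratic and cubic powers.
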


\noindent
{\bf Remark.} The above estimate is uniform with respect to $\epsilon>0$, but on the left hand of \eqref{approx-less-k}, we missing the terms $\|\partial^{m}_x\tilde{w}_\epsilon\|_{L^{2}_{k+\ell}}^2$. This is because that we can't control the term
$$
\partial^{m}_x\tilde{v}_\epsilon(t, x, y)=- \int^y_0\partial^{m+1}_x\tilde{u}_\epsilon(t, x, \tilde{y}) d\tilde{y},
$$
which is the major difficulty in the study of the Prandtl equation. We will study this term in the next Proposition with
a non-uniform estimate firstly, and then focus on proving the uniform estimate in  the rest part of this paper.

\begin{proof}
	For $|\alpha|=\alpha_1+\alpha_2\le m, \alpha_1\le m-1$, we have
\begin{equation}\label{non-approx-est-less-s}
\begin{split}
	&\partial_t \partial^{\alpha} \tilde{w}_\epsilon - \epsilon \partial^2_x\partial^{\alpha} \tilde{w}_\epsilon - \partial_y^2 \partial^{\alpha}\partial\tilde{w}_\epsilon \\
	&= - \partial^{\alpha} \big((u^s + \tilde{u}_\epsilon)\partial_x  \tilde{w}_\epsilon \big) - \partial^{\alpha} \big( \tilde{v}_\epsilon ( u^s_{yy}+ \partial_y\tilde{w}_{\epsilon} ) \big).
\end{split}
\end{equation}	
Multiplying the \eqref{non-approx-est-less-s} with $ \langle y \rangle^{2(k+\ell+{\alpha_2})} \partial^{\alpha} \tilde{w}_\epsilon $, and integrating over $\mathbb{R}^2_+$,
\begin{equation*}
\begin{split}
	&\int_{\mathbb{R}^2_+} (\partial_t  \partial^{\alpha} \tilde{w}_\epsilon) \langle y \rangle^{2(k+\ell)+2{\alpha_2}} \partial^{\alpha} \tilde{w}_\epsilon dx dy - \epsilon
\int_{\mathbb{R}^2_+} (\partial^2_x\partial^{\alpha} \tilde{w}_\epsilon) \langle y \rangle^{2(k+\ell)+2{\alpha_2}} \partial^{\alpha} \tilde{w}_\epsilon dx dy \\
&\qquad\qquad- \int_{\mathbb{R}^2_+} (\partial^2_y\partial^{\alpha} \tilde{w}_\epsilon) \langle y \rangle^{2(k+\ell)+2{\alpha_2}} \partial^{\alpha} \tilde{w}_\epsilon dx dy  \\
	&= - \int_{\mathbb{R}^2_+} \partial^{\alpha} \big((u^s + \tilde{u}_\epsilon)\partial_x  \tilde{w}_\epsilon -  \tilde{v}_\epsilon ( u^s_{yy}+ \partial_y\tilde{w}_{\epsilon} )\big) \langle y \rangle^{2(k+\ell)+2{\alpha_2}} \partial^{\alpha}
\tilde{w}_\epsilon dx dy .
\end{split}
\end{equation*}
Remark that for $\tilde{w}_\epsilon\in L^\infty([0, T_\epsilon]; H^{m+2}_{k+\ell}(\mathbb{R}^2_+))$, all above integrations are in the classical sense. We deal with each term on the left hand respectively.  After integration by part, we have
\begin{align*}
	& \int_{\mathbb{R}^2_+} (\partial_t  \partial^{\alpha} \tilde{w}_\epsilon) \langle y \rangle^{2(k+\ell)+2{\alpha_2}(\mathbb{R}^2_+)} \partial^{\alpha} \tilde{w}_\epsilon dx dy =\frac 12 \frac{d}{ dt}\| \partial^{\alpha} \tilde{w}_\epsilon\|_{L^2_{k+\ell+\alpha_2}(\mathbb{R}^2_+)}^2,\\
	& -\epsilon\int_{\mathbb{R}^2_+}  ( \partial_x^{2}  \partial^{\alpha} \tilde{w}_\epsilon )\langle y \rangle^{2(k+\ell)+2{\alpha_2}(\mathbb{R}^2_+)} \partial^{\alpha} \tilde{w}_\epsilon dx dy =\epsilon\|\partial_x  \partial^{\alpha} \tilde{w}_\epsilon\|_{L^2_{k+\ell+\alpha_2}(\mathbb{R}^2_+)}^2,
	\end{align*}
and
\begin{align*}
&\quad  -\int_{\mathbb{R}^2_+} \partial_y^{2} \partial^{\alpha} \tilde{w}_\epsilon \langle y \rangle^{2(k+\ell)+2{\alpha_2}} \partial^{\alpha} \tilde{w}_\epsilon dx dy \\
&=\|\partial_y  \partial^{\alpha} \tilde{w}_\epsilon\|_{L^2_{k+\ell+\alpha_2}(\mathbb{R}^2_+)}^2+\int_{\mathbb{R}^2_+}  \partial^{\alpha}\partial_y\tilde{w}_\epsilon (\langle y \rangle^{2(k+\ell)+2{\alpha_2}} )'\partial^{\alpha} \tilde{w}_\epsilon dx dy\\
&\qquad\qquad+\int_{\mathbb{R}}  \left(\partial^{\alpha}\partial_y\tilde{w}_\epsilon \partial^{\alpha} \tilde{w}_\epsilon\right)\big|_{y=0} dx .
\end{align*}
Cauchy-Schwarz inequality implies
\begin{align*}
&\left|\int_{\mathbb{R}^2_+}  \partial^{\alpha}\partial_y\tilde{w}_\epsilon (\langle y \rangle^{2(k+\ell)+2{\alpha_2}} )'\partial^{\alpha} \tilde{w}_\epsilon dx dy\right|\\
&\qquad\le\frac{1}{16} \|\partial_y \partial^{\alpha} \tilde{w}_\epsilon\|_{L^2_{k+\ell+\alpha_2}(\mathbb{R}^2_+)}^2 +C\| \partial^{\alpha} \tilde{w}_\epsilon\|_{L^2_{k+\ell+\alpha_2-1}(\mathbb{R}^2_+)}^2.
\end{align*}
We study now the term
$$
\int_{\mathbb{R}}  \left(\partial^{\alpha}\partial_y\tilde{w}_\epsilon \partial^{\alpha} \tilde{w}_\epsilon\right)\big|_{y=0} dx.
$$
{\bf Case : $|\alpha|\le m-1$}, using the trace Lemma \ref{lemma-trace}, we have
\begin{align*}
\left|\int_{\mathbb{R}}  \left(\partial^{\alpha}\partial_y\tilde{w}_\epsilon \partial^{\alpha} \tilde{w}_\epsilon\right)\big|_{y=0} dx\right|
&\le\|(\partial^{\alpha}\partial_y\tilde{w}_\epsilon)|_{y=0}\|_{L^2(\mathbb{R})}
\|(\partial^{\alpha}\tilde{w}_\epsilon)|_{y=0}\|_{L^2(\mathbb{R})}\\
&\le C\|\partial^{\alpha}\partial^2_y\tilde{w}_\epsilon\|_{L^2_{k+\ell}(\mathbb{R}^2_+)}
\|\partial^{\alpha}\partial_y\tilde{w}_\epsilon\|_{L^2_{k+\ell}(\mathbb{R}^2_+)}\\
&\le C\|\partial_y\tilde{w}_\epsilon\|_{H^{m, m-1}_{k+\ell}(\mathbb{R}^2_+)}\|\tilde{w}_\epsilon\|_{H^{m}_{k+\ell}
(\mathbb{R}^2_+)}\\
&\le \frac 1{16}\|\partial_y\tilde{w}_\epsilon\|^2_{H^{m, m-1}_{k+\ell}(\mathbb{R}^2_+)}+C\|\tilde{w}_\epsilon\|^2_{H^{m}_{k+\ell}
(\mathbb{R}^2_+)}.
\end{align*}
{\bf Case : $\alpha_1=m-1, \alpha_2=1$}, using \eqref{boundary-12b}, we have
$$
(\partial^{\alpha}\tilde{w}_\epsilon)|_{y=0}=
(\partial_x^{\alpha_1}\partial_y^{2} \tilde{u}_\epsilon)|_{y=0} = 0,
$$
thus
$$
\int_{\mathbb{R}}  \left(\partial^{\alpha}\partial_y\tilde{w}_\epsilon \partial^{\alpha} \tilde{w}_\epsilon\right)|_{y=0} dx=0.
$$
{\bf Case : $\alpha_1=0, \alpha_2=m$}. Only in this case, we need to suppose that $m$ is even. Using again the trace Lemma \ref{lemma-trace}, we have
\begin{align*}
\left|\int_{\mathbb{R}}  \left(\partial^{m+1}_y\tilde{w}_\epsilon \partial^{m}_y \tilde{w}_\epsilon\right)|_{y=0} dx\right|
&\le\|(\partial^{m+2}_y\tilde{u}_\epsilon)|_{y=0}\|_{L^2(\mathbb{R})}
\|(\partial^{m}_y\tilde{w}_\epsilon)|_{y=0}\|_{L^2(\mathbb{R})}\\
&\le C\|(\partial^{m+2}_y\tilde{u}_\epsilon)|_{y=0}\|_{L^2(\mathbb{R})}
\|\partial^{m+1}_y\tilde{w}_\epsilon\|_{L^2_{k+\ell}(\mathbb{R}^2_+)}\\
&\le \frac 1{16}\|\partial_y\tilde{w}_\epsilon\|^2_{H^{m, m-1}_{k+\ell}(\mathbb{R}^2_+)}+C\|(\partial^{m+2}_y\tilde{u}_\epsilon)
|_{y=0}\|^2_{L^2(\mathbb{R})}.
\end{align*}
Using Proposition \ref{prop-comp-b} and the trace Lemma \ref{lemma-trace}, we can estimate the above last term $\|(\partial^{m+2}_y\tilde{u}_\epsilon)
|_{y=0}\|^2_{L^2(\mathbb{R})}$ by a finite summation of the following forms
 $$
  \|\prod^{p}_{j=1} (\partial^{\alpha_j}_x\partial^{\beta_j+1}_y( u^s + \tilde u_\epsilon))|_{y=0}\|^2_{L^2(\mathbb{R})}\le C\|\partial_y\prod^{p}_{j=1} (\partial^{\alpha_j}_x\partial^{\beta_j+1}_y( u^s + \tilde u_\epsilon)) \|^2_{L^2_{\frac 12+\delta}(\mathbb{R}^2_+)}
$$
with $2\le p\le \frac m2$, $\alpha_j + \beta_j \le m -1$ and $\{j; \alpha_j>0\}\not=\emptyset$.
Then using Sobolev inequality and $m\ge 6$, we get
$$
\|(\partial^{m+2}_y\tilde{u}_\epsilon)
|_{y=0}\|_{L^2(\mathbb{R})}\le C \|\tilde{w}_\epsilon\|^{m/2}_{H^{m}_{k+\ell}
(\mathbb{R}^2_+)}.
$$
{\bf Case : $1\le \alpha_1\le m-2, \alpha_1+\alpha_2=m, \alpha_2$ even}, using the same argument  to the precedent case, we have
\begin{align*}
&\left|\int_{\mathbb{R}}(\partial^{\alpha}\partial_y\tilde{w}_\epsilon \partial^{\alpha} \tilde{w}_\epsilon)|_{y=0}dx \right|=\left|\int_{\mathbb{R}}
(\partial^{\alpha_1}_x\partial^{\alpha_2+1}_y\tilde{w}_\epsilon \partial^{\alpha_1}_x\partial^{\alpha_2}_y \tilde{w}_\epsilon)|_{y=0}dx \right|\\
&\qquad\le \|(\partial^{\alpha_1}_x\partial^{\alpha_2+1}_y\tilde{w}_\epsilon)|_{y=0}\|_{L^2(\mathbb{R})} \|(\partial^{\alpha_1}_x\partial^{\alpha_2}_y \tilde{w}_\epsilon)|_{y=0}\|_{L^2(\mathbb{R})}\\
&\qquad\le \frac 1{16}\|\partial_y\tilde{w}_\epsilon\|^2_{H^{m, m-1}_{k+\ell}(\mathbb{R}^2_+)}+C\|(\partial^{\alpha_1}_x
\partial^{\alpha_2+2}_y\tilde{u}_\epsilon)|_{y=0}\|^2_{L^2(\mathbb{R})}\\
&\qquad\le \frac 1{16}\|\partial_y\tilde{w}_\epsilon\|^2_{H^{m, m-1}_{k+\ell}(\mathbb{R}^2_+)}+C\|\tilde{w}_\epsilon\|^{\alpha_2}_{H^{m}_{k+\ell}
(\mathbb{R}^2_+)}.
\end{align*}

{\bf Case : $1\le \alpha_1\le m-2, \alpha_1+\alpha_2=m, \alpha_2$ odd}, integration by part with respect to $x$ variable implies
\begin{align*}
&\left|\int_{\mathbb{R}}
(\partial^{\alpha_1}_x\partial^{\alpha_2+1}_y\tilde{w}_\epsilon \partial^{\alpha_1}_x\partial^{\alpha_2}_y \tilde{w}_\epsilon)|_{y=0}dx \right|=\left|\int_{\mathbb{R}}
(\partial^{\alpha_1-1}_x\partial^{\alpha_2+1}_y\tilde{w}_\epsilon \partial^{\alpha_1+1}_x\partial^{\alpha_2}_y \tilde{w}_\epsilon)|_{y=0}dx \right|\\
&\qquad\le \|(\partial^{\alpha_1-1}_x\partial^{\alpha_2+1}_y\tilde{w}_\epsilon)|_{y=0}\|_{L^2(\mathbb{R})} \|(\partial^{\alpha_1+1}_x\partial^{\alpha_2}_y \tilde{w}_\epsilon)|_{y=0}\|_{L^2(\mathbb{R})}\\
&\qquad\le \frac 1{16}\|\partial_y\tilde{w}_\epsilon\|^2_{H^{m, m-1}_{k+\ell}(\mathbb{R}^2_+)}+C\|(\partial^{\alpha_1+1}_x
\partial^{\alpha_2+1}_y\tilde{u}_\epsilon)|_{y=0}\|^2_{L^2(\mathbb{R})}\\
&\qquad\le \frac 1{16}\|\partial_y\tilde{w}_\epsilon\|^2_{H^{m, m-1}_{k+\ell}(\mathbb{R}^2_+)}+C\|\tilde{w}_\epsilon\|^{\alpha_2-1}_{H^{m}_{k+\ell}
(\mathbb{R}^2_+)}.
\end{align*}

Finally, we have proven
\begin{align*}
\begin{split}
	& \int_{\mathbb{R}^2_+} \big(\partial_t \partial^{\alpha} \tilde{w}_\epsilon - \partial_y^2\partial^{\alpha} \tilde{w}_\epsilon - \epsilon \partial^2_x\partial^{\alpha} \tilde{w}_\epsilon\big) \langle y \rangle^{2 (k+\ell+\alpha_2)} \partial^{\alpha} \tilde{w}_\epsilon dx dy\\
	& \ge\frac12 \frac{d}{ dt}\|\partial^{\alpha} \tilde{w}_\epsilon\|_{L^2_{k+\ell+\alpha_2}}^2+ \epsilon\|\partial_x\partial^{\alpha} \tilde{w}_\epsilon\|_{L^2_{k+\ell+\alpha_2}}^2 +\|\partial_y \partial^{\alpha} \tilde{w}_\epsilon\|_{L^2_{k+\ell+\alpha_2}}^2\\
	&\qquad-\frac{1}{4} \|\partial_y\tilde{w}_\epsilon\|^2_{H^{m, m-1}_{k+\ell}(\mathbb{R}^2_+)}-C\|\tilde{w}_\epsilon\|^{m}_{H^{m}_{k+\ell}
(\mathbb{R}^2_+)}.
\end{split}
\end{align*}
	
We estimate now the right hand of \eqref{non-approx-est-less-s}. For the first item, we need to split it into two parts
	\begin{align*}
	- \partial^{\alpha} \big((u^s + \tilde{u}_\epsilon)\partial_x  \tilde{w}_\epsilon \big) =  -   (u^s + \tilde{u}_\epsilon)\partial_x \partial^{\alpha} \tilde{w}_\epsilon +  [ (u^s + \tilde{u}_\epsilon), \partial^{\alpha}]\partial_x \tilde{w}_\epsilon.
	\end{align*}
	Firstly, we have
	\begin{align*}
	\int_{\mathbb{R}^2_+} \big((u^s + \tilde{u}_\epsilon) \partial_x \partial^{\alpha} \tilde{w}_\epsilon\big) \langle y \rangle^{2(k+\ell+{\alpha_2})}\partial^{\alpha} \tilde{w}_\epsilon dx dy \le  \|\partial_x \tilde{u}_\epsilon\|_{L^\infty}\|\partial^{\alpha} \tilde{w}_\epsilon\|_{L^2_{k+\ell+{\alpha_2}}}^2,
	\end{align*}
	then using \eqref{sobolev-1}, we get
	\begin{align*}
	\left|\int_{\mathbb{R}^2_+} \big((u^s + \tilde{u}_\epsilon) \partial_x \partial^{\alpha} \tilde{w}_\epsilon\big)\langle y \rangle^{2(\ell+{\alpha_2})}\partial^{\alpha} \tilde{w}_\epsilon dx dy\right| \le  \| \tilde{w}_\epsilon\|_{H^3_1}\|\partial^{\alpha} \tilde{w}_\epsilon\|_{L^2_{k+\ell+{\alpha_2}}}^2.
	\end{align*}
	
	For the commutator operator, in fact, it can be written as
	\begin{align*}
	&  [ (u^s + \tilde{u}_\epsilon), \partial^{\alpha}]\partial_x \tilde{w}_\epsilon = \sum\limits_{\beta \le \alpha,\, 1\le|\beta|}C^\beta_\alpha \,\,\partial^{\beta}( u^s + \tilde{u}_\epsilon)  \partial^{\alpha - \beta}\partial_x \tilde{w}_\epsilon.
	\end{align*}
	Then for $|\alpha|\le m, m\ge 4$, using the Sobolev inequality again and Lemma \ref{inequality-hardy},
	$$
	\| [ (u^s + \tilde{u}), \partial^{\alpha}]\partial_x \tilde{w}_\epsilon\|_{L^2_{k+\ell+\alpha_2}}\le C( \|\tilde{w}_\epsilon\|_{H^m_{k+\ell}}
	+\|\tilde{w}_\epsilon\|^2_{H^m_{k+\ell}}).
	$$
	Thus
	\begin{align*}
	\left|\int_{\mathbb{R}^2_+} \langle y \rangle^{2(k+\ell+\alpha_2)}\big(  [ (u^s + \tilde{u}_\epsilon), \partial^{\alpha}]\partial_x \tilde{w}_\epsilon \big)\cdot \partial^{\alpha}  \tilde{w}_\epsilon dx dy \right|\le C \big(  \|\tilde{w}_\epsilon\|^2_{H^m_{k+\ell}}
	+\|\tilde{w}_\epsilon\|^3_{H^m_{k+\ell}} \big) ,
	\end{align*}
	and
	\begin{align*}
	\left|\int_{\mathbb{R}^2_+} \langle y \rangle^{2(k+\ell+\alpha_2)}\big(  \partial^{\alpha} \big( (u^s + \tilde{u}_\epsilon)\partial_x  \tilde{w}_\epsilon \big)\big)\partial^{\alpha}  \tilde{w}_\epsilon dx dy\right| \le C \big(  \|\tilde{w}_\epsilon\|^2_{H^m_{k+\ell}}
	+\|\tilde{w}_\epsilon\|^3_{H^m_{k+\ell}} \big),
	\end{align*}
	where $C$ is independent of $\epsilon$.
	
For the next one, similar to  the first term in \eqref{non-approx-est-less-s}, we have
	\begin{align*}
	\partial^{\alpha} \big( \tilde{v}_\epsilon ( u^s_{yy}+\partial_y\tilde{w}_\epsilon ) \big) & = \tilde{v}_\epsilon \partial_y \partial^{\alpha} \tilde{w}_\epsilon -  [\tilde{v}_\epsilon, \partial^{\alpha} ] \partial_y \tilde{w}_\epsilon+ \partial^{\alpha}
	(\tilde{v}_\epsilon u^s_{yy} ).
	\end{align*}
	Then
	\begin{align*}
	\left|\int_{\mathbb{R}^2_+} \tilde{v}_\epsilon \langle y \rangle^{ 2(k+\ell+\alpha_2)} (\partial_y\partial^{\alpha} \tilde{w}_\epsilon)\cdot \partial^{\alpha} \tilde{w}_\epsilon dx dy\right|
	&\le \|\tilde{v}_\epsilon\|_{L^\infty(\mathbb{R}^2_+)}
\|\partial_y \tilde{w}_\epsilon\|_{H^m_{k+\ell}}\| \tilde{w}_\epsilon\|_{H^m_{k+\ell}}\\
&\le \frac 1{4} \|\partial_y \tilde{w}_\epsilon\|^2_{H^m_{k+\ell}(\mathbb{R}^2_+)}+C\| \tilde{w}_\epsilon\|^4_{H^m_{k+\ell}(\mathbb{R}^2_+)}
	\end{align*}
	where we have used
	\begin{equation*}
	\begin{split}
	&\|\tilde{v}_{\epsilon}\|_{L^\infty(\mathbb{R}^2_+)}  \le C\|\partial_x\tilde{u}_{\epsilon}\|_{L^\infty(\mathbb{R}_x; L^2_{\frac 12 +\delta}(\mathbb{R}_{y, +}))}\\
	&\le C \int_{\mathbb{R}^2_+}\langle y\rangle ^{1+2\delta}(|\partial_x\tilde{u}_{\epsilon}|^2+|\partial^2_x\tilde{u}_{\epsilon}|^2)
	dx dy\\
	&\le C \int_{\mathbb{R}^2_+}\langle y\rangle ^{3+2\delta}(|\partial_x\tilde{w}_{\epsilon}|^2+|\partial^2_x\tilde{w}_{\epsilon}|^2)
	dx dy\le C\| \tilde{w}_\epsilon\|_{H^2_{\frac 32 +\delta}},
	\end{split}
	\end{equation*}
	where $\delta>0$ is small.
	
	Noticing that
	\begin{align*}
	[ \tilde{v}_\epsilon, \partial^{\alpha} ] \partial_y \tilde{w}_\epsilon & = \sum\limits_{ \beta \le \alpha, 1\le |\beta|} C^\beta_\alpha \,\,\, \partial^{\beta} \tilde{v}_\epsilon \,\,\partial^{\alpha - \beta}\partial_y \tilde{w}_\epsilon.
	\end{align*}
	Since $H^m_\ell$ is an algebra for $m\ge 6$, we only need to pay attention to the order of derivative in the above formula.	 Firstly for $|\beta|\ge 1$, we have for $|\alpha-\beta|+1\le m$,
	$$
	-\partial^{\beta} \tilde{v}_\epsilon=\partial^{\beta_1}_x \partial^{\beta_2}_y \int^y_0\tilde{u}_{\epsilon, x} d\tilde {y}
	=\left\{\begin{array}{ll}
	\partial^{\beta_1+1}_x \partial^{\beta_2-1}_y \tilde{u}_{\epsilon},&\quad \beta_2\ge 1,\\
	\int^y_0 \partial^{\beta_1+1}_x  \tilde{u}_{\epsilon} d\tilde {y},&\quad\beta_2=0.
	\end{array}\right.
	$$
	Now using the hypothesis $\beta\le \alpha, 1\le |\beta|$ and $\beta_1\le \alpha_1\le m-1$, using Lemma \ref{inequality-hardy}, we get
	$$
	\| [ \tilde{v}_\epsilon, \partial^{\alpha} ] \partial_y \tilde{w}_\epsilon \|_{L^2_{k+\ell+\alpha_2}}\le C\|\tilde{w}_\epsilon\|^2_{H^m_{k+\ell}}.
	$$
	On the other hand, if $\alpha_2=0$, using $-1+\ell<-\frac 12$, we can get
	$$
	\| \partial^{m-1}_x( \tilde{v}_\epsilon u^s_{yy}) \|_{L^2_{k+\ell}}\le
	C\|\partial^{m}_x\tilde{u}_\epsilon\|_{L^2_{\frac 12+\delta}(\mathbb{R}^2_+)}\|u^s_{yy} \|_{L^2_{k+\ell}(\mathbb{R}_+)}
	\le C\|\tilde{w}_\epsilon\|_{H^m_{\frac 32+\delta}}.
	$$
	Similar computation for other cases, we can get, for $\alpha_2>0, \alpha_1+\alpha_2\le m$,
	$$
	\| \partial^{\alpha}( \tilde{v}_\epsilon u^s_{yy}) \|_{L^2_{k+\ell+\alpha_2}}\le C\|\tilde{w}_\epsilon\|_{H^m_{k+\ell}}.
	$$
Combining the above estimates, we have finished the proof of the Proposition \ref{prop3.1}.
\end{proof}

\noindent {\bf Smallness of approximate solutions.} To close the energy estimate, we still need  to estimate the term $\partial^m_x \tilde{w}_\epsilon$.

\begin{proposition} \label{prop3.2}
	Under the hypothesis
	of Theorem \ref{theorem3.1}, and with the same notations as in Proposition \ref{prop3.1}, we have
	\begin{align}
	\label{approx-k}
	\begin{split}
	& \frac 12\frac{d}{dt}\|\partial_x^m \tilde{w}_\epsilon\|_{L^2_{k+\ell}}^2 + \frac{3\epsilon}{4}\|\partial_x^{m+1}\tilde{w}_\epsilon\|_{L^2_{k+\ell}}^2 + \frac{3}{4} \|\partial_y \partial_x^m \tilde{w}_\epsilon\|_{L^2_{k+\ell}}^2 \\
	& \le C\big( \| \tilde{w}_\epsilon\|_{H^m_{k+\ell}}^2 + \| \tilde{w}_\epsilon\|_{H^m_{k+\ell}}^3 \big) + \frac{32}{\epsilon}\big(\|\tilde{w}_\epsilon\|_{H^m_{k+\ell}}^4+\|\tilde{w}_\epsilon\|_{H^m_{k+\ell}}^2\big).
	\end{split}
	\end{align}
\end{proposition}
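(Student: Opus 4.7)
\textbf{Proof strategy for Proposition \ref{prop3.2}.} The plan is to apply $\partial_x^m$ to the vorticity equation \eqref{shear-prandtl-approxiamte-vorticity}, multiply by $\langle y\rangle^{2(k+\ell)}\partial_x^m\tilde w_\epsilon$, and integrate over $\mathbb{R}_+^2$. The new feature absent from Proposition \ref{prop3.1} is that the multi-index $(m,0)$ forces the appearance of $\partial_x^m\tilde v_\epsilon=-\int_0^y\partial_x^{m+1}\tilde u_\epsilon\,d\tilde y$, which involves $m+1$ tangential derivatives and therefore cannot be controlled by the $H^m_{k+\ell}$ energy. The only way to close the estimate is to spend part of the $\epsilon$-regularization, and this is precisely why the resulting bound is non-uniform and carries a $1/\epsilon$ factor. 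On the left hand side, integration by parts in $y$ produces $\tfrac12\tfrac{d}{dt}\|\partial_x^m\tilde w_\epsilon\|^2_{L^2_{k+\ell}}$, the vertical dissipation $\|\partial_y\partial_x^m\tilde w_\epsilon\|^2_{L^2_{k+\ell}}$, the regularization $\epsilon\|\partial_x^{m+1}\tilde w_\epsilon\|^2_{L^2_{k+\ell}}$, and a weight-commutator remainder absorbed by a small fraction of the dissipation; the boundary trace at $y=0$ vanishes thanks to the Neumann condition $\partial_y\tilde w_\epsilon|_{y=0}=0$ from \eqref{shear-prandtl-approxiamte-vorticity}.

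For the tangential convection I would split $\partial_x^m\bigl((u^s+\tilde u_\epsilon)\partial_x\tilde w_\epsilon\bigr)=(u^s+\tilde u_\epsilon)\partial_x^{m+1}\tilde w_\epsilon+[(u^s+\tilde u_\epsilon),\partial_x^m]\partial_x\tilde w_\epsilon$: the transport piece is treated by integration by parts in $x$ and bounded by $\|\partial_x\tilde u_\epsilon\|_{L^\infty}\|\partial_x^m\tilde w_\epsilon\|^2_{L^2_{k+\ell}}$, while the commutator (which never leaves more than $m$ $x$-derivatives on $\tilde w_\epsilon$) is estimated by Sobolev embedding and Hardy exactly as in Proposition \ref{prop3.1}. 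For the vertical convection, Leibniz gives
\begin{equation*}
\partial_x^m(\tilde v_\epsilon\partial_y\tilde w_\epsilon)=\tilde v_\epsilon\partial_x^m\partial_y\tilde w_\epsilon+\sum_{j=1}^{m-1}\binom{m}{j}\partial_x^j\tilde v_\epsilon\,\partial_x^{m-j}\partial_y\tilde w_\epsilon+(\partial_x^m\tilde v_\epsilon)\partial_y\tilde w_\epsilon.
\end{equation*}
The first piece is handled by integration by parts in $y$ using $\partial_y\tilde v_\epsilon=-\partial_x\tilde u_\epsilon$; each intermediate piece ($1\le j\le m-1$) stays inside the $H^m_{k+\ell}$ energy because $\partial_x^j\tilde v_\epsilon=-\int_0^y\partial_x^{j+1}\tilde u_\epsilon$ uses only $\le m$ tangential derivatives, so Lemma \ref{inequality-hardy} yields polynomial-in-$\|\tilde w_\epsilon\|_{H^m_{k+\ell}}$ bounds independent of $\epsilon$.

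The crux is the last Leibniz term, together with the source contribution $\partial_x^m(\tilde v_\epsilon u^s_{yy})=u^s_{yy}\partial_x^m\tilde v_\epsilon$. For the nonlinear one I would estimate
\begin{equation*}
\left|\int_{\mathbb{R}_+^2}(\partial_x^m\tilde v_\epsilon)\,\partial_y\tilde w_\epsilon\,\langle y\rangle^{2(k+\ell)}\partial_x^m\tilde w_\epsilon\,dxdy\right|\le\|\partial_y\tilde w_\epsilon\|_{L^\infty}\|\partial_x^m\tilde v_\epsilon\|_{L^2_{k+\ell}}\|\partial_x^m\tilde w_\epsilon\|_{L^2_{k+\ell}},
\end{equation*}
invoke Hardy (valid since $k+\ell>\tfrac32$) together with $\tilde u_\epsilon=\int_0^y\tilde w_\epsilon\,d\tilde y$ to trade $\|\partial_x^m\tilde v_\epsilon\|_{L^2_{k+\ell}}$ for $C\|\partial_x^{m+1}\tilde w_\epsilon\|_{L^2_{k+\ell}}$, Sobolev embedding (since $m\ge6$) to bound $\|\partial_y\tilde w_\epsilon\|_{L^\infty}\le C\|\tilde w_\epsilon\|_{H^m_{k+\ell}}$, and Young's inequality with weight $\epsilon/8$ to get
\begin{equation*}
\le\frac{\epsilon}{8}\|\partial_x^{m+1}\tilde w_\epsilon\|^2_{L^2_{k+\ell}}+\frac{C}{\epsilon}\|\tilde w_\epsilon\|^2_{H^m_{k+\ell}}\|\partial_x^m\tilde w_\epsilon\|^2_{L^2_{k+\ell}}.
\end{equation*}
The first term is absorbed by the LHS regularization and the second feeds the $\tfrac{C}{\epsilon}\|\tilde w_\epsilon\|^4_{H^m_{k+\ell}}$ contribution of \eqref{approx-k}; the source $u^s_{yy}\partial_x^m\tilde v_\epsilon$ is treated identically using $|u^s_{yy}|\le C\langle y\rangle^{-k-1}$, supplying the remaining $\tfrac{C}{\epsilon}\|\tilde w_\epsilon\|^2_{H^m_{k+\ell}}$ contribution. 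Collecting everything and absorbing in total $\epsilon/4$ of the regularization and $1/4$ of the vertical dissipation into the LHS yields \eqref{approx-k}. The main obstacle is precisely this loss-of-$x$-derivative step: it is the only place where no uniform-in-$\epsilon$ control is available, and it is what motivates the introduction of the cancellation unknown $g_m=(\partial_x^mu/\partial_yu)_y$ in the rest of the paper.
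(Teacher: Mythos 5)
Your overall strategy coincides with the paper's: same energy functional, same Leibniz splitting of $\partial_x^m(\tilde v_\epsilon(\partial_y\tilde w_\epsilon+u^s_{yy}))$, same identification of $(\partial_x^m\tilde v_\epsilon)\partial_y\tilde w_\epsilon$ as the loss term, and the same mechanism of paying for it with the $\epsilon$-regularization via Young's inequality. However, your treatment of the crux term contains a step that fails as written. You apply H\"older in the form
\[
\Bigl|\int_{\mathbb{R}_+^2}(\partial_x^m\tilde v_\epsilon)\,\partial_y\tilde w_\epsilon\,\langle y\rangle^{2(k+\ell)}\partial_x^m\tilde w_\epsilon\Bigr|
\le\|\partial_y\tilde w_\epsilon\|_{L^\infty}\,\|\partial_x^m\tilde v_\epsilon\|_{L^2_{k+\ell}}\,\|\partial_x^m\tilde w_\epsilon\|_{L^2_{k+\ell}},
\]
which places the growing weight $\langle y\rangle^{k+\ell}$ on $\partial_x^m\tilde v_\epsilon$. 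But $\partial_x^m\tilde v_\epsilon(t,x,y)=-\int_0^y\partial_x^{m+1}\tilde u_\epsilon\,d\tilde y$ does \emph{not} decay in $y$; it tends to the generally nonzero limit $-\int_0^\infty\partial_x^{m+1}\tilde u_\epsilon\,d\tilde y$, so $\|\partial_x^m\tilde v_\epsilon\|_{L^2_{k+\ell}}=+\infty$ unless that limit vanishes identically in $x$. For the same reason the Hardy inequality you invoke to ``trade'' this norm for $C\|\partial_x^{m+1}\tilde w_\epsilon\|_{L^2_{k+\ell}}$ is inapplicable: part (i) of Lemma \ref{inequality-hardy} requires $\lim_{y\to\infty}f=0$, which fails here, and part (ii) requires a negative weight exponent (and even formally, two applications of Hardy would produce the weight $k+\ell+2$ on $\partial_x^{m+1}\tilde w_\epsilon$, which is not absorbable by the term $\epsilon\|\partial_x^{m+1}\tilde w_\epsilon\|^2_{L^2_{k+\ell}}$ on the left).

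The repair is the mixed-norm splitting the paper uses: distribute the weight as $(\partial_x^m\tilde v_\epsilon)\cdot(\langle y\rangle^{k+\ell}\partial_y\tilde w_\epsilon)\cdot(\langle y\rangle^{k+\ell}\partial_x^m\tilde w_\epsilon)$ and apply H\"older in $L^2_xL^\infty_y\times L^\infty_xL^2_y\times L^2_{x,y}$, so that $\tilde v$ carries no weight at all. Then $\|\partial_x^m\tilde v_\epsilon\|_{L^2(\mathbb{R}_x;L^\infty(\mathbb{R}_{y,+}))}\le C\|\partial_x^{m+1}\tilde u_\epsilon\|_{L^2_{1/2+\delta}}\le C\|\partial_x^{m+1}\tilde w_\epsilon\|_{L^2_{3/2+\delta}}\le C\|\partial_x^{m+1}\tilde w_\epsilon\|_{L^2_{k+\ell}}$ (using $k+\ell>\tfrac32$ and Hardy applied to $\tilde u_\epsilon$, which \emph{does} vanish at infinity), while $\|\partial_y\tilde w_\epsilon\|_{L^\infty(\mathbb{R}_x;L^2_{k+\ell}(\mathbb{R}_+))}\le C\|\tilde w_\epsilon\|_{H^m_{k+\ell}}$. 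For the source piece $u^s_{yy}\partial_x^m\tilde v_\epsilon$ your argument survives unchanged, because there the decay $|u^s_{yy}|\le C\langle y\rangle^{-k-1}$ itself carries the weight. With this correction your proof matches the paper's and yields \eqref{approx-k}.
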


\begin{proof}
	We have
	\begin{align*}
	\partial_t \partial_x^m  \tilde{w}_\epsilon - \partial_y^2 \partial_x^m \tilde{w}_\epsilon - \epsilon \partial_x^m\partial^2_x \tilde{w}_{\epsilon}= - \partial_x^m  \big((u^s + \tilde{u}_\epsilon)\partial_x  \tilde{w}_\epsilon \big) - \partial_x^m \big( \tilde{v}_\epsilon (\partial_y\tilde{w}_\epsilon  + u^s_{yy}) \big),
	\end{align*}
	then the same computations as in Proposition \ref{prop3.1} give
	\begin{align}
	\label{approx-k-part-1}
	\begin{split}
	& \frac{d}{2 dt}\|\partial_x^m \tilde{w}_\epsilon\|_{L^2_{{k+\ell}}}^2 + \epsilon\|\partial_x^{m+1} \tilde{w}_\epsilon\|_{L^2_{k+\ell}}^2 + \frac{3}{4} \|\partial_y \partial_x^m\tilde{w}_\epsilon\|_{L^2_{k+\ell}}^2\\
	&\le C( \|\tilde{w}_\epsilon \|_{H^m_{k+\ell}}^2+\|\tilde{w}_\epsilon \|_{H^m_{k+\ell}}^3)\\
	&+\left|\int_{\mathbb{R}^2_+} \partial_x^m \big( \tilde{v}_\epsilon (\partial_y\tilde{w}_\epsilon  + u^s_{yy}) \big)  \langle y \rangle^{2(k+\ell)} \partial_x^m \tilde{w}_\epsilon dxdy\right|,
	\end{split}
	\end{align}
where the boundary terms is more easy to control, since
$$
(\partial_y \partial_x^m\tilde{w}_\epsilon)(t, x, 0)=(\partial^2_y \partial_x^m\tilde{u}_\epsilon)(t, x, 0)=0,\,\,\,(t, x)\in [0, T]\times \mathbb{R}.
$$
The estimate of the last term on right hand  is the main obstacle for the study
of the Prandtl equations.
	\begin{align*}
	\partial_x^m \big( \tilde{v}_\epsilon (\partial_y\tilde{w}_\epsilon  + u^s_{yy}) \big) & = \tilde{v}_\epsilon \partial_x^m   \partial_y \tilde{w}_\epsilon +  (\partial^m_x\tilde{v}_\epsilon) (\partial_y \tilde{w}_\epsilon+u^s_{yy})\\
	&\quad +\sum_{1\le j\le m-1}C^j_m\, \partial^j_x\tilde{v}_\epsilon\partial^{m-j}_x \partial_y\tilde{w}_\epsilon.
	\end{align*}
For the first term	
	\begin{align*}
	\int_{\mathbb{R}^2_+} \tilde{v}_\epsilon(\partial_x^m \partial_y  \tilde{w}_\epsilon)  \langle y \rangle^{ 2(k+\ell)} (\partial_x^m   \tilde{w}_\epsilon)dx dy 
	&=\frac{1}{2}\int \tilde{v}_\epsilon \langle y \rangle^{ 2(k+\ell)} \partial_y(\partial_x^m \tilde{w}_\epsilon)^2 dx dy \\
	& =
	\frac{1}{2}\int \tilde{u}_{\epsilon, x} \langle y \rangle^{ 2(k+\ell)} (\partial_x^m  \tilde{w}_\epsilon)^2 dx dy\\
	& - \ell \int \tilde{v}_\epsilon \langle y \rangle^{ 2(k+\ell) - 1} (\partial_x^m   \tilde{w}_\epsilon)^2 dx dy\\
	& \le C \|\tilde{w}_\epsilon\|_{H^m_{k+\ell}}^3,
	\end{align*} 	
where we have used $\tilde{v}_\epsilon|_{y=0}=0$, and
	\begin{align*}
	\left|\int_{\mathbb{R}^2_+} \big(\sum_{1\le j\le m-1}C^j_m\, \partial^j_x\tilde{v}_\epsilon\partial^{m-j}_x \partial_y\tilde{w}_\epsilon)  \langle y \rangle^{ 2(k+\ell)} (\partial_x^m   \tilde{w}_\epsilon\big)dx dy\right|  \le C \|\tilde{w}_\epsilon\|_{H^m_{k+\ell}}^3.
	\end{align*} 	
	Finally for the worst term, we have
	\begin{align*}
	&\left|\int_{\mathbb{R}^2_+} (\partial^m_x \tilde{v}_\epsilon)(\partial_y  \tilde{w}_\epsilon
	+u^s_{yy})  \langle y \rangle^{ 2(k+\ell)} (\partial_x^m   \tilde{w}_\epsilon)dx dy \right|\\
	&\le C\|\partial^m_x \tilde{v}_\epsilon\|_{L^2(\mathbb{R}_x; L^\infty(\mathbb{R}_+))}  \|\partial_y\tilde{w}_\epsilon\|_{L^\infty(\mathbb{R}_x; L^2_{k+\ell}(\mathbb{R}_+))} \|\tilde{w}_\epsilon\|_{H^m_{k+\ell}}\\
	&\qquad\qquad +\|\partial^m_x \tilde{v}_\epsilon u^s_{yy}\|_{L^2_{k+\ell}(\mathbb{R}^2_+)}  \|\tilde{w}_\epsilon\|_{H^m_{k+\ell}}.
	\end{align*} 	
	On the other hand, observing
	$$
	\partial^m_x \tilde{v}_\epsilon (t, x, y)=-\int^y_0 \partial^{m+1}_x \tilde{u}_\epsilon(t, x, \tilde{y})d\tilde y,
	$$
then using Sobolev inequality and Lemma \ref{inequality-hardy}, for $\delta>0$ small,
	$$
	\|\partial^m_x \tilde{v}_\epsilon\|_{L^2(\mathbb{R}_x; L^\infty(\mathbb{R}_+))}\le C\|\partial^{m+1}_x \tilde{u}_\epsilon\|_{L^2_{\frac 12+\delta}(\mathbb{R}^2_+)} \le C\|\partial^{m+1}_x \tilde{w}_\epsilon\|_{L^2_{\frac 32+\delta}(\mathbb{R}^2_+)},
	$$
we get
	$$
	\|\partial^m_x \tilde{v}_\epsilon\|_{L^2(\mathbb{R}_x; L^\infty(\mathbb{R}_+))} \le C\|\partial^{m+1}_x \tilde{w}_\epsilon\|_{L^2_{\frac 32+\delta}(\mathbb{R}^2_+)}.
	$$
	Using  the hypothesis for the shear flow $u^s$ and $\ell-1<-\frac 12$,
	\begin{align*}
	\|\partial^m_x (\tilde{v}_\epsilon u^s_{yy})\|_{L^2_{k+\ell}(\mathbb{R}^2_+)}&\le \|\partial^m_x \tilde{v}_\epsilon\|_{L^2(\mathbb{R}_x; L^\infty(\mathbb{R}_+))}\| u^s_{yy}\|_{L^2_{k+\ell}(\mathbb{R}_+)}\\
	&\le C\|\partial^{m+1}_x \tilde{w}_\epsilon\|_{L^2_{\frac 32+\delta}(\mathbb{R}^2_+)},
	\end{align*}
	and for $k+\ell\ge \frac 32+\delta$,
	$$
	\|\partial_y\tilde{w}_\epsilon\|_{L^\infty(\mathbb{R}_x; L^2_{k+\ell}(\mathbb{R}_+))} \le C \|\partial_y\tilde{w}_\epsilon\|_{H^1(\mathbb{R}_x; L^2_{k+\ell}(\mathbb{R}_+))}\le C\|\tilde{w}_\epsilon\|_{H^m_{k+\ell}(\mathbb{R}^2_+)}.
	$$
Thus, we have
	\begin{align}
	\label{approx-k-part-3}
	\begin{split}
	&\int  \big(\partial_x^m  \big( \tilde{v}_\epsilon (\partial_y\tilde{w}_\epsilon  + u^s_{yy}) \big)\big) \, \langle y \rangle^{2(k+\ell)} \partial_x^m \tilde{w}_\epsilon dx dy \\
	& \le C \| \tilde{w}_\epsilon \|_{H^m_{k+\ell}}^3 + \frac{32}{\epsilon}\big(\|\tilde{w}_\epsilon\|_{H^m_{k+\ell}}^4+\|\tilde{w}_\epsilon\|_{H^m_{k+\ell}}^2\big) + \frac{\epsilon}{4}\| \partial_x^{m+1} \tilde{w}_\epsilon\|_{L^2_{\frac 32+\delta}}^2.
	\end{split}
	\end{align}
	From \eqref{approx-k-part-1} and \eqref{approx-k-part-3}, we have, if $k+\ell >\frac 32$,
	\begin{align*}
	\begin{split}
	&\frac 12 \frac{d}{dt}\|\partial_x^m \tilde{w}_\epsilon\|_{L^2_{k+\ell}}^2 + \frac{3\epsilon}{4}\|\partial_x^{m+1}\tilde{w}_\epsilon\|_{L^2_{k+\ell}}^2 + \frac{3}{4} \|\partial_y \partial_x^m \tilde{w}_\epsilon\|_{L^2_{k+\ell}}^2 \\
	& \le C\big( \| \tilde{w}_\epsilon\|_{H^m_{k+\ell}}^2 + \| \tilde{w}_\epsilon\|_{H^m_{k+\ell}}^3 \big) + \frac{32}{\epsilon}\big(\|\tilde{w}_\epsilon\|_{H^m_{k+\ell}}^4+\|\tilde{w}_\epsilon\|_{H^m_{k+\ell}}^2\big).
	\end{split}
	\end{align*}
\end{proof}

\begin{proof}[{\bf End of proof of Theorem \ref{theorem3.1}}]
	
	Combining \eqref{approx-less-k} and \eqref{approx-k},  for $m\ge 6, k>1, \frac 32 -k<\ell<\frac 12 $ and $0<\epsilon\le 1$, we get
	\begin{align}
	\label{approx-total}
	\frac{d}{dt}\| \tilde{w}_\epsilon\|_{H^m_{k+\ell}(\mathbb{R}^2_+)}^2 &\le \frac{C}{\epsilon}\big( \| \tilde{w}_\epsilon\|_{H^m_{k+\ell}(\mathbb{R}^2_+)}^2 + \| \tilde{w}_\epsilon\|_{H^m_{k+\ell}(\mathbb{R}^2_+)}^m \big),
	\end{align}
	with $C>0$ independent of $\epsilon$.
	
From \eqref{approx-total}, by the nonlinear Gronwall's inequality, we have
	$$
	\| \tilde{w}_\epsilon(t)\|^{m-2}_{H^m_{k+\ell}(\mathbb{R}^2_+)} \le\, \frac{\|\tilde{w}_\epsilon(0)\|_{H^m_{k+\ell}}^{m-2}}
	{e^{-\frac{C}{\epsilon}t(\frac m2-1)}-(\frac m2-1)\frac{C}{\epsilon}t
\|\tilde{w}_\epsilon(0)\|_{H^m_{k+\ell}}^{m-2} },\,\,~~0< t \le T_\epsilon,
	$$
where we choose $T_\epsilon>0$ such that
\begin{align}
\label{time-1}	
\left(e^{-\frac{C}{\epsilon}T_\epsilon(\frac m2-1)}-(\frac m2-1)\frac{C}{\epsilon}T_\epsilon
\bar \zeta^{\,m-2} \right)^{-1}=\left(\frac 43\right)^{m-2}.
\end{align}
Finally, we get for any $\| \tilde{w}_\epsilon(0)\|_{H^m_{k+\ell}}\le \bar\zeta$, and $0<\epsilon\le \epsilon_0$,
	\begin{align*}\label{bound-3}
	\| \tilde{w}_\epsilon(t)\|_{H^m_{k+\ell}(\mathbb{R}^2_+)} \le \frac 43\| \tilde{w}_\epsilon(0)\|_{H^m_{k+\ell}(\mathbb{R}^2_+)}\le 2\| \tilde{w}_0\|_{H^m_{k+\ell}(\mathbb{R}^2_+)},~~~~~0< t \le T_\epsilon.
	\end{align*}
\end{proof}

The rest of this paper is dedicated to improve the results of Proposition \ref{prop3.2}, and try to get an uniform estimate with respect to $\epsilon$. Of course, we have to recall the assumption on the shear flow in the main Theorem \ref{main-theorem}.


\section{Formal transformations}\label{section4}

Since the estimate \eqref{approx-less-k} is independent of $\epsilon$, we only need to treat
\eqref{approx-k} in a new way to get an estimate which is also independent of $\epsilon$. To simplify the notations, from now on, we drop the notation tilde and sub-index $\epsilon$, that is, with no confusion, we take
$$
u=\tilde{u}_\epsilon,\quad v=\tilde{v}_\epsilon,\quad w=\tilde{w}_\epsilon.
$$

Let $w\in L^\infty ([0, T]; H^{m}_{k+\ell}(\mathbb{R}^2_+), m\ge 6, k>1, 0<\ell<\frac12,~~ \frac{1}{2}<\ell' < \ell + \frac{1}{2},~ k+\ell>\frac 32$ be a classical solution of \eqref{shear-prandtl-approxiamte-vorticity} which satisfies the following {\em \`a priori} condition
\begin{equation}\label{apriori}
\|w\|_{ L^\infty ([0, T]; H^{m}_{k+\ell}(\mathbb{R}^2_+))}\le  \zeta.
\end{equation}
Then \eqref{sobolev-1} gives
\begin{align*}
\|\langle y\rangle ^{k+\ell} w\|_{ L^\infty ([0, T]\times\mathbb{R}^2_+)}\le
&C(\|\langle y\rangle ^{\frac 12+\delta}(\langle y\rangle ^{k+\ell}w)_y \|_{ L^\infty ([0, T]; L^2(\mathbb{R}^2_+))}\\
&+
\|\langle y\rangle ^{\frac 12+\delta}(\langle y\rangle ^{k+\ell}w)_{xy} \|_{ L^\infty ([0, T]; L^2(\mathbb{R}^2_+))})\\
&\le C_m \|w\|_{ L^\infty ([0, T]; H^{m}_{k+\ell}(\mathbb{R}^2_+))},
\end{align*}
which implies
$$
|\partial_y u(t, x, y)|=|w(t, x, y)|\le C_m\, \zeta \, \langle y\rangle^{-k-\ell},\quad (t, x, y)\in [0, T]\times \mathbb{R}^2_+.
$$
We assume that {\bf  $\zeta$ is small enough} such that
\begin{equation}\label{C0}
C_m\, \zeta \le \frac{\tilde c_1}{4},
\end{equation}
where $C_m$ is the above Sobolev embedding constant.
Then we have  for $\ell\ge 0$,
\begin{align}
\label{pior-2}\frac{\tilde c_1}4 \langle y \rangle^{-k} \le |u^s_y + u_y|\le  4\tilde c_2 \langle y \rangle^{-k},\quad (t, x, y)\in [0, T]\times \mathbb{R} \times \mathbb{R}^+.
\end{align}

\noindent {\bf The formal transformation of equations.} Under the conditions \eqref{C0} and \eqref{pior-2}, in this subsection,  we will introduce the following formal transformations of system \eqref{shear-prandtl-approxiamte}.
{
 Set, for $0\le n\le m$
\[
g_n = \left( \frac{\partial^n_x u}{u^s_y + u_y} \right)_y,\,~\eta_1 = \frac{u_{xy}}{u^s_y + u_y},~ \eta_2 = \frac{u^s_{yy} + u_{yy}}{{u^s_y +u_y}}, \,\forall (t, x, y)\in [0, T]\times \mathbb{R}^2_+ .
\]
Formally, we will use the following notations
\[
\partial^{-1}_y g_n(t, x, y) = \frac{\partial^n_x u}{u^s_y + {u}_y}(t,x, y),\,
\partial_y \partial_y^{-1} g_n = g_n,~\forall (t, x, y)\in [0, T]\times \mathbb{R}^2_+
\]

 Applying  $\partial^n_x$ to  \eqref{shear-prandtl-approxiamte}, we have
\begin{align}\label{shear-prandtl-approxiamte-aa}
\begin{split}
\partial_t\partial^n_x u + (u^s + {u}) \partial_x\partial^n_x u &+(\partial^n_x {v}) (u^s_y + \partial_y {u})\\
&= \partial^2_{y}\partial^n_x u + \epsilon \partial^2_{x}\partial^n_x u+
A^1_n+A^2_n,
\end{split}
\end{align}
where
\begin{align*}
A^1_n=-[\partial^n_x,\, (u^s + {u})] \partial_x u=-
\sum_{i=1}^{n}C^i_n\partial_x^i {u} \, \partial^{n + 1 -i}_x {u}, \,\,\\
A^2_n=-[\partial^n_x,\, (u^s_y + \partial_y {u})] {v}= -\sum_{i=1}^{n}C^i_n\partial_x^i {w} \,\partial^{n  -i}_x {v}.
\end{align*}
Dividing \eqref{shear-prandtl-approxiamte-aa} with $(u^s_y + {u}_y)$ and performing $\partial_y$ on the resulting equation, observing
$$
 \partial_x\partial^n_x u +\partial_y\partial^n_x v= \partial^n_x(\partial_x u +\partial_y v)=0,
$$
we have for $j=1, 2$,
\begin{equation*}
\begin{split}
& \partial_y\left(\frac{\partial_t {\partial^n_x u}}{u^s_y + {u}_y}\right) +  (u^s + {u}) \partial_y\left(\frac{\partial_x{\partial^n_x u}}
{u^s_y + {u}_y}\right)\\
&\qquad\qquad=  \partial_y\left(\frac{\partial^2_{y}{\partial^n_x u} + \epsilon \partial^2_{x}{\partial^n_x u}}{u^s_y + {u}_y}\right)
+ \partial_y\left(\frac{A^1_n+A^2_n}{u^s_y + {u}_y}\right).
\end{split}
\end{equation*}
We compute each term on the support of $ $,
 \begin{align*}
 \partial_y\left(\frac{\partial_t {\partial^n_x u} }{u^s_y + {u}_y}\right) &= \partial_y\bigg(\partial_t \frac{{\partial^n_x u} }{u^s_y + {u}_y} + \partial_y^{-1} g_n \, \frac{\partial_t  {u}_y + \partial_t u^s_y }{u^s_y + {u}_y} \bigg)\\
 &= \partial_t g_n + \partial_y\bigg( \partial_y^{-1} g_n \, \frac{ \partial_t u^s_y+\partial_t  {u}_y}{u^s_y + \tilde{u}_y} \bigg),
 \end{align*}
 \begin{align*}
 		(u^s + {u})\partial_y \left(\frac{\partial_x {\partial^n_x u}}{u^s_y + {u}_y}\right) & = (u^s + {u})\bigg\{\partial_x\partial_y\left(\frac{{\partial^n_x u}}{u^s_y + {u}_y}\right) +\partial_y \left(\frac{{\partial^n_xu}}{u^s_y +{u}_y}\right) \, \frac{{u}_{xy}}{u^s_y + {u}_y} \\
 &\hskip 3cm + \left(\frac{\partial^n_x{u}}{u^s_y + {u}_y}\right)\partial_y\left( \frac{{u}_{xy}}{u^s_y + {u}_y}\right)\bigg\}\\
 		& =  (u^s + {u})(\partial_x g_n + g_n\, \eta_1 + \partial_y^{-1} g_n \, \partial_y \eta_1),
 \end{align*}
 \begin{align*}
 \frac{{\partial^2_{y} {\partial^n_x u}}}{u^s_y + {u}_y}  =\partial^2_y \left(\frac{{ {\partial^n_x u}}}{u^s_y + {u}_y} \right)  + 2\left(\frac{{ \partial_y {u}}}{u^s_y + {u}_y}\right)\frac{u^s_{yy} + {u}_{yy}}{{u^s_y + {u}_y}} - {\partial^n_x u} \, \partial^2_y\left(\frac{1}{u^s_y + {u}_y}\right),
 \end{align*}
 \begin{align*}
\partial^2_y \left(\frac{1}{u^s_y + {u}_y}\right) = - \partial_y\left(\frac{u_{yy}^s + {u}_{yy}}{(u^s_y + {u}_y)^2}\right)= -\frac{u_{yyy}^s + {u}_{yyy}}{(u^s_y + {u}_y)^2} + 2 \left(\frac{u_{yy}^s + {u}_{yy}}{(u^s_y + {u}_y)}\right)^2 \frac{1}{u^s_y + {u}_y},
 \end{align*}
 \begin{align*}
 \frac{{ \partial_y {\partial^n_x u}}}{u^s_y + {u}_y}\frac{u^s_{yy} +{u}_{yy}}{{u^s_y + {u}_y}} =  \left(\frac{{{\partial^n_xu}}}{u^s_y + {u}_y}\right)_y \frac{u^s_{yy} + {u}_{yy}}{{u^s_y + {u}_y}} - \frac{{{\partial^n_xu}}}{u^s_y + {u}_y}\left(\frac{u_{yy}^s + {u}_{yy}}{(u^s_y + {u}_y)}\right)^2.
\end{align*}
 So 		
\begin{align*}
\frac{{\partial^2_{y}{\partial^n_x u}}}{u^s_y + {u}_y} = \partial_y g_n  + 2(g_n \eta_2 - 2 \partial^{-1}_y g_n \eta_2^2) + \partial^{-1}_y g_n \left(\frac{u_{yyy}^s + {u}_{yyy}}{(u^s_y + \tilde{u}_y)}\right),
\end{align*}
\begin{align*}
\partial_y\left(\frac{{\partial^2_{y} {\partial^n_x u}}}{u^s_y + {u}_y}\right)&=\partial^2_y g_n  + 2 (\partial_y g_n) \eta_2 + 2g_n \partial_y \eta_2 -  4g_n\eta_2^2\\
 &- 8 \partial_y^{-1} g_n \eta_2 \partial_y \eta_2 + \partial_y\bigg(\partial_y^{-1} g_n\, \frac{u_{yyy}^s + {u}_{yyy}}{u^s_y + {u}_y}\bigg).
\end{align*}
Similarly, we have
\begin{align*}
\frac{{\partial^2_{x} {\partial^n_x u}}}{u^s_y + {u}_y} &=  \partial^2_x\left(\frac{{{\partial^n_x u}}}{u^s_y + {u}_y} \right)  +  2\left(\frac{{{\partial^n_x u}}}{u^s_y + {u}_y}\right)_x\frac{ {u}_{xy}}{{u^s_y + {u}_y}} \\
&-  2\frac{{{\partial^n_x u}}}{u^s_y +{u}_y}\left( \frac{{u}_{xy}}{(u^s_y +{u}_y)}\right)^2 + \frac{{{\partial^n_x u}}}{u^s_y + {u}_y}\frac{{u}_{xxy}}{(u^s_y +{u}_y)},
\end{align*} 			
\begin{align*}
\partial_y \left(\frac{{\partial^2_{x}{\partial^n_x u}}}{u^s_y + {u}_y}\right)& = \partial^2_xg_n + 2\partial_x  g_n \eta_1 + 2\partial_x \partial_y^{-1} g_n \partial_y \eta_1  \\
  &-  2g_n\eta_1^2 - {4 \partial_y^{-1} g_n \eta_1 \partial_y \eta_1} +\partial_y\bigg(\partial_y^{-1} g_n\,\frac{   {u}_{xxy}}{u^s_y +{u}_y}\bigg)\, .
\end{align*}
For the boundary condition, we only need to pay attention to $j=1$.  From \eqref{shear-prandtl-approxiamte-aa} and the boundary condition for $(u, v)$ in
\eqref{shear-prandtl-approxiamte}, we observe
$$
\partial^n_x u|_{y=0}=0,\,\,\partial^2_y \partial^n_x u|_{y=0}=0,\,\,   (u^s_y+u_y)|_{y=0}\not=0.
$$
At the same time,
\begin{align*}
0=\frac{{\partial^2_{y}{\partial^n_x u}}}{u^s_y + {u}_y}\bigg|_{y=0} &= \partial_y g_n|_{y=0} + 2(g_n \eta_2 - 2 (\partial^{-1}_y g_n )\eta_2^2)|_{y=0}\\ &
\qquad+ \partial^{-1}_y g_n \left(\frac{u_{yyy}^s + {u}_{yyy}}{(u^s_y + \tilde{u}_y)}\right)\bigg|_{y=0},
\end{align*}
and
$$
\eta_2|_{y=0} = \frac{u^s_{yy} + u_{yy}}{{u^s_y +u_y}}\bigg|_{y=0}=0, \quad
\partial^{-1}_y g_n(t, x, y)|_{y=0} = \frac{\partial^n_x u}{u^s_y + {u}_y}(t,x, y)\bigg|_{y=0}=0,\,
$$
we get then
$$
(\partial_y g_n)|_{y=0}=0,\quad 0\le n\le m.
$$
Finally, we have, for $j=1, 2$,
\begin{equation}
\begin{cases}
\label{non-monotone-transformation}
\partial_t   g_n + ( u^s + {u})  \partial_x g_n -  \partial^2_y g_n - \epsilon   \partial^2_xg_n\\
\qquad\qquad\qquad  - \epsilon \,2\,   (\partial_x \partial_y^{-1} g_n) \partial_y \eta_1 =   M_n,\\
  (\partial_y g_n)|_{y=0}=0, \\
  g_n|_{t=0}=  g_{n,0},
\end{cases}
\end{equation}
with $M_n=\sum^6_{j=1}M_j^n$,
\begin{align*}
&  M^n_1 = -(u^s + {u})( g_n \eta_1 +( \partial_y^{-1} g_n ) \partial_y \eta_1)  ,\\
&  M^n_2 =  2(\partial_y g_n) \eta_2 + 2g_n (\partial_y \eta_2 - 2\eta_2^2) - 8 (\partial_y^{-1} g_n)\, \eta_2 \partial_y \eta_2 ,\\
&  M^n_3 = \epsilon \big(2(\partial_x  g_n) \eta_1 - 2 g_n\,\eta_1^2 - 4{(\partial_y^{-1} g_n) \eta_1 \partial_y \eta_1}\big), \\
&  M^n_4 = \partial_y\bigg(\partial_y^{-1} g_n \, \frac{(u^s + {u}) {w}_x + {v} ({w}_y  + u^s_{yy})}{u^s_y + {u}_y}\bigg),\\
&  M^n_5  = -\partial_y\bigg(\frac{\sum_{i=1}^{n}C^i_n\partial_x^i {u} \cdot \partial^{n + 1 -i}_x {u} }{ u^s_y +{u}_y} \bigg)\,,\\
& M^n_6  = -\partial_y\bigg(\frac{ \sum_{i=1}^{n}C^i_n\partial_x^i {w} \cdot \partial^{n  -i}_x {v}}{ u^s_y +{u}_y} \bigg)\, ,
\end{align*}
where we have used the relation,
$$
\partial_t u^s_y+\partial_t  {u}_y -(u_{yyy}^s + {u}_{yyy})-\epsilon {u}_{xxy}=-(u^s + {u}) {w}_x + {v} (u^s_{yy}+{w}_y).
$$

\section{Uniform estimate}\label{section5}

In the future application(see Lemma \ref{lemma-g-h-w}), we need that the weight 
of $g_m$ big then $\frac 12$, but from the definition,  
$ w\in H^{m+2}_{k + \ell}(\mathbb{R}^2_+)$ imply only  
$g_m\in H^{2}_{\ell}(\mathbb{R}^2_+)$ with $0<\ell<\frac 12$. 
So the first step is to improve this weights if the weight of the initial data is more big. We first have
\begin{lemma}\label{lemma-initial-deta}
If $\tilde w_0\in H^{m+2}_{k + \ell'}(\mathbb{R}^2_+), m\ge 6, k>1, 0< \ell<\frac12,~~ \frac{1}{2}<\ell' < \ell + \frac{1}{2},~ k+\ell>\frac 32$ which satisfies \eqref{apriori}-\eqref{C0} with $0<\zeta\le1$, then $(  g_m)(0)\in H^2_{k+\ell}(\mathbb{R}^2_+)$, and we have
$$
\|(  g_m)(0)\|_{H^2_{\ell'}(\mathbb{R}^2_+)}\le C \|\tilde w_0\|_{H^{m+2}_{k + \ell''}(\mathbb{R}^2_+)}.
$$
\end{lemma}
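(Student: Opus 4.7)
The plan is a direct expansion of $g_m(0)$ via the quotient rule and term-by-term estimation in the weighted Sobolev norm. By the definition of $g_m$,
\begin{equation*}
g_m(0) = \frac{\partial_x^m \tilde w_0}{u^s_{0,y}+\partial_y\tilde u_0} - \frac{(\partial_x^m \tilde u_0)(u^s_{0,yy} + \partial_y \tilde w_0)}{(u^s_{0,y}+\partial_y \tilde u_0)^2},
\end{equation*}
and for the $H^2_{\ell'}$ norm one further applies $\partial_x^{\alpha_1}\partial_y^{\alpha_2}$ with $\alpha_1+\alpha_2\le 2$ through Leibniz. The proof is then entirely structural: every term that appears after this expansion is a product in which exactly one factor carries the top-order $x$-derivatives of $\tilde u_0$ or $\tilde w_0$; that factor will be placed in $L^2$ with a weight no larger than $\langle y\rangle^{k+\ell'+\alpha_2}$ (hence controlled by $\|\tilde w_0\|_{H^{m+2}_{k+\ell'}}$), while each remaining factor goes into $L^\infty$ via the Sobolev embedding \eqref{sobolev-1}, which is affordable because $m+2\ge 8$.

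Three ingredients underlie the pointwise control of the non-top-order factors. First, the {\it \`a priori} monotonicity bound \eqref{pior-2} gives $(u^s_{0,y}+\partial_y\tilde u_0)^{-1}\le C\langle y\rangle^k$, and each occurrence of the reciprocal contributes one such factor. Second, the shear-profile estimate \eqref{shear-critical-momotone} (at $t=0$) supplies the polynomial decay $|\partial_y^p u^s_0|\le C\langle y\rangle^{-k-p+1}$ for $1\le p\le m+4$, which together with \eqref{sobolev-1} applied to $\tilde w_0\in H^{m+2}_{k+\ell'}$ controls $|\partial_x^{\alpha_1}\partial_y^{\alpha_2+1}\tilde u_0|$ in $L^\infty$ with a weighted decay strong enough to absorb the powers of $\langle y\rangle^k$ coming from the reciprocals. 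Third, for the factor $\partial_x^m \tilde u_0=-\int_y^\infty \partial_x^m \tilde w_0\,d\tilde y$ one invokes the Hardy-type inequality (Lemma \ref{inequality-hardy}) to exchange the integral for a weighted $L^2$ bound on $\partial_x^m \tilde w_0$, which is legitimate precisely because the target weight $\ell'$ satisfies $\ell'>\tfrac12$.

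The main obstacle is the weight bookkeeping. Every $\partial_y$ that falls on a denominator factor produces an additional $\langle y\rangle^k$ of growth, and one must verify that the decay inherited from $u^s_0$ and from the Sobolev embedding for $\tilde w_0$ provides exactly the cancellation needed so that the net weight matches $\ell'+\alpha_2$ on the left-hand side. Because $g_m$ is differentiated at most twice, at most two extra reciprocal powers appear; these are matched by one or two factors of the form $u^s_{0,yy}+\partial_y\tilde w_0$ whose decay $\langle y\rangle^{-k-1}$ (from Lemma \ref{shear-profile} and \eqref{sobolev-1}) compensates each extra $\langle y\rangle^k$. The hypothesis $k+\ell>\tfrac32$ then guarantees that the remaining lower-order product is integrable, so that the $H^2_{\ell'}$ estimate closes with a constant depending only on $\tilde c_1,\tilde c_2,\tilde c_3$ and on $\|\tilde w_0\|_{H^{m+2}_{k+\ell'}}$.
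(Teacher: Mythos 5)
Your proposal is correct and follows essentially the same route as the paper: expand $g_m(0)$ by the quotient rule (the paper writes the second term as $\frac{\partial_x^m\tilde u_0}{u^s_{0,y}+\tilde u_{0,y}}\,\eta_2(0)$), bound the reciprocal of $u^s_{0,y}+\tilde u_{0,y}$ by $C\langle y\rangle^{k}$ using \eqref{pior-2}, use the decay of the shear profile for $\eta_2(0)$, and convert $\partial_x^m\tilde u_0$ into $\partial_x^m\tilde w_0$ via the Hardy inequality of Lemma \ref{inequality-hardy}, which is available since $k+\ell'-1>-\tfrac12$. The only difference is that you spell out the Leibniz bookkeeping for the two extra derivatives in the $H^2_{\ell'}$ norm, which the paper's remark leaves implicit.
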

\noindent
{\bf Remark.} 
In fact, observing
\begin{align*}
  g_m(0)=  \left( \frac{\partial_x^m \tilde{u}_0}{u^s_{0,y} + \tilde u_{0, y}} \right)_y=
  \frac{\partial_y\partial_x^m \tilde{u}_0}{u^s_{0,y} + \tilde u_{0, y}}-  \frac{\partial_x^m \tilde{u}_0}{u^s_{0,y} + \tilde u_{0, y}} \eta_2(0),
\end{align*}
then \eqref{pior-2} implies
\begin{align*}
\langle y \rangle ^{k+\ell}|  g_m(0)|\le
C\langle y \rangle ^{k + \ell'}|  \partial_x^m \tilde{w}_0|+C\langle y \rangle ^{k + \ell'-1}|  \partial_x^m \tilde{u}_0|,
\end{align*}
which finishes the proof of  this Lemma.

\begin{proposition}\label{lemma-non-x-k-monotone-part1-2}
Let $w \in L^\infty ([0, T]; H^{m+2}_{k+\ell}(\mathbb{R}^2_+)), m\ge 6, k>1, 0\le \ell<\frac12, ~\ell'> \frac{1}{2},~~\ell' - \ell < \frac{1}{2},~k+\ell>\frac 32$, satisfy  \eqref{apriori}-\eqref{C0} with $0<\zeta\le1$. Assume that the shear flow $u^s$ verifies the conclusion of Lemma \ref{shear-profile},
and $g_n$ satisfies the equation \eqref{non-monotone-transformation} for $ 1\le n\le m$, then we have the following estimates, for $t\in [0, T]$
\begin{equation}
\label{uniform-part2-2}
\begin{split}
\frac{d}{dt}\sum^m_{n=1}\|   g_n \|_{L^2_{\ell'}(\mathbb{R}^2_+)}^2 & + \sum^m_{n=1}\|   \partial_y g_n\|_{L^2_{\ell'}(\mathbb{R}^2_+)}^2 + \epsilon \sum^m_{n=1}\|  \partial_x g_n\|_{L^2_{\ell'}(\mathbb{R}^2_+)}^2\\
& \le C_2(\sum^m_{n=1}\|  g_n\|_{L^2_{\ell'}(\mathbb{R}^2_+)}^2 + \|{w}\|_{H^m_{k+\ell}(\mathbb{R}^2_+)}^2),
\end{split}
\end{equation}
where $C_2$ is independent of $\epsilon$.
\end{proposition}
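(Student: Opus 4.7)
\smallskip
\noindent\textbf{Proof proposal.}
The plan is to perform a weighted $L^2$ energy estimate on the equation \eqref{non-monotone-transformation} for each $g_n$ with $1\le n\le m$ and then sum over $n$. First I would multiply the equation by $\langle y\rangle^{2\ell'}g_n$ and integrate over $\mathbb{R}^2_+$. The time derivative gives $\tfrac12\frac{d}{dt}\|g_n\|_{L^2_{\ell'}}^2$. Integration by parts in $x$ of the transport term $(u^s+u)\partial_x g_n$ yields $-\tfrac12\int(\partial_x u)|g_n|^2\langle y\rangle^{2\ell'}$, bounded using $\|\partial_x u\|_{L^\infty}\lesssim \|w\|_{H^m_{k+\ell}}$ and the a priori bound \eqref{apriori}. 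For the diffusive terms, I integrate by parts in $y$ (respectively $x$); the boundary contribution from $-\partial_y^2 g_n$ vanishes thanks to $(\partial_y g_n)|_{y=0}=0$, producing the good dissipative terms $\|\partial_y g_n\|^2_{L^2_{\ell'}}$ and $\epsilon\|\partial_x g_n\|^2_{L^2_{\ell'}}$ together with a harmless lower-order error $C\|g_n\|^2_{L^2_{\ell'-1}}$ stemming from derivatives falling on the weight.

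Next I handle the $\epsilon$-correction $-2\epsilon(\partial_x\partial_y^{-1}g_n)\,\partial_y\eta_1$. Since $\partial_y^{-1}g_n=\partial_x^n u/(u^s_y+u_y)$ vanishes at $y=0$ by the boundary condition $\partial_x^n u|_{y=0}=0$, a Hardy-type inequality (Lemma~\ref{inequality-hardy}) controls $\partial_y^{-1}g_n$ in weighted $L^2$ by $g_n$ in weighted $L^2$. Combined with the pointwise bounds on $\partial_y\eta_1$ coming from \eqref{pior-2} and Lemma~\ref{shear-profile}, Cauchy-Schwarz absorbs this term into $C\|g_n\|^2_{L^2_{\ell'}}+\tfrac{\epsilon}{8}\|\partial_x g_n\|^2_{L^2_{\ell'}}$.

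The heart of the proof is the right-hand side $M_n=\sum_{j=1}^{6}M_n^j$. For $M_n^1$ and $M_n^3$, the factors $\eta_1$, $\partial_y\eta_1$ and the products $g_n\eta_1$, $(\partial_y^{-1}g_n)\partial_y\eta_1$ are tamed by \eqref{pior-2}, Lemma~\ref{shear-profile} and Hardy; they are absorbed into $C(\|g_n\|^2_{L^2_{\ell'}}+\|w\|^2_{H^m_{k+\ell}})$. In $M_n^2$ the piece $2(\partial_y g_n)\eta_2$ after Cauchy-Schwarz gives $\tfrac14\|\partial_y g_n\|^2_{L^2_{\ell'}}+C\|g_n\|^2_{L^2_{\ell'}}$, the first half being absorbed in the dissipation. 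The delicate block is $M_n^4, M_n^5, M_n^6$, where an outer $\partial_y$ acts on a product involving $\partial_y^{-1}g_n$ and $\partial_x^i u$, $\partial_x^j w$, $\partial_x^j v$. I move this outer $\partial_y$ onto the test function $\langle y\rangle^{2\ell'}g_n$ by integration by parts; the boundary term at $y=0$ vanishes since $\partial_y^{-1}g_n$ and $v$ do, and the contribution at infinity vanishes by decay. The resulting integrals involve derivatives of order at most $n\le m$ in $x$ for $u,w,v$, each estimable through the reconstruction \eqref{u-v-w}, the Hardy inequality and Sobolev embeddings by $\|w\|_{H^m_{k+\ell}}$ (using in particular $w\in H^{m+2}_{k+\ell}$ to accommodate the loss induced by the vertical integration defining $v$).

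The main obstacle is the top $x$-derivative in $M_m^6$: the factor $\partial_x^{m-i}v=-\int_0^y \partial_x^{m-i+1}u\,d\tilde y$ loses one $x$-derivative, so the Hardy inequality must be applied with weight $\ell'>\tfrac12$ to recover it in $L^2_{\ell'-1}$ against an $L^\infty$ partner; the hypothesis $\ell'-\ell<\tfrac12$ is exactly what makes the resulting norm controllable by $\|w\|_{H^m_{k+\ell}}$ since $k+\ell>\tfrac32>\ell'+\tfrac12$ after using the weight gain from $\partial_y^{-1}$. Once each summand is bounded by $C(\|g_n\|^2_{L^2_{\ell'}}+\|w\|^2_{H^m_{k+\ell}})$ plus a small fraction of $\|\partial_y g_n\|^2_{L^2_{\ell'}}+\epsilon\|\partial_x g_n\|^2_{L^2_{\ell'}}$, I sum over $n=1,\ldots,m$ and absorb the dissipative fractions on the left, yielding \eqref{uniform-part2-2} with a constant $C_2$ independent of $\epsilon$.
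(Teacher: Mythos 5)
Your treatment of the left-hand side, of the $\epsilon$-correction term, and of $M_n^1,M_n^2,M_n^3$ matches the paper's. But there is a genuine gap in your handling of $M_n^4,M_n^5,M_n^6$, and it is precisely where the real difficulty of the proposition lives. After you move the outer $\partial_y$ onto the test function, you are left with quantities such as
\[
\Big\|\tfrac{\partial_x^{i} w\,\partial_x^{\,n-i}v}{u^s_y+u_y}\Big\|_{L^2_{\ell'}},
\]
and you claim these are "estimable \dots by $\|w\|_{H^m_{k+\ell}}$". They are not at top order. By \eqref{pior-2} the factor $(u^s_y+u_y)^{-1}$ behaves like $\langle y\rangle^{k}$, so for $n=m$, $i=m$ the term $\partial_x^m w\cdot v$ must be measured as $\|\langle y\rangle^{k+\ell'}\partial_x^m w\,v\|_{L^2}$; since $v$ does not decay in $y$ and $\ell'>\ell$, this requires $\|\partial_x^m w\|_{L^2_{k+\ell'}}$, which is strictly stronger than the $\|\partial_x^m w\|_{L^2_{k+\ell}}$ provided by $\|w\|_{H^m_{k+\ell}}$. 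The same weight mismatch appears in $M_m^4$ and $M_m^5$. The paper's proof avoids this by an algebraic substitution that you are missing: after distributing $\partial_y$ it rewrites
\[
\frac{\partial_x^{i}u}{u^s_y+u_y}=\partial_y^{-1}g_i,\qquad
\frac{\partial_x^{i}w}{u^s_y+u_y}=g_i+(\partial_y^{-1}g_i)\,\eta_2 ,
\]
so that every factor carrying the heavy weight $\langle y\rangle^{\ell'}$ is one of the $g_i$ (controlled in $L^2_{\ell'}$), while the raw derivatives $\partial_x^{n}u$, $\partial_x^{n-i}v$ only ever appear with weight at most $k+\ell-1$ (handled as in the paper's estimate of $g_1\,\partial_x^{n-1}v$, using $k+\ell-1>\tfrac12$ and $\partial_x g_j=g_{j+1}-g_j\eta_1-(\partial_y^{-1}g_j)\partial_y\eta_1$). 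This is also why the lower-order $g_i$, $i<n$, must appear on the right-hand side and why the proposition is stated for the sum over $n=1,\dots,m$; your scheme never produces them, which is a symptom of the missing structure.

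A secondary but real omission: since a priori $w\in H^{m+2}_{k+\ell}$ only gives $g_n\in H^2_{\ell}$ with $\ell<\tfrac12$, the function $\langle y\rangle^{2\ell'}g_n$ is not an admissible test function directly. The paper first views \eqref{non-monotone-transformation} as a \emph{linear} system in $(g_1,\dots,g_m)$, proves \eqref{uniform-part2-2} as an a priori estimate, obtains existence in $L^\infty([0,T];H^2_{\ell'})$ by duality (Hahn--Banach), and concludes by uniqueness. Your proposal skips this justification entirely.
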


\noindent
{\bf\em Approach of the proof for the Proposition \ref{lemma-non-x-k-monotone-part1-2}:} We can't prove \eqref{uniform-part2-2} directly, since
the approximate solution $w_\epsilon$ obtained in Theorem \ref{theorem3.1} is belongs to
$L^\infty([0, T_\epsilon]; H^{m+2}_{k+\ell}(\mathbb{R}^2_+))$, which implies  only  $  g_n\in L^\infty([0, T_\epsilon]; H^{2}_{\ell}(\mathbb{R}^2_+))$. Then we can't use $\langle y\rangle^{2\ell'}  g_n \in L^\infty([0, T_\epsilon]; H^{2}_{\ell-2\ell'}(\mathbb{R}^2_+)) $ as the test function to the equation \eqref{non-monotone-transformation}. To overcome this difficulty, we consider that \eqref{non-monotone-transformation} as a linear system for $ g_n, n=1, \cdots, m$ with the coefficients
and the source terms depends on $w$ and their derivatives up to order $m$, we will clarify this confirmation in the following proof of the the Proposition \ref{lemma-non-x-k-monotone-part1-2}. We prove now the estimate
\eqref{uniform-part2-2} by the following approach: For the linear system  \eqref{non-monotone-transformation}, we prove firstly \eqref{uniform-part2-2} as {\em \`a priori} estimate. Lemma \ref{lemma-initial-deta} imply that $ {g}_{n}(0)\in H^2_{\ell'}(\mathbb{R}^2_+), n=1,\cdots, m$, then by using Hahn-Banach theorem, this {\em \`a priori} estimate imply the existence of solutions
$$
  g_n\in L^\infty([0, T]; H^2_{\ell'}(\mathbb{R}^2_+)),\quad n=1, \cdots, m.
$$
Finally, by uniqueness,  we can prove the estimate \eqref{uniform-part2-2} by proving  it as  {\em \`a priori} estimate. So that the proof of the Proposition \ref{lemma-non-x-k-monotone-part1-2} is reduced to the proof of the {\em \`a priori} estimate \eqref{uniform-part2-2}.

\begin{proof}[{\bf Proof of the {\em \`a priori} estimate \eqref{uniform-part2-2}}]
Multiplying the linear system \eqref{non-monotone-transformation} by $\langle y\rangle^{2\ell'}g_{n}\in L^\infty([0, T]; H^2_{-\ell'}(\mathbb{R}^2_+)) $ and integrating over $\mathbb{R} \times \mathbb{R}^+$. We start to  deal with the left hand of \eqref{non-monotone-transformation} first, we have
\begin{align*}
\int_{\mathbb{R}^2_+} \partial_t g_{n} \, \langle y\rangle^{2\ell'} g_{n} dx dy = \frac 12\frac{d}{dt}\|g_{n}\|_{L^2_{\ell'}(\mathbb{R}^2_+)}^2,
\end{align*}
and
\begin{align*}
\int_{\mathbb{R}^2_+}   ( u^s + {u})\partial_x g_{n} \, \langle y\rangle^{2\ell'} g_{n}dx dy
=& \frac{1}{2}\int_{\mathbb{R}^2_+}   ( u^s + {u}) \cdot \partial_x (\langle y\rangle^{2\ell'} g_{n}^2) dx dy \\
&\le \frac 12 \|{u}_x\|_{L^\infty(\mathbb{R}^2_+)}\|g_{n}\|_{L^2_{\ell'}(\mathbb{R}^2_+)}^2\\
&\le C\|w\|_{H^2_{1}(\mathbb{R}^2_+)}\|g_{n}\|_{L^2_{\ell'}(\mathbb{R}^2_+)}^2.
\end{align*}
Integrating by part, where the boundary value is vanish,
\begin{align*}
- \int_{\mathbb{R}^2_+}    \partial_y^2 g_{n} \, \langle y\rangle^{2\ell'} g_{n}dx dy 
&=  \|\partial_y g_{n} \|_{L^2_{\ell'}(\mathbb{R}^2_+)}^2 +  \int_{\mathbb{R}^2_+}   \partial_y g_{n} (\langle y\rangle^{2\ell'})' g_{n} dx dy\\
& \ge \frac{3}{4}\|  \partial_y g_{n} \|_{L^2_{\ell'}(\mathbb{R}^2_+)}^2 - 4  \| g_{n}\|_{L^2(\mathbb{R}^2_+)}^2,
\end{align*}
and
\begin{align*}
-\epsilon\int_{\mathbb{R}^2_+}  \partial_x^2 g_{n} \, \langle y\rangle^{2\ell'}  g_{n} dx dy =\epsilon\|  \partial_x g_{n}\|_{L^2_{\ell'}(\mathbb{R}^2_+)}^2.
\end{align*}
We have also
\begin{align*}
&-\epsilon \int_{\mathbb{R}^2_+}  \big(\partial_x \partial_y^{-1} g_{n} \big)\partial_y \eta_1 \langle y\rangle^{2\ell'}g_{n} dx dy \\
& = \epsilon \int_{\mathbb{R}^2_+} \partial_y^{-1} g_{n} \partial_y \eta_1 \langle y\rangle^{2\ell'}   \partial_x g_{n} dx dy\\
 &\quad+ \epsilon \int_{\mathbb{R}^2_+}   \partial_y^{-1} g_{n} (\partial_y \partial_x \eta_1)\langle y\rangle^{2\ell'} g_{n}dx dy\\
& \le \epsilon\| \partial_y^{-1} g_{n} \partial_y \eta_1 \|_{L^2_{\ell'}(\mathbb{R}^2_+)}^2 + \frac{\epsilon}{8}\|\partial_x g_{n}\|_{L^2_{\ell'}(\mathbb{R}^2_+)}^2\\
& \quad+ \epsilon\| \partial_y^{-1} g_{n}\partial_y \partial_x \eta_1\|_{L^2(\mathbb{R}^2_+)}^2 + \epsilon\|g_{n}\|_{L^2_{\ell'}(\mathbb{R}^2_+)}^2.
\end{align*}
So by \eqref{non-monotone-transformation} and $0<\epsilon\le 1$, we obtain
\begin{equation*}
\begin{split}
&\frac{d}{dt}\|g_{n}\|^2_{L^2_{\ell'}(\mathbb{R}^2_+)}+\| \partial_yg_{n}\|^2_{L^2_{\ell'}(\mathbb{R}^2_+)}
+\epsilon \|\partial_x g_{n}\|^2_{L^2_{\ell'}(\mathbb{R}^2_+)}\\
&\le C \|g_{n}\|^2_{L^2_{\ell'}(\mathbb{R}^2_+)}+\|(\partial_y^{-1} g_{n}) \partial_y \eta_1 \|_{L^2_{\ell'}(\mathbb{R}^2_+)}^2 \\
&\qquad+ \| (\partial_y^{-1} g_{n}) \partial_y \partial_x \eta_1\|_{L^2(\mathbb{R}^2_+)}^2 + 2\sum_{j=1}^{6}\bigg|\int_{\mathbb{R}^2_+}   {M}_{j} \, \langle y\rangle^{2\ell'}g_{n} dx dy\bigg|.
\end{split}
\end{equation*}
Then we can finish the proof of the {\em \`a priori} estimate \eqref{uniform-part2-2} by the following four Lemmas.
\end{proof}

\begin{lemma}\label{lemma5.1}
Under the assumption of Proposition \ref{lemma-non-x-k-monotone-part1-2}, we have
\begin{equation*}
\begin{split}
\| \partial_y^{-1} g_{n} \partial_y \eta_1 \|_{L^2_{\ell'}(\mathbb{R}^2_+)}^2 + \|  \partial_y^{-1} g_{n} \partial_y \partial_x \eta_1\|_{L^2(\mathbb{R}^2_+)}^2 \le C \|g_{n}\|_{L^2_{\ell'}(\mathbb{R}^2_+)}^2.
\end{split}
\end{equation*}
where $\tilde{C}$ is independent of $\epsilon$.
\end{lemma}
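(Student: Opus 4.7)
The plan is to reduce both norms to the form $\|G_n/\langle y\rangle\|_{L^2_{\ell'}}^2$ (where $G_n:=\partial_y^{-1} g_n$) multiplied by uniformly bounded $L^\infty$ quantities, and then to apply a weighted Hardy inequality. The key observation is that, by the definition $G_n=\partial_x^n u/(u^s_y+u_y)$ together with the boundary condition $u|_{y=0}=0$ from \eqref{shear-prandtl-approxiamte}, one has $G_n|_{y=0}=0$, so for $\ell'\ne -\tfrac12$ the weighted Hardy inequality yields
\begin{equation*}
\int_{\mathbb{R}^2_+}\frac{|G_n|^2}{\langle y\rangle^2}\langle y\rangle^{2\ell'}\,dxdy \le C(\ell')\int_{\mathbb{R}^2_+}|g_n|^2\langle y\rangle^{2\ell'}\,dxdy = C(\ell')\|g_n\|_{L^2_{\ell'}}^2.
\end{equation*}

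Next I would carry out uniform pointwise control of $\partial_y\eta_1$ and $\partial_y\partial_x\eta_1$. Writing $q:=u^s_y+u_y$, so that $\partial_y q=u^s_{yy}+w_y$ and $\partial_x q=w_x$, direct differentiation gives
\begin{equation*}
\partial_y\eta_1 = \frac{w_{xy}}{q}-\frac{w_x(u^s_{yy}+w_y)}{q^2},\qquad \partial_y\partial_x\eta_1 = \partial_y\!\left(\frac{w_{xx}}{q}-\frac{w_x^2}{q^2}\right).
\end{equation*}
The lower bound \eqref{pior-2} provides $|q^{-1}|\le C\langle y\rangle^k$, Lemma \ref{shear-profile} controls the derivatives of $u^s$ by $\langle y\rangle^{-k-p+1}$, and the Sobolev embedding applied to $w\in H^m_{k+\ell}(\mathbb{R}^2_+)$ with $m\ge 6$ (together with \eqref{apriori}--\eqref{C0}) gives $|\partial_x^{\alpha_1}\partial_y^{\alpha_2}w|\le C\langle y\rangle^{-k-\ell-\alpha_2}$ for $\alpha_1+\alpha_2\le m-2$. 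Combining these in both formulas produces
\begin{equation*}
|\partial_y\eta_1| + |\partial_y\partial_x\eta_1| \le C\langle y\rangle^{-\ell-1},
\end{equation*}
so that $\langle y\rangle\partial_y\eta_1$ and $\langle y\rangle\partial_y\partial_x\eta_1$ are uniformly bounded (since $\ell>0$), with constants that depend only on $c_1,c_2,T$ and on the a priori bound $\zeta\le 1$, but not on $\epsilon$.

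Finally, I would combine these two steps by Cauchy--Schwarz: for the first term,
\begin{equation*}
\|G_n\,\partial_y\eta_1\|_{L^2_{\ell'}}^2 \le \|\langle y\rangle\partial_y\eta_1\|_{L^\infty}^2\,\Big\|\frac{G_n}{\langle y\rangle}\Big\|_{L^2_{\ell'}}^2 \le C\|g_n\|_{L^2_{\ell'}}^2,
\end{equation*}
and analogously for the second, observing that the $L^2$ norm (weight $0$) is dominated by $L^2_{\ell'}$ through $\langle y\rangle^{-2\ell'}\le 1$ inside the integral after factoring out $\langle y\rangle^{-1}$. The only delicate point is the weighted Hardy step, which requires $\ell'\ne -\tfrac12$ (ensured by $\ell'>\tfrac12$) and the vanishing of $G_n$ at $y=0$; the remaining work is a bookkeeping of weights and Sobolev indices, with $m\ge 6$ giving ample margin. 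The obstacle is therefore only to check that all constants are independent of $\epsilon$, which follows because $\epsilon$ does not enter any of the bounds derived above.
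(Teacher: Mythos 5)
Your pointwise bounds on $\partial_y\eta_1$ and $\partial_y\partial_x\eta_1$ are correct and coincide with the paper's, but the reduction to a weighted Hardy inequality does not work, and this is a genuine gap. You keep only one power of decay, writing $\|G_n\,\partial_y\eta_1\|_{L^2_{\ell'}}^2\le \|\langle y\rangle\partial_y\eta_1\|_{L^\infty}^2\,\|\langle y\rangle^{\ell'-1}G_n\|_{L^2}^2$, and then claim $\|\langle y\rangle^{\ell'-1}G_n\|_{L^2}\le C\|\langle y\rangle^{\ell'}g_n\|_{L^2}$ from $G_n|_{y=0}=0$ alone. Since $\frac12<\ell'<\ell+\frac12<1$, the exponent $\lambda=\ell'-1$ lies in $(-\frac12,0)$, and for $\lambda>-\frac12$ the Hardy inequality with vanishing at $y=0$ is false: take $g_n$ compactly supported in $y$ with $\int_0^\infty g_n\,dy\neq0$; then $G_n$ tends to a nonzero constant as $y\to+\infty$ and $\int\langle y\rangle^{2\ell'-2}|G_n|^2\,dy=+\infty$ while $\|g_n\|_{L^2_{\ell'}}$ is finite. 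The variant \eqref{Hardy1} valid for $\lambda>-\frac12$ instead requires $\lim_{y\to\infty}G_n=0$, which you have not established and which does not follow from the available estimates (they only give $|G_n|\lesssim\langle y\rangle^{\frac12-\ell}$ with $\ell<\frac12$). So the intermediate inequality your proof rests on is false in the stated range of $\ell'$.

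The missing ingredient is precisely the extra factor $\langle y\rangle^{-\ell}$ in $|\partial_y\eta_1|\le C\langle y\rangle^{-\ell-1}$ that you discard when you only retain $\langle y\rangle\partial_y\eta_1\in L^\infty$. The paper keeps it: after inserting the full decay the integrand becomes $\langle y\rangle^{2(\ell'-\ell-1)}\big(\int_0^y g_n\,d\tilde y\big)^2$; one bounds $\big|\int_0^y g_n\,d\tilde y\big|\le C\|g_n(x,\cdot)\|_{L^2_{\ell'}(\mathbb{R}_+)}$ uniformly in $y$ by Cauchy--Schwarz (using $\ell'>\frac12$ to integrate $\langle \tilde y\rangle^{-2\ell'}$), and then integrates in $y$ using $2(\ell'-\ell-1)<-1$, which is exactly the hypothesis $\ell'-\ell<\frac12$. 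No Hardy inequality is needed. Your argument cannot be repaired without reinstating that factor.
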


\begin{proof}
Notice that \eqref{apriori} and \eqref{C0} imply
\begin{align*}
&|\eta_1| \le C \langle y \rangle^{-\ell},\quad |\partial_x \eta_1 |\le C\langle y \rangle^{-\ell},\\
&|\partial_y \eta_1 |\le C \langle y \rangle^{-\ell - 1},\quad
|\partial_y \partial_x \eta_1| \le C \langle y \rangle^{-\ell - 1}.
\end{align*}
Then $\ell'>\frac 12, \ell'-\ell<\frac12$, imply
\begin{align*}
\begin{split}
\|\partial_y^{-1} g_{n} (\partial_y \partial_x \eta_1)\|_{L^2_{\ell'}(\mathbb{R}^2_+)}^2 & \le C\int_{\mathbb{R}^2_+}\langle y \rangle^{2(\ell' - \ell-1)}
\Big(\int^y_0 g_n (t, x, \tilde y)d\tilde y \Big)^2 dx dy\\
& \le C\| g_{n} \|_{L^2_{\ell'}(\mathbb{R}^2_+)}^2.
\end{split}
\end{align*}
Similarly, we also obtain
\begin{align*}
\begin{split}
\| \partial_y^{-1} g_{n} \partial_y \eta_1\|_{L^2_{\ell'}(\mathbb{R}^2_+)}^2 \le C\| g_{n} \|_{L^2_{ \ell'}(\mathbb{R}^2_+)}^2.
\end{split}
\end{align*}
\end{proof}
\begin{lemma}\label{lemma5.2}
Under the assumption of Proposition \ref{lemma-non-x-k-monotone-part1-2}, we have
\begin{equation*}
\begin{split}
\left|\int_{\mathbb{R}^2_+}  \sum^4_{j = 0}\tilde{M}^n_{j } \, \langle y\rangle^{2\ell'}g_{n} dx dy\right|\le &
\frac18\|\partial_y g_{n}\|^2_{L^2_{\ell'}(\mathbb{R}^2_+)}+
\frac{\epsilon}8\| \partial_x g_{n}\|^2_{L^2_{\ell'}(\mathbb{R}^2_+)} \\
&\quad+ \tilde{C}(\|g_{n}\|_{L^2_{\ell'}(\mathbb{R}^2_+)}^2
+\|w\|_{H^m_{k+\ell}(\mathbb{R}^2_+)}^2),
\end{split}
\end{equation*}
where $\tilde{C}$ is independent of $\epsilon$.
\end{lemma}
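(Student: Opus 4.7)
The plan is to estimate the four source pieces $M^n_1, M^n_2, M^n_3, M^n_4$ separately, reducing everything via Cauchy--Schwarz, the pointwise bounds on $u^s$ and its $y$-derivatives from Lemma~\ref{shear-profile}, and the weighted Sobolev embedding built on the \emph{\`a priori} bounds \eqref{apriori}--\eqref{C0}. These hypotheses yield $|w|\le C\langle y\rangle^{-k-\ell}$ together with \eqref{pior-2}, from which I read off the standing pointwise bounds $\|u^s+u\|_{L^\infty}\le C$, $|\eta_j|\le C$, and $|\partial_y\eta_j|\le C\langle y\rangle^{-\ell-1}$ for $j=1,2$.

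The terms $M^n_1$, $M^n_2$, $M^n_3$ are zero-order in $g_n$ apart from two borderline derivative pieces. The zero-order pieces of all three are handled by direct Cauchy--Schwarz, invoking Lemma~\ref{lemma5.1} to control every occurrence of $(\partial_y^{-1} g_n)\partial_y\eta_1$; each such piece contributes at most $C\|g_n\|_{L^2_{\ell'}}^2$. The single derivative term $2\eta_2(\partial_y g_n)$ inside $M^n_2$ is absorbed via
\[
\Bigl|\int 2\eta_2(\partial_y g_n)\langle y\rangle^{2\ell'}g_n\,dxdy\Bigr|\le \tfrac{1}{16}\|\partial_y g_n\|_{L^2_{\ell'}}^2+C\|g_n\|_{L^2_{\ell'}}^2,
\]
and the analogous term $2\epsilon\eta_1(\partial_x g_n)$ inside $M^n_3$ is absorbed into $\tfrac{\epsilon}{8}\|\partial_x g_n\|_{L^2_{\ell'}}^2+C\|g_n\|_{L^2_{\ell'}}^2$, the prefactor $\epsilon$ being indispensable here since we need an estimate uniform in $\epsilon$.

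The main obstacle is $M^n_4=\partial_y(\partial_y^{-1}g_n\cdot X)$ with
\[
X:=\frac{(u^s+u)w_x+v(w_y+u^s_{yy})}{u^s_y+u_y},
\]
as it is the only piece carrying the vertical velocity $v$ and the normal derivative $w_y+u^s_{yy}$, and thus the only piece that must be closed against the full norm $\|w\|_{H^m_{k+\ell}}$. I would expand $M^n_4=g_nX+(\partial_y^{-1}g_n)\partial_yX$ and treat the two integrals separately. The key structural observation is that, although $1/(u^s_y+u_y)$ contributes a factor $\langle y\rangle^{k}$ by \eqref{pior-2}, this growth is killed by the $\langle y\rangle^{-k-\ell}$ decay of $w_x$ and the $\langle y\rangle^{-k-1}$ decay of $w_y+u^s_{yy}$ coming from Sobolev embedding and Lemma~\ref{shear-profile}; together with the $L^\infty$-bound $\|v\|_{L^\infty}\le C\|w\|_{H^m_{k+\ell}}$ already derived in Proposition~\ref{prop3.1}, this yields $\|X\|_{L^\infty}\le C(1+\|w\|_{H^m_{k+\ell}})$ and a companion pointwise estimate for $\partial_yX$. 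The integral $\int X g_n^2\langle y\rangle^{2\ell'}\,dxdy$ is then bounded by $C\|g_n\|_{L^2_{\ell'}}^2$, while the second integral is handled by Cauchy--Schwarz together with the weighted Hardy inequality $\|\partial_y^{-1}g_n\|_{L^2_{\ell'-1}}\le C\|g_n\|_{L^2_{\ell'}}$, valid because $\partial_y^{-1}g_n|_{y=0}=0$ and $\ell'>\tfrac12$. The output is a bound of the form $C(\|g_n\|_{L^2_{\ell'}}^2+\|w\|_{H^m_{k+\ell}}^2)$ uniform in $\epsilon$, and summing the four contributions produces the claimed estimate.
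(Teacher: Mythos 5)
Your treatment of $M^n_1$, $M^n_2$, $M^n_3$ matches the paper's: pointwise decay of $\eta_1,\eta_2$ from \eqref{apriori}--\eqref{C0}, Cauchy--Schwarz plus Lemma \ref{lemma5.1} for the zero-order and $\partial_y^{-1}g_n$ pieces, and absorption of $2\eta_2\partial_yg_n$ and $2\epsilon\eta_1\partial_xg_n$ into the dissipative terms (correctly noting that the factor $\epsilon$ is what makes the latter uniform in $\epsilon$). For $M^n_4$ your decomposition $g_nX+(\partial_y^{-1}g_n)\partial_yX$ is also the right one, and you are more explicit than the paper, which disposes of this term in one sentence.

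However, the auxiliary inequality you invoke for the second piece,
\[
\|\partial_y^{-1}g_n\|_{L^2_{\ell'-1}(\mathbb{R}^2_+)}\le C\|g_n\|_{L^2_{\ell'}(\mathbb{R}^2_+)},
\]
is false as stated, and the justification you give (vanishing at $y=0$ together with $\ell'>\tfrac12$) is exactly the wrong combination. Since $\ell'>\tfrac12$, the weight $\ell'-1$ exceeds $-\tfrac12$, so Lemma \ref{inequality-hardy}(ii) is unavailable; Lemma \ref{inequality-hardy}(i) would require $\partial_y^{-1}g_n\to0$ as $y\to+\infty$, which fails here: $\partial_y^{-1}g_n=\int_0^yg_n\,d\tilde y$ converges (by Cauchy--Schwarz and $2\ell'>1$) to the generically nonzero limit $\int_0^{+\infty}g_n\,dy$, and $\langle y\rangle^{\ell'-1}$ times a nonzero constant is not square-integrable in $y$ when $2(\ell'-1)>-1$. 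So the step as written would fail. It also indicates that your "companion pointwise estimate for $\partial_yX$" is only $O(\langle y\rangle^{-1})$, which is not enough. The repair is to extract more decay from $\partial_yX$: every term in it in fact decays at least like $\langle y\rangle^{-\ell-1}$ times $\|w\|_{H^m_{k+\ell}}$. The delicate term is $v_y(w_y+u^s_{yy})/(u^s_y+u_y)=-u_x\eta_2$, where you must use $|u_x|=|\int_y^{+\infty}w_x\,d\tilde y|\le C\|w\|_{H^m_{k+\ell}}\langle y\rangle^{1-k-\ell}$ (not merely $\|u_x\|_{L^\infty}$), giving $|u_x\eta_2|\le C\langle y\rangle^{-k-\ell}\le C\langle y\rangle^{-\ell-1}$ since $k\ge1$; the remaining terms ($w_x$, $(u^s+u)w_{xy}/(u^s_y+u_y)$, $v(w_{yy}+u^s_{yyy})/(u^s_y+u_y)$, $X\eta_2$) decay at least this fast for $\ell<\tfrac12$. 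With $|\partial_yX|\le C\|w\|_{H^m_{k+\ell}}\langle y\rangle^{-\ell-1}$ you then invoke precisely the bound proved in Lemma \ref{lemma5.1}, namely $\|\langle y\rangle^{\ell'-\ell-1}\partial_y^{-1}g_n\|_{L^2}\le C\|g_n\|_{L^2_{\ell'}}$, which is legitimate because $\ell'-\ell-1<-\tfrac12$. With that substitution your argument closes and yields the claimed estimate.
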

\begin{proof}
 Recalling $  {M}^n_{1 } = -(u^s + {u})\big( g_{n} \eta_1 +( \partial_y^{-1} g_{n} ) \partial_y \eta_1  \big) $, by Lemma \ref{lemma5.1},
\begin{align*}
\left|\int_{\mathbb{R}^2_+} (u^s + {u})g_{n} \eta_1 \, \langle y\rangle^{2\ell'}g_{n}  dxdy\right| &\le C\|g_{n}\|_{L^2_{\ell'}(\mathbb{R}^2_+)}^2,\\
\int_{\mathbb{R}^2_+} |(u^s + {u})  ( \partial_y^{-1} g_n) \partial_y \eta_1\,  \langle y \rangle^{2\ell'} g_{n} | dy dx &\le C \| w\|_{H^n_{k+ \ell}}^2 + C \| g_{n}\|_{L^2_{\ell'}}^2.
\end{align*}
Besides, we have
\begin{align*}
\left|\int_{\mathbb{R}^2_+}  {M}^n_{1 } \,  \langle y\rangle^{2\ell'}g_{n} dx dy\right|\le {C}(\|g_{n}\|_{L^2_{\ell'}(\mathbb{R}^2_+)}^2
+\|w\|_{H^n_{\ell'}(\mathbb{R}^2_+)}^2).
\end{align*}

The estimates of $ {M}^n_2$ and $ {M}^n_3$ needs the following decay rate of $\eta_2$:
\begin{align*}
&|\eta_2| \le C \langle y \rangle^{-1},\quad |\partial_x \eta_2|
\le C \langle y \rangle^{-\ell-1},\\
&|\partial_y \eta_2 |\le C \langle y \rangle^{-2},\quad |\partial_y \partial_x \eta_2|
\le C \langle y \rangle^{-\ell - 2}.
\end{align*}
Recall $  {M}_{2 }^n=  2 \partial_y g_{n}  \eta_2 + 2g_{n} (\partial_y \eta_2 - 2\eta_2^2) - 8  \partial_y^{-1} g_{n}\, \eta_2 \partial_y \eta_2$. We have
\begin{align*}
& \left|\int_{\mathbb{R}^2_+}  g_{n} (\partial_y \eta_2 - \eta_2^2) \, \langle y\rangle^{2\ell'} g_{n}dx dy\right| \le C\|g_{n}\|_{L^2_{\ell'}(\mathbb{R}^2_+)}^2,\\
&\left|\int_{\mathbb{R}^2_+} (\partial_y g_{n}) \eta_2 \, \langle y\rangle^{2\ell'}g_{n} dx dy\right| \le C\|g_{n}\|_{L^2_{\ell'}(\mathbb{R}^2_+)}^2 + \frac{1}{8}\| \partial_y g_{n}\|_{L^2_{\ell'}(\mathbb{R}^2_+)}^2,\\
 &\left|2 \int_{\mathbb{R}^2_+}  \partial_y^{-1} g_{n} \eta_2 \, \partial_y \eta_2 \langle y\rangle^{2\ell'}g_{n} dx dy\right| \\
 &\qquad\qquad \le C\| \langle y \rangle^{\ell'-3}\partial_y^{-1} g_n \|_{L^2}^2 + \| g_{n}\|_{L^2_{\ell'}(\mathbb{R}^2_+)}^2\le C\|g_{n}\|_{L^2_{\ell'}(\mathbb{R}^2_+)}^2.
\end{align*}
All together, we conclude
\begin{align*}
\left|\int_{\mathbb{R}^2_+}   {M}^n_{2 } \langle y\rangle^{2\ell'}g_{n} dx dy\right| &\le C(\|g_{n}\|_{L^2_{\ell'}(\mathbb{R}^2_+)}^2 +\|w\|_{H^n_{\ell'}(\mathbb{R}^2_+)}^2)+ \frac{1}{8}\| \partial_y  g_n\|_{L^2_{\ell'}(\mathbb{R}^2_+)}^2,
\end{align*}
and exactly same computation gives also
\begin{align*}
\left|\int_{\mathbb{R}^2_+}  {M}^n_{3 } \langle y\rangle^{2\ell'}g_{n} dx dy\right| &\le C(\|g_{n}\|_{L^2_{\ell'}(\mathbb{R}^2_+)}^2 +\|w\|_{H^n_{k+\ell}(\mathbb{R}^2_+)}^2)+ \frac{\epsilon}{8}\|\partial_x  g_n\|_{L^2_{\ell'}(\mathbb{R}^2_+)}^2.
\end{align*}
Now using  \eqref{apriori}-\eqref{C0} and $m\ge 6$, with the same computation as above, we can get
$$
\left|\int_{\mathbb{R}^2_+}  {M}^n_{4 } \langle y\rangle^{2\ell'}g_{n} dx dy\right| \le C\left(\|g_{n}\|_{L^2_{\ell'}(\mathbb{R}^2_+)}^2 +\|w\|_{H^n_{k+\ell}(\mathbb{R}^2_+)}^2\right).
$$
which finishes the proof of Lemma \ref{lemma5.2}.
\end{proof}

\begin{lemma}\label{lemma5.3}
Under the assumption of Proposition \ref{lemma-non-x-k-monotone-part1-2}, we have
\begin{equation*}
\begin{split}
&\left|\int_{\mathbb{R}^2_+} {M}^n_5 \, \langle y\rangle^{2\ell'}g_{n} dx dy\right|\le
\tilde{C}\left(\sum^n_{p=1}\|\tilde g_p\|_{L^2_{\ell'}(\mathbb{R}^2_+)}^2
+\|w\|_{H^n_{k + \ell}(\mathbb{R}^2_+)}^2\right),
\end{split}
\end{equation*}
where $\tilde{C}$ is independent of $\epsilon$.
\end{lemma}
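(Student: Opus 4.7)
The plan is to expand $M^n_5$ completely, isolate its top-order (in $x$) piece, exploit an algebraic cancellation that mirrors the nonlinear cancellation structure of the Prandtl equation, and then close all remaining terms by Cauchy--Schwarz combined with Sobolev embedding and Hardy's inequality. No integration by parts against $g_n$ is needed, which is what keeps the right-hand side free of $\|\partial_y g_n\|$ or $\|\partial_x g_n\|$ terms.

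Setting $B = u^s_y + u_y$ and $f_n = \sum_{i=1}^n C^i_n \partial_x^i u \cdot \partial_x^{n+1-i} u$, I would first carry out the outer $y$-derivative to write
$$ M^n_5 = -\frac{\partial_y f_n}{B} + \eta_2\, \frac{f_n}{B}, $$
and then systematically replace each factor using $\partial_x^j u / B = \partial_y^{-1} g_j$ and, consequently, $\partial_x^j w / B = g_j + \eta_2\, \partial_y^{-1} g_j$ for $1 \le j \le n$. The maximal $x$-order $n$ arises only from $i=1$ and $i=n$, which combine with coefficient $\binom{n}{1}+\binom{n}{n} = n+1$ into $(n+1)\partial_x u \cdot \partial_x^n u$ inside $f_n$; after differentiation and substitution, the two $\eta_2\, \partial_x u\, \partial_y^{-1} g_n$ contributions coming from $-\partial_y f_n / B$ and from $\eta_2 f_n / B$ cancel exactly, leaving the clean formula
$$ M^n_{5,\mathrm{top}} = -(n+1)\bigl(\partial_x u \cdot g_n + \partial_x w \cdot \partial_y^{-1} g_n\bigr). $$

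Testing $M^n_{5,\mathrm{top}}$ against $\langle y\rangle^{2\ell'} g_n$, the first term is bounded by $\|\partial_x u\|_{L^\infty}\|g_n\|_{L^2_{\ell'}}^2$ with $\|\partial_x u\|_{L^\infty} \le C\|w\|_{H^m_{k+\ell}}$ via Sobolev (using $m \ge 6$). For the second, Cauchy--Schwarz together with $\|\langle y\rangle\partial_x w\|_{L^\infty} \le C\|w\|_{H^m_{k+\ell}}$ (using $k+\ell>\tfrac32$) reduces matters to $\|\partial_y^{-1} g_n\|_{L^2_{\ell'-1}}$; since $\partial_x^n u|_{y=0}=0$ gives $\partial_y^{-1} g_n = \int_0^y g_n\, d\tilde y$, Hardy's inequality (Lemma \ref{inequality-hardy}), available because $\ell'>\tfrac12$, yields $\|\partial_y^{-1} g_n\|_{L^2_{\ell'-1}} \le C\|g_n\|_{L^2_{\ell'}}$, and the \`a priori smallness \eqref{apriori}--\eqref{C0} absorbs the factor $\|w\|_{H^m_{k+\ell}}$. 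For the middle-order terms ($2 \le i \le n-1$), every factor after differentiation has $x$-order $\le n-1$, so I would put one factor in $L^\infty$ by Sobolev and keep the other in $L^2_{\ell'}$---either directly controlled by $\|w\|_{H^n_{k+\ell}}$ using $|1/B|\le C\langle y\rangle^k$, or, when $\partial_x^{n+1-i} w / B$ appears, via its $g_{n+1-i}$-representation combined with Hardy---giving bounds absorbed into $\|g_p\|_{L^2_{\ell'}}^2 + \|w\|_{H^n_{k+\ell}}^2$.

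The main obstacle is the top-order piece: $\partial_x^n w$ cannot afford any loss of $x$-derivative, and the naive use of $\partial_x^n w / B = g_n + \eta_2 \partial_y^{-1} g_n$ produces an $\eta_2 \partial_y^{-1} g_n \cdot \partial_x u$ term that is not directly of the form allowed on the right-hand side of the lemma. The algebraic cancellation uncovered above is precisely what removes this obstruction, and is one instance of the nonlinear cancellation structure of the Prandtl equation under the monotonicity assumption highlighted in the abstract.
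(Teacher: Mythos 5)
Your decomposition of $M^n_5$ is, after the dust settles, identical to the paper's: the paper never generates the $\eta_2$ terms because it groups $1/(u^s_y+u_y)$ with one factor of each product \emph{before} applying $\partial_y$ (writing $\partial_x^j u/(u^s_y+u_y)=\partial_y^{-1}g_j$ and differentiating that), whereas you generate them and watch them cancel; the resulting top-order expression $-(n+1)(\partial_x u\, g_n+\partial_x w\,\partial_y^{-1}g_n)$ and the treatment of the middle terms by $L^\infty$--$L^2$ splitting plus Sobolev and Hardy are the same. So the ``cancellation'' you highlight is the product rule in disguise rather than a new structural input, but the algebra is correct.

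There is, however, one step that fails as written: the claim
$\|\partial_y^{-1}g_n\|_{L^2_{\ell'-1}}\le C\|g_n\|_{L^2_{\ell'}}$, ``available because $\ell'>\tfrac12$.'' Since $\tfrac12<\ell'<\ell+\tfrac12<1$, the exponent $\ell'-1$ lies in $(-\tfrac12,0)$, which is the regime of Hardy's inequality (i), requiring $\lim_{y\to\infty}\partial_y^{-1}g_n=0$ --- not the regime (ii) where vanishing at $y=0$ suffices. But $\partial_y^{-1}g_n=\partial_x^n u/(u^s_y+u_y)$ does \emph{not} decay: the denominator is $\approx\langle y\rangle^{-k}$ while $\partial_x^n w\in L^2_{k+\ell}$ only gives $|\partial_x^n u|\lesssim\langle y\rangle^{\frac12-k-\ell}$, so $|\partial_y^{-1}g_n|\lesssim\langle y\rangle^{\frac12-\ell}$, which is unbounded for $\ell<\tfrac12$; correspondingly $\langle y\rangle^{2(\ell'-1)}|\partial_y^{-1}g_n|^2\lesssim\langle y\rangle^{2(\ell'-\ell)-1}$ is not integrable at infinity. (As a general inequality it is simply false: take $g_n$ supported near $y=1$ with $\int_0^\infty g_n\,dy\neq0$; then $\langle y\rangle^{\ell'-1}\int_0^y g_n$ fails to be in $L^2$ for $\ell'\ge\tfrac12$.) The fix is to reallocate the weights exactly as the paper does: estimate
\begin{equation*}
\int_{\mathbb{R}^2_+}\bigl|\partial_x w\,\partial_y^{-1}g_n\bigr|\,\langle y\rangle^{2\ell'}|g_n|\,dxdy
\le \bigl\|\langle y\rangle^{\ell+1}\partial_x w\bigr\|_{L^\infty}\,
\bigl\|\langle y\rangle^{\ell'-\ell-1}\partial_y^{-1}g_n\bigr\|_{L^2}\,
\|g_n\|_{L^2_{\ell'}},
\end{equation*}
where now $\ell'-\ell-1\in[-1,-\tfrac12)$, so Hardy (ii) applies using $\partial_y^{-1}g_n|_{y=0}=0$ and yields $\|\langle y\rangle^{\ell'-\ell-1}\partial_y^{-1}g_n\|_{L^2}\le C\|\langle y\rangle^{\ell'-\ell}g_n\|_{L^2}\le C\|g_n\|_{L^2_{\ell'}}$ (as $\ell\ge0$), while $\|\langle y\rangle^{\ell+1}\partial_x w\|_{L^\infty}\le\|\langle y\rangle^{k+\ell}\partial_x w\|_{L^\infty}\le C\|w\|_{H^m_{k+\ell}}$ since $k>1$. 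This is precisely the paper's Lemma \ref{lemma5.1} and the displayed estimate in its proof of Lemma \ref{lemma5.3}. With that correction your argument goes through; the same weight bookkeeping should be applied wherever you invoke ``$|1/B|\le C\langle y\rangle^k$ directly'' for the middle-order terms.
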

\noindent

\begin{proof} Recall,
\begin{align*}
\tilde{M}_{5 }^n & =\sum_{i\ge 4}C^i_ng_{i}\, \partial^{n + 1 -i}_x u +
\sum_{1\le i\le 3}C^i_n\partial_x^i u\, g_{n+1-i }\\
&\quad+\sum_{i\ge 4}C^i_n   \partial_y^{-1} g_{n}  \partial^{n + 1 -i}_x w  + \sum_{1\le i\le 3}C^i_n\partial_x^i w\,  \partial_y^{-1} g_{n+ 1 - i } ,
\end{align*}
here if $n\le 3$, we have only the last term.
Then, for { $\|w\|_{H^{m}_{k+\ell}}\le \zeta\le 1, m\ge 6$},
\begin{align*}
&\sum_{i\ge 4}C^i_n\|g_{i}\, \partial^{n + 1 -i}_x u\|_{L^2_{\ell'}(\mathbb{R}^2_+)} +
\sum_{1\le i\le 3}\|\partial_x^i u\, g_{n+1-i }\|_
{L^2_{\ell'}(\mathbb{R}^2_+)}\\
&\le  \sum_{i\ge 4}C^i_n\|g_{i}\|_{L^2_{\ell'}(\mathbb{R}^2_+)}\|\partial^{n + 1 -i}_x u\|_{L^\infty(\mathbb{R}^2_+)} \\
&\qquad+
\sum_{1\le i\le 3}C^i_n\|\partial_x^i u\|_{L^\infty(\mathbb{R}^2_+)}\, \|\tilde g_{n+1-i }\|_
{L^2_{\ell'}(\mathbb{R}^2_+)}\\
&\le C \sum_{i\ge 4}C^i_n\|g_{i}\|_{L^2_{\ell'}(\mathbb{R}^2_+)}\|w\|_{H^{n + 3 -i}_1} \\
&\qquad+ C
\sum_{1\le i\le 3}C^i_n\|w\|_{H^{i + 3}_1}\, \|g_{n+1-i }\|_
{L^2_{\ell'}(\mathbb{R}^2_+)}\\
& \le C \sum_{i=1}^{n} \| g_{i}\|_{L^2_{\ell'}}.
\end{align*}
Similarly, for the second line in ${M}_{5}$, by Lemma \ref{lemma5.1}, we have
\begin{align*}
\sum_{i\ge 4}C^i_n\| (\partial^{-1}_y g_{i}) \partial^{n + 1 -i}_x w\|_{L^2_{\ell'}(\mathbb{R}^2_+)} &\le \sum_{i\ge 4}C^i_n\| \langle y \rangle^{\ell' - \ell - 1} (\partial^{-1}_y g_{i}) \|_{L^2(\mathbb{R}^2_+)} \| \langle y \rangle^{\ell + 1}\partial^{n + 1 -i}_x w\|_{L^\infty}\\
& \le C \sum_{i = 1}^{n} \|g_{i}\|_{L^2_{\ell'}(\mathbb{R}^2_+)}.
 \end{align*}
We have  proven Lemma \ref{lemma5.3}.
\end{proof}

\begin{lemma}\label{lemma5.4}
Under the assumption of Proposition \ref{lemma-non-x-k-monotone-part1-2}, we have
\begin{equation*}
\begin{split}
&\left|\int_{\mathbb{R}^2_+} {M}^n_6 \, \langle y\rangle^{2\ell'}g_{n} dx dy\right|\\
&\quad\le \frac 1{8m} \sum^n_{p=1}\|\partial_y  g_p\|^2_{L^2_{\ell'}(\mathbb{R}^2_+)}+
\tilde{C}\left(\sum^n_{p=1}\| g_p\|_{L^2_{\ell'}(\mathbb{R}^2_+)}^2
+\|w\|_{H^n_{k+\ell}(\mathbb{R}^2_+)}^2\right),
\end{split}
\end{equation*}
where $\tilde{C}$ is independents of $\epsilon$.
\end{lemma}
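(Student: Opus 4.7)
The natural strategy is integration by parts in $y$ to move the outer $\partial_y$ off $M^n_6$ and onto the test function $\langle y\rangle^{2\ell'} g_n$. Writing
$$M^n_6 = -\partial_y N^n_6, \qquad N^n_6 := \frac{1}{u^s_y + u_y}\sum_{i=1}^n C^i_n\, \partial^i_x w \cdot \partial^{n-i}_x v,$$
I get
$$\int_{\mathbb{R}^2_+} M^n_6 \,\langle y\rangle^{2\ell'} g_n \,dx\,dy \;=\; \int_{\mathbb{R}^2_+} N^n_6 \Bigl(\langle y\rangle^{2\ell'}\partial_y g_n + 2\ell'\, y\langle y\rangle^{2\ell'-2} g_n\Bigr) dx\,dy \;-\; B,$$
where the boundary contribution $B$ vanishes: at $y=+\infty$ by the weighted-$L^2$ decay, and at $y=0$ because $\partial^{n-i}_x v|_{y=0} = 0$ for every $i \ge 1$ (a direct consequence of $v|_{y=0}=0$). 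This is the step that exploits the special structure of $M^n_6$.

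Next I apply Cauchy--Schwarz to the two resulting pieces. The $\partial_y g_n$-piece produces $\tfrac{1}{8m}\|\partial_y g_n\|^2_{L^2_{\ell'}}$ plus $C\|\langle y\rangle^{\ell'}N^n_6\|^2_{L^2}$, and the $g_n$-piece gives $\|g_n\|^2_{L^2_{\ell'}}$ plus $C\|\langle y\rangle^{\ell'-1}N^n_6\|^2_{L^2}$. Using \eqref{pior-2} to dominate $(u^s_y+u_y)^{-1}$ by $C\langle y\rangle^k$, the whole question reduces to controlling
$$\bigl\|\langle y\rangle^{k+\ell'}\,\partial^i_x w\cdot \partial^{n-i}_x v\bigr\|_{L^2(\mathbb{R}^2_+)}, \qquad 1\le i\le n,$$
by $C(\|g_n\|_{L^2_{\ell'}}+\|w\|_{H^n_{k+\ell}})$.

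For this I split the sum according to derivative distribution. When $n-i$ is small (roughly $n-i\le \lfloor n/2\rfloor$), I place $\langle y\rangle^{k+\ell}\partial^{n-i}_x v$ in $L^\infty_{x,y}$ via Sobolev embedding together with $v=-\int_0^y \partial_x u\,d\tilde y$ (using the already-established $\|v\|_{L^\infty}\le C\|w\|_{H^2_{3/2+\delta}}$ type bound from Proposition \ref{prop3.2}), and put $\partial^i_x w$ in $L^2$ with weight $\ell'-\ell < 1/2$, absorbed by $\|w\|_{H^n_{k+\ell}}$ thanks to $k+\ell>3/2$. For $i$ small, I instead put $\langle y\rangle^{k+\ell}\partial^i_x w$ in $L^\infty$ by Sobolev on $w\in H^m_{k+\ell}$, and bound $\partial^{n-i}_x v$ by Hardy's inequality: since $\partial^{n-i}_x v=-\int_0^y \partial^{n-i+1}_x u\,d\tilde y$ and $u=-\int_y^\infty w\,d\tilde y$, two applications of Lemma \ref{inequality-hardy} give $\|\langle y\rangle^{\ell'-1}\partial^{n-i}_x v\|_{L^2}\le C\|\langle y\rangle^{\ell'+1}\partial^{n-i+1}_x w\|_{L^2}\le C\|w\|_{H^n_{k+\ell}}$, provided the weight indices satisfy $\ell'>\tfrac12$ and $\ell'+1\le k+\ell$, both of which hold under our standing assumptions.

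\textbf{The main obstacle.} The worst case is $i=1$, that is $\partial_x w\cdot\partial^{n-1}_x v$, which saturates the $x$-derivatives on $v$ --- exactly the loss-of-$x$-derivative phenomenon characteristic of Prandtl. The integration by parts at the outset is what saves us: after it, we only need the undifferentiated $\partial^{n-1}_x v$ (not its $y$-derivative), so the Hardy trade above converts this to a weighted $L^2$ norm of $\partial^n_x u$, equivalently of $\partial^n_x w$ up to one power of $\langle y\rangle$. Because $n\le m$ and $w\in H^{m+2}_{k+\ell}$, this stays strictly inside the available regularity budget, and the \emph{a priori} estimate \eqref{uniform-part2-2} closes. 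The symmetric case $i=n$ (where the $v$-factor is just $v$) and the intermediate $i$'s are strictly easier. Finally, since the coefficients $C^i_n$ are bounded by a function of $m$, summing over the $n$ terms yields the stated constant $\tfrac{1}{8m}$ in front of the dissipation and a generic constant $\tilde C$ in front of the absorbing terms, completing the proposed proof.
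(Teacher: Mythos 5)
Your opening move (writing $M^n_6=-\partial_y N^n_6$, integrating by parts in $y$, and checking that the boundary terms vanish because $\partial_x^{n-i}v|_{y=0}=0$) is sound and is essentially the same device the paper uses for the $\partial_y g_i\,\partial_x^{n-i}v$ contribution. The gap is in the reduction that follows: you bound $(u^s_y+u_y)^{-1}$ crudely by $C\langle y\rangle^{k}$ and then try to control $\|\langle y\rangle^{k+\ell'}\partial_x^i w\cdot\partial_x^{n-i}v\|_{L^2}$ by distributing the weight between the two factors. This cannot work as written, because $v$ and its $x$-derivatives are bounded but do \emph{not} decay in $y$: as $y\to+\infty$, $\partial_x^{n-i}v\to-\int_0^{+\infty}\partial_x^{n-i+1}u\,d\tilde y\neq0$ in general. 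Consequently $\langle y\rangle^{k+\ell}\partial_x^{n-i}v\notin L^\infty$ (the bound you cite from Proposition \ref{prop3.2} is unweighted), and your Hardy step $\|\langle y\rangle^{\ell'-1}\partial_x^{n-i}v\|_{L^2}\le C\|\langle y\rangle^{\ell'}\partial_x^{n-i+1}u\|_{L^2}$ is not covered by Lemma \ref{inequality-hardy}: since $\tfrac12<\ell'<\ell+\tfrac12<1$ one has $-\tfrac12<\ell'-1<0$, so part (ii) (which needs $\lambda<-\tfrac12$) does not apply, and part (i) needs exactly the decay at infinity that $v$ lacks. Moreover $\ell'+1\le k+\ell$ does not follow from $k>1$, $k+\ell>\tfrac32$, $\ell'<\ell+\tfrac12$ (take $k=1.2$, $\ell=0.4$, $\ell'=0.8$), and even granting both factor bounds the weights do not add up: $(k+\ell)+(\ell'-1)=k+\ell'-(1-\ell)<k+\ell'$, leaving an uncontrolled $\langle y\rangle^{1-\ell}$. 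Finally, for $i$ close to $n$ (e.g.\ $i=n=m$) you cannot put $\langle y\rangle^{k+\ell}\partial_x^i w$ in $L^\infty$ by Sobolev, and $\|\langle y\rangle^{k+\ell'}\partial_x^i w\|_{L^2}$ overshoots the available norm $\|w\|_{H^m_{k+\ell}}$ by the positive amount $\ell'-\ell$; discarding the quotient structure is what creates this deficit.

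The missing ideas, which are the heart of the paper's proof, are two. First, do not separate $\partial_x^i w$ from $(u^s_y+u_y)^{-1}$: the identity $\partial_x^i w/(u^s_y+u_y)=g_i+\eta_2\,\partial_y^{-1}g_i$ turns each summand of $N^n_6$ into $(g_i+\eta_2\partial_y^{-1}g_i)\,\partial_x^{n-i}v$, so the weight it must carry is exactly $\ell'$, matching the energy $\|g_i\|_{L^2_{\ell'}}$ (together with Lemma \ref{lemma5.1} for the $\partial_y^{-1}g_i$ part). Second, the product is estimated by the mixed H\"older inequality $\|fh\|_{L^2}\le\|f\|_{L^\infty_xL^2_y}\|h\|_{L^\infty_yL^2_x}$ with the \emph{entire} $y$-weight on the $g$-factor and none on the $v$-factor: $\sup_y\|\partial_x^{n-i}v(\cdot,y)\|_{L^2_x}$ is controlled via Cauchy--Schwarz in $y$ (using $k+\ell-1>\tfrac12$) by $\|\langle y\rangle^{k+\ell-1}\partial_x^{n-i+1}u\|_{L^2}\le C\|w\|_{H^n_{k+\ell}}$, while $\sup_x\|\langle y\rangle^{\ell'}g_i(x,\cdot)\|_{L^2_y}$ costs one $x$-derivative of $g_i$, which is supplied without loss by the relation $\partial_x g_i=g_{i+1}-g_i\eta_1-\partial_y^{-1}g_i\,\partial_y\eta_1$ (and for $i=n$ one simply uses $\|v\|_{L^\infty}$, since the $v$-factor then carries no $x$-derivative). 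Without these two points the estimate does not close, so the proposal as written has a genuine gap rather than a merely different route.
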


\begin{proof} Recall
\begin{align*}
{M}_{6 }  & =  \sum_{i=1}^{n}C^i_n  g_{i}\eta_2   \partial^{n -i}_x {v}+
\sum_{i=1}^{n}C^i_n g_{i}  \partial^{n+1  -i}_x {u}  + \sum_{i=1}^{n}C^i_n  \partial_y g_{i}   \partial^{n -i}_x {v}\\
& + \sum_{i=1}^{n}   \partial_y^{-1} g_{i}  \left( C_n^i\partial^{n -i}_x {v} \partial_y \eta_2 + C_n^i \partial^{n+1  -i}_x {u} \eta_2\right).
\end{align*}
{In ${M}_6^n$, we just study the term $\partial_y g_{1} \partial_x^{n - 1} v $ as an example, the others terms are similar,
\begin{align*}
\int_{\mathbb{R}^2_+}  \partial_y g_{1 } \partial_x^{n - 1} v \,  \langle y \rangle^{2\ell'} g_{n} & = - \int_{\mathbb{R}^2_+}   g_{1 } \partial_x^{n - 1} v \,  \langle y \rangle^{2\ell'}  \partial_y g_{n}\\
& + \int_{\mathbb{R}^2_+}    g_{1 }  \partial_x^{n  } u\,  \langle y \rangle^{2\ell'}  g_{n} dx dy,
\end{align*}
\begin{align*}
\int_{\mathbb{R}^2_+}   g_{1 } \partial_x^{n - 1} v \,  \langle y \rangle^{2\ell'}  \partial_y g_{n} dx dy & \le \frac{1}{8m} \|\partial_y g_{n}\|_{L^2_{\ell'}}^2 + C\| g_{1 } \partial_x^{n - 1} v \|_{L^2_{\ell'}}^2,
\end{align*}
\begin{align*}
 \| g_{1 } \partial_x^{n - 1} v \|_{L^2_{\ell'}}^2 & \le \sup_{x \in \mathbb{R}} \int_{0}^{+\infty} \langle y \rangle^{2\ell'}g_{1 }^2 dy \, \sup_{y \in \mathbb{R}_+} \int_{-\infty}^{+\infty} \left|\int_{0}^{y}\partial_x^{n} u dz\right|^2 dy\\
 & \le \bigg(  \|g_{1 }\|_{L^2_{\ell'}(\mathbb{R}_+^2)}^2 +  \|\partial_x g_{1 }\|_{L^2_{\ell'}(\mathbb{R}_+^2)}^2  \bigg) \int_{-\infty}^{+\infty} \left|\int_{0}^{+\infty}|\partial_x^{n} u| dz\right|^2 dy\\
 & \le C \bigg(  \|g_{1 }\|_{L^2_{\ell'}(\mathbb{R}_+^2)}^2 +  \|\partial_x g_{1 }\|_{L^2_{\ell'}(\mathbb{R}_+^2)}^2  \bigg)\\
 &\qquad \times \int_{-\infty}^{+\infty} \left|\int_{0}^{+\infty}\langle y \rangle^{- k - \ell + 1} \, \langle y \rangle^{k + \ell - 1}|\partial_x^{n} u| dz\right|^2 dy\\
 & \le C \bigg(  \|g_{1 }\|_{L^2_{\ell'}(\mathbb{R}_+^2)}^2 +  \|  g_{2 }\|_{L^2_{\ell'}(\mathbb{R}_+^2)}^2 + \|w\|_{H^m_{k +\ell}}^2\bigg)\\
 & \qquad\times \int_{-\infty}^{+\infty} \left|\int_{0}^{+\infty}\langle y \rangle^{- k - \ell + 1} \, \langle y \rangle^{k + \ell - 1}|\partial_x^{n} u| dz\right|^2 dy\\
 & \le C\sum_{i=1}^{2}\|g_{i}\|_{L^2_{\ell'}}^2 + C \|w\|_{H^m_{k +\ell}}^2.
\end{align*}
Here we have used Lemma \ref{lemma5.1} and
\[ k + \ell - 1 > \frac{1}{2},~~ \| w\|_{H^m_{k + \ell}} \le 1, \]
and
\[ \partial_x g_j =  g_{j+1} - g_j \eta_1 -  \partial_y^{-1} g_{n}  \cdot \partial_y \eta_1.  \]
}
By the similar trick, we have completed the proof of this lemma.
\end{proof}

\section{Existence of the solution}\label{section7}

Now, we can  conclude   the following energy estimate for the sequence of
approximate solutions.

\begin{theorem}\label{energy}
Assume $u^s$ satisfies Lemma \ref{shear-profile}. Let $m\ge 6$ be an even integer, $k + \ell >\frac{3}{2}, 0 < \ell<\frac12, ~\ \frac{1}{2}<\ell' < \ell + \frac{1}{2},~$, and $\tilde{u}_{0}\in  H^{m+3}_{k + \ell'-1}(\mathbb{R}^2_+)$ which  satisfies the compatibility conditions \eqref{compatibility-a1}-\eqref{compatibility-a2}. Suppose that
$\tilde{w}_\epsilon \in L^\infty ([0, T]; H^{m+2}_{k+\ell}(\mathbb{R}^2_+))$ is a solution to \eqref{shear-prandtl-approxiamte-vorticity} such that
 \begin{equation*}
\|\tilde{w}_\epsilon\|_{ L^\infty ([0, T]; H^m_{k+\ell}(\mathbb{R}^2_+)}\le  \zeta
\end{equation*}
with
$$
0<\zeta\le1, \quad C_m\zeta\le \frac{\tilde c_1}{2},
$$
where $0<T\le T_1$ and $T_1$ is the lifespan of shear flow $u^s$ in the Lemma \ref{shear-profile}, $C_m$ is the Sobolev embedding constant in \eqref{C0}. Then there exists
$C_T>0, \tilde C_T>0$  such that,
\begin{equation}\label{energy estimate-A}
\|\tilde{w}_\epsilon\|_{ L^\infty ([0, T]; H^m_{k+\ell}(\mathbb{R}^2_+))}\le  C_T\|\tilde{u}_{0}\|_{H^{m+1}_{k + \ell'-1}(\mathbb{R}^2_+)},
\end{equation}
where $C_T>0$ is increasing with respect to $0<T\le T_1$ and  independent of $0<\epsilon\le 1$.
\end{theorem}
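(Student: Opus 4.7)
The estimate \eqref{approx-less-k} of Proposition \ref{prop3.1} is already uniform in $\epsilon$ but misses the pure top-order $x$-derivative $\|\partial_x^m \tilde w_\epsilon\|_{L^2_{k+\ell}}$, while the $g_n$-estimate \eqref{uniform-part2-2} of Proposition \ref{lemma-non-x-k-monotone-part1-2} is uniform in $\epsilon$ and, through the transformation $g_n=(\partial_x^n \tilde u_\epsilon/(u^s_y+\tilde u_{\epsilon,y}))_y$, secretly encodes precisely that missing top-order information. The plan is to introduce the combined functional
\[
E(t)\;:=\;\|\tilde w_\epsilon(t)\|_{H^{m,m-1}_{k+\ell}(\mathbb{R}^2_+)}^2\;+\;\sum_{n=1}^{m}\|g_n(t)\|_{L^2_{\ell'}(\mathbb{R}^2_+)}^2,
\]
to establish an equivalence $\|\tilde w_\epsilon\|_{H^m_{k+\ell}}^2 \le C\,E(t)$ with $C$ independent of $\epsilon$, and then to close a linear Gronwall inequality for $E$.

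\textbf{From $g_m$ back to $\partial_x^m \tilde w_\epsilon$.} Since $\partial_x^m \tilde u_\epsilon|_{y=0}=0$, the identity $g_m=(\partial_x^m\tilde u_\epsilon/(u^s_y+\tilde u_{\epsilon,y}))_y$ integrates to $\partial_x^m \tilde u_\epsilon=(u^s_y+\tilde u_{\epsilon,y})\int_0^y g_m(\cdot,\tilde y)\,d\tilde y$, whence
\[
\partial_x^m \tilde w_\epsilon\;=\;(u^s_y+\tilde u_{\epsilon,y})\,g_m\;+\;(u^s_{yy}+\tilde u_{\epsilon,yy})\int_0^y g_m(\cdot,\tilde y)\,d\tilde y.
\]
Using $|u^s_y+\tilde u_{\epsilon,y}|\lesssim\langle y\rangle^{-k}$ from \eqref{pior-2}, $|u^s_{yy}+\tilde u_{\epsilon,yy}|\lesssim\langle y\rangle^{-k-1}$ from the shear-flow assumption plus the a priori bound, and a weighted Hardy inequality (valid because $\ell-1<-\tfrac12$, i.e. $\ell<\tfrac12$), this yields $\|\partial_x^m \tilde w_\epsilon\|_{L^2_{k+\ell}}\le C\|g_m\|_{L^2_\ell}\le C\|g_m\|_{L^2_{\ell'}}$. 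Combined with the definition of $\|\cdot\|_{H^{m,m-1}_{k+\ell}}$, this gives the claimed equivalence $\|\tilde w_\epsilon\|_{H^m_{k+\ell}}^2\le C\,E(t)$.

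\textbf{Closing the inequality and Gronwall.} Adding \eqref{approx-less-k} and \eqref{uniform-part2-2} (the latter summed over $n=1,\dots,m$) and using the a priori smallness $\|\tilde w_\epsilon\|_{H^m_{k+\ell}}\le\zeta\le 1$ to absorb the higher-power nonlinearity $\|\tilde w_\epsilon\|^m_{H^m_{k+\ell}}\le \|\tilde w_\epsilon\|^2_{H^m_{k+\ell}}$, together with the previous paragraph, one obtains
\[
\frac{d}{dt}E(t)\;\le\;C_T\,E(t),
\]
where $C_T>0$ depends on the shear-flow constants $\tilde c_1,\tilde c_2,\tilde c_3$ of Lemma \ref{shear-profile} (hence is increasing in $T$) but is independent of $\epsilon\in(0,\epsilon_0]$. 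Gronwall gives $E(t)\le E(0)\,e^{C_T t}$ on $[0,T]$. It remains to bound $E(0)$. For the non-isotropic part, expanding $\tilde w_{0,\epsilon}=\partial_y\tilde u_0+\epsilon\partial_y\mu_\epsilon$ and inspecting the definition \eqref{norm-1} gives $\|\tilde w_{0,\epsilon}\|_{H^{m,m-1}_{k+\ell}}^2\le C\|\tilde u_0\|_{H^{m+1}_{k+\ell-1}}^2$ plus an $\epsilon$-correction bounded by Corollary \ref{coro-boundary}; the weight inequality $\ell-1<\ell'-1$ then upgrades to $\|\tilde u_0\|_{H^{m+1}_{k+\ell'-1}}^2$. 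For the $g_n$-initial data the remark following Lemma \ref{lemma-initial-deta} gives directly $\|g_n(0)\|_{L^2_{\ell'}}\le C\|\tilde u_0\|_{H^{m+1}_{k+\ell'-1}}$. These two bounds together with the equivalence yield exactly \eqref{energy estimate-A}.

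\textbf{Main obstacle.} The delicate point is the apparent circularity: the right-hand side of \eqref{uniform-part2-2} carries $\|\tilde w_\epsilon\|_{H^m_{k+\ell}}^2$, which contains the very top-order term $\|\partial_x^m \tilde w_\epsilon\|_{L^2_{k+\ell}}^2$ that the estimate is meant to help control. This feedback would be fatal if it produced an $\epsilon^{-1}$ factor, as in Proposition \ref{prop3.2}; what saves the argument is that, by Step~2, the top-order piece is absorbed into $\sum_n\|g_n\|_{L^2_{\ell'}}^2\subset E(t)$, so the loop closes uniformly in $\epsilon$. The success hinges on the narrow window $\tfrac12<\ell'<\ell+\tfrac12$ with $0<\ell<\tfrac12$ and $k+\ell>\tfrac32$, which is precisely what allows both the Hardy step above and the weight-gain $\ell'>\ell$ exploited in Proposition \ref{lemma-non-x-k-monotone-part1-2}.
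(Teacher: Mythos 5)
Your proposal is correct and follows essentially the same route as the paper: the combined functional $E(t)$ is exactly the quantity $\sum_n\|g_n^\epsilon\|_{L^2_{\ell'}}^2+\|\tilde w_\epsilon\|_{H^{m,m-1}_{k+\ell}}^2$ the paper tracks, your recovery of $\partial_x^m\tilde w_\epsilon$ from $g_m$ via $\partial_x^m\tilde u_\epsilon=(u^s_y+\tilde u_{\epsilon,y})\int_0^y g_m$ is the paper's Lemma \ref{lemma-g-h-w}, and the closure by summing \eqref{approx-less-k} with \eqref{uniform-part2-2}, bounding $E(0)$ by $\|\tilde u_0\|^2_{H^{m+1}_{k+\ell'-1}}$, and applying Gronwall is the paper's concluding argument (with $C_T^2=\tilde C_9e^{(C_2+\tilde C_8)T}$). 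The only cosmetic difference is that you phrase the closure as a differential inequality while the paper uses the equivalent integral form.
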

Firstly,  we collect some results to be used from Section \ref{section3} - \ref{section5}. We come back to the notations with tilde and the sub-index $\epsilon$. Then $g^\epsilon_m, h^\epsilon_m$ are the the functions defined by $\tilde{u}_\epsilon$.
Under the hypothesis of Theorem \ref{energy}, we have proven the estimates \eqref{approx-less-k} and \eqref{uniform-part2-2}
\begin{equation}
\label{approx-less-k-b}
\begin{split}
\frac{d}{ dt}\| \tilde{w}_\epsilon\|_{H^{m, m-1}_{k+\ell}(\mathbb{R}^2_+)}^2 &+ \|\partial_y\tilde{w}_\epsilon\|_{H^{m, m-1}_{k+\ell}(\mathbb{R}^2_+)}^2\\
&+ \epsilon\|\partial_x\tilde{w}_\epsilon\|_{H^{m, m- 1}_{k+\ell}(\mathbb{R}^2_+)}^2
\le C_1 \| \tilde{w}_\epsilon\|_{H^m_{k+\ell}(\mathbb{R}^2_+)}^2,
\end{split}
\end{equation}

\begin{equation}
\label{uniform-part2-1b}
\begin{split}
\frac{d}{dt}\sum^m_{n=1}\|   g^\epsilon_n\|_{L^2_{\ell'}(\mathbb{R}^2_+)}^2 & + \sum^m_{n=1}\|   \partial_y g^\epsilon_n\|_{L^2_{\ell'}(\mathbb{R}^2_+)}^2 + \epsilon \sum^m_{n=1}\|  \partial_x g^\epsilon_n\|_{L^2_{\ell'}(\mathbb{R}^2_+)}^2\\
& \le C_2(\sum^m_{n=1}\|  g^\epsilon_n\|_{L^2_{\ell'}(\mathbb{R}^2_+)}^2 + \|\tilde{w}_\epsilon\|_{H^m_{k+\ell}(\mathbb{R}^2_+)}^2)\,,
\end{split}
\end{equation}

\begin{lemma}\label{lemma-initial}
For the inital date, we have
\begin{align*}
T^\epsilon_m(g, w)(0)&= \sum^m_{n=1}\|   g^\epsilon_n(0)\|_{L^2_{\ell'}(\mathbb{R}^2_+)}^2+\| \tilde{w}_\epsilon(0)\|_{H^{m, m -1}_{k+\ell}(\mathbb{R}^2_+)}^2\\
&\le C\| \tilde{u}_0\|_{H^{m+1}_{k + \ell'-1}(\mathbb{R}^2_+)}^2,
\end{align*}
where $C$ is independent of $\epsilon$.
\end{lemma}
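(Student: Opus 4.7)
The statement is a purely static estimate on the initial data; the only point requiring care is that the initial datum of the regularized problem is $\tilde u_{0,\epsilon}=\tilde u_0+\epsilon\mu_\epsilon$, and the bound must be uniform in $\epsilon$. The plan is to bound the two summands of $T^\epsilon_m(g,w)(0)$ separately, reducing each to $\|\tilde u_0\|_{H^{m+1}_{k+\ell'-1}}$.

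For the second summand I first note that $H^{m,m-1}_{k+\ell}(\mathbb{R}^2_+)$ is a sub-norm of $H^m_{k+\ell}$, and that $\|\cdot\|_{H^m_{k+\ell}}\le\|\cdot\|_{H^m_{k+\ell'}}$ because $\ell<\ell'$ and $\langle y\rangle\ge 1$. Hence
\begin{equation*}
\|\tilde w_\epsilon(0)\|_{H^{m,m-1}_{k+\ell}}\le\|\tilde w_{0,\epsilon}\|_{H^m_{k+\ell'}}.
\end{equation*}
The corrector estimate in Corollary \ref{coro-boundary} with $\tilde m=m$ yields $\|\tilde w_{0,\epsilon}\|_{H^m_{k+\ell'}}\le\tfrac 32\|\tilde w_0\|_{H^m_{k+\ell'}}$, and the index shift $\alpha_2\mapsto\alpha_2+1$ in the definition of the weighted Sobolev norm produces $\|\tilde w_0\|_{H^m_{k+\ell'}}\le\|\tilde u_0\|_{H^{m+1}_{k+\ell'-1}}$.

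For the first summand I would use the explicit expansion already exploited in the remark following Lemma \ref{lemma-initial-deta},
\begin{equation*}
g^\epsilon_n(0)=\frac{\partial^n_x \tilde w_{0,\epsilon}}{u^s_{0,y}+\tilde u_{0,\epsilon,y}} - \frac{\partial^n_x \tilde u_{0,\epsilon}}{u^s_{0,y}+\tilde u_{0,\epsilon,y}}\,\eta_2(0).
\end{equation*}
The lower bound $|u^s_{0,y}+\tilde u_{0,\epsilon,y}|\gtrsim\langle y\rangle^{-k}$ from \eqref{pior-2}, combined with $|\eta_2(0)|\lesssim\langle y\rangle^{-1}$, gives the pointwise inequality
\begin{equation*}
\langle y\rangle^{\ell'}|g^\epsilon_n(0)|\le C\langle y\rangle^{k+\ell'}|\partial^n_x\tilde w_{0,\epsilon}| + C\langle y\rangle^{k+\ell'-1}|\partial^n_x\tilde u_{0,\epsilon}|.
\end{equation*}
Since $\tilde u_{0,\epsilon}|_{y=0}=0$ and $k+\ell'-1>\tfrac 12$, Hardy's inequality (Lemma \ref{inequality-hardy}) absorbs the second term into the first, so $\|g^\epsilon_n(0)\|_{L^2_{\ell'}}\le C\|\partial^n_x\tilde w_{0,\epsilon}\|_{L^2_{k+\ell'}}$. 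Summing over $n=1,\dots,m$ and reapplying Corollary \ref{coro-boundary} yields $\sum_{n=1}^m\|g^\epsilon_n(0)\|^2_{L^2_{\ell'}}\le C\|\tilde w_0\|^2_{H^m_{k+\ell'}}\le C\|\tilde u_0\|^2_{H^{m+1}_{k+\ell'-1}}$.

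No individual step is difficult; the only real obstacle is bookkeeping, namely tracking the weights through the chain $H^m_{k+\ell}\hookrightarrow H^m_{k+\ell'}$ (with the embedding in this direction because $\langle y\rangle\ge 1$), through Hardy's inequality, and through the corrector bound. Both the Hardy constant and the $\tfrac 32$ factor in Corollary \ref{coro-boundary} are manifestly $\epsilon$-independent, so the final constant $C$ is $\epsilon$-free, as the statement requires.
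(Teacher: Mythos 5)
Your proposal is correct and follows essentially the same route as the paper: expand $g^\epsilon_n(0)$ via the quotient rule, use the lower bound \eqref{pior-2} on $u^s_{0,y}+\tilde u_{0,\epsilon,y}$ and the decay $|\eta_2(0)|\lesssim\langle y\rangle^{-1}$, and shift the $y$-index to land in $H^{m+1}_{k+\ell'-1}$; your explicit handling of the corrector via Corollary \ref{coro-boundary} is if anything more careful than the paper, which simply writes $\tilde u_\epsilon(0)=\tilde u_0$. One small slip: the Hardy step for $\|\langle y\rangle^{k+\ell'-1}\partial^n_x\tilde u_{0,\epsilon}\|_{L^2}$ must invoke part (i) of Lemma \ref{inequality-hardy} (since $\lambda=k+\ell'-1>-\tfrac12$, using $\lim_{y\to+\infty}\tilde u_{0,\epsilon}=0$), not the vanishing at $y=0$, which pairs with part (ii) for $\lambda<-\tfrac12$ --- and in fact this detour is unnecessary, as that term is already a constituent of $\|\tilde u_0\|_{H^{m+1}_{k+\ell'-1}}$.
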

\begin{proof}
Notice for any $1\le n\le m$,
\[
  g^\epsilon_n
=  \big(\frac{\partial_x^n \tilde{u}_\epsilon}{u^s_y + \tilde{w}_\epsilon}\big)_y
=   \frac{\partial_x^n\partial_y \tilde{u}_\epsilon}{u^s_y + \tilde{w}_\epsilon}
 -   \frac{\partial_x^n \tilde{u}_\epsilon}{u^s_y + \tilde{w}_\epsilon} \eta_2,
\]
and $\tilde{u}_\epsilon(0)=\tilde{u}_0$, then we deduce, for any $1\le n\le m$,
\begin{align*}
&\|  g^\epsilon_n(0)\|_{L^2_{\ell'}(\mathbb{R}^2_+)}^2\le 2\left\|   \frac{\partial_x^n\partial_y \tilde{u}_0}{u^s_{0, y} + \tilde{w}_0}\right\|_{L^2_{\ell'}(\mathbb{R}^2_+)}^2+2\left\|   \frac{\partial_x^n\tilde{u}_0}{u^s_{0, y} + \tilde{w}_0}\eta_2(0)\right\|_{L^2_{\ell'}(\mathbb{R}^2_+)}^2\\
&\le C\big(\| \partial_x^n\partial_y \tilde{u}_0\|_{L^2_{k+\ell'}(\mathbb{R}^2_+)}^2+\|\partial_x^n\tilde{u}_0\|_{L^2_{k+\ell'-1}(\mathbb{R}^2_+)}^2\big)
\le C\|\tilde{u}_0\|_{H^{m+1}_{k+\ell'-1}(\mathbb{R}^2_+)}^2.
\end{align*}
\end{proof}
From \eqref{approx-less-k-b} and \eqref{uniform-part2-1b}, we have
 \begin{align}
\label{uniform-full-1b}
\begin{split}
& \|   g^\epsilon_m\|_{L^2_{\ell'}(\mathbb{R}^2_+)}^2    + \| \tilde{w}_\epsilon\|_{H^{m, m-1}_{k+\ell}(\mathbb{R}^2_+)}^2 \\
& \le  C_8 e^{C_2 t}\int^t_0e^{-C_2 \tau}\| \tilde{w}_\epsilon(\tau)\|_{H^m_{k+\ell}(\mathbb{R}^2_+)}^2 d\tau+C_9e^{C_2 t}\| \tilde{u}_0\|_{H^{m+1}_{k + \ell'-1}(\mathbb{R}^2_+)}^2 .
\end{split}
\end{align}
\begin{lemma}\label{lemma-g-h-w}
We have also the following estimate :
$$
\|\partial^m_x\tilde{w}_\epsilon\|_{L^2_{k+\ell}(\mathbb{R}^2_+)}^2\le
\tilde C \|{g}_m^\epsilon\|_{L^2_{\ell'}}^2.
$$
where $\tilde C$ is independent of $\epsilon$.
\end{lemma}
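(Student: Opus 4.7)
The plan is to invert the defining relation for $g^\epsilon_m$ and express $\partial^m_x \tilde w_\epsilon$ algebraically in terms of $g^\epsilon_m$ and its vertical antiderivative. Since the boundary condition $\tilde u_\epsilon|_{y=0} = 0$ forces $(\partial^m_x \tilde u_\epsilon / (u^s_y + \tilde u_{\epsilon,y}))|_{y=0} = 0$, integrating $g^\epsilon_m = (\partial^m_x \tilde u_\epsilon/(u^s_y + \tilde u_{\epsilon,y}))_y$ in $y$ gives
\begin{equation*}
\partial^m_x \tilde u_\epsilon = (u^s_y + \tilde u_{\epsilon,y})\, \partial_y^{-1} g^\epsilon_m, \qquad \partial_y^{-1} g^\epsilon_m(t,x,y) = \int_0^y g^\epsilon_m(t,x,y')\,dy'.
\end{equation*}
A single $y$-differentiation then yields the key algebraic identity
\begin{equation*}
\partial^m_x \tilde w_\epsilon = (u^s_y + \tilde u_{\epsilon,y})\, g^\epsilon_m + (u^s_{yy} + \tilde u_{\epsilon,yy})\, \partial_y^{-1} g^\epsilon_m,
\end{equation*}
so the task reduces to bounding each of the two pieces in $L^2_{k+\ell}(\mathbb{R}^2_+)$.

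For the first piece, the monotonicity bound \eqref{pior-2} gives $|u^s_y + \tilde u_{\epsilon,y}| \le C\langle y\rangle^{-k}$, so its $L^2_{k+\ell}$-norm squared is controlled by $C\|g^\epsilon_m\|_{L^2_{\ell}(\mathbb{R}^2_+)}^2 \le C\|g^\epsilon_m\|_{L^2_{\ell'}(\mathbb{R}^2_+)}^2$, using $\ell < 1/2 < \ell'$. For the second piece, Lemma \ref{shear-profile} together with the Sobolev-type control $|\partial_y \tilde w_\epsilon| \le C\zeta\langle y\rangle^{-k-\ell-1}$ coming from the a priori bound \eqref{apriori} yields $|u^s_{yy} + \tilde u_{\epsilon,yy}| \le C\langle y\rangle^{-k-1}$; hence its contribution is at most
\begin{equation*}
C\int_{\mathbb{R}^2_+} \langle y\rangle^{2(\ell-1)} |\partial_y^{-1} g^\epsilon_m|^2\, dx\,dy.
\end{equation*}

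The central step, and the only place where anything subtle occurs, is applying the weighted Hardy inequality (Lemma \ref{inequality-hardy}) to $\partial_y^{-1} g^\epsilon_m$, which vanishes at $y = 0$. Because $\ell < 1/2$, Hardy gives, uniformly in $x$,
\begin{equation*}
\int_0^\infty \langle y\rangle^{2(\ell-1)} |\partial_y^{-1} g^\epsilon_m|^2\,dy \le C\int_0^\infty \langle y\rangle^{2\ell} |g^\epsilon_m|^2\, dy \le C\int_0^\infty \langle y\rangle^{2\ell'} |g^\epsilon_m|^2\,dy,
\end{equation*}
and integrating in $x$ combined with the first contribution yields the claimed inequality. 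The resulting constant $\tilde C$ depends only on $k,\ell,\ell'$, on the shear-flow constants of Lemma \ref{shear-profile}, and on $\zeta$, so it is independent of $\epsilon$. The condition $\ell < 1/2$ is precisely what makes the Hardy bound admissible at this weight, while $\ell' > \ell$ guarantees the slight weight-loss absorbs the discrepancy between the two sides — a higher weight on the left would fail exactly at Hardy, so this is the tightest version of the lemma available from this argument.
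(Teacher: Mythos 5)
Your proof is correct and follows essentially the same route as the paper: invert the definition of $g^\epsilon_m$ using $\partial^m_x\tilde u_\epsilon|_{y=0}=0$, differentiate once in $y$ to get $\partial^m_x\tilde w_\epsilon=(u^s_y+\tilde u_{\epsilon,y})g^\epsilon_m+(u^s_{yy}+\tilde u_{\epsilon,yy})\partial_y^{-1}g^\epsilon_m$, and estimate the two pieces with the decay bounds \eqref{pior-2} and $|u^s_{yy}+\partial_y\tilde w_\epsilon|\le C\langle y\rangle^{-k-1}$. The only cosmetic difference is that you handle the antiderivative term via Lemma \ref{inequality-hardy}(ii) with $\lambda=\ell-1\in[-1,-\tfrac12)$, while the paper absorbs it using $\ell-1<-\tfrac12$ and $\ell'>\tfrac12$ (a Cauchy--Schwarz in $y$); both give the same weight bookkeeping and an $\epsilon$-independent constant.
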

\begin{proof} By the definition,
\[
\partial_x^m \tilde{u}_\epsilon(t, x, y)
= ( u^s_y+ \tilde{w}_\epsilon)\int_{0}^{y} g^\epsilon_m(t, x, \tilde y) d\tilde y,\quad y\in  \mathbb{R}_+,
\]
Therefore,
\[ 
\partial_x^m \tilde{w} = ( u^s_{yy}+(\tilde{w}_\epsilon)_y)\int_{0}^{y} g^\epsilon_m(t, x, \tilde y) d\tilde y -
( u^s_y+ \tilde{w}_\epsilon) g^\epsilon_m(t, x, y)  ,\,\,~~ y \ge 0 ,
\]
and
\begin{align*}
\| \partial_x^m \tilde{w}\|_{L^2_{k + \ell}}^2 & \le   C \int_{\mathbb{R}_+^2}\langle y \rangle^{2\ell - 2} \bigg( \int_{0}^{y} g_m^\epsilon(t,x, z) dz \bigg)^2 dx dy + \|g_m^\epsilon(t)\|_{L^2_{\ell'}(\mathbb{R}_+^2)}^2\\
& \le C \|g_m^\epsilon(t)\|_{L^2_{\ell'}(\mathbb{R}_+^2)}^2,
\end{align*}
where we have used $\ell - 1 < - \frac{1}{2}$ and $\frac12 < \ell'$.

\end{proof}

\begin{proof}[{\bf End of proof of Theorem \ref{energy}}]

Combining \eqref{uniform-full-1b}, Lemma \ref{lemma-initial} and Lemma \ref{lemma-g-h-w}, we get, for any $t\in ]0, T]$,
\begin{align*}
\begin{split}
\|\tilde{w}_\epsilon(t)\|_{H^{m}_{k+\ell}(\mathbb{R}^2_+)}^2\le & \tilde C_8 e^{C_2 t}\int^t_0e^{-C_2 \tau}\| \tilde{w}_\epsilon(\tau)\|_{H^m_{k+\ell}(\mathbb{R}^2_+)}^2 d\tau\\
&+\tilde C_9e^{C_2 t}\| \tilde{u}_0\|_{H^{m+1}_{k + \ell'-1}(\mathbb{R}^2_+)}^2,
\end{split}
\end{align*}
with $\tilde C_8, \tilde C_9$ independent of $0<\epsilon\le 1$.
We have by Gronwell's inequality that, for any $t\in ]0, T]$,
$$
\|\tilde{w}_\epsilon(t)\|_{H^{m}_{k+\ell}(\mathbb{R}^2_+)}^2\le \tilde C_9e^{(C_2+\tilde C_8) t}\| \tilde{u}_0\|_{H^{m+1}_{k + \ell'-1}(\mathbb{R}^2_+)}^2.
$$
So it is enough to take
\begin{align}\label{bound-2}
C^2_T=\tilde C_9e^{(C_2+\tilde C_8) T}
\end{align}
which gives \eqref{energy estimate-A}, and $C_T$  is increasing with respect to $T$.
We finish the proof of Theorem \ref{energy}.
\end{proof}

\begin{theorem}\label{uniform-existence}
Assume $u^s$ satisfies Lemma \ref{shear-profile}, and let  $\tilde{u}_{0}\in  H^{m+3}_{k + \ell'-1}(\mathbb{R}^2_+)$,  $m\ge 6$ be an even integer, $k>1, 0<\ell<\frac12,~~ \frac{1}{2}<\ell' < \ell + \frac{1}{2},~k+\ell>\frac 32$, and
$$
0<\zeta\le 1\,\,\,\mbox{with}\,\,\, C_m\zeta\le \frac{\tilde c_1}{2},
$$
where $C_m$ is the Sobolev embedding constant. If there exists $0<\zeta_0$ small enough such that,
 \begin{equation*}
\|\tilde{u}_0\|_{H^{m+1}_{k + \ell'-1}(\mathbb{R}^2_+)}\le  \zeta_0,
\end{equation*}
then, there exists $\epsilon_0>0$ and for any $0<\epsilon\le \epsilon_0$, the system \eqref{shear-prandtl-approxiamte-vorticity} admits a unique solution $\tilde{w}_\epsilon$ such that
$$
\|\tilde{w}_\epsilon\|_{L^\infty ([0, T_1]; H^{m}_{k+\ell}(\mathbb{R}^2_+))}\le \zeta,
$$
where $T_1$ is the lifespan of shear flow $u^s$ in the Lemma \ref{shear-profile}.
\end{theorem}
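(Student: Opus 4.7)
The plan is a standard continuation/bootstrap argument that glues the short-time existence from Proposition \ref{prop3.0} (whose existence time $T_\epsilon$ degenerates as $\epsilon \to 0$) to the $\epsilon$-independent energy estimate of Theorem \ref{energy}. The key point is that the uniform estimate \eqref{energy estimate-A} controls the $H^m_{k+\ell}$-norm in terms of the initial datum with a constant $C_{T_1}$ independent of $\epsilon$, while the local existence regime only needs to hold long enough for the bootstrap to close.

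First, I would use Corollary \ref{coro-boundary} to fix $\epsilon_0>0$ and, for each $0<\epsilon\le \epsilon_0$, produce the corrector $\mu_\epsilon\in H^{m+3}_{k+\ell'-1}$ so that $\tilde u_{0,\epsilon}=\tilde u_0+\epsilon \mu_\epsilon$ satisfies the compatibility conditions of order $m+2$ for the regularized system \eqref{shear-prandtl-approxiamte}, with
$$\|\partial_y\tilde u_{0,\epsilon}\|_{H^{m+2}_{k+\ell'-1}}\le \tfrac{3}{2}\|\partial_y\tilde u_0\|_{H^{m+2}_{k+\ell'-1}},\quad \|\tilde u_{0,\epsilon}\|_{H^{m+1}_{k+\ell'-1}}\le \tfrac{3}{2}\zeta_0.$$
By Proposition \ref{prop3.0} (cf.\ Remark \ref{remark3.5}), for each such $\epsilon$ there is $T_\epsilon>0$ and a unique solution $\tilde w_\epsilon\in L^\infty([0,T_\epsilon]; H^{m+2}_{k+\ell}(\mathbb{R}^2_+))$ of \eqref{shear-prandtl-approxiamte-vorticity} starting from $\tilde w_{0,\epsilon}$.

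Next, I would run a bootstrap on $[0,T_1]$. Define
$$T^\ast=\sup\Bigl\{T\in (0,T_1]\ :\ \tilde w_\epsilon\ \text{exists on}\ [0,T]\ \text{with}\ \|\tilde w_\epsilon\|_{L^\infty([0,T];H^m_{k+\ell})}\le \zeta \Bigr\}.$$
For any $T<T^\ast$, the a priori bound $\|\tilde w_\epsilon\|_{L^\infty([0,T];H^m_{k+\ell})}\le \zeta$ together with $C_m\zeta\le \tilde c_1/2$ places us in the hypothesis of Theorem \ref{energy}, which yields
$$\|\tilde w_\epsilon\|_{L^\infty([0,T];H^m_{k+\ell})}\le C_T\|\tilde u_{0,\epsilon}\|_{H^{m+1}_{k+\ell'-1}}\le \tfrac{3}{2}\,C_{T_1}\,\zeta_0,$$
since $C_T$ is monotone in $T$. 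Choosing $\zeta_0>0$ at the outset so small that
$$\tfrac{3}{2}\,C_{T_1}\,\zeta_0\le \tfrac{\zeta}{2},$$
the improved bound $\|\tilde w_\epsilon(t)\|_{H^m_{k+\ell}}\le \zeta/2$ holds on $[0,T^\ast)$.

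Finally, I would close the argument by excluding $T^\ast<T_1$. By the improved bound and continuity in $t$ of $t\mapsto \|\tilde w_\epsilon(t)\|_{H^m_{k+\ell}}$, the strict inequality $\|\tilde w_\epsilon(T^\ast)\|_{H^m_{k+\ell}}\le \zeta/2<\zeta$ persists; restarting Proposition \ref{prop3.0} from the initial datum $\tilde w_\epsilon(T^\ast)\in H^{m+2}_{k+\ell}$ (its $H^{m+2}$-norm is finite for fixed $\epsilon$ by the parabolic nature of \eqref{shear-prandtl-approxiamte-vorticity}) furnishes a genuine extension of $\tilde w_\epsilon$ past $T^\ast$ still satisfying the $\zeta$-bound, contradicting maximality. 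Uniqueness on $[0,T_1]$ follows from the same energy method applied to the difference of two solutions. I expect the delicate step to be precisely this continuation at $T^\ast$: one must know that the (possibly $\epsilon$-dependent) $H^{m+2}_{k+\ell}$-regularity is not lost in finite time, which is the content of Proposition \ref{prop3.0} applied iteratively together with standard parabolic smoothing for fixed $\epsilon>0$; the uniform $H^m_{k+\ell}$ bound from Theorem \ref{energy} is what keeps the bootstrap constants independent of $\epsilon$ so that $T^\ast=T_1$ holds for every $0<\epsilon\le \epsilon_0$ simultaneously.
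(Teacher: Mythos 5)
Your proposal is correct and follows essentially the same strategy as the paper: combine the $\epsilon$-dependent local existence of Theorem \ref{theorem3.1} with the $\epsilon$-uniform estimate \eqref{energy estimate-A} of Theorem \ref{energy}, choose $\zeta_0$ so that $C_{T_1}\zeta_0\le \zeta/2$, and continue the solution up to $T_1$. The only cosmetic difference is that the paper performs the continuation as an explicit iteration with a fixed step $T'_\epsilon$ (determined by \eqref{time-1} with $\bar\zeta=\zeta/2$, since the uniform estimate returns the norm to $\zeta/2$ after each extension), whereas you phrase it as a maximal-time/contradiction bootstrap; both hinge on exactly the same two ingredients, and the delicate point you flag (persistence of the $H^{m+2}_{k+\ell}$ regularity needed to restart the local theory) is handled the same way in the paper.
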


\begin{remark}\label{remark7.1}
Under the uniform monotonic assumption \eqref{shear-critical-momotone}, some results of above theorem holds for any fixed $T>0$. But $\zeta_0$ decreases as $T$ increases, according to the \eqref{c-tilde}.
\end{remark}

\begin{proof}
We fix $0<\epsilon\le 1$, then for any $\tilde{w}_{0}\in  H^{m+2}_{k+\ell}(\mathbb{R}^2_+)$, Theorem \ref{theorem3.1} ensures that,  there exists $\epsilon_0>0$ and for any $0<\epsilon\le \epsilon_0$, there exits $T_\epsilon>0$ such that the system \eqref{shear-prandtl-approxiamte-vorticity} admits a unique solution
$\tilde{w}_\epsilon \in L^\infty ([0, T_\epsilon]; H^{m+2}_{k+\ell}(\mathbb{R}^2_+))$ which satisfies
$$
\|\tilde{w}_\epsilon\|_{L^\infty ([0, T_\epsilon]; H^{m}_{k+\ell}(\mathbb{R}^2_+))}\le \frac 43 \|\tilde{w}_\epsilon(0)\|_{H^{m}_{k+\ell}(\mathbb{R}^2_+)}\le 2 \|\tilde{u}_0\|_{H^{m+1}_{k+\ell-1}(\mathbb{R}^2_+)}.
$$
Now choose $\zeta_0$ such that
\begin{equation*}
\max\{2, C_{T_1}\} \zeta_0\le \frac{\zeta}{2}.
\end{equation*}
On the other hand, taking $\tilde{w}_\epsilon(T_\epsilon)$ as initial data for  the system \eqref{shear-prandtl-approxiamte-vorticity},  Theorem \ref{theorem3.1} ensures  that there exits $T'_\epsilon>0$,
which is defined by \eqref{time-1} with $\bar\zeta=\frac{\zeta}{2}$, such that the system \eqref{shear-prandtl-approxiamte-vorticity} admits a unique solution
$\tilde{w}'_\epsilon \in L^\infty ([T_\epsilon, T_\epsilon+T'_\epsilon]; H^{m}_{k+\ell}(\mathbb{R}^2_+))$ which satisfies
$$
\|\tilde{w}'_\epsilon\|_{L^\infty ([T_\epsilon, T_\epsilon+T'_\epsilon]; H^{m}_{k+\ell}(\mathbb{R}^2_+))}\le \frac 43 \|\tilde{w}_\epsilon(T_\epsilon)\|_{H^{m}_{k+\ell}(\mathbb{R}^2_+)}\le \zeta.
$$
Now, we extend $\tilde{w}_\epsilon$ to $[0, T_\epsilon+T'_\epsilon]$ by $\tilde{w}'_\epsilon$, then we get a solution
$\tilde{w}_\epsilon \in L^\infty ([0, T_\epsilon+T'_\epsilon]; H^{m}_{k+\ell}(\mathbb{R}^2_+))$ which satisfies
$$
\|\tilde{w}_\epsilon\|_{L^\infty ([0, T_\epsilon+T'_\epsilon]; H^{m}_{k+\ell}(\mathbb{R}^2_+))}\le \zeta.
$$
So  if $T_\epsilon+T'_\epsilon<T_1$, we can apply Theorem \ref{energy} to $\tilde{w}_\epsilon$ with $T=T_\epsilon+T'_\epsilon$, and use \eqref{energy estimate-A}, this gives
$$
\|\tilde{w}_\epsilon\|_{L^\infty ([0, T_\epsilon+T'_\epsilon]; H^{m}_{k+\ell}(\mathbb{R}^2_+))}\le C_{T_1} \|\tilde{u}_0\|_{H^{m+1}_{k+\ell-1}(\mathbb{R}^2_+)}\le \frac{\zeta}{2}.
$$
Now taking $\tilde{w}_\epsilon(T_\epsilon+T'_\epsilon)$ as initial data for  the system \eqref{shear-prandtl-approxiamte-vorticity}, applying again Theorem \ref{theorem3.1}, for the same $T'_\epsilon>0$, the system
\eqref{shear-prandtl-approxiamte-vorticity} admits a unique solution
$\tilde{w}'_\epsilon \in L^\infty ([T_\epsilon+T'_\epsilon, T_\epsilon+2T'_\epsilon]; H^{m}_{k+\ell}(\mathbb{R}^2_+))$ which satisfies
$$
\|\tilde{w}'_\epsilon\|_{L^\infty ([T_\epsilon+T'_\epsilon, T_\epsilon+2T'_\epsilon]; H^{m}_{k+\ell}(\mathbb{R}^2_+))}\le \frac 43 \|\tilde{w}_\epsilon(T_\epsilon+T'_\epsilon)\|_{H^{m}_{k+\ell}(\mathbb{R}^2_+)}\le \zeta.
$$
Now, we extend $\tilde{w}_\epsilon$ to $[0, T_\epsilon+2T'_\epsilon]$ by $\tilde{w}'_\epsilon$, then we get a solution
$\tilde{w}_\epsilon \in L^\infty ([0, T_\epsilon+2T'_\epsilon]; H^{m}_{k+\ell}(\mathbb{R}^2_+))$ which satisfies
$$
\|\tilde{w}_\epsilon\|_{L^\infty ([0, T_\epsilon+2T'_\epsilon]; H^{m}_{k+\ell}(\mathbb{R}^2_+))}\le \zeta.
$$
So if $T_\epsilon+2T'_\epsilon<T_1$, we can apply Theorem \ref{energy} to $\tilde{w}_\epsilon$ with $T=T_\epsilon+2T'_\epsilon$, and use \eqref{energy estimate-A}, this gives again
$$
\|\tilde{w}_\epsilon\|_{L^\infty ([0, T_\epsilon+2T'_\epsilon]; H^{m}_{k+\ell}(\mathbb{R}^2_+))}\le C_{T_1} \|\tilde{u}_0\|_{H^{m+1}_{k+\ell-1}(\mathbb{R}^2_+)}\le \frac{\zeta}{2}.
$$
Then by recurrence, we can extend the solution $\tilde{w}_\epsilon$ to $[0, T_1]$, and then the lifespan of approximate solution is equal to  that of shear flow if the initial date $\tilde{u}_0$ is small enough.
\end{proof}

\label{sec-exi}
We have obtained the following estimate, for $m\ge 6$ and $0<\epsilon\le\epsilon_0$,
\begin{align*}
\|\tilde{w}_\epsilon(t)\|_{H^m_{k+\ell}(\mathbb{R}^2_+)} \le \zeta,\quad t \in [0, T_1].
\end{align*}
By using the equation \eqref{shear-prandtl-approxiamte-vorticity} and the Sobolev   inequality, we get, for $0<\delta<1$
\[
\|\tilde{w}_\epsilon\|_{Lip ([0, T_1]; C^{2, \delta}(\mathbb{R}^2_+))}\le M<+\infty.
\]
Then taking a subsequence, we have, for $0<\delta'<\delta$,
\[
\tilde{w}_\epsilon \to \tilde{w}\,\,({\epsilon\,\to\,0}),\,\, \text{locally strong in }~~C^0([0, T_1]; C^{2, \delta'}(\mathbb{R}^2_+))\,,
\]
and
\[
\partial _t \tilde{w} \in L^\infty ([0, T_1]; H^{m-2}_{k+\ell}(\mathbb{R}^2_+)),\quad
\tilde{w} \in L^\infty ([0, T_1]; H^{m}_{k+\ell}(\mathbb{R}^2_+)),
\]
with
\begin{align*}
\|\tilde{w}\|_{L^\infty ([0, T_1]; H^{m}_{k+\ell}(\mathbb{R}^2_+))} \le \zeta.
\end{align*}
Then we have
\begin{align*}
\tilde{u}=\partial^{-1}_y w\in L^\infty([0, T_1];  H^{m}_{k+\ell-1}(\mathbb{R}^2_+)),
\end{align*}
where we use the Hardy inequality \eqref{Hardy1}, since
$$
\lim_{y\to+\infty} \tilde{u}(t, x, y)=-\lim_{y\to+\infty} \int^{+\infty}_y \tilde{w}(t, x, \tilde{y} )d\tilde{y}=0.
$$
In fact, we also have
$$
\lim_{y\to 0} \tilde{u}(t, x, y)=\lim_{y\to 0} \int^y_0 \tilde{w}(t, x, \tilde{y} )d\tilde{y}=0.
$$
Using the condition $k+\ell-1>\frac 12$, we have also
$$
\tilde{v}=-\int^y_0 \tilde{u}_x\, d \tilde{y} \in L^\infty ([0, T_1]; L^\infty(\mathbb{R}_{+, y}); H^{m-1}(\mathbb{R}_x)).
$$
We have proven that, $\tilde{w}$ is a classical solution to the following vorticity Prandtl equation
\begin{align*}
\begin{cases}
& \partial_t\tilde{w} + (u^s + \tilde{u}) \partial_x\tilde{w} + \tilde{v} \partial_y(u^s_y+\tilde{w})
= \partial^2_y\tilde{w},\\
& \partial_y \tilde{w}|_{y=0} = 0,\\
& \tilde{w}|_{t=0} = \tilde{w}_0,
\end{cases}
\end{align*}
and $(\tilde{u}, \tilde{v})$ is a classical solution to \eqref{non-shear-prandtl}.
Finally, $(u, v)=(u^s+\tilde{u}, \tilde{v})$ is a classical solution to \eqref{full-prandtl},
and satisfies \eqref{main-energy}. In conclusion, we have proved the following theorem
which is the existence part of main Theorem \ref{main-theorem}.

\begin{theorem}\label{main-theorem-bis}
Let $m\ge 6$ be an even integer, $k>1, 0< \ell<\frac12,~ \frac 12< \ell' < \ell+ \frac 12,~ k+\ell>\frac 32$,  assume that $u^s_0$ satisfies \eqref{shear-critical-momotone}, the initial date
$\tilde{u}_0 \in H^{m+3}_{k + \ell' -1 }(\mathbb{R}^2_+)$ and $\tilde{u}_0 $  satisfies the compatibility condition \eqref{compatibility-a1}-\eqref{compatibility-a2} up to order $m+2$.  Then there exists $T>0$ such that if
\begin{equation*}
\|\tilde{u}_0  \|_{H^{m+1}_{k + \ell' -1 }(\mathbb{R}^2_+)}\le \delta_0,
\end{equation*}
for some $\delta_0>0$ small enough, then the initial-boundary value problem \eqref{non-shear-prandtl} admits a solution $(\tilde{u}, \tilde{v})$ with
 \begin{align*}
 &\tilde{u}\in L^\infty([0, T]; H^{m}_{k+\ell-1}(\mathbb{R}^2_+)),\quad \partial_y\tilde{u}\in L^\infty([0, T]; H^{m}_{k+\ell}(\mathbb{R}^2_+)).
 \end{align*}
Moreover, we have the following energy estimate,
\begin{align}\label{main-energy}
\begin{split}
\|\partial_y\tilde{u}\|_{L^\infty([0, T]; H^m_{k+\ell}(\mathbb{R}^2_+))} \le C\|\tilde{u}_0 \|^2_{ H^{m+1}_{k + \ell' -1 }(\mathbb{R}^2_+)}.
\end{split}
\end{align}
\end{theorem}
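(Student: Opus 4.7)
The plan is to obtain the solution as a weak-$*$ limit of the approximate solutions $\tilde w_\epsilon$ constructed in Theorem \ref{uniform-existence}, then verify the limit solves the original Prandtl system with the claimed regularity. Choose $\zeta>0$ small with $C_m\zeta\le \tilde c_1/2$ and let $T=T_1$ be the life-span of the shear flow from Lemma \ref{shear-profile}. By Corollary \ref{coro-boundary}, for $0<\epsilon\le \epsilon_0$ there is a corrector $\mu_\epsilon$ so that $\tilde u_{0,\epsilon}=\tilde u_0+\epsilon\mu_\epsilon$ satisfies the compatibility conditions for the regularized system and $\|\tilde u_{0,\epsilon}\|_{H^{m+1}_{k+\ell'-1}}\le \tfrac32\|\tilde u_0\|_{H^{m+1}_{k+\ell'-1}}$. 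Choosing $\delta_0>0$ so that $\tfrac32 \delta_0\le \zeta_0$ (the threshold in Theorem \ref{uniform-existence}), Theorem \ref{uniform-existence} provides a unique $\tilde w_\epsilon\in L^\infty([0,T_1];H^m_{k+\ell}(\mathbb{R}^2_+))$ with $\|\tilde w_\epsilon\|_{L^\infty H^m_{k+\ell}}\le\zeta$, and Theorem \ref{energy} yields the uniform bound $\|\tilde w_\epsilon\|_{L^\infty H^m_{k+\ell}}\le C_{T_1}\|\tilde u_0\|_{H^{m+1}_{k+\ell'-1}}$.

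Next, I would extract a Lipschitz-in-time bound at lower regularity. Using the equation \eqref{shear-prandtl-approxiamte-vorticity}, the uniform estimate on $\tilde w_\epsilon$ controls each nonlinear term in $H^{m-2}_{k+\ell}$ (with $\tilde u_\epsilon=-\int_y^\infty\tilde w_\epsilon\,d\tilde y$ and $\tilde v_\epsilon=-\int_0^y\partial_x\tilde u_\epsilon\,d\tilde y$ handled via Hardy's inequality \eqref{Hardy1}), so $\partial_t\tilde w_\epsilon$ is bounded in $L^\infty([0,T_1];H^{m-2}_{k+\ell})$ uniformly in $\epsilon$, and the diffusion term $\epsilon\partial_x^2\tilde w_\epsilon$ vanishes in this space as $\epsilon\to 0$. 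Since $\tilde w_\epsilon$ is uniformly bounded in $H^m_{k+\ell}$, Sobolev embedding gives a uniform bound in $C^{2,\delta}$ on compact sets; combined with the $\partial_t$ bound we can apply Arzelà--Ascoli locally and a diagonal argument to extract a subsequence converging to some $\tilde w$ in $C^0([0,T_1];C^{2,\delta'}_{\text{loc}})$ for any $0<\delta'<\delta$, while also $\tilde w_\epsilon\overset{*}{\rightharpoonup}\tilde w$ in $L^\infty([0,T_1];H^m_{k+\ell})$. The limit inherits the norm bound by weak-$*$ lower semi-continuity, yielding \eqref{main-energy}.

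Passing to the limit in the equation is then direct: the linear terms pass by weak-$*$ convergence, the regularization $\epsilon\partial_x^2\tilde w_\epsilon\to 0$ in the sense of distributions, and the quadratic terms $(u^s+\tilde u_\epsilon)\partial_x\tilde w_\epsilon$ and $\tilde v_\epsilon\partial_y(u^s_y+\tilde w_\epsilon)$ pass thanks to the local strong convergence of $\tilde w_\epsilon,\partial_x\tilde w_\epsilon,\partial_y\tilde w_\epsilon$ in $C^0_{\text{loc}}$, together with the uniform weighted bounds that let us truncate large $y$. The boundary condition $\partial_y\tilde w|_{y=0}=0$ and the initial condition $\tilde w|_{t=0}=\tilde w_0$ pass from the uniform $C^{2,\delta'}$ bound. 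I would then define $\tilde u=-\int_y^\infty\tilde w(t,x,\tilde y)\,d\tilde y$, which using the Hardy inequality \eqref{Hardy1} lies in $L^\infty([0,T_1];H^m_{k+\ell-1})$, and note that $\lim_{y\to\infty}\tilde u=0$ automatically while $\tilde u(t,x,0)=\int_0^\infty\tilde w\,d\tilde y$ needs a check. For the boundary trace at $y=0$, I would follow the argument the excerpt already sketches around \eqref{3.00}: integrating the $\tilde w$-equation over $[0,\infty)_y$ shows $f(t,x):=\tilde u(t,x,0)$ satisfies a Burgers-type equation with $f|_{t=0}=0$, and since $\epsilon=0$ in the limit the unique smooth solution is $f\equiv 0$. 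Then $\tilde v=-\int_0^y\partial_x\tilde u\,d\tilde y$ lies in $L^\infty([0,T_1];L^\infty_y H^{m-1}_x)$ using $k+\ell-1>\tfrac12$, and $(\tilde u,\tilde v)$ solves \eqref{non-shear-prandtl}.

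The main obstacle will be justifying the limit in the nonlinear term $\tilde v_\epsilon\,\partial_y\tilde w_\epsilon$, because $\tilde v_\epsilon$ involves the $x$-derivative of $\tilde u_\epsilon$ integrated in $y$ and the weight argument is delicate; here it is essential that the uniform bound in $H^m_{k+\ell}$ with $k+\ell>3/2$ provides enough integrability to control $\tilde v_\epsilon$ pointwise in $y$ (as used in the bound $\|\tilde v_\epsilon\|_{L^\infty}\lesssim\|\tilde w_\epsilon\|_{H^2_{3/2+\delta}}$ that already appeared in the proof of Proposition \ref{prop3.1}). Once these limits are justified, the estimate \eqref{main-energy} follows from the uniform bound via weak-$*$ lower semi-continuity, and the relation $\delta_0\sim e^{-T}$ noted in \eqref{c-tilde}--\eqref{bound-2} is implicit in the dependence of $C_{T_1}$ on $T_1$.
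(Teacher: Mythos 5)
Your proposal is correct and follows essentially the same route as the paper: the uniform bound of Theorem \ref{uniform-existence} on $[0,T_1]$, a Lipschitz-in-time bound at lower regularity from the equation, local strong compactness in $C^0([0,T_1];C^{2,\delta'})$ combined with weak-$*$ convergence in $L^\infty([0,T_1];H^m_{k+\ell})$ to pass to the limit, and then the Hardy inequality together with the boundary-trace argument of \eqref{3.00}--\eqref{u-w-0} to recover $(\tilde u,\tilde v)$ and the estimate \eqref{main-energy}. No substantive differences to report.
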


\section{Uniqueness and stability}\label{section8}

Now, we study  the stability of solutions which implies immediately the uniqueness of solution.

Let $\tilde{u}^1, \tilde{u}^2$ be two solutions obtained in Theorem \ref{main-theorem-bis}
with respect to the initial date $\tilde{u}^1_0, \tilde{u}^2_0$ respectively. Denote $\bar u = \tilde{u}^1 - \tilde{u}^2$
and $\bar v= \tilde{v}^1-\tilde{v}^2$, then
\begin{equation*}
\begin{cases}
\partial_t \bar{u} + (u^s + \tilde{u}_1)\partial_x \bar{u}   + (u^s_y + \tilde{u}_{1, y})\bar{v}
= \partial^2_y \bar{u} - \tilde{v}_2 \partial_y\bar{u} -(\partial_x\tilde{u}_2) \bar{u},\\
\partial_x \bar{u}+\partial_y\bar{v}=0,\\
\bar{u}|_{y=0}=\bar{v}|_{y=0}=0,\\
\bar{u}|_{t=0}=\tilde{u}^1_0 - \tilde{u}^2_0 .
\end{cases}
\end{equation*}
So  it is a linear equation for $\bar{u}$. We also have  for the vorticity $\bar w=
\partial_y \bar u$,
\begin{equation}\label{stability-2}
\begin{cases}
\partial_t \bar{w} + (u^s + \tilde{u}_1)\partial_x \bar{w}   + (u^s_{yy} + \tilde{w}_{1, y})\bar{v}
= \partial^2_y \bar{w} - \tilde{v}_2 \partial_y\bar{w} -(\partial_x\tilde{w}_2) \bar{u},\\
\partial_y\bar{w}|_{y=0}=0,\\
\bar{w}|_{t=0}=\tilde{w}^1_0 - \tilde{w}^2_0 .
\end{cases}
\end{equation}

\noindent {\bf Estimate with a loss of $x$-derivative.} Firstly, for the vorticity $\bar w=\partial_y \bar u$, we deduce an energy estimate with a loss of
$x$-derivative with the anisotropic norm defined by \eqref{norm-1}.
\begin{proposition}\label{prop8.1}
Let $\tilde{u}^1, \tilde{u}^2$ be two solutions obtained in Theorem \ref{main-theorem-bis}
with respect to the initial date $\tilde{u}^1_0, \tilde{u}^2_0$, then we have
\begin{equation}
\label{w-bar-less-k}
\begin{split}
\frac{d}{ dt}\| \bar{w}\|_{H^{m-2, m-3}_{k+\ell}(\mathbb{R}^2_+)}^2+ \|\partial_y\bar{w}\|_{H^{m-2, m-3}_{k+\ell}(\mathbb{R}^2_+)}^2\le \bar C_1\| \bar{w}\|_{H^{m-2}_{k+\ell}}^2,
\end{split}
\end{equation}
where the constant $\bar C_1$ depends on the norm of $\tilde{w}^1, \tilde{w}^2$ in $L^\infty([0, T]; H^m_{k+\ell}(\mathbb{R}^2_+))$.
\end{proposition}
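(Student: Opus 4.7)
The plan is to adapt the argument of Proposition~\ref{prop3.1} to the linearized system \eqref{stability-2}. For each multi-index $\alpha=(\alpha_1,\alpha_2)$ with $|\alpha|\le m-2$ and $\alpha_1\le m-3$, I apply $\partial^{\alpha}$ to \eqref{stability-2}, multiply by $\langle y\rangle^{2(k+\ell+\alpha_2)}\partial^{\alpha}\bar w$, integrate over $\mathbb{R}^2_+$, and sum. The constraint $\alpha_1\le m-3$ is imposed precisely so that the single loss of $x$-derivative coming from $\bar v=-\int_0^y\partial_x\bar u\,d\tilde y$ still falls inside $\|\bar w\|_{H^{m-2}_{k+\ell}}$, and so that the top-order traces encountered in Proposition~\ref{prop3.1} never appear.

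The $\partial_t$ and $-\partial_y^2$ contributions are handled as in Proposition~\ref{prop3.1}: they yield $\tfrac12\tfrac{d}{dt}\|\partial^{\alpha}\bar w\|^2_{L^2_{k+\ell+\alpha_2}}+\|\partial_y\partial^{\alpha}\bar w\|^2_{L^2_{k+\ell+\alpha_2}}$ modulo a weight-derivative error absorbable by a small fraction of the dissipation. The transport term $(u^s+\tilde u^1)\partial_x\bar w$ is split into the principal part, integrated by parts in $x$ and controlled by $\|\partial_x\tilde u^1\|_{L^\infty}\le C\|\tilde w^1\|_{H^3_1}$, plus a commutator $[(u^s+\tilde u^1),\partial^{\alpha}]\partial_x\bar w$ which is bounded in $L^2_{k+\ell+\alpha_2}$ by $C(\|\tilde w^1\|_{H^m_{k+\ell}})\|\bar w\|_{H^{m-2}_{k+\ell}}$ using Leibniz, the Sobolev inequality \eqref{sobolev-1}, and Lemma~\ref{inequality-hardy}. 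The damping-type term $\tilde v^2\partial_y\bar w$ is treated exactly as in Proposition~\ref{prop3.1}: the principal piece is bounded by $\|\tilde v^2\|_{L^\infty}\|\partial_y\bar w\|_{H^{m-2,m-3}_{k+\ell}}\|\bar w\|_{H^{m-2}_{k+\ell}}$ with $\|\tilde v^2\|_{L^\infty}\le C\|\tilde w^2\|_{H^2_{3/2+\delta}}$, and a Young inequality absorbs the dissipative factor; the commutator with $\partial^\alpha$ is routine since $\alpha_1\le m-3$ ensures every $\partial^\beta\tilde v^2$ reduces to at most $\partial_x^{m-2}$ of $\tilde u^2$, controlled via Hardy.

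The genuinely delicate term is $(u^s_{yy}+\partial_y\tilde w^1)\bar v$. Using $\bar v=-\int_0^y\partial_x\bar u\,d\tilde y$, one finds that, for $\alpha_2\ge 1$,
\begin{equation*}
\partial^{\alpha_1}_x\partial^{\alpha_2}_y\bar v=-\partial^{\alpha_1+1}_x\partial^{\alpha_2-1}_y\bar u,
\end{equation*}
and for $\alpha_2=0$ it equals the primitive $-\int_0^y\partial^{\alpha_1+1}_x\bar u\,d\tilde y$. The restriction $\alpha_1+1\le m-2$ guarantees that both quantities are controlled in $L^2_{k+\ell+\alpha_2-1}$ by $C\|\bar u\|_{H^{m-2}_{k+\ell-1}}\le C\|\bar w\|_{H^{m-2}_{k+\ell}}$ via Hardy's inequality \eqref{Hardy1} (the weight being shifted down by one as required by the factor $u^s_{yy}+\partial_y\tilde w^1\sim\langle y\rangle^{-k-1}$). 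The full $\partial^{\alpha}((u^s_{yy}+\partial_y\tilde w^1)\bar v)$ is then expanded by Leibniz, and the worst summand (the one in which all derivatives fall on $\bar v$) is precisely the piece handled above; the remaining factors carry at most $m$ derivatives of $\tilde w^1$ and are bounded by $C(\|\tilde w^1\|_{H^m_{k+\ell}})$. The zero-order term $(\partial_x\tilde w^2)\bar u$ is treated similarly, writing $\bar u=-\int_y^{+\infty}\bar w\,d\tilde y$ and invoking Hardy.

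The boundary contribution $\int_{\mathbb{R}}(\partial^{\alpha}\partial_y\bar w\,\partial^{\alpha}\bar w)|_{y=0}\,dx$ is where the anisotropic constraint pays off most clearly. When $|\alpha|\le m-3$ the trace Lemma~\ref{lemma-trace} gives the bound $C\|\bar w\|_{H^{m-2}_{k+\ell}}\|\partial_y\bar w\|_{H^{m-2,m-3}_{k+\ell}}$ directly. The cases $\alpha_1+\alpha_2=m-2$ with $\alpha_1\le m-3$ are analyzed exactly as in Proposition~\ref{prop3.1}: for $\alpha_2=2$ and $\alpha_1\le m-4$ we use $(\partial_y^2\partial_x^{\alpha_1}\bar u)|_{y=0}=0$ inherited from the equation, while for $\alpha_2$ odd one integrates by parts in $x$, and the resulting top-order traces are controlled using the linearized version of the boundary-reduction identity \eqref{boundary-data1-e} applied to $\bar u$, combined with the a priori bounds on $\tilde u^1,\tilde u^2$. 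Critically, the excluded case $\alpha_1=m-2$ is the only one in which this machinery would break, which is precisely why the norm is anisotropic. Collecting everything, choosing the small constants, and summing over admissible $\alpha$ yields \eqref{w-bar-less-k} with $\bar C_1$ depending polynomially on $\|\tilde w^1\|_{L^\infty_TH^m_{k+\ell}}$ and $\|\tilde w^2\|_{L^\infty_TH^m_{k+\ell}}$. The main obstacle, as usual for Prandtl, is the term $(u^s_{yy}+\partial_y\tilde w^1)\bar v$; here the anisotropic norm is tailored exactly to absorb its loss.
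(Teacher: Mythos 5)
Your proposal follows essentially the same route as the paper's proof: the same anisotropic energy estimate with $\alpha_1\le m-3$, the same splitting of the transport and damping terms into principal part plus commutator, the same identification of $(u^s_{yy}+\partial_y\tilde w^1)\bar v$ as the term producing the single loss of $x$-derivative (absorbed because $\alpha_1+1\le m-2$) and of the summands with all derivatives on $\partial_y\tilde w^1$ or on $\partial_x\tilde w^2$ as the reason $\bar C_1$ depends on the $H^m_{k+\ell}$ norms of $\tilde w^1,\tilde w^2$, and the same treatment of the boundary traces via Lemma \ref{lemma-trace} and the reduction identities. No gaps; this matches the paper's argument.
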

\begin{proof}
The proof of this Proposition is similar to the proof of the Proposition \ref{prop3.1}, and we need to use that $m-2$ is even.
We only give the calculation  for the terms which need a different argument. Moreover we also explain
why we only  get the estimate on $\|\bar{w}\|_{H^{m-2}_{k+\ell}}^2$  but require the norm
of $\tilde{w}^1, \tilde{w}^2$ in $L^\infty([0, T]; H^m_{k+\ell}(\mathbb{R}^2_+))$.
With out loss of the generality, we suppose that $\|\bar{w}\|_{H^{m-2}_{k+\ell}}\le 1, \|\tilde{w}^1\|_{H^{m}_{k+\ell}}\le 1$ and $\|\tilde{w}^2\|_{H^{m}_{k+\ell}}\le 1$.

Derivating the equation of \eqref{stability-2} with $\partial^\alpha=\partial^{\alpha}_x\partial^{\alpha_2}_y$,
for $|\alpha|=\alpha_1+\alpha_2\le m-2, \alpha_1\le m-3$,
\begin{align}\label{8.1}
\begin{split}
&\partial_t \partial^{\alpha} \bar{w} - \partial_y^2 \partial^{\alpha}\partial\bar{w}= - \partial^{\alpha} \big((u^s + \tilde{u}_1)\partial_x  \bar{w}+ \tilde{v}_2 \partial_y\bar{w}\\
&\qquad\qquad+( u^s_{yy}+ \tilde{w}_{1, y} )\bar{v} +(\partial_x\tilde{w}_2) \bar{u} \big).
\end{split}
\end{align}	
Multiplying the above equation with $ \langle y \rangle^{k + \ell'+{\alpha_2}} \partial^{\alpha} \bar{w}$,
the same computation as in the proof of the Proposition \ref{prop3.1},  in particular,
the reduction of the boundary-data are the same,  gives
 \begin{align*}
 \begin{split}
 & \int_{\mathbb{R}^2_+} \bigg(\partial_t \partial^{\alpha} \bar{w} - \partial_y^2\partial^{\alpha} \bar{w} \bigg) \langle y \rangle^{2 (k+\ell+\alpha_2)} \partial^{\alpha} \bar{w} dx dy\\
 & \ge\frac 12 \frac{d}{dt}\|\partial^{\alpha}\bar{w}\|_{L^2_{k+\ell+\alpha_2}}^2+ \frac{3}{4} \|\partial_y \bar{w}\|_{H^{m-2, m-3}_{k+\ell}}^2 - C\|\bar{w}\|_{H^{m-2}_{k+\ell}}^2.
  \end{split}
 \end{align*}
 As for the right hand of \eqref{8.1}, for the first item, we split it into two parts
 \begin{align*}
 - \partial^{\alpha} \bigg((u^s + \tilde{u}_1)\partial_x  \bar{w}\bigg) =  -   (u^s + \tilde{u}_1)\partial_x \partial^{\alpha} \bar{w} +  [ (u^s + \tilde{u}_1), \partial^{\alpha}]\partial_x \bar{w}.
\end{align*}
Firstly, we have
\begin{align*}
\left|\int_{\mathbb{R}^2_+} \big((u^s + \tilde{u}_1) \partial_x \partial^{\alpha} \bar{w}\big)\langle y \rangle^{2(\ell+{\alpha_2})}\partial^{\alpha} \bar{w} dx dy\right| \le  \| \tilde{w}_1\|_{H^3_1}\|\partial^{\alpha} \bar{w}\|_{L^2_{k+\ell+{\alpha_2}}}^2.
\end{align*}
For the commutator operator, we have,
\begin{align*}
\| [ (u^s + \tilde{u}_1), \partial^{\alpha}]\partial_x \tilde{w}_\epsilon\|_{L^2_{k+\ell+\alpha_2}}&\le C \|\tilde{w}_1\|_{H^{m-2}_{k+\ell}(\mathbb{R}^2_+)}\|\bar{w}\|_{H^{m-2, m-3}_{k+\ell}(\mathbb{R}^2_+)}.
\end{align*}
Notice that for this term, we don't have the loss of $x$-derivative.

With the similar method for the terms $\tilde{v}_2 \partial_y\bar{w}$, we get
\begin{align*}
\left|\int_{\mathbb{R}^2_+} \tilde{v}_2 \partial_y\bar{w}\langle y \rangle^{2(\ell+{\alpha_2})}\partial^{\alpha} \bar{w} dx dy\right| &\le  \| \tilde{w}_2\|_{H^{m-2}_{k+\ell}(\mathbb{R}^2_+)}\|
\bar{w}\|_{H^{m-2, m-3}_{k+\ell}(\mathbb{R}^2_+)}^2.
\end{align*}
For the next one,  we have
$$
\partial^{\alpha} \bigg(( u^s_{yy}+\partial_y\tilde{w}_1) \bar{v} \bigg)
 = \sum\limits_{ \beta \le \alpha } C^\alpha_\beta\, \partial^{\beta} ( u^s_{yy}+\partial_y\tilde{w}_1) \partial^{\alpha - \beta}\bar{v},
$$
and thus
\begin{align*}
&\left\|\sum\limits_{ \beta \le \alpha, 1\le |\beta|<|\alpha| } C^\alpha_\beta\, \partial^{\beta} ( u^s_{yy}+\partial_y\tilde{w}_1)\partial^{\alpha - \beta}\bar{v}\right\|_{L^2_{k+\ell+{\alpha_2}}}\\
&\qquad\le
C\| \tilde{w}_1\|_{H^{m-2}_{k+\ell}(\mathbb{R}^2_+)}\|
\bar{w}\|_{H^{m-2, m-3}_{k+\ell}(\mathbb{R}^2_+)}.
\end{align*}
On the other hand, using Lemma \ref{inequality-hardy} and $\frac 32 -k<\ell<\frac 12$,
\begin{align*}
&\left\|\big(\partial^{\alpha} ( u^s_{yy}+\partial_y\tilde{w}_1)\big)\bar{v}\right\|_{L^2_{k+\ell+{\alpha_2}}}\le
\left\|\big(\partial^{\alpha} u^s_{yy}\big)\bar{v}\right\|_{L^2_{k+\ell+{\alpha_2}}}+
\left\|\big(\partial^{\alpha} \partial_y\tilde{w}_1\big)\bar{v}\right\|_{L^2_{k+\ell+{\alpha_2}}}\\
&\qquad\qquad\le C \left\|\bar{v}\right\|_{L^2(\mathbb{R}_x; L^\infty(\mathbb{R}_+))}+
C\| \tilde{w}_1\|_{H^{m}_{k+\ell}(\mathbb{R}^2_+)}\left\|\bar{v}\right\|_{L^\infty(\mathbb{R}^2_+)}\\
&\qquad\qquad\le C \left\|\bar{u}_x\right\|_{L^2_{\frac 12+\delta}(\mathbb{R}^2_+)}+
C\| \tilde{w}_1\|_{H^{m}_{k+\ell}(\mathbb{R}^2_+)}(\left\|\bar{u}_x\right\|_{L^2_{\frac 12+\delta}(\mathbb{R}^2_+)}
+\left\|\bar{u}_{xx}\right\|_{L^2_{\frac 12+\delta}(\mathbb{R}^2_+)})\\
&\qquad\qquad\le C(1+\| \tilde{w}_1\|_{H^{m}_{k+\ell}(\mathbb{R}^2_+)})\left\|\bar{w}\right\|_{H^2_{\frac 12+\delta}(\mathbb{R}^2_+)}\\
&\qquad\qquad\le C(1+\| \tilde{w}_1\|_{H^{m}_{k+\ell}(\mathbb{R}^2_+)})\left\|\bar{w}\right\|_{H^2_{k+\ell}(\mathbb{R}^2_+)}.
\end{align*}
So this term requires the norms $\| \tilde{w}_1\|_{H^{m}_{k+\ell}(\mathbb{R}^2_+)})$.

Moreover, if $\alpha_2\not =0$
\begin{align*}
&\left\|( u^s_{yy}+\partial_y\tilde{w}_1)\partial^{\alpha} \bar{v}\right\|_{L^2_{k+\ell+{\alpha_2}}}=
\left\|( u^s_{yy}+\partial_y\tilde{w}_1)\partial^{\alpha_1}_x\partial^{\alpha_2-1} \bar{u}_x\right\|_{L^2_{k+\ell+{\alpha_2}}}\\
&\qquad\qquad\le C (1+\| \tilde{w}_1\|_{H^{m-1}_{k+\ell}(\mathbb{R}^2_+)})\left\|\bar{w}
\right\|_{H^{m-2}_{k+\ell}(\mathbb{R}^2_+)},
\end{align*}
and also if $\alpha_2 =0$
\begin{align*}
&\left\|( u^s_{yy}+\partial_y\tilde{w}_1)\partial^{\alpha_1}_x \bar{v}\right\|_{L^2_{k+\ell}}=
\left\|( u^s_{yy}+\partial_y\tilde{w}_1)\partial^{-1}_y\partial^{\alpha_1}_x \bar{u}_x\right\|_{L^2_{k+\ell}}\\
&\qquad\qquad\le C (1+\| \tilde{w}_1\|_{H^{m-1}_{k+\ell}(\mathbb{R}^2_+)})\left\|\partial^{\alpha_1+1}_x\bar{w}\right\|_{L^2_{\frac 32+\delta}(\mathbb{R}^2_+)}.
\end{align*}
These two cases imply  the loss of $x$-derivative.

Similar argument also gives
\begin{align*}
\left|\int_{\mathbb{R}^2_+} \big(\partial^{\alpha}(\partial_x\tilde{w}_2) \bar{u}\big)\langle y \rangle^{2(\ell+{\alpha_2})}\partial^{\alpha} \bar{w} dx dy\right| \le C \| \tilde{w}_2\|_{H^m_{k+\ell}(\mathbb{R}^2_+)}\| \bar{w}\|_{H^{m-2}_{k+\ell}(\mathbb{R}^2_+)}^2,
\end{align*}
which finishes the proof of the Proposition \ref{prop8.1}.
\end{proof}

\noindent {\bf Estimate on the loss term.} To close the estimate \eqref{main-energy}, we need   to study the terms $\|\partial^{m-2}_x
\bar w\|_{L^2_{k+\ell}(\mathbb{R}^2_+)}$ which is missing in the left hand side of \eqref{w-bar-less-k}.

Similar to the argument in Section \ref{section7}, we will recover this term by the estimate of functions
\begin{align*}
\bar g_n& = \left( \frac{\partial_x^n \bar{u}}{u^s_y + \tilde u_{1,y}} \right)_y, \quad \forall (t, x, y)\in [0, T]\times \mathbb{R} \times \mathbb{R}^+.
\end{align*}

\begin{proposition}
\label{prop8.2b}
Let $\tilde{u}^1, \tilde{u}^2$ be two solutions obtained in Theorem \ref{main-theorem-bis}
with respect to the initial date $\tilde{u}^1_0, \tilde{u}^2_0$, then we have
\begin{equation*}
\begin{split}
\frac{d}{dt}\sum^{m-2}_{n=1}\|    \bar g_n \|_{L^2_{\ell'}(\mathbb{R}^2_+)}^2 & + \sum^{m-2}_{n=1}\|   \partial_y \bar g_n\|_{L^2_{\ell'}(\mathbb{R}^2_+)}^2\\
& \le C_2(\sum^{m-2}_{n=1}\|  \bar g_n\|_{L^2_{\ell'}(\mathbb{R}^2_+)}^2 + \|\bar {w}\|_{H^{m-2}_{k+\ell}}^2),
\end{split}
\end{equation*}
where the constant $\bar C_2$ depends on the norm of $\tilde{w}^1, \tilde{w}^2$ in $L^\infty([0, T]; H^m_{k+\ell}(\mathbb{R}^2_+))$.
\end{proposition}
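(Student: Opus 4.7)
The plan is to mimic the derivation and estimate carried out for the nonlinear quantity $g_n$ in Section \ref{section5}, but applied now to the linear equation for $\bar u$. First I would derive the evolution equation for $\bar g_n$: apply $\partial_x^n$ to the equation for $\bar u$, divide by $u^s_y + \tilde u_{1,y}$ (which is uniformly comparable to $\langle y\rangle^{-k}$ by \eqref{pior-2} applied to $\tilde u^1$), and then apply $\partial_y$. The divergence-free relation $\partial_x \partial_x^n \bar u + \partial_y \partial_x^n \bar v = 0$ eliminates $\bar v$ from the top-order term exactly as in Section \ref{section4}, producing an equation of the form
\begin{equation*}
\partial_t \bar g_n + (u^s + \tilde u_1)\partial_x \bar g_n + \tilde v_2 \partial_y \bar g_n - \partial_y^2 \bar g_n = \bar M_n,
\end{equation*}
where $\bar M_n$ is the analogue of $M_1^n + \cdots + M_6^n$ in \eqref{non-monotone-transformation} (with $u,v,w$ replaced by $\tilde u^1, \tilde v^1, \tilde w^1$ and the $\partial_x^n u$ factors replaced by $\partial_x^n \bar u$) together with the extra linearization contributions coming from $\tilde v_2 \partial_y\bar u$ and $(\partial_x \tilde u_2)\bar u$. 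The boundary condition $\partial_y \bar g_n|_{y=0}=0$ follows from $\bar u|_{y=0}=0$, $\partial_y^2 \bar u|_{y=0}=0$, and the vanishing of $\eta_2$ and $\partial_y^{-1}\bar g_n$ at $y=0$, exactly as in Section \ref{section4}.

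Next I would test the equation against $\langle y\rangle^{2\ell'}\bar g_n$ and integrate over $\mathbb{R}^2_+$. The time-derivative and diffusion produce $\tfrac{1}{2}\tfrac{d}{dt}\|\bar g_n\|_{L^2_{\ell'}}^2 + \tfrac{3}{4}\|\partial_y \bar g_n\|_{L^2_{\ell'}}^2$ modulo a $C\|\bar g_n\|_{L^2_{\ell'}}^2$ error; the transport terms are controlled by integrating by parts in $x$ and $y$, using $\|\partial_x\tilde u_1\|_{L^\infty}\lesssim \|\tilde w^1\|_{H^m_{k+\ell}}$ and $\tilde v_2|_{y=0}=0$ together with Lemma \ref{inequality-hardy}. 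For the source $\bar M_n$ I would reuse Lemmas \ref{lemma5.1}–\ref{lemma5.4} verbatim, observing that all the coefficients involving $\tilde u^1$ and its derivatives are uniformly bounded by $\|\tilde w^1\|_{H^m_{k+\ell}}$ (this is how $\bar C_2$ ends up depending on the $H^m_{k+\ell}$-norms of $\tilde w^1, \tilde w^2$), while every $\bar u$-factor of lower $x$-order is absorbed through $\bar g_p$ ($1\le p\le n$) or, when the $x$-order is exactly $n$ or $n+1$, through $\|\bar w\|_{H^{m-2}_{k+\ell}}$ via $\bar u=\partial_y^{-1}\bar w$ and the Hardy inequality \eqref{Hardy1}. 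The new term $(\partial_x\tilde u_2)\bar u$ contributes $\|\partial_x\tilde u_2\|_{L^\infty}\|\bar u\|_{L^2_{k+\ell-1}}\|\bar g_n\|_{L^2_{\ell'}}\lesssim \|\tilde w^2\|_{H^m_{k+\ell}}(\|\bar w\|_{H^{m-2}_{k+\ell}}^2+\|\bar g_n\|_{L^2_{\ell'}}^2)$, and $\tilde v_2 \partial_y\bar u$ is treated by moving one $y$-derivative onto $\bar g_n$ and using $\|\tilde v_2\|_{L^\infty}\lesssim \|\tilde w^2\|_{H^2_{3/2+\delta}}$.

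The main obstacle I expect is the cluster of terms in $\bar M_n$ that encode the original Prandtl loss of $x$-derivative (the analogues of $M_4^n$ and $M_6^n$, in particular the ones involving $\partial_x^{n-i}\bar v$ with $i$ small). There one has to argue, as in Lemma \ref{lemma5.4}, that $\bar v$ can be reconstructed from $\partial_y^{-1}$ of one more $x$-derivative of $\bar u$, and rewrite that $x$-derivative through $\partial_x \bar g_p = \bar g_{p+1} - \bar g_p \eta_1 - (\partial_y^{-1}\bar g_p)\partial_y\eta_1$; this is precisely why the range $1\le n\le m-2$ is chosen, since the resulting factor involves at most $m-1$ $x$-derivatives of $\bar u$, hence at most $m-2$ of $\bar w$, and fits into the right-hand side $\|\bar w\|_{H^{m-2}_{k+\ell}}^2$. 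The fact that $\tilde w^1, \tilde w^2$ are taken in $H^m_{k+\ell}$ (two derivatives more than $\bar w$) is what makes these Sobolev estimates work without loss.

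Finally, as in the proof of Proposition \ref{lemma-non-x-k-monotone-part1-2}, I would first establish the inequality as an \emph{a priori} bound on smooth solutions of the linear system for $\bar g_n$; the initial value $\bar g_n(0)$ lies in $L^2_{\ell'}$ by Lemma \ref{lemma-initial-deta} applied to $\tilde u_0^1 - \tilde u_0^2 \in H^{m+1}_{k+\nu}$; Hahn–Banach then yields a solution $\bar g_n \in L^\infty([0,T]; L^2_{\ell'}(\mathbb{R}^2_+))$, which by uniqueness coincides with the $\bar g_n$ built from the stability solution $\bar u$, upgrading the \emph{a priori} estimate to a genuine bound. Summation over $1\le n\le m-2$ yields the stated inequality.
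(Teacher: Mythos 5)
Your proposal is correct and follows essentially the same route the paper intends: the authors explicitly omit the proof of Proposition \ref{prop8.2b}, stating that it is ``exactly the same calculation as in Section \ref{section5}'' with the only caveat that when all $m-2$ derivatives fall on the coefficients one needs the $H^m_{k+\ell}$-norms of $\tilde w^1,\tilde w^2$ — precisely the point you identify at the end of your second and third paragraphs. Your reconstruction of the $\bar g_n$-equation, the reuse of Lemmas \ref{lemma5.1}--\ref{lemma5.4}, the treatment of the extra linearization terms $\tilde v_2\partial_y\bar u$ and $(\partial_x\tilde u_2)\bar u$, and the \emph{a priori}-estimate-plus-Hahn--Banach scheme borrowed from Proposition \ref{lemma-non-x-k-monotone-part1-2} all match the paper's framework.
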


These Propositions can be proven by using exactly the same calculation  as in Section \ref{section5}. The only difference is that when we use the Leibniz formula,
for the term where the order of derivatives is $|\alpha|=m-2$, it acts on the coefficient which depends on $\tilde{u}^1, \tilde{u}^2$. Therefore,  we need their norm in the order of $(m-2)+1$. So we omit the proof of this Proposition here.

With the similar  argument to   the proof of Theorem \ref{energy}, we get
\begin{equation*}
\|\bar{w}\|_{ L^\infty ([0, T]; H^{m-2}_{k+\ell}(\mathbb{R}^2_+))}\le  C \|\bar{u}_{0}\|_{H^{m+1}_{k + \ell'-1}(\mathbb{R}^2_+)},
\end{equation*}
which finishes  the proof of Theorem \ref{main-theorem}.

\appendix

\section{Some inequalities}
We will use the following Hardy type inequalities.
\begin{lemma}\label{inequality-hardy}
Let $f : \mathbb{R} \times \mathbb{R}^+\to \mathbb{R}$. Then
\begin{itemize}
 \item[(i)] if $\lambda > - \frac{1}{2}$ and $ \lim\limits_{y \to \infty} f(x,y) = 0$, then
 		\begin{equation} \label{Hardy1}
 		\|\langle y \rangle^\lambda f\|_{L^2 (\mathbb{R}^2_+)} \le C_\lambda
 		\|\langle y \rangle^{\lambda +1} \partial_y f\|_{L^2 (\mathbb{R}^2_+)};
 		\end{equation}
\item[(ii)] if $-1 \le \lambda < - \frac{1}{2}$ and $f(x, 0) = 0$, then
 		\begin{equation*}
 		\|\langle y \rangle^\lambda f\|_{L^2 (\mathbb{R}^2_+)} \le C_\lambda
 		\| \langle y \rangle^{\lambda + 1} \partial_y f \|_{L^2 (\mathbb{R}^2_+)}.
 		\end{equation*}
 	\end{itemize}
 Here  $C_\lambda \to +\infty,~ \text{as}~~\lambda \to -\frac 12$.
 \end{lemma}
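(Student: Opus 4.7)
The plan is to reduce both statements to their one-dimensional analogues, since the weight only involves $y$. Fixing $x$ and working in 1D, the natural strategy is to represent $f(y)^2$ as $\pm 2\int f f'\,dz$ using whichever endpoint vanishes, swap the order of integration via Fubini, and close with Cauchy--Schwarz. Integrating the resulting pointwise-in-$x$ estimate against $dx$ then produces the two-dimensional inequalities.

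For part (i), where $f(x,\infty)=0$ and $\lambda > -\frac{1}{2}$, I would write $f(y)^2 = -2\int_y^\infty f f_z\,dz$, substitute into $\int_0^\infty \langle y\rangle^{2\lambda} f^2\,dy$, and apply Fubini to get
$$
\int_0^\infty \langle y\rangle^{2\lambda} f(y)^2\,dy = -2\int_0^\infty f(z)\,f'(z)\,\Phi_1(z)\,dz,
\qquad \Phi_1(z) := \int_0^z \langle y\rangle^{2\lambda}\,dy .
$$
The hypothesis $2\lambda+1 > 0$ yields the elementary bound $\Phi_1(z) \le \frac{C}{2\lambda+1}\langle z\rangle^{2\lambda+1}$. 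Cauchy--Schwarz then splits the right-hand side into $\|\langle y\rangle^{\lambda}f\|_{L^2}\cdot \|\langle y\rangle^{\lambda+1}\partial_y f\|_{L^2}$, and dividing by $\|\langle y\rangle^\lambda f\|_{L^2}$ closes the estimate with $C_\lambda \sim 1/(2\lambda+1)$.

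For part (ii), where instead $f(x,0)=0$ and $-1 \le \lambda < -\frac{1}{2}$, I would run the same scheme with the roles of $0$ and $\infty$ swapped, starting from $f(y)^2 = 2\int_0^y f f_z\,dz$. Fubini now produces the tail primitive $\Phi_2(z) := \int_z^\infty \langle y\rangle^{2\lambda}\,dy$, which is finite precisely because $2\lambda+1 < 0$, with $\Phi_2(z) \le \frac{C}{|2\lambda+1|}\langle z\rangle^{2\lambda+1}$. The identical Cauchy--Schwarz step then yields $C_\lambda \sim 1/|2\lambda+1|$. In both cases the factor $1/|2\lambda+1|$ is responsible for the announced blow-up $C_\lambda \to +\infty$ as $\lambda \to -\frac{1}{2}$.

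There is no real obstacle here: the only step requiring a little care is the pointwise bound on the primitives $\Phi_1, \Phi_2$, which one verifies by splitting into the regions $\{y \le 1\}$ and $\{y \ge 1\}$ and using the asymptotics $\langle y\rangle \sim y$ at infinity; both the small-$y$ contribution (bounded by a constant) and the large-$y$ contribution (controlled by direct integration of $y^{2\lambda}$) are absorbed into $C\langle z\rangle^{2\lambda+1}/|2\lambda+1|$. Finally, one should briefly note that the 1D manipulations are justified for, say, Schwartz functions satisfying the relevant boundary/decay condition, and the general statement follows by a routine density argument in the weighted $L^2$ spaces.
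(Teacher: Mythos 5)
Your proof is correct. The paper itself gives no argument here --- it states ``The proof of the above two Lemmas is elementary, so we leave it to the reader'' --- and what you supply is exactly the standard Hardy-type argument the authors have in mind: write $f^2$ as an integral of $2ff_z$ from the endpoint where $f$ vanishes, apply Fubini to transfer the weight onto the primitive $\Phi_1(z)=\int_0^z\langle y\rangle^{2\lambda}dy$ (resp.\ the tail $\Phi_2(z)=\int_z^\infty\langle y\rangle^{2\lambda}dy$), bound it by $C\langle z\rangle^{2\lambda+1}/|2\lambda+1|$ using $2\lambda+1>0$ (resp.\ $<0$), and close with Cauchy--Schwarz; the factor $1/|2\lambda+1|$ correctly accounts for the blow-up of $C_\lambda$ as $\lambda\to-\tfrac12$. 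The only point worth stating explicitly in a final write-up is the justification of the division by $\|\langle y\rangle^\lambda f\|_{L^2}$ (a priori finiteness of the left-hand side), which your closing remark about working first with smooth compactly supported functions and then passing to the limit by density already covers.
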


We need the following trace theorem in the weighted Sobolev space.
\begin{lemma}\label{lemma-trace}
Let $\lambda>\frac 12$, then there exists $C>0$ such that for any function $f$ defined on $\mathbb{R}^2_+$, if $\partial_y f\in L^2_{\lambda}(\mathbb{R}^2_+)$, it admits a trace on $\mathbb{R}_x\times\{0\}$, and satisfies
$$
\|\gamma_0(f)\|_{L^2 (\mathbb{R}_x)}\le C
\|\partial_y f\|_{L^2_\lambda(\mathbb{R}^2_+)},
$$
where $\gamma_0(f)(x)=f(x, 0)$ is the trace operator.
\end{lemma}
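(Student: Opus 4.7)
The plan is to reduce the estimate to a one-dimensional integration in $y$ via the fundamental theorem of calculus, then apply Cauchy–Schwarz with the weight, using the hypothesis $\lambda>\tfrac12$ to make the complementary weight integrable near infinity. More precisely, I would first clarify in what sense the trace exists: given that $\partial_y f\in L^2_\lambda(\mathbb{R}^2_+)$ with $\lambda>\tfrac12$, I would work in the natural class of distributions $f$ for which $f(x,\cdot)$ is locally absolutely continuous in $y$ for a.e.\ $x$ and $\lim_{y\to+\infty}f(x,y)=0$ for a.e.\ $x$. (In the paper the lemma is applied to functions lying in weighted Sobolev spaces that already decay as $y\to\infty$, so this is the relevant setting.) By density I may in addition assume $f$ smooth and compactly supported in $y$ to justify the computations, and then pass to the limit at the end using the estimate itself.

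With this setup the key identity is
\begin{equation*}
\gamma_0(f)(x)\;=\;f(x,0)\;=\;-\int_{0}^{+\infty}\partial_y f(x,\tilde y)\,d\tilde y,\qquad \text{a.e. } x\in\mathbb{R}.
\end{equation*}
Applying the Cauchy–Schwarz inequality with the weight $\langle\tilde y\rangle^{\lambda}$ gives, for a.e.\ $x$,
\begin{equation*}
|\gamma_0(f)(x)|^{2}\;\le\;\Bigl(\int_{0}^{+\infty}\langle\tilde y\rangle^{-2\lambda}\,d\tilde y\Bigr)\Bigl(\int_{0}^{+\infty}\langle\tilde y\rangle^{2\lambda}|\partial_y f(x,\tilde y)|^{2}\,d\tilde y\Bigr).
\end{equation*}
Because $\lambda>\tfrac12$ the first factor is a finite constant $C_\lambda^{2}$; integrating the resulting pointwise inequality over $x\in\mathbb{R}$ and invoking Fubini yields the desired bound
\begin{equation*}
\|\gamma_0(f)\|_{L^{2}(\mathbb{R}_x)}^{2}\;\le\;C_\lambda^{2}\,\|\partial_y f\|_{L^{2}_{\lambda}(\mathbb{R}^{2}_{+})}^{2}.
\end{equation*}

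The only subtle point—and the closest thing to an obstacle—is giving a proper meaning to $\gamma_0$ before estimating it. I would handle this by a standard cutoff/density argument: take $\chi\in C^{\infty}_{c}([0,\infty))$ with $\chi\equiv 1$ near $0$, set $f_{R}(x,y)=\chi(y/R)f(x,y)$, note $\partial_y f_{R}\to\partial_y f$ in $L^{2}_{\lambda}$ thanks to $\lambda>\tfrac12$ and the decay of $f$, and observe that $\gamma_{0}(f_{R})=\gamma_{0}(f)$ (pointwise in $x$) for all $R$ large. The above inequality applied to $f_{R}$ and the limit $R\to\infty$ then delivers both the existence of the trace as a bounded linear operator and its estimate; the threshold $\lambda=\tfrac12$ is exactly where the weight $\int_{0}^{\infty}\langle\tilde y\rangle^{-2\lambda}d\tilde y$ fails to converge, explaining the blow-up $C_\lambda\to +\infty$ as $\lambda\to\tfrac12^{+}$.
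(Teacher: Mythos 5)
Your proof is correct, and it is precisely the elementary argument the paper alludes to when it says the proof of this lemma is ``left to the reader'': write $\gamma_0(f)(x)=-\int_0^{+\infty}\partial_y f(x,\tilde y)\,d\tilde y$, apply Cauchy--Schwarz against the weight $\langle \tilde y\rangle^{\lambda}$, and use $2\lambda>1$ to make $\int_0^{+\infty}\langle\tilde y\rangle^{-2\lambda}\,d\tilde y$ finite before integrating in $x$. You are also right to flag that the statement as literally written needs the normalization $\lim_{y\to+\infty}f(x,y)=0$ (otherwise adding a constant to $f$ breaks the inequality); this hypothesis is implicit in the paper, since the lemma is only ever applied to functions lying in weighted spaces that force decay in $y$, and your density/cutoff remarks dispose of the remaining measure-theoretic technicalities.
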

The proof of the above two Lemmas is elementary, so we leave it to the reader.

We use also the following Sobolev inequality and algebraic properties of $H^m_{k+\ell}(\mathbb{R}^2_+)$,
\begin{lemma}\label{lemma2.4}
For the suitable functions $f, g$, we have:

\noindent
1) If the function $f$ satisfies $f(x, 0) = 0$ or $\lim_{y \to +\infty}f(x, y)=0$, then for any small $\delta>0$,
\begin{equation}\label{sobolev-1}
\begin{split}
\|f\|_{L^\infty(\mathbb{R}^2_+)}\le
C(\| f_y\|_{L^2_{\frac 12+\delta}(\mathbb{R}^2_+)}+\| f_{x y}\|_{L^2_{\frac 12+\delta}(\mathbb{R}^2_+)}).
\end{split}
\end{equation}
2) For $m\ge 6, k+\ell>\frac 32$, and any $\alpha, \beta\in \mathbb{N}^2$ with  $|\alpha|+|\beta|\le m$, we have
\begin{equation}\label{sobolev-2}
\begin{split}
\|(\partial^\alpha f)(\partial^\beta g)\|_{L^2_{k+\ell+\alpha_2+\beta_2}(\mathbb{R}^2_+)}\le
C\|f\|_{H^m_{k+\ell}(\mathbb{R}^2_+)}\|g\|_{H^m_{k+\ell}(\mathbb{R}^2_+)}.
\end{split}
\end{equation}
3) For $m\ge 6, k+\ell>\frac 32$, and any $\alpha\in \mathbb{N}^2, p\in\mathbb{N}$ with $|\alpha|+p\le m$, we have,
$$
\|(\partial^\alpha f)(\partial^p_x (\partial^{-1}_y g))\|_{L^2_{k+\ell+\alpha_2}(\mathbb{R}^2_+)}\le
C\|f\|_{H^m_{k+\ell}(\mathbb{R}^2_+)}\|g\|_{H^m_{\frac 12+\delta}(\mathbb{R}^2_+)},
$$
where $\partial^{-1}_y$ is the inverse of derivative $\partial_y$, meaning,
$\partial^{-1}_y g=\int^y_0 g(x, \tilde y) \, d\tilde{y}$.
\end{lemma}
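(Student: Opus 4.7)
The proof of all three parts rests on the same toolkit: the fundamental theorem of calculus in $y$ to exploit the vanishing conditions at $0$ or $+\infty$, Cauchy--Schwarz against a weight of the form $\langle y\rangle^{1/2+\delta}$ (with $\delta>0$ to ensure integrability of $\langle y\rangle^{-1-2\delta}$), the 1D embedding $H^1(\mathbb{R})\hookrightarrow L^\infty(\mathbb{R})$, and the 2D embedding $H^2(\mathbb{R}^2_+)\hookrightarrow L^\infty(\mathbb{R}^2_+)$.

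For (1), writing $f(x,y)=\pm\int_{y_0}^y f_y(x,\tilde y)\,d\tilde y$ with $y_0=0$ or $+\infty$ as dictated by the vanishing hypothesis, and applying Cauchy--Schwarz with weight $\langle\tilde y\rangle^{1/2+\delta}$, yields the pointwise bound $|f(x,y)|\le C\,\|f_y(x,\cdot)\|_{L^2_{1/2+\delta}(\mathbb{R}_+)}$. Taking the supremum in $x$ via $H^1_x\hookrightarrow L^\infty_x$ applied to the squared right-hand side then produces the $f_{xy}$ contribution and gives the stated estimate.

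For (2), by symmetry I may assume $|\beta|\le m/2\le m-2$ (using $m\ge 6$), and split the weight as $\langle y\rangle^{k+\ell+\alpha_2+\beta_2}=\langle y\rangle^{k+\ell+\alpha_2}\cdot\langle y\rangle^{\beta_2}$, so that
\[
\|\langle y\rangle^{k+\ell+\alpha_2+\beta_2}(\partial^\alpha f)(\partial^\beta g)\|_{L^2}\le \|\langle y\rangle^{k+\ell+\alpha_2}\partial^\alpha f\|_{L^2}\,\|\langle y\rangle^{\beta_2}\partial^\beta g\|_{L^\infty}.
\]
The first factor is controlled by $\|f\|_{H^m_{k+\ell}}$ directly from the definition of the norm; for the second, apply the 2D Sobolev embedding to $\langle y\rangle^{k+\ell+\beta_2}\partial^\beta g$ (which admits two spare derivatives since $|\beta|\le m-2$), distribute Leibniz on the polynomial weight, and absorb the extra power $\langle y\rangle^{-k-\ell}\le 1$ so as to bound everything by $\|g\|_{H^m_{k+\ell}}$.

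For (3), set $h=\partial_y^{-1}g$, so that $h(x,0)=0$ and $\partial_y h=g$, and split according to which factor dominates. If $|\alpha|\le m/2$, I would place $\partial^\alpha f$ in weighted $L^\infty$ by the 2D Sobolev argument of (2), and use Hardy's inequality (Lemma~\ref{inequality-hardy}(ii)) to convert the $L^2$ norm of $\partial_x^p h$ into an $L^2$ norm of $\partial_x^p g$ with one extra power of $\langle y\rangle$, which is absorbed by $\|g\|_{H^m_{1/2+\delta}}$. If instead $|\alpha|>m/2$, then $p<m/2$ and $\partial^\alpha f$ goes into $L^2$, while $\partial_x^p h(x,y)=\int_0^y\partial_x^p g(x,\tilde y)\,d\tilde y$ can be bounded in weighted $L^\infty$ by the very same Cauchy--Schwarz and 1D Sobolev argument as in part (1), now applied to $\partial_x^p g$. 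The main, and essentially only, obstacle is the bookkeeping of weights: one must verify in each case that the Hardy thresholds, the gain from $\delta>0$, and the hypothesis $k+\ell>3/2$ align so that every weight exponent lies in the admissible range; no new analytic idea is needed beyond standard weighted Sobolev calculus.
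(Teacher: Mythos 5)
Parts (1) and (2) of your proposal are correct and follow essentially the paper's own argument (the paper puts the factor with $|\alpha|\le m-2$ into weighted $L^\infty$ rather than the one with $|\beta|\le m/2$, but this is the same idea). The second case of your part (3) (where $\partial_x^p\partial_y^{-1}g$ goes into $L^\infty$ via Cauchy--Schwarz in $y$ plus the one-dimensional Sobolev embedding in $x$) is also correct and coincides with the paper's case $p\le m-2$.

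The first case of your part (3), however, contains a genuine gap: the Hardy step cannot be made to close. Write $h=\partial_y^{-1}g$ and split the weight as $\langle y\rangle^{k+\ell+\alpha_2}=\langle y\rangle^{k+\ell+\alpha_2-\lambda}\cdot\langle y\rangle^{\lambda}$. Lemma~\ref{inequality-hardy}(ii) applies to $\partial_x^p h$ (which vanishes at $y=0$) only for $\lambda<-\tfrac12$; this is forced, because $\partial_x^ph(x,y)=\int_0^y\partial_x^pg\,d\tilde y$ is bounded but does \emph{not} decay as $y\to+\infty$, so $\|\langle y\rangle^{\lambda}\partial_x^ph\|_{L^2(dy)}$ is simply infinite for $\lambda\ge-\tfrac12$ and part (i) of the Hardy lemma is unavailable. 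You must then control $\|\langle y\rangle^{k+\ell+\alpha_2+|\lambda|}\partial^\alpha f\|_{L^\infty}$ with $|\lambda|>\tfrac12$; but the anisotropic embedding \eqref{sobolev-1} applied to $\langle y\rangle^{\mu}\partial^\alpha f$ requires $\|\langle y\rangle^{\mu+\frac12+\delta''}\partial_y\partial^\alpha f\|_{L^2}$, and $\|f\|_{H^m_{k+\ell}}$ only provides the weight $k+\ell+\alpha_2+1$ on $\partial_y\partial^\alpha f$, i.e.\ a gain of at most $\tfrac12-\delta''<\tfrac12$ over the base weight $k+\ell+\alpha_2$. The two thresholds meet exactly at $\tfrac12$ and are incompatible: there is no admissible $\lambda$. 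Note that this case cannot be avoided, since for $p=m$, $\alpha=0$ (precisely the situation in which the lemma is invoked for the loss term $\partial_x^m v$) the roles cannot be exchanged: bounding $\partial_x^m h$ in $L^\infty(\mathbb{R}^2_+)$ would cost $\partial_x^{m+1}g$, which exceeds the budget. The repair is the paper's mixed-norm H\"older inequality
\begin{equation*}
\|\langle y\rangle^{k+\ell+\alpha_2}(\partial^\alpha f)(\partial_x^ph)\|_{L^2(\mathbb{R}^2_+)}
\le\|\langle y\rangle^{k+\ell+\alpha_2}\partial^\alpha f\|_{L^2(\mathbb{R}_{y,+};L^\infty(\mathbb{R}_x))}\,
\|\partial_x^ph\|_{L^\infty(\mathbb{R}_{y,+};L^2(\mathbb{R}_x))},
\end{equation*}
in which the second factor is bounded, uniformly in $y$, by $\|\partial_x^pg\|_{L^2_{\frac12+\delta}(\mathbb{R}^2_+)}$ via Cauchy--Schwarz in $\tilde y$ alone (costing no extra $x$-derivative), while the first costs two extra derivatives on $f$ and is admissible since $|\alpha|\le m-2$. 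This mixed-norm decomposition, rather than a Hardy inequality, is the missing ingredient.
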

\begin{proof} For (1), using $f(x, 0) = 0$, we have
\begin{equation*}
\begin{split}
\|f\|_{L^\infty(\mathbb{R}^2_+)}&= \left\|\int^y_0 (\partial_y f)(x, \tilde y) \, d\tilde{y}\right\|_{L^\infty(\mathbb{R}^2_+)}
\le C\|\partial_y f\|_{L^\infty(\mathbb{R}_x; L^2_{\frac 12+\delta}(\mathbb{R}_+))}\\
&\le
C(\| \partial_y f\|_{L^2_{\frac 12+\delta}(\mathbb{R}^2_+)}+\| \partial_x\partial_y f\|_{L^2_{\frac 12+\delta}(\mathbb{R}^2_+)}).
\end{split}
\end{equation*}
If $\lim_{y\to+\infty} f(x, y)=0$, we use
$$
f(x, y)=-\int^\infty_y (\partial_y f)(x, \tilde y) \, d\tilde{y}.
$$

For (2), firstly, $m\ge 6$ and $|\alpha|+|\beta|\le m$ imply $|\alpha|\le m-2$ or $|\beta|\le m-2$, without loss of generality, we suppose that $|\alpha|\le m-2$.
Then, using the conclusion of (1), we have
\begin{equation*}
\begin{split}
\|(\partial^\alpha f)(\partial^\beta g)\|_{L^2_{k+\ell+\alpha_2+\beta_2}(\mathbb{R}^2_+)}
&\le \|\langle y\rangle^{\alpha_2}(\partial^\alpha f)\|_{L^\infty(\mathbb{R}^2_+)}\|\partial^\beta g\|_{L^2_{k+\ell+\beta_2}(\mathbb{R}^2_+)}\\
&\le
C\|f\|_{H^{|\alpha|+2}_{\frac 12 +\delta}(\mathbb{R}^2_+)}\|\partial^\beta g\|_{L^2_{k+\ell+\beta_2}(\mathbb{R}^2_+)},
\end{split}
\end{equation*}
which give \eqref{sobolev-2}.

For (3), if $|\alpha|\le m-2$, we have
\begin{equation*}
\begin{split}
\|(\partial^\alpha f)&(\partial^p_x (\partial^{-1}_y g))\|_{L^2_{k+\ell+\alpha_2}(\mathbb{R}^2_+)}\\
&\le \|\langle y\rangle^{k+\ell+\alpha_2}(\partial^\alpha f)\|_{L^2(\mathbb{R}_{y, +}; L^\infty(\mathbb{R}_x))}\|\partial^p_x (\partial^{-1}_y g)\|_{L^\infty(\mathbb{R}_{y, +}; L^2(\mathbb{R}_x))}\\
&\le
C\|f\|_{H^{|\alpha|+2}_{k+\ell}(\mathbb{R}^2_+)}\|\partial^p_x g\|_{L^2_{\frac 12+\delta}(\mathbb{R}^2_+)}.
\end{split}
\end{equation*}
If $p\le m-2$, we have
\begin{equation*}
\begin{split}
\|(\partial^\alpha f)&(\partial^p_x (\partial^{-1}_y g))\|_{L^2_{k+\ell+\alpha_2}(\mathbb{R}^2_+)}\\
&\le \|\langle y\rangle^{k+\ell+\alpha_2}(\partial^\alpha f)\|_{L^2(\mathbb{R}^2_+)}\|\partial^p_x (\partial^{-1}_y g)\|_{L^\infty(\mathbb{R}^2_+)}\\
&\le
C\|f\|_{H^{|\alpha|}_{k+\ell}(\mathbb{R}^2_+)}\|\partial^p_x g\|_{L^\infty(\mathbb{R}_x; L^2_{\frac 12+\delta}(\mathbb{R}_{y, +}))}\\
&\le
C\|f\|_{H^{|\alpha|}_{k+\ell}(\mathbb{R}^2_+)}\|g\|_{H^m_{\frac 12+\delta}(\mathbb{R}^2_+)}.
\end{split}
\end{equation*}
We have completed the proof of the Lemma.
\end{proof}


\section{The existence of approximate solutions} \label{section-a3}
Now, we prove the Proposition \ref{prop3.0},  the existence of solution to the vorticity equation   $ \tilde{w}_\epsilon=\partial_y\tilde{u}_\epsilon $ and
  suppose that $m, k, \ell$ and $u^s(t, y)$ satisfy the assumption of Proposition \ref{prop3.0},
\begin{align}
\label{apendix-vorticity-bb}
\begin{cases}
& \partial_t\tilde{w}_\epsilon + (u^s + \tilde{u}_\epsilon) \partial_x\tilde{w}_\epsilon +{v}_\epsilon (u^s_{yy} + \partial_y \tilde{w}_{\epsilon})
= \partial^2_{y}\tilde{w}_\epsilon + \epsilon \partial^2_{x}\tilde{w}_\epsilon, \\
& \partial_y \tilde{w}_{\epsilon}|_{y=0}=0\\
&\tilde{w}_\epsilon|_{t=0}=\tilde{w}_{0, \epsilon},
\end{cases}
\end{align}
where 
\begin{equation*}
\tilde{u}_\epsilon(t, x, y)=-\int^{+\infty}_y \tilde{w}_\epsilon(t, x, \tilde y) d\tilde y,\quad
\tilde{v}_\epsilon(t, x, y)=-\int^{y}_0\partial_x \tilde{u}_\epsilon(t, x, \tilde y) d\tilde y.
\end{equation*}
We will use  the following iteration process to prove the existence of solution,
where $ w^0=\tilde{w}_{0, \epsilon}$,
\begin{align}
\label{apendix-vorticity-iteration}
\begin{cases}
& \partial_tw^n + (u^s+{u}^{n-1} )\partial_x w^{n} + (u^s_{yy} +\partial_y w^{n-1}){v}^{n}
= \partial^2_{y}w^n  + \epsilon \partial^2_{x}w^n , \\
& \partial_y w^n |_{y=0}=0\\
&w^n|_{t=0}=\tilde{w}_{0, \epsilon},
\end{cases}
\end{align}
with
\begin{equation*}
u^{n-1}(t, x, y)=-\int^{+\infty}_y w^{n-1}(t, x, \tilde y) d\tilde y,
\end{equation*}
and
\begin{align*}
v^{n}(t, x, y)&=-\int^{y}_0\partial_x u^{n}(t, x, \tilde y) d\tilde y\\
&=\int^{y}_0\int^{+\infty}_{\tilde y} \partial_x w^{n}(t, x, z) dz d\tilde y.
\end{align*}

Here for the boundary data, we have
$$
\partial^3_{y}w^n|_{y=0}=((u^s_y+{w}^{n-1} )\partial_x w^{n})|_{y=0},
$$
\begin{equation*}
\begin{split}
&\qquad(\partial^5_y  w^n)(t, x, 0)\\
&=
\left(\partial^3_y u^s(t, 0) + \partial^2_y w^{n-1}(t, x, 0)+\epsilon (\partial^2_x w^{n-1})(t, x, 0)\right)( \partial_x w^n )(t, x, 0)\\
&\qquad+\left(u^s_y(t, 0) + (w^{n-1})(t, x, 0)\right) \left((\partial^2_y\partial_x w^{n})(t, x, 0)+\epsilon (\partial^3_x w^{n})(t, x, 0)\right)\\
&\qquad\qquad\qquad\qquad-(\partial_y\partial_x w^{n})(u^s_y + w^{n-1})(t, x, 0)\\
&\quad+\sum_{1\le j\le 3}C^4_j \bigg((\partial^j_y(u^s + u^{n-1})) \partial^{4-j}_y\partial_x u^{n} - (\partial^{j - 1}_y\partial_x \tilde{u}^n )\partial^{4-j}_y(u^s_y + w^{n-1})\bigg)(t, x, 0)\\
&\qquad\qquad -\epsilon \partial^2_{x}\bigg(\left(u^s_y(t, 0) + (w^{n-1})(t, x, 0)\right) (\partial_x w^n )(t, x, 0)\bigg).
\end{split}
\end{equation*}
and also for $3\le p\le \frac m2+1$,  $\partial^{2p+1}_{y}w^n|_{y=0}$ is a linear combination of the terms of the form:
\begin{align}\label{mu-4}
 \prod^{q_1}_{j=1}\bigg(\partial_x^{\alpha_j} \partial_y^{\beta_j + 1}\big( u^s + {u}^n \big) \bigg)\bigg|_{y=0}\times \prod^{q_2}_{l=1}  \bigg(\partial_x^{\tilde \alpha_l} \partial_y^{\tilde\beta_l + 1}\big( u^s +  {u}^{n-i} \big)\bigg)\bigg|_{y=0}\,\, ,
\end{align}
where $2\le  q_1+q_2\le p,\,\, 1\le i \le \min\{n,\, p\} $ and
\begin{align*}
&\alpha_j + \beta_j\le 2p - 1, \,\, 1\le j\le q_1;\,\, \tilde\alpha_l + \tilde\beta_l \le 2p - 1,\,\, 1\le l\le q_2;&\\
&\sum^{q_1}_{j=1} (3\alpha_j + \beta_j) +\sum^{q_2}_{l=1}(3\tilde \alpha_l + \tilde\beta_l )= 2p +1\,;&\\
&~\sum\limits_{j=1}^{q_1}\beta_j+\sum\limits_{l=1}^{q_2}\tilde \beta_l \le 2p -2;\,\,~\sum\limits_{j=1}^{q_1} \alpha_j +\sum\limits_{l=1}^{q_2} \tilde\alpha_l \le p - 1,
\,\,\,0<\sum\limits_{j=1}^{q_1} \alpha_j  .&
\end{align*}
Remark that the condition $0<\sum\limits_{j=1}^{q_1} \alpha_j$ implies that, in  \eqref{mu-4}, there are at last one factor like $\partial_x^{\alpha_j}\partial_y^{\beta_j +1}  {u}^n(t, x, 0)$.

For given $w^{n-1}$, we have ${u}^{n-1}=\partial^{-1}_yw^{n-1} $ and ${v}^{n}=-\partial^{-1}_y{u}^{n}_{x}$. We will prove the existence and boundness of the sequence $\{ w^n, n\in\mathbb{N}\}$ in $L^\infty([0, T_\epsilon]; H^{m+2}_{k+\ell}(\mathbb{R}^2_+))$ to the linear equation \eqref{apendix-vorticity-iteration} firstly, then the existence of solution to \eqref{apendix-vorticity-bb} is guaranteed by using the standard weak convergence methods.

\begin{lemma}\label{lemma-app-vorticity-iteration-1}
Assume that $w^{n-i}\in L^\infty([0, T];  H^{m+2}_{k+\ell}(\mathbb{R}^2_+)), 1\le i \le \min\{n,  \frac{m}{2} +1\}$  and $\tilde w_{0, \epsilon}$ satisfies the compatibility condition up to order $m+2$ for the system \eqref{apendix-vorticity-bb}, then the initial-boundary value problem \eqref{apendix-vorticity-iteration} admit a unique solution $w^n$ such that, for any $t\in [0, T]$,
\begin{equation}\label{appendix-ck-a}
\frac {d}{dt}\|w^n (t)\|_{H^{m+2}_{k+\ell}(\mathbb{R}^2_+)}^2\le B^{n-1}_T\|{w}^n(t)\|^2_{H^{m+2}_{k+\ell}(\mathbb{R}^2_+)}+
D^{n-1}_T\| w^n\|^{m+2}_{H^{m+2}_{k+\ell}(\mathbb{R}^2_+)},
\end{equation}
where
\begin{align*}
B^{n-1}_T=C\bigg(1+&{ \sum\limits_{i=1}^{\min\{n,  {m}/{2} +1\}}}\|w^{n-i}\|_{L^\infty([0, T]; H^{m+2}_{k+\ell}(\mathbb{R}^2_+))}\\
&+
(1+\frac{1}{\epsilon} ) { \sum\limits_{i=1}^{\min\{n,  {m}/{2} +1\}}}\|w^{n-i}\|^2_{L^\infty([0, T]; H^{m+2}_{k+\ell}(\mathbb{R}^2_+))} \bigg),
\end{align*}
and
$$
D^{n-1}_T=C{ \sum\limits_{i=1}^{\min\{n,  {m}/{2} +1\}}}\|w^{n-i}\|^{m+2}_{L^\infty([0, T];H^{m+2}_{k+\ell}(\mathbb{R}^2_+))}\,.
$$
\end{lemma}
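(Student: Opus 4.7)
The plan is to first establish existence and uniqueness of the iterate $w^n$ for the (non-local) linear problem \eqref{apendix-vorticity-iteration}, then to derive the differential inequality \eqref{appendix-ck-a} by a weighted $H^{m+2}_{k+\ell}$ energy method. Viewing \eqref{apendix-vorticity-iteration} as an equation in the single unknown $w^n$, the coefficients $u^s+u^{n-1}$ and $u^s_{yy}+\partial_y w^{n-1}$ are prescribed by the previous iterate, and the only non-local feature is $v^n=\int_0^y\int_{\tilde y}^{+\infty}\partial_x w^n(t,x,z)\,dz\,d\tilde y$, a bounded first-order operator in $w^n$. Since $\partial_y^2+\epsilon\partial_x^2$ is uniformly elliptic for fixed $\epsilon>0$, I will construct $w^n$ by a standard Galerkin procedure (Fourier truncation in $x$ plus an $L^2_{k+\ell}$-orthonormal basis in $y$ compatible with the Neumann condition $\partial_y w^n|_{y=0}=0$); the energy inequality below then supplies the uniform bounds needed to pass to the limit, and uniqueness follows by applying the same estimate to the difference of two solutions.

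For the estimate, I apply $\partial^\alpha=\partial_x^{\alpha_1}\partial_y^{\alpha_2}$ with $|\alpha|\le m+2$ to \eqref{apendix-vorticity-iteration}, test against $\langle y\rangle^{2(k+\ell+\alpha_2)}\partial^\alpha w^n$, and integrate over $\mathbb{R}^2_+$. After the customary integrations by parts, the parabolic part contributes
$$
\tfrac12\tfrac{d}{dt}\|\partial^\alpha w^n\|_{L^2_{k+\ell+\alpha_2}}^2+\|\partial_y\partial^\alpha w^n\|_{L^2_{k+\ell+\alpha_2}}^2+\epsilon\|\partial_x\partial^\alpha w^n\|_{L^2_{k+\ell+\alpha_2}}^2,
$$
up to weight commutators absorbed into $\|\partial^\alpha w^n\|^2$, together with boundary traces $\int_{\mathbb{R}}(\partial_y\partial^\alpha w^n)(\partial^\alpha w^n)|_{y=0}\,dx$. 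These boundary terms are treated as in Proposition \ref{prop3.1}: the Neumann condition kills the case $\alpha_2=0$; for $\alpha_2\ge 1$ the trace formula \eqref{mu-4} expresses $\partial_y^{2p+1}w^n|_{y=0}$ as a polynomial in lower-order traces of $u^s+u^n$ and $u^s+u^{n-i}$, and combined with Lemma \ref{lemma-trace} and Sobolev embedding this produces both the nonlinear power $\|w^n\|_{H^{m+2}_{k+\ell}}^{m+2}$ appearing in $D^{n-1}_T$ and the factors involving $\|w^{n-i}\|$ for $1\le i\le\min\{n,m/2+1\}$ appearing in $B^{n-1}_T$.

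On the right-hand side, the transport term $(u^s+u^{n-1})\partial_x w^n$ is split via the commutator $[\partial^\alpha,u^s+u^{n-1}]\partial_x w^n$: the flux part integrates by parts into an $\|\partial_x u^{n-1}\|_{L^\infty}\|\partial^\alpha w^n\|^2$ piece, while the commutator is bounded by Lemma \ref{lemma2.4}, contributing the $\|w^{n-1}\|_{H^{m+2}_{k+\ell}}$ factor of $B^{n-1}_T$. The decisive term is $(u^s_{yy}+\partial_y w^{n-1})v^n$. When all $x$-derivatives concentrate on $v^n$ at top order, $\partial_x^{m+2}v^n=\int_0^y\int_{\tilde y}^{+\infty}\partial_x^{m+3}w^n\,dz\,d\tilde y$, and the Hardy inequality (Lemma \ref{inequality-hardy}) together with Sobolev embedding in $x$ yield
$$
\|\partial_x^{m+2}v^n\|_{L^2(\mathbb{R}_x;L^\infty(\mathbb{R}_+))}\le C\|\partial_x^{m+3}w^n\|_{L^2_{k+\ell}(\mathbb{R}^2_+)}.
$$
This loss of one $x$-derivative is closed by absorption into the parabolic gain $\epsilon\|\partial_x^{m+3}w^n\|^2$ appearing on the left at $\alpha=(m+2,0)$, at the price of a factor $1/\epsilon$; this is the origin of the $(1+1/\epsilon)$ coefficient in $B^{n-1}_T$. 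Summing over $|\alpha|\le m+2$ and regrouping yields \eqref{appendix-ck-a}.

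The main obstacle is this loss of $x$-derivative in the $v^n$-contribution: without the $\epsilon\partial_x^2$ regularization the top-order part of $(u^s_{yy}+\partial_y w^{n-1})v^n$ cannot be controlled by the available norms, and only the $\epsilon$-absorption trick described above yields a finite (but $\epsilon$-dependent) bound, explaining why the estimate \eqref{appendix-ck-a} must degenerate as $\epsilon\to 0$. A secondary bookkeeping issue is the tracking of previous iterates through the boundary-trace formulas \eqref{mu-4}, which forces the sum over $1\le i\le\min\{n,m/2+1\}$ to appear in both $B^{n-1}_T$ and $D^{n-1}_T$.
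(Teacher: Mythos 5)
Your overall strategy coincides with the paper's: a weighted $H^{m+2}_{k+\ell}$ energy estimate for the linear iteration \eqref{apendix-vorticity-iteration}, with the transport term split into a flux part plus a commutator, the boundary traces reduced via \eqref{mu-4} and Lemma \ref{lemma-trace} (whence the power $\|w^n\|^{m+2}_{H^{m+2}_{k+\ell}}$ in $D^{n-1}_T$ and the sum over the previous iterates), and the loss of one $x$-derivative coming from the top-order derivative of $v^n$ absorbed into the dissipation $\epsilon\|\partial_x w^n\|^2_{H^{m+2}_{k+\ell}}$ at the cost of the factor $1/\epsilon$ in $B^{n-1}_T$. (The paper obtains existence from the \emph{a priori} estimate by a Hahn--Banach duality argument rather than Galerkin, but that difference is immaterial.)

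There is, however, one term you do not address, and as you have set things up it would fail. In the contribution of $v^n(u^s_{yy}+\partial_y w^{n-1})$, besides the case where the derivatives concentrate on $v^n$, the Leibniz expansion also produces the term in which $\partial^\alpha$ with $|\alpha|=m+2$ falls entirely on $\partial_y w^{n-1}$, namely
$$
\int_{\mathbb{R}^2_+}\langle y\rangle^{2(k+\ell+\alpha_2)}\,v^n\,\big(\partial^\alpha\partial_y w^{n-1}\big)\,\partial^\alpha w^n\,dx\,dy .
$$
This involves $m+3$ derivatives of the \emph{previous} iterate, one more than the assumed regularity $w^{n-1}\in L^\infty([0,T];H^{m+2}_{k+\ell}(\mathbb{R}^2_+))$, so it cannot be estimated by any of the norms entering $B^{n-1}_T$ or $D^{n-1}_T$. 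The paper removes this loss of a $y$-derivative on $w^{n-1}$ by integrating by parts in $y$, using $v^n|_{y=0}=0$: the extra $\partial_y$ is transferred onto the weight, onto $v^n$ (producing $\partial_x u^n$), and onto $\partial^\alpha w^n$, and the last of these is absorbed into the dissipation $\|\partial_y w^n\|^2_{H^{m+2}_{k+\ell}}$. Your write-up names the loss of an $x$-derivative on $v^n$ as the only obstacle and treats the coefficient $u^s_{yy}+\partial_y w^{n-1}$ as if it were harmlessly prescribed at all orders; this extra integration by parts must be supplied. Once it is, the rest of your argument goes through and yields \eqref{appendix-ck-a}.
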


\begin{proof}
Once we get  {\em \`a priori} estimate for this linear problem, the existence of solution is guaranteed by the Hahn-Banach theorem. So we  only prove the {\em \`a  priori} estimate of the smooth solutions.

For any $\alpha\in \mathbb{N}^2, |\alpha|\le m+2$, taking the equation \eqref{apendix-vorticity-iteration} with derivative $\partial^\alpha$,
multiplying the resulting equation by $\langle y \rangle^{2k + 2\ell+2\alpha_2} \partial^\alpha w^n  $ and integrating  by part over $\mathbb{R}^2_+$, one obtains that
\begin{align}\label{appendix-ck}
\begin{split}
&\frac{1}{2}\frac{d}{dt}\|w^n \|_{H^{m+2}_{k+\ell}(\mathbb{R}^2_+)}^2 + \|\partial_y w^n  \|_{H^{m+2}_{k+\ell}(\mathbb{R}^2_+)}^2 + \epsilon \|\partial_x w^n  \|_{H^{m+2}_{k+\ell}(\mathbb{R}^2_+)}^2\\
&
=\sum_{|\alpha|\le {m+2}}\int_{\mathbb{R}^2_+}
\langle y \rangle^{2k+ 2\ell +2\alpha_2 }\partial^\alpha\big((u^s+{u}^{n-1})\partial_x w^{n} \\
&\qquad\qquad\qquad \qquad -(\partial^{-1}_y {u}^n_{x})(u^s_{yy} +\partial_y {w}^{n-1})\big)\partial^\alpha w^n dx dy\\
&\quad+\sum_{|\alpha|\le {m+2}}\int_{\mathbb{R}^2_+}
(\langle y \rangle^{2k+ 2\ell +2\alpha_2 })'\partial^\alpha\partial_y w^{n}\partial^\alpha\partial_y w^{n}dxdy \\
&\qquad \qquad +\sum_{|\alpha|\le {m+2}}\int_{\mathbb{R}}
(\partial^\alpha\partial_y w^{n}\partial^\alpha\partial_y w^{n})\big|_{y=0}dx,
\end{split}
\end{align}
With similar analysis to Section \ref{section5}, we have
\begin{align*}
\begin{split}
&\left|\int_{\mathbb{R}^2_+}
\langle y \rangle^{2k+ 2\ell +2\alpha_2 }(u^s+{u}^{n-1})\partial_x\partial^\alpha w^{n}\partial^\alpha w^n dx dy\right|\\
&=\left|-\frac 12 \int_{\mathbb{R}^2_+}
\langle y \rangle^{2k+ 2\ell +2\alpha_2 }\partial_x(u^s+{u}^{n-1})\partial^\alpha w^{n}\partial^\alpha w^n dx dy\right|\\
&\le C \|{u}^{n-1}\|_{L^\infty(\mathbb{R}^2_+)} \|w^n\|^2_{H^{m+2}_{k+\ell}(\mathbb{R}^2_+)},
\end{split}
\end{align*}
and
\begin{align*}
\begin{split}
&\left|\int_{\mathbb{R}^2_+}
\langle y \rangle^{2k+ 2\ell +2\alpha_2 }[\partial^\alpha, (u^s+{u}^{n-1})]\partial_x w^{n}\partial^\alpha w^n dx dy\right|\\
&\le C(1+ \|{w}^{n-1}\|_{H^{m+2}_{k+\ell}(\mathbb{R}^2_+)} ) \|w^n\|^2_{H^{m+2}_{k+\ell}(\mathbb{R}^2_+)}.
\end{split}
\end{align*}
For the second term on the right hand   of \eqref{appendix-ck}, by using the Leibniz formula, we need to pay more attention to the following two terms
\begin{align*}
\begin{split}
&\left|\int_{\mathbb{R}^2_+}
\langle y \rangle^{2k+ 2\ell +2\alpha_2}\big(\partial^\alpha\partial^{-1}_y {u}^n_{x}\big)(u^s_{yy} +\partial_y {w}^{n-1})\partial^\alpha w^n dx dy\right|\\
&\le C(1+\|{w}^{n-1}\|_{H^{m+2}_{k+\ell}(\mathbb{R}^2_+)})\|\partial_x w^n\|_{H^{m+2}_{k+\ell}(\mathbb{R}^2_+)}
\|w^n\|_{H^{m+2}_{k+\ell}(\mathbb{R}^2_+)}\\
&\le\frac{\epsilon}{2}\|\partial_x w^n\|^2_{H^{m+2}_{k+\ell}(\mathbb{R}^2_+)}+
\frac{C}{\epsilon}(1+\|{w}^{n-1}\|_{H^{m+2}_{k+\ell}(\mathbb{R}^2_+)})^2
\|w^n\|^2_{H^{m+2}_{k+\ell}(\mathbb{R}^2_+)},
\end{split}
\end{align*}
and
\begin{align*}
\begin{split}
&\int_{\mathbb{R}^2_+}
\langle y \rangle^{2k+ 2\ell +2\alpha_2 } {v}^n\big)\big(\partial^\alpha\partial_y {w}^{n-1}\big)\partial^\alpha w^n dx dy\\
&=-\int_{\mathbb{R}^2_+}\partial_y \big(
\langle y \rangle^{2k+ 2\ell +2\alpha_2 }(\partial^{-1}_y {u}^n_{x})\big)\big(\partial^\alpha{w}^{n-1}\big)\partial^\alpha w^n dx dy\\
&\quad-\int_{\mathbb{R}^2_+}\big(
\langle y \rangle^{2k+ 2\ell +2\alpha_2 }(\partial^{-1}_y {u}^n_{x})\big)\big(\partial^\alpha {w}^{n-1}\big)\partial_y \partial^\alpha w^n dx dy,
\end{split}
\end{align*}
here we have used $v^n|_{y=0}=0$, thus
\begin{align*}
\begin{split}
&\left|\int_{\mathbb{R}^2_+}
\langle y \rangle^{2k+ 2\ell +2\alpha_2} {v}^n\big)\big(\partial^\alpha\partial_y {w}^{n-1}\big)\partial^\alpha w^n dx dy\right|\\
&\le C\|{w}^{n-1}\|_{H^{m+2}_{k+\ell}(\mathbb{R}^2_+)}
\big(\|w^n\|^2_{H^{m+2}_{k+\ell}(\mathbb{R}^2_+)}+\|\partial_y w^n\|_{H^{m+2}_{k+\ell}(\mathbb{R}^2_+)}
\|w^n\|_{H^{m+2}_{k+\ell}(\mathbb{R}^2_+)}\big).
\end{split}
\end{align*}
For the boundary term, similar to  the proof of Proposition \ref{prop3.1}, we can get
\begin{align*}
&\sum_{|\alpha|\le {m+2}}\left|\int_{\mathbb{R}}
(\partial^\alpha\partial_y w^{n}\partial^\alpha\partial_y w^{n})\big|_{y=0}dx\right|\\
&\le \frac 1{16} \|\partial_y w^n\|^2_{H^{m+2}_{k+\ell}(\mathbb{R}^2_+)}+
C\|w^{n-1}\|^{m+2}_{H^{m+2}_{k+\ell}(\mathbb{R}^2_+)}\| w^n\|^{m+2}_{H^{m+2}_{k+\ell}(\mathbb{R}^2_+)}.
\end{align*}
We get finally
\begin{align*}
\begin{split}
\frac {d}{dt}\|w^n (t)\|_{H^{m+2}_{k+\ell}(\mathbb{R}^2_+)}^2 &+ \|\partial_y w^n(t)  \|_{H^{m+2}_{k+\ell}(\mathbb{R}^2_+)}^2  + \epsilon \|\partial_x w^n(t)  \|_{H^{m+2}_{k+\ell}(\mathbb{R}^2_+)}^2 \\
&\le B^{n-1}_T\|{w}^n(t)\|^2_{H^{m+2}_{k+\ell}(\mathbb{R}^2_+)}+
D^{n-1}_T\| w^n\|^{m+2}_{H^{m+2}_{k+\ell}(\mathbb{R}^2_+)}\, .
\end{split}
\end{align*}
\end{proof}

\begin{lemma}\label{lemmab.2}
Suppose that $m, k, \ell$ and $u^s(t, y)$ satisfy the assumption of Proposition \ref{prop3.0}, $\bar\zeta>0$, then for any $0<\epsilon\le 1$, there exists $T_\epsilon>0$ such that for any $\tilde{w}_{0, \epsilon}\in H^{m+2}_{k+\ell}(\mathbb{R}^2_+)$ with
$$
\|\tilde{w}_{0, \epsilon}\|_{H^{m+2}_{k+\ell}(\mathbb{R}^2_+)}\le \bar \zeta,
$$
the iteration equations \eqref{apendix-vorticity-iteration} admit a sequence of solution $\{w^n, n\in\mathbb{N}\}$ such that, for any $t\in [0, T_\epsilon]$,
$$
\|w^n(t)\|_{H^{m+2}_{k+\ell}(\mathbb{R}^2_+)}\le \frac 43\|\tilde{w}_{0, \epsilon}\|_{H^{m+2}_{k+\ell}(\mathbb{R}^2_+)} ,\quad \forall n\in\mathbb{N}.
$$
\end{lemma}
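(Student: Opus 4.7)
The plan is to prove the lemma by induction on $n$, combining the linear existence and the differential inequality from Lemma \ref{lemma-app-vorticity-iteration-1} with a nonlinear Gronwall argument. The key point is to choose $T_\epsilon$ once and for all so that it depends on $\epsilon$ and $\bar\zeta$ only, and is independent of $n$.

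First, since $w^0 = \tilde w_{0,\epsilon}$ (viewed as constant in time), the desired bound holds trivially at $n=0$ for any $T_\epsilon>0$. For the inductive step, assume that, for some $T_\epsilon>0$ still to be specified, the previous iterates $w^{n-i}$ with $1\le i\le \min\{n,m/2+1\}$ satisfy
$$
\|w^{n-i}\|_{L^\infty([0,T_\epsilon];H^{m+2}_{k+\ell}(\mathbb{R}^2_+))}\le \tfrac{4}{3}\|\tilde w_{0,\epsilon}\|_{H^{m+2}_{k+\ell}(\mathbb{R}^2_+)}\le \tfrac{4}{3}\bar\zeta.
$$
Under this hypothesis, the expressions $B^{n-1}_T,D^{n-1}_T$ defined in Lemma \ref{lemma-app-vorticity-iteration-1} are majorized by two constants $\mathcal{B}(\epsilon,\bar\zeta)$ and $\mathcal{D}(\bar\zeta)$ depending only on $\epsilon$ and $\bar\zeta$ (through the embedding constants of $H^{m+2}_{k+\ell}$), and in particular are independent of $n$. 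Since $\tilde w_{0,\epsilon}$ satisfies the compatibility condition up to order $m+2$, Lemma \ref{lemma-app-vorticity-iteration-1} first produces the linear solution $w^n\in L^\infty([0,T_\epsilon];H^{m+2}_{k+\ell}(\mathbb{R}^2_+))$ and then yields the differential inequality
$$
\frac{d}{dt}\|w^n(t)\|_{H^{m+2}_{k+\ell}}^{2}\le \mathcal{B}(\epsilon,\bar\zeta)\,\|w^n(t)\|_{H^{m+2}_{k+\ell}}^{2}+\mathcal{D}(\bar\zeta)\,\|w^n(t)\|_{H^{m+2}_{k+\ell}}^{m+2}.
$$

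Setting $y(t)=\|w^n(t)\|_{H^{m+2}_{k+\ell}}^{2}$, this is a Bernoulli-type inequality of the form $y'\le \mathcal{B}\,y+\mathcal{D}\,y^{(m+2)/2}$, whose explicit nonlinear Gronwall integration (of the same type used at the end of the proof of Theorem \ref{theorem3.1}, see \eqref{time-1}) gives
$$
y(t)\le \frac{y(0)}{\bigl(e^{-\mathcal{B}\frac{m}{2}t}-\mathcal{D}\,\frac{m}{2\mathcal{B}}\bar\zeta^{\,m}\bigl(1-e^{-\mathcal{B}\frac{m}{2}t}\bigr)\bigr)^{2/m}},
$$
as long as the denominator remains positive. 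Choose $T_\epsilon=T_\epsilon(\epsilon,\bar\zeta)>0$ small enough so that the right hand side of this inequality stays below $(4/3)^{2}\|\tilde w_{0,\epsilon}\|_{H^{m+2}_{k+\ell}}^{2}$ for $t\in[0,T_\epsilon]$, for every initial norm $\le\bar\zeta$. Because $\mathcal{B}$ and $\mathcal{D}$ are $n$-independent, so is $T_\epsilon$; with this choice we obtain
$$
\|w^n(t)\|_{H^{m+2}_{k+\ell}}\le \tfrac{4}{3}\|\tilde w_{0,\epsilon}\|_{H^{m+2}_{k+\ell}},\qquad t\in[0,T_\epsilon],
$$
which closes the induction.

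The main technical obstacle is exactly the uniformity of $T_\epsilon$ in $n$: it relies on the observation that the right hand side of \eqref{appendix-ck-a} depends on the previous iterates only through their norms, so replacing them by the uniform bound $\tfrac{4}{3}\bar\zeta$ removes the $n$-dependence. The $\epsilon^{-1}$ factor hidden in $\mathcal{B}$ is harmless at this stage (it only forces $T_\epsilon\to 0$ as $\epsilon\to 0$, which is the behavior announced in the remark after Theorem \ref{theorem3.1}), and plays no role in the induction since $\epsilon>0$ is fixed throughout.
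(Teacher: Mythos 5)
Your proposal is correct and follows essentially the same route as the paper: an induction on $n$ in which the bound $\tfrac43\|\tilde w_{0,\epsilon}\|_{H^{m+2}_{k+\ell}}\le\tfrac43\bar\zeta$ on the previous iterates makes $B^{n-1}_T$ and $D^{n-1}_T$ from Lemma \ref{lemma-app-vorticity-iteration-1} majorized by $n$-independent constants, after which the nonlinear (Bernoulli-type) Gronwall integration of \eqref{appendix-ck-a} lets one fix $T_\epsilon$ depending only on $\epsilon$ and $\bar\zeta$, exactly as in the paper's choice of $T_\epsilon$ via the explicit equation analogous to \eqref{time-1}. The only differences are cosmetic (your integrated Gronwall bound keeps the factor $\tfrac{\mathcal D}{\mathcal B}(1-e^{-\mathcal B\frac m2 t})$ where the paper uses the cruder $\tfrac m2 D^{n-1}_T t$, and you start the induction at $n=0$ rather than $n=1$).
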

\noindent
{\bf Remark.}  Here $\bar\zeta$ is aribitary.

\begin{proof}
Integrating \eqref{appendix-ck-a} over $[0, t]$, for $0<t\le T$ and $T>0$ small,
$$
\|w^n(t)\|^{m}_{H^{m+2}_{k+\ell}(\mathbb{R}^2_+)}\le \frac{\|\tilde{w}_{0, \epsilon}\|^{m}_{H^{m+2}_{k+\ell}(\mathbb{R}^2_+)}}{e^{-\frac m2 B^{n-1}_T t}-\frac m2 D^{n-1}_T t\|\tilde{w}_{0, \epsilon}\|^{m}_{H^{m+2}_{k+\ell}(\mathbb{R}^2_+)}}.
$$
We prove the Lemma by induction. For $n=1$, we have
\begin{align*}
B^{0}_T&=C\left(1+\|\tilde w_{0, \epsilon}\|_{ H^{m+2}_{k+\ell}(\mathbb{R}^2_+)}+
(1+\frac{1}{\epsilon} ) \|\tilde w_{0, \epsilon}\|^2_{ H^{m+2}_{k+\ell}(\mathbb{R}^2_+)} \right)\\
&\le C\left(1+\bar \zeta+
(1+\frac{1}{\epsilon} ) \bar \zeta^2 \right) ,
\end{align*}
and
$$
D^{n-1}_T=C\|\tilde w_{0, \epsilon}\|^{m+2}_{H^{m+2}_{k+\ell}(\mathbb{R}^2_+)}\le C\bar \zeta^{m+2}.
$$
Choose $T_\epsilon>0$ small such that
$$
\left(e^{-\frac m2 C\left(1+2\bar \zeta+
4(1+\frac{1}{\epsilon} ) \bar \zeta^2 \right) T_\epsilon}-\frac m2 C(2\bar \zeta)^{m+2} T_\epsilon  (2\bar \zeta)^{m}\right)^{-1}=\left(\frac 43\right)^m,
$$
we get
$$
\|w^1(t) \|_{H^{m+2}_{k+\ell}(\mathbb{R}^2_+)} \le \frac 43 \|\tilde{w}_{0, \epsilon}\|_{H^{m+2}_{k+\ell}(\mathbb{R}^2_+)}.
$$
Now the induction hypothesis is:  for $0\le t\le T_\epsilon$,
$$
\|w^{n-1}(t) \|_{H^{m+2}_{k+\ell}(\mathbb{R}^2_+)} \le \frac 43 \|\tilde{w}_{0, \epsilon}\|_{H^{m+2}_{k+\ell}(\mathbb{R}^2_+)},
$$
thanks to the choose of $T_\epsilon$, we have also
$$
\left(e^{-\frac m2 B^{n-1}_{T_\epsilon} T_\epsilon}-\frac m2 D^{n-1}_{T_\epsilon} T_\epsilon\|\tilde{w}_{0, \epsilon}\|^{m}_{H^{m+2}_{k+\ell}(\mathbb{R}^2_+)}\right)^{_1}\le \left(\frac 43\right)^m
$$
for any $t\in [0, T_\epsilon]$, then we finish the proof of the Lemma \ref{lemmab.2}.
\end{proof}


\section*{Acknowledgments}

The first author was partially supported by `` the Fundamental Research Funds
for the Central Universities'' and the NSF of China (No. 11171261). The second author was
supported by a period of sixteen months scholarship from the State Scholarship Fund of China, and
he would thank   ``Laboratoire de math\'ematiques Rapha\"el Salem de l'Universit\'e de Rouen" for
their hospitality.


\begin{thebibliography}{10}
	
\bibitem{xu2014local}
R.~Alexandre, Y.-G. Wang, C.-J. Xu, and T.~Yang.
\newblock Well-posedness of the Prandtl equation in sobolev spaces.
\newblock {\em J. Amer. Math. Soc.}, 28(2015), 745-784.

\bibitem{cannone-non1}
M. Cannone1, M. C. Lombardo, and M. Sammartino.	
\newblock Well-posedness of Prandtl equations with non-compatible data.
\newblock {\em  Nonlinearity} 26 (2013), 3077-3100.	

\bibitem{cannone-non2}
M. Cannone, M. C. Lombardo, and M. Sammartino.
\newblock On the Prandtl boundary layer equations in presence of corner singularities.
\newblock {\em Acta Appl. Math.},  132 (2014), 139-149.	
	
\bibitem{e-1} W. E.
\newblock Boundary layer theory and the zero-viscosity limit of
the Navier-Stokes equation.
\newblock{\em Acta Math. Sin. (Engl. Ser.)} 16(2000), 207-218.

\bibitem{e-2} W. E and B. Enquist.
\newblock Blow up of solutions of the unsteady Prandtl's equation,
\newblock {\em Comm. Pure Appl. Math.}, 50(1997), 1287-1293.

\bibitem{gerard2010ill}
D.~G{\'e}rard-Varet and E.~Dormy.
\newblock On the ill-posedness of the Prandtl equation.
\newblock {\em J. Amer. Math. Soc.}, 23(2010), 591-609.


\bibitem{masmoudi2013gevrey}
D.~Gerard-Varet and N.~Masmoudi.
\newblock Well-posedness for the Prandtl system without analyticity or
	monotonicity.
\newblock {\em to appear ``Ann. Eco. Norm. Sup. Paris'', preprint arXiv:1305.0221}.
	
\bibitem{gerard2010remark}
D.~G{\'e}rard-Varet and T.~{Nguyen}.
\newblock Remarks on the ill-posedness of the Prandtl equation.
\newblock {\em  Asymptot. Anal.}, 77 (2012), 71-88.

\bibitem{ggn1}E. Grenier, Y. Guo, and T. Nguyen. 
\newblock{Spectral instability of characteristic boundary layer flows.}
\newblock{\em Preprint arXiv:1406.3862, 2014.}

\bibitem{ggn2}E. Grenier, Y. Guo, and T. Nguyen. 
\newblock{Spectral instability of symmetric shear flows in a two-dimensional channel.}
\newblock{\em Preprint arXiv:1402.1395, 2014.}

\bibitem{ggn3} E. Grenier, Y. Guo, and T. Nguyen. 
\newblock{Spectral stability of Prandtl boundary layers: an overview.}
\newblock{\em Analysis (Berlin)} 35  (2015), 343–355.
	
\bibitem{grenier-2}
E. Grenier.
\newblock On the nonlinear instability of Euler and Prandtl equations.
\newblock {\em Comm. Pure Appl. Math.} 53(2000), 1067-1091.

\bibitem{guo2011nonlinear}
Y.~Guo and T.~Nguyen.
\newblock A note on Prandtl boundary layers.
\newblock {\em Comm. Pure Appl. Math.}, 64 (2011), 1416--1438.

\bibitem{vicol2015}
M. Ignatova and V. Vicol.
\newblock Almost global existence for the Prandtl boundary layer equations.
\newblock {\em Arch. Ration. Mech. Anal.}, 220(2016), 809-848. 
	
\bibitem{vicol2013}
I. Kukavica and V. Vicol.
\newblock On the local existence of analytical solution to the Prandtl boundary layer equations.
\newblock{\em Commun. Math. Sci.} 11(2013), 269-292.
	
\bibitem{vicol2014}
I. Kukavica , N. Masmoudi, V. Vicol and T. K.  Wong.	
\newblock On the local well-posedness of the Prandtl and the hydrostatic Euler equations with multiple monotonicity regions.
\newblock{\em SIAM Journal of Mathematical Analysis} 46(2014), 3865-3890. 	
	
\bibitem{wangyaguang2014three}
C.-J. Liu, Y.-G. Wang, and T.~Yang.
\newblock A well-posedness theory for the Prandtl equations in three space
variables.
\newblock {\em preprint arXiv:1405.5308}, 2014.

\bibitem{wangyaguang2015global}
C.-J. Liu, Y.-G. Wang, and T.~Yang.
\newblock Global existence of weak solutions to the three-dimensional Prandtl equations with A special structure.
\newblock {\em Preprint arXiv:1509.03856}, 2015.
	
\bibitem{sammartino2003onlyx}
M.~C. Lombardo, M.~Cannone, and M.~Sammartino.
\newblock Well-posedness of the boundary layer equations.
\newblock {\em SIAM J. Math. Anal.}, 35(2003), 987-1004.

\bibitem{masmoudi2012local}
N.~Masmoudi and T.~K. Wong.
\newblock Local-in-time existence and uniqueness of solutions to the Prandtl
	equations by energy methods.
\newblock {\em Comm. Pure Appl. Math.}, 68(2015),1683-1741.
	

\bibitem{oleinik1999prandtl}
O.~A. Oleinik and V.~N. Samokhin.
\newblock {\em Mathematical models in boundary layer theory}, volume~15.
\newblock CRC Press, 1999.
	
\bibitem{prandtl1904uber}
L.~Prandtl.
\newblock \"{U}ber fl\"ussigkeitsbewegungen bei sehr kleiner reibung.
\newblock {\em Verhaldlg III Int. Math. Kong}, pages 484-491, 1904.
	
\bibitem{renardy2009ill-steady-hydrostatic}
	M.~Renardy.
\newblock Ill-posedness of the hydrostatic Euler and Navier-Stokes equations.
\newblock {\em Arch. Ration. Mech. Anal.}, 194(2009), 877-886.
	
\bibitem{sammartino1998analytic-1}
M.~Sammartino and R.~E. Caflisch.
\newblock Zero viscosity limit for analytic solutions of the Navier-Stokes
equation on a half-space. I. existence for Euler and Prandtl equations.
\newblock {\em Comm. Math. Phy.}, 192(1998), 433-461.
	
\bibitem{sammartino1998analytic-2}
M.~Sammartino and R.~E. Caflisch.
\newblock Zero viscosity limit for analytic solutions of the Navier-Stokes
	equation on a half-space. II. construction of the Navier-Stokes solution.
\newblock {\em Comm. Math. Phy.}, 192(1998), 463-491.
	
\bibitem{xin2004global}
Z.~Xin and L.~Zhang.
\newblock On the global existence of solutions to the Prandtl's system.
\newblock {\em Adv. Math.}, 181(2004), 88-133.


\bibitem{zhangping2014longtime}
P.~Zhang and Z.~Zhang.
\newblock Long time well-posdness of Prandtl system with small and analytic
initial data.
\newblock {\em Preprint arXiv:1409.1648}, 2014.
	
\end{thebibliography}
\end{document}